\newtheorem{theorem}{Theorem}[section]
\newtheorem*{theorem*}{Theorem}
\newtheorem{corollary}[theorem]{Corollary}
\newtheorem{lemma}[theorem]{Lemma}
\newtheorem{question}[theorem]{Question}
\newtheorem{proposition}[theorem]{Proposition}
\theoremstyle{definition}
\newtheorem{definition}[theorem]{Definition}
\newtheorem{remark}[theorem]{Remark}
\newtheorem{claim}{Claim}
\theoremstyle{definition}
\newtheorem{example}[theorem]{Example}
\newcommand{\N}{\mathbb{N}}
\newcommand{\Z}{\mathbb{Z}}
\def\T{\mathbb{T}}
\def\R{\mathbb{R}}
\DeclareMathOperator{\conv}{conv}
\DeclareMathOperator{\Mod}{mod}
\DeclareMathOperator{\lcm}{lcm}
\def\SS{\mathbb{S}}
\def \id {\textrm{id}}
\def\N{\mathcal{N}}
\def\NN{\mathbb{N}}
\def\A{\mathcal{A}}
\def\B{\mathcal{B}}
\def\T{\mathcal{T}}
\def\NN{\mathbb{N}}
\DeclareMathOperator{\ND}{ND}
\DeclareMathOperator{\DD}{DD}
\DeclareMathOperator{\Aut}{Aut}
\DeclareMathOperator{\Fac}{Fac}
\DeclareMathOperator{\supp}{supp}
\DeclareMathOperator{\dist}{dist}
\DeclareMathOperator{\End}{End}
\DeclareMathOperator{\diam}{diam}
\DeclareMathOperator{\Homeo}{Homeo}
\DeclareMathOperator{\Hom}{Hom}
\DeclareMathOperator{\Ext}{Ext}
\DeclareMathOperator{\Aff}{Aff}
\DeclareMathOperator{\cl}{cl}
\DeclareMathOperator{\ri}{ri}
\DeclareMathOperator{\cone}{cone}
\DeclareMathOperator{\diag}{diag}
\newcounter{sigmavariable}
\newcommand{\bigdotcup}{\mathop{\mathchoice{\mathaccent"702E{\smash{\bigcup\nolimits}}}{\mathaccent"7201{\smash{\bigcup\nolimits}}}{\mathaccent"7201{\smash{\bigcup\nolimits}}}{\mathaccent"7201{\smash{\bigcup\nolimits}}}\vphantom{\bigcup}}}
\definecolor{zzttqq}{rgb}{0.6,0.2,0.}
\title[Homomorphisms between multidimensional constant-shape substitutions]{Homomorphisms between multidimensional constant-shape substitutions}
\author{Christopher Cabezas}
\address{Laboratoire Ami\'enois de Math\'ematiques Fondamentales et Appliqu\'ees, CNRS-UMR 7352, Universit\'{e} de Picardie Jules Verne, 33 rue Saint Leu, 80039   Amiens cedex 1, France.}
\curraddr{University of Li\`ege, Department of Mathematics, All\'ee de la d\'ecouverte 12 (B37), B-4000 Li\`ege, Belgium.}
\email{ccabezas@uliege.be}
\subjclass[2020]{Primary 37B10, 37B52; Secondary 37A15, 52C23}
\keywords{Homomorphisms, automomorphism groups, substitutive subshifts, digit tiles, nondeterministic directions.}
\begin{document}
	\begin{abstract}
		We study a class of $\Z^{d}$-substitutive subshifts, including a large family of constant-length substitutions, and homomorphisms between them, i.e., factors modulo isomorphisms of $\Z^{d}$. We prove that any measurable factor map and even any homomorphism associated to a matrix commuting with the expansion matrix, induces a continuous one. We also get strong restrictions on the normalizer group, proving that any endomorphism is invertible, the normalizer group is virtually generated by the shift action and the quotient of the normalizer group by the automorphisms is restricted by the digit tile of the substitution.
	\end{abstract}
	\maketitle
	
	\section{Introduction}
	
	In this article, we study multidimensional \emph{constant-shape substitutions} and \emph{homomorphisms} between them, i.e., continuous maps $\phi:X\to Y$ (called \emph{isomorphisms} for invertible ones) such that for some matrix $M\in GL(d,\Z)$ and any ${\bm n}\in \Z^{d}$, $\phi\circ S^{{\bm n}}=S^{M{\bm n}}\circ \phi$, where $(X,S,\Z^{d})$ and $(Y,S,\Z^{d})$ are subshifts given by substitutions with a uniform support. Such a map gives an orbit equivalence with constant orbit cocycle via a linear map. When the matrix $M$ is the identity, surjective homomorphisms are called \emph{factor maps}, and \emph{conjugacies} when it is invertible. We refer to the conjugacies as \emph{automorphisms} when the dynamical systems are the same. In the one-dimensional case, homomorphisms lead to the notion of \emph{flip conjugacy} of dynamical systems \cite{bezuglyi2008fullgroups} and by this fact are also called \emph{reversing symmetries} (see \cite{goodson1999inverse},\cite{baake2006structure}). The relation between homomorphisms and factor maps becomes less clear in higher dimensions, since $GL(d,\Z)$ is infinite for $d\geq 2$ (see for example \cite{baake2019number}). 
	
	The study of factors and automorphisms of a dynamical system is a classical problem. It mainly concerns their algebraic and dynamical properties in relation with the one of the system $(X,S, \Z^{d})$. The automorphisms can be algebraically defined as elements of the centralizer of the action group $\left\langle S \right \rangle$, seen as a subgroup of all homeomorphisms $\Homeo(X)$ from $X$ to itself. With this algebraic point of view, isomorphisms can be seen as elements of the normalizer group of $\left\langle S \right \rangle$ seen as a subgroup of $\Homeo(X)$. The automorphism group is always nonempty, but in general, the existence of isomorphisms for a particular matrix $M\in GL(d,\Z)$ is an open problem. 
	
	In this context, the rich family of symbolic systems exhibits rigidity properties of factor maps and automorphisms already in the one-dimensional case. For instance, the famous Curtis-Hedlund-Lyndon theorem \cite{hedlund1969endomorphism}, ensures that any factor map between subshifts is a \emph{sliding block code}, showing that the automorphism group is countable. Among the simplest nontrivial zero-entropy symbolic systems, are the substitutive ones introduced by W.H. Gottschalk in \cite{gottschalk1963substitution} (see \cite{queffelec2010substitution} for a good bibliography on this subject). They also present rigidity properties. B. Host and F. Parreau in \cite{host1989homomorphismes} gave a complete description of factor maps between subshifts arising from certain constant-length substitutions, proving that any measurable factor map induces a continuous one, and the automorphism group is virtually generated by the shift action. Moreover, any finite group can be realized as a quotient group $\Aut(X,S,\Z)/\left\langle S\right\rangle$ for these subshifts as proved by M. Lema\'nczyk and M. K. Mentzen in \cite{lemanczyk1988metric}. Later, I. Fagnot \cite{fagnot1997facteurs} proved that the problem of whether there exists a factor map between two constant-length substitution subshifts is decidable, using the first-order logic framework of Presburger arithmetic. Some years later, F. Durand in \cite{durand2000linearly} showed that \emph{linearly recurrent subshifts} (in particular substitutive subshifts) have finitely many symbolic factors, up to conjugacy. Using the self-induced properties of substitutive subshifts, V. Salo and I. T\"{o}rm\"{a} provide in \cite{salo2015blockmaps} a 
	renormalization process of the factor maps to extend the description obtained in \cite{host1989homomorphismes}. In \cite{donoso2016lowcomplexity} the authors proved that the automorphism group of a minimal subshift with non-superlinear complexity is virtually generated by the shift action, using the concept of \emph{asymptotic pairs}. Next, C. M\"ullner and R. Yassawi \cite{yassawi2020automorphisms} demonstrated that any topological factor of a constant-length substitutive shift is conjugate to a constant-length substitution via a letter-to-letter map. More recently, F. Durand and J. Leroy \cite{durand2018decidability} showed the decidability of the existence problem of a factor map between two minimal substitutive subshifts. 
	
	In the multidimensional setting, substitutive systems are originally motivated by physical reasons with the discovery of the aperiodic structure of quasicrystals modelized by the Penrose tiling \cite{penrose1974role}, where the symmetries play a fundamental role. Substitutions also occur in different topics such as combinatorics, diophantine approximations and theoretical computer science, with the minimal Robinson subshift of finite type being one of the most fundamental example \cite{gahler2012combinatorics}. Characterizations of the isomorphisms of the chair tiling, together with the full shift and Ledrappier's shift, were given in \cite{baake2018reversing}. The chair tiling, the table tiling and the minimal Robinson tiling belong to the class of constant-shape substitutions, which is a multidimensional analogue of the so-called constant-length substitutions. As a difference with the one-dimensional case, these substitutions may not be linearly recurrent (\cref{ExampleNonLinearMultidimensionalSubstitution}). In \cite{bustos2020extended,bustos2022admissible} was studied the case of bijective block substitutions.
	
	In this article, we pursue the study of isomorphisms to homomorphisms, and more general multidimensional substitutions (nondiagonal expansion matrix, nonrectangular support, and a weaker version of bijectivity. See \cite{frank2022spectral} for recent results on their spectral properties). We also obtain rigidity properties about homomorphisms. First, we prove that any aperiodic symbolic factor of a constant-shape substitution is conjugate to a constant-shape substitution via a letter-to-letter map (\cref{FactorConjugateSubstitution}), extending the mentioned one-dimensional result from \cite{yassawi2020automorphisms}. Then, we show that any measurable factor map and any homomorphism associated with a matrix commuting with some power of the expansion matrix of the substitution induces a continuous one, and we give an explicit bound on the radius of its block maps (\cref{MainTheorem} and \cref{NormalizerHostParreau}). These are analogue results of B. Host and F. Parreau's from \cite{host1989homomorphismes}. These imply that certain constant-shape substitutions are \emph{coalescent} (\cref{Coalescence}), and the automorphism group is virtually generated by the shift action (\cref{AutomoprhismVirtuallyZd}). Finally, we give algebraic and geometrical properties of the normalizer group for \emph{polytope substitutions}, i.e., in the case where the convex hull of the digit tile generated by the expansion matrix and the support of the substitution is a polytope. To do this, we relate the \emph{nondeterministic directions} of substitutive subshifts to the supporting hyperplanes to the convex hull of the digit tile (\cref{NonExpansiveHalfspacesSupportingConvexHull}). We deduce that any homomorphism of the substitutive subshift is invertible, and the normalizer group is virtually generated by the shift action (\cref{FinalTheoremNormalizerGroupPolytopeCase}). Moreover, the \emph{linear representation group}, defined as the set of matrices associated with a homomorphism, is finite and we give explicit bounds for the norm of these matrices (\cref{PropositionAboutMatricesPolytopeCase}). Together with the former bound on the radii of the block maps, these restrictions enable an algorithmic description of the normalizer group whenever the expansion matrix is proportional to the identity. These recover results and answer some questions in \cite{bustos2022admissible}.
	
	This article is organized as follows. The basic definitions and background are introduced in \cref{SectionBackground}. In \cref{SectionRecognizabilityPropertyGeneral} we prove \cref{FactorConjugateSubstitution} characterizing the aperiodic symbolic factors of substitutive subshifts. For this we study a recognizability property of these symbolic factors (\cref{RecognizabilityFactors}) and we determine their maximal equicontinuous factor (\cref{MaximalEquiContinuousFactorMultidimensionalSubstitution}). We also give a polynomial bound on the repetitivity function for substitutive subshifts (\cref{GrowthRepetititvtyFunction}). \cref{Sectionproof} is devoted to the proofs of the measurable rigidity properties of homomorphisms: \cref{MainTheorem} and \cref{NormalizerHostParreau}. Then, we deduce the coalescence (\cref{Coalescence}) and that the automorphism group of substitutive subshifts is virtually $\Z^{d}$ (\cref{AutomoprhismVirtuallyZd}). Finally, in \cref{SectionBijectiveSubstitutions} we describe the nondeterministic directions of substitutive subshifts through the digit tile for \emph{bijective on the extremities substitutions} (\cref{NonExpansiveHalfspacesSupportingConvexHull}). Moreover, these directions are computable in terms of the combinatorics of the substitution (\cref{CorollaryNonExpansiveHalfspacesBijectiveSubstitutions}). This enables us to provide algebraic restrictions and to bound elements of the linear representation group (\cref{PropositionAboutMatricesPolytopeCase}). The last theorem (\cref{FinalTheoremNormalizerGroupPolytopeCase}) summarizes all the results of this last study.
	
	\subsection*{Acknowledgments} The author thanks Samuel Petite for all of his support, dedication and guidance during the process of this work, Natalie Priebe Frank for useful discussions in the subject and Julien Leroy for a careful reading on Section 3 and for many helpful comments. The author also thanks the anonymous referee for very helpful comments and suggestions that improved this article.
	
	\section{General setting and notions}\label{SectionBackground}
	
	\subsection{Basic definitions and notation}
	\subsubsection{Notation} Throughout this article we will denote by ${\bm n}=(n_{1},\ldots,n_{d})$ the elements of $\Z^{d}$ and by ${\bm x}=(x_{1},\ldots,x_{d})$ the elements of $\R^{d}$. If $F\subseteq \Z^{d}$ is a finite set, it will be denoted by $F\Subset \Z^{d}$ and we use the notation $\Vert F\Vert =\max\limits_{{\bm n}\in F}\Vert {\bm n}\Vert$, where $\Vert\cdot\Vert$ is the standard Euclidean norm of $\R^{d}$. The standard cartesian product in $\R^{d}$ will be denoted by $\left\langle \cdot,\cdot\right\rangle$. If $L\in \mathcal{M}(d,\R)$ is a matrix, we denote $\Vert L\Vert=\max\limits_{{\bm x}\in \R\setminus\{{\bm 0}\}} \Vert L({\bm x})\Vert/\Vert {\bm x}\Vert$ as the \emph{matrix norm of} $L$. We denote $GL(d,\Z)$ as the set of $d\times d$ matrices $M$ with integer coefficients such that $|\det(M)|=1$. The matrices $M\in GL(d,\Z)$ represent the automorphisms of $\Z^{d}$.
	
	We will call a sequence of finite sets $(A_{n})_{n>0}\subseteq \Z^{d}$ \emph{a F}\o\emph{lner sequence}\footnote{In the literature, especially group theory, it is common to also ask that the union of the sequence of sets $(F_{n})_{n>0}$ is equal to $\Z^{d}$ for a sequence to be F\o lner, but we will not use it in this article.} if for all ${\bm n}\in \Z^{d}$ we have that
	$$\lim\limits_{n\to \infty}\dfrac{|A_{n}\Delta ({\bm n}+A_{n})|}{|A_{n}|}=0.$$
	
	For any $r>0$ and $F\Subset \Z^{d}$ we denote $F^{\circ r}$ as the set of all elements ${\bm f}\in F$ such that ${\bm f}+(B({\bm 0},r)\cap \Z^{d})\subseteq F$, i.e.,
	$$F^{\circ r}=\{{\bm f}\in F\colon {\bm f}+(B({\bm 0},r)\cap \Z^{d})\subseteq F\}.$$
	
	Note that the F\o lner assumption implies that for any $r>0$
	\begin{equation}\label{LargerballsFolner}
		\lim\limits_{n\to \infty}\dfrac{|F_{n}^{\circ r}|}{|F_{n}|}=1.
	\end{equation}	
	
	\subsubsection{Convex geometry}\label{Subsectionconvexgeometry}
	
	A set $C\subseteq \R^{d}$ is said to be \emph{convex} if for all ${\bm x}, {\bm y}\in C$ the set $[{\bm x},{\bm y}]=\{{\bm z}\in \R^{d}\colon {\bm z}=t{\bm x}+(1-t){\bm y}, t \in [0,1]\}$ is included in $C$. Recall that the image of a convex set under an affine map is also a convex set, and the intersection of an arbitrary family of convex sets is also a convex set. This leads to the notion of convex hull of a set.
	
	If $A\subseteq \R^{d}$ we define the \emph{convex hull of} $A$, denoted by $\conv(A)$, as the intersection of all convex sets containing $A$.
	
	A set $S\subseteq \R^{d}$ is an \emph{affine set} if for any ${\bm x},{\bm y}\in S$ the line $\{t{\bm x}+(1-t){\bm y}\colon t \in \R\}$ is contained in $S$. For any set $A\subseteq \R^{d}$ we define the \emph{affine hull of} $A$, denoted by $\Aff(A)$, as the intersection of all affine sets containing $A$.
	
	A fundamental characterization of convex sets is provided by Carath\'eodory's theorem.
	
	\begin{theorem}[Carath\'eodory's theorem]
		For any $A\subseteq \R^{d}$, any element of $\conv(A)$ can be represented as a convex combination of no more than $(d+1)$ elements of $A$.
	\end{theorem}

	We now recall some basic topological concepts associated with convex sets. A point ${\bm x}\in A$ is said to be \emph{relative-interior} for $A$, if $A$ contains the intersection of a ball centered at ${\bm x}$ with $\Aff(A)$, i.e., $\exists r>0,\ B({\bm x},r)\cap \Aff(A)\subseteq A$. The set of all relative-interior points of $A$ is called the \emph{relative interior of} $A$ and is denoted by $\ri(A)$. We can also define the \emph{relative boundary} $\partial_{\ri}(A)$ as the set difference of the closure and the relative interior, i.e., $\partial_{\ri}(A)=\cl(A)\setminus \ri(A)$.
	
	An important notion for convex sets are the supporting hyperplanes. Let $C\subseteq \R^{d}$ be a closed convex set and ${\bm x}\in C$ be a point in the relative boundary of $C$. An affine hyperplane $\partial H[{\bm a};c]=\{{\bm y}\in \R^{d}\colon \left\langle {\bm a},{\bm y}\right\rangle=c\}$, for some ${\bm a}\in \R^{d}\setminus\{0\}$ and $c\in \R$ is called a \emph{supporting hyperplane to} $C$ \emph{at} ${\bm x}$ if
	${\bm x}\in \partial H[{\bm a}:c]$ and
	$$\inf\limits_{{\bm y}\in C}\left\langle {\bm a},{\bm y}\right\rangle< \left\langle {\bm a},{\bm x}\right\rangle=c = \sup\limits_{{\bm y} \in C} \left\langle {\bm a},{\bm y}\right\rangle.$$
	
	We now recall some basic notions about cones and polyhedral sets. A nonempty set $C\subseteq \R^{d}$ is said to be a \emph{cone} if for every ${\bm x}\in C$, the set $C$ contains the positive ray $\R_{+}{\bm x}=\{t{\bm x}\colon t>0\}$ spanned by ${\bm x}$. A translation of a cone by a non zero vector is called an \emph{affine convex cone}. A cone $C\subseteq \R^{d}$ is said to be \emph{finitely generated} if it can be written as
	$$C=\left\{\sum\limits_{i=1}^{p}t {\bm u}_{i}\colon {\bm u}_{i}\in \R^{d},\ t_{i}\geq 0,\ i=1,\ldots,p\right\}.$$
	
	For a given nonempty set $A\subseteq \R^{d}$, the smallest cone containing the set $A$ is called the \emph{positive hull} (or \emph{conical hull}) of $A$. This set is given by
	$$\cone(A)=\{t{\bm x}\colon {\bm x}\in A,\ t\geq 0\}.$$
	
	The positive hull is also said to be the \emph{cone generated by} $A$.
	
	Convex sets can be represented, but it requires the notion of faces. A point ${\bm x}$ in a convex set $C$ is called an \emph{extreme point}, if it cannot be written as the convex combination of two different points in $C$, i.e., if ${\bm x}$ is equal to $t{\bm u}+(1-t){\bm v}$ for some $0\leq t\leq 1$, with ${\bm u},{\bm v}\in C$, then ${\bm u}={\bm v}={\bm x}$. We denote by $\Ext(C)$ the set of the extreme points of a convex set $C$. A compact convex set is called a \emph{polytope} if it has a finite number of extreme points.
	
	Extreme points are special cases of \emph{faces} of a convex set. A convex subset ${\bm F}\subseteq C$ is called a \emph{face} of $C$ if for every ${\bm x}\in {\bm F}$ and every ${\bm y}, {\bm z}\in C$ such that ${\bm x}=t{\bm y}+(1-t){\bm z}$, with $0< t< 1$, we have that ${\bm y}, {\bm z}\in {\bm F}$. The \emph{dimension} of a face ${\bm F}$ of $C$ is the dimension of its affine hull. The 0-dimensional faces of $C$ are exactly the extreme points of $C$, and the bounded $1$-dimensional faces are called \emph{segments} or \emph{edges}. An \emph{extreme ray} of a convex set $C$ is the direction of an affine half-line, that is, a face of $C$.  A useful result about representation of closed convex sets in $\R^{d}$ is the following.
	
	\begin{theorem}[Krein-Milman theorem for unbounded convex sets]
		If a nonempty closed convex set $C\subseteq \R^{d}$ has at least one extreme point, i.e., does not contain an affine line, then $C$ can be written as the sum of the convex hull of its extreme points and the cone generated by its extreme rays.
	\end{theorem}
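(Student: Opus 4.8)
The plan is to argue by induction on $n=\dim C$, the dimension of $\Aff(C)$, after reducing to the full-dimensional case: since neither the extreme points, nor the extreme rays, nor the asserted decomposition are affected by replacing the ambient space $\R^{d}$ by $\Aff(C)$, I may assume $\Aff(C)=\R^{d}$, so that $C$ has nonempty interior and $\ri(C)$ is the topological interior of $C$. The central auxiliary object is the \emph{recession cone} $0^{+}C=\{{\bm v}\in\R^{d}\colon {\bm x}+t{\bm v}\in C\ \text{for all }t\geq 0\}$, which for a closed convex set does not depend on the base point ${\bm x}\in C$ and is itself a closed convex cone; the hypothesis that $C$ possesses an extreme point is precisely the condition that $C$, equivalently $0^{+}C$, contains no affine line. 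Besides the supporting-hyperplane theory recalled above, I would invoke Minkowski's theorem (the finite-dimensional Krein--Milman theorem): every compact convex set is the convex hull of its extreme points.

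For the base case $n=0$ the set $C$ is a single point, which is its own unique extreme point, has no extreme ray, and equals itself plus the trivial cone $\{{\bm 0}\}$. For the inductive step, fix an arbitrary ${\bm x}\in C$; I treat separately the relative boundary and the interior. If ${\bm x}\in\partial_{\ri}(C)$, there is a supporting hyperplane $\partial H[{\bm a};c]$ of $C$ at ${\bm x}$, and $F=C\cap \partial H[{\bm a};c]$ is a closed convex \emph{face} of $C$, line-free and of dimension at most $n-1$. The structural point is that faces are transitive and extremality is inherited: the extreme points of $F$ are extreme points of $C$ and the extreme rays of $F$ are extreme rays of $C$. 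Hence the decomposition of ${\bm x}$ produced by the inductive hypothesis applied to $F$ is already of the desired form.

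If instead ${\bm x}\in\ri(C)$, I choose a line $\ell$ through ${\bm x}$. Because $C$ is line-free, $\ell\cap C$ is either a bounded segment $[{\bm a},{\bm b}]$ with ${\bm a},{\bm b}\in\partial_{\ri}(C)$, or a ray based at a point ${\bm a}\in\partial_{\ri}(C)$ in some direction ${\bm v}\in 0^{+}C$. In the first case ${\bm x}$ is a convex combination of two relative boundary points, each already handled, and the target set $\conv(\Ext(C))+\cone(\text{extreme rays})$ is convex, so ${\bm x}$ belongs to it. In the second case ${\bm x}={\bm a}+s_{0}{\bm v}$ with $s_{0}\geq 0$ and ${\bm a}\in\partial_{\ri}(C)$ already handled, so everything reduces to showing that every recession direction ${\bm v}\in 0^{+}C$ lies in the cone generated by the extreme rays of $C$.

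This last reduction is the main obstacle. I would resolve it by establishing, in parallel, the conic analogue of Krein--Milman: a line-free closed convex cone $K=0^{+}C$ is the conical hull of its extreme rays, proved by the same supporting-hyperplane peeling, each supporting hyperplane of $K$ at a nonzero relative boundary point cutting out a lower-dimensional line-free subcone whose extreme rays remain extreme in $K$. The genuinely delicate part is the compatibility bookkeeping: an extreme ray $\R_{+}{\bm v}$ of the recession cone $K$ must be realized as the recession direction of an unbounded edge (extreme ray) of $C$ itself, which requires choosing the base point on a minimal face so that the ray ${\bm a}+\R_{+}{\bm v}$ is a face of $C$. Verifying this correspondence, and that extremality is correctly inherited at every stage of both inductions, is where the care of the argument concentrates.
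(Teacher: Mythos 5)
The paper states this theorem as classical background (it is the Minkowski--Klee representation theorem, cf.\ Rockafellar, \emph{Convex Analysis}, Theorem 18.5) and supplies no proof, so your attempt can only be measured against the standard argument. Your skeleton matches it: reduce to $\Aff(C)$, induct on dimension, handle relative boundary points by a supporting hyperplane whose exposed face is line-free, of lower dimension, and inherits extreme points and extreme rays by transitivity of faces, and handle relative interior points by intersecting $C$ with a line. Up to that point the proposal is sound.

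The genuine gap is your resolution of the ray case. You reduce it to the claim that every ${\bm v}\in 0^{+}C$ lies in the cone generated by the extreme rays of $C$, to be proved via the lemma that every extreme ray of the recession cone $0^{+}C$ is realized as the recession direction of an unbounded edge of $C$ after a suitable choice of base point. That lemma is false. Take $C=\{(x,y)\in\R^{2}\colon y\geq x^{2}\}$: here $0^{+}C=\{0\}\times\R_{+}$ is an extreme ray of itself, but $C$ has \emph{no} extreme rays at all --- the boundary parabola is strictly convex, so the only proper faces of $C$ are the extreme points $(t,t^{2})$, and any half-line in $C$ (necessarily of direction $(0,1)$) passes through interior points of $C$, hence lies in no proper face. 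Thus the cone generated by the extreme rays of $C$ is $\{{\bm 0}\}$ and does not contain the recession direction $(0,1)$; the theorem nonetheless holds for this $C$ because $\conv(\Ext(C))$ already equals $C$ (every point above the parabola lies on a chord between two points of the parabola). So the step you flag as ``delicate bookkeeping'' is not delicate but wrong, and the induction as you structured it does not close.

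The standard repair is to prevent the ray case from ever arising in dimension at least $2$: since $C$ is line-free, $0^{+}C$ is a pointed closed convex cone, so $0^{+}C\cup(-0^{+}C)$ cannot be the whole parallel space of $\Aff(C)$ when $\dim C\geq 2$ (intersect with any $2$-plane: two opposite closed sectors of angle strictly less than $\pi$ do not cover the plane). Choosing the direction of the line $\ell$ through ${\bm x}$ outside $0^{+}C\cup(-0^{+}C)$ forces $\ell\cap C$ to be a bounded segment with both endpoints in $\partial_{\ri}(C)$, and your boundary case together with the convexity of $\conv(\Ext(C))+\cone(\text{extreme rays})$ finishes the inductive step. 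The ray case then survives only when $\dim C\leq 1$, where $C$ is a point, a segment, or a half-line, and a half-line is its own extreme ray. With this modification your argument becomes Klee's proof, and the parallel conic Krein--Milman lemma (which is true) is no longer needed.
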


	A useful relation between faces and the convex hull of a set that we will use in this article is the following. A proof can be found in \cite[Section 18]{rockafellar1970convex}.
	
	\begin{theorem}\label{PropertyConvexHull}
		Let $C=\conv(A)\subseteq \R^{d}$ be the convex hull of a set $A\subseteq \R^{d}$ and let ${\bm F}\subseteq C$ be a nonempty face of $C$. Then ${\bm F}=\conv(A\cap {\bm F})$.
	\end{theorem}

	Some useful notion for closed convex sets corresponds to their \emph{normal cones}. Let ${\bm F}$ be a nonempty face of a closed convex set $C$. The \emph{opposite normal cone}\footnote{The word \emph{opposite} comes from the fact that the usual normal cone is related to the outward normal vectors of convex sets and in this article we will use the inward normal vectors.} $\hat{N}_{{\bm F}}(C)$ \emph{of} $C$ \emph{at} ${\bm F}$ is defined as
	$$\hat{N}_{{\bm F}}(C)=\left\{{\bm v}\in \R^{d}\colon \min\limits_{{\bm t}\in C}\left\langle {\bm v},{\bm t}\right\rangle=\left\langle {\bm v},{\bm p}\right\rangle,\ \forall {\bm p}\in {\bm F}\right\}.$$
	
	The \emph{opposite normal fan of} $C$ is the collection of all opposite normal cones of $C$:
	$$\hat{\N}(C)=\left\{\hat{N}_{{\bm F}}(C)\colon {\bm F}\ \text{is a proper face of}\ C\right\}.$$
	
	The following are simple statements on the normal fan:
	
	\begin{itemize}
		\item $\dim(\hat{N}_{{\bm F}}(C))=d-\dim({\bm F})$.
		\item If ${\bm F}$ is a face of ${\bm G}$, which is a face of $C$, then $\hat{N}_{{\bm G}}(C)$ is a face of $\hat{N}_{{\bm F}}(C)$.
		\item The set $\bigcup\limits_{{\bm F}\ \text{face of}\ C}\hat{N}_{{\bm F}}(C)$ is equal to $\R^{d}$. 
	\end{itemize}

	\cref{FigureExampleNormalCones} illustrate the opposite normal cones of a triangle.
	\begin{figure}[H]
		\centering
		\begin{tikzpicture}
			\draw[fill=green,opacity=0.2] (0,0)--(10,0)--(6,4)--(0,0);
			\path[thin] (0,0) edge (10,0);
			\path[thin] (10,0) edge (6,4);
			\path[thin] (0,0) edge (6,4);
			
			\path[thin,->,color=red] (3,2) edge (3.51,1.18);
			\path[thin,->,color=red] (5,0) edge (5,1);
			\path[thin,->,color=red] (8,2) edge (7.31,1.3);
			
			\draw[fill=blue!30,opacity=0.5] (0,0) -- (0.55,-0.83) arc[start angle=-56.31, end angle=90,radius=1cm] -- (0,0);
			
			\draw[fill=black!30,opacity=0.5] (10,0) -- (10,1) arc[start angle=90, end angle=225.29,radius=1cm] -- (10,0);
			
			\draw[fill=cyan!30,opacity=0.5] (6,4) -- (5.3,3.29) arc[start angle=225.29, end angle=303.69,radius=1cm] -- (6,4);
			
			\draw[blue,ultra thick]  (12.55,1.17) arc[start angle=-56.31, end angle=90,radius=1cm];
			
			\draw[black,ultra thick] (12,3) arc[start angle=90, end angle=225.29,radius=1cm];
			
			\draw[cyan,ultra thick](11.3,1.29) arc[start angle=225.29, end angle=303.69,radius=1cm];
			
		\end{tikzpicture}
		\caption{Example of the opposite normal cones of a triangle and the stratification of the circle $\SS^{1}$ given by them.}
		\label{FigureExampleNormalCones}
	\end{figure}

	\subsubsection{Fractal Geometry}\label{SubsectionFractalGeometry} 
	Let $\mathcal{C}(\R^{d})$ be the collection of all nonempty compact subsets of $\R^{d}$. The \emph{Hausdorff metric} $h$ on $\mathcal{C}(\R^{d})$ is defined as
	$$\forall A,B \in \mathcal{C}(\R^{d}),\ h(A,B)=\inf\{\varepsilon\colon A\subseteq B_{\varepsilon}\ \wedge\ B\subseteq A_{\varepsilon}\},$$
	
	\noindent where $A_{\varepsilon}=\{{\bm t}\in \R^{d}\colon \Vert {\bm t}-{\bm y}\Vert\leq \varepsilon,\ \text{for some}\ {\bm y}\in A\}$. With this metric $(\mathcal{C}(\R^{d}),h)$ is a complete metric space.
	
	A map $f:\R^{d}\to \R^{d}$ is said to be a \emph{contraction} if there exists $0<c<1$ such that ${\Vert f({\bm x})-f({\bm y})\Vert \leq c \Vert {\bm x}-{\bm y}\Vert}$ for all ${\bm x},{\bm y}\in \R^{d}$. Let $\{f_{i}\}_{i=1}^{N}$ be a set of contraction maps on $\R^{d}$ and define the map
	$$\begin{array}{llll}
		F: & (\mathcal{C}(\R^{d}),h) & \to & (\mathcal{C}(\R^{d}),h) \\ 
		& \multicolumn{1}{c}{A} & \mapsto & \bigcup\limits_{i=1}^{N}f_{i}(A) \\ 
	\end{array}$$
	
	This map is a contraction on $(\mathcal{C}(\R^{d}),h)$. By the Banach fixed-point theorem (or particularly the IFS theorem), there exists a unique set ${T\in \mathcal{C}(\R^{d})}$ (called \emph{digit tile}) such that $T=\bigcup\limits_{i=1}^{N}f_{i}(T)$. A way to approximate this set is by iterations
	\begin{equation}\label{ApproximationDigitTile}
		T=\lim\limits_{n\to \infty} F^{n}(T_{0}),
	\end{equation}
 
	\noindent where $T_{0}$ is an arbitrary compact set of $\R^{d}$ and the limit is with respect to the Hausdorff metric.
	
	Since the convex hull of a compact set in $\R^{d}$ is compact, the map $\conv:\mathcal{C}(\R^{d})\to \mathcal{C}(\R^{d})$, which gives for any set $A\in \mathcal{C}(\R^{d})$ its convex hull, is well defined and is well known to be continuous.
	
	\subsection{Topological dynamical systems}\label{SectionTopologicalDynamicalSystem}
	
	A \emph{topological dynamical system} is a triple $(X,T,G)$, where $(X,\rho)$ is a compact metric space, $G$ is a group of self-homeomorphisms of the space $X$ and $T:X\times G\to X$ is a continuous map, satisfying $T(x,e)=x$, and $T(T(x,g),h)=T(x,gh)$ for all $x\in X$ and $g,h\in G$. We denote $T^{g}$ the homeomorphism $T(\cdot,g)$.
	
	If $(X,\rho)$ is a compact metric space, we denote $\Homeo(X)$ the group of self-homeomorphisms of $X$. If $T\in \Homeo(X)$, we use $(X,T,\Z)$ to denote the topological dynamical system $(X,T,\{T^{n}\colon n\in \Z\})$. Similarly, if $T_{1},\ldots,T_{d}$ are $d$ commuting homeomorphisms on $X$, we use $(X,T,\Z^{d})$ to denote the topological dynamical system $(X,T,\left\langle\{T_{1},\ldots,T_{d}\}\right\rangle)$.
	
	For a point $x\in X$, we define its \emph{orbit} as the set $\mathcal{O}(x,G)=\{T^{g}(x)\colon g\in G\}$. If $A\subseteq X$, we say that $A$ is $G$-\emph{invariant} if for all $x\in A$, $\mathcal{O}(x,G)$ is included in $A$.
	
	If $(X,T,G)$ is a topological dynamical system, a subset $K\subseteq X$ is called a \emph{minimal set} if $K$ is closed, nonempty, $G$-invariant and has no proper closed nonempty invariant subsets, i.e., if $N\subseteq K$ is closed and $G$-invariant, then $N=\emptyset$ or $N=K$. In this case, we say that $(K,T|_{K},G)$ is a \emph{minimal system}, where $T|_{K}:K\times G\to K$ corresponds to the restriction of $T$ to $K$. It is easy to see that a system is minimal if and only if it is the closure orbit of all of its points.
	
	\begin{definition}
		Let $(X,T,\Z^{d})$, $(Y,T,\Z^{d})$ be two topological dynamical systems and $M\in GL(d,\Z)$. A \emph{homomorphism associated with} $M$ is a continuous map $\phi:X\to Y$ such that for all ${\bm n}\in \Z^{d}$, we have that $\phi\circ T^{{\bm n}}= T^{M{\bm n}}\circ \phi$. If $\phi$ is surjective, then $\phi$ is an \emph{epimorphism} and if it is invertible, then $\phi$ is an \emph{isomorphism}.
	\end{definition}

	Note that a homomorphism between two minimal systems is always an epimorphism. In the following, we fix the different notations that we will use throughout this article:
	
	\begin{itemize}
		\item We denote the set of all homomorphisms associated with $M$ between $(X,T,\Z^{d})$ and $(Y,T,\Z^{d})$ by $\Hom_{M}(X,Y,T,\Z^{d})$.
		
		\item The \emph{set of homomorphisms} between two dynamical systems, is defined as the collection of all of homomorphisms, i.e.,
		$$\Hom(X,Y,T,\Z^{d})=\bigcup\limits_{M\in GL(d,\Z)}\Hom_{M}(X,Y,T,\Z^{d}).$$
		
		\item In the special case where $M$ is the identity matrix, the homomorphisms are called \emph{factor maps} and we denote $\Fac(X,Y,T,\Z^{d})$ the collection of all factor maps between $(X,T,\Z^{d})$ and $(Y,T,\Z^{d})$. If a factor map is invertible, then it is called a \emph{conjugacy}.
		
		\item In the case $(X,T,\Z^{d})=(Y,T,\Z^{d})$, we simply denote these sets as $N_{M}(X,T,\Z^{d})$ and $N(X,T,\Z^{d})$. The last set is called the \emph{normalizer semigroup of} $(X,T,\Z^{d})$. A factor map is called an \emph{endomorphism}, and  a conjugacy is called an \emph{automorphism}. We denote the set of all endomorphisms and automorphisms of a topological dynamical system as $\End(X,T,\Z^{d})$ and $\Aut(X,T,\Z^{d})$, respectively.
		
		\item We define the \emph{linear representation semigroup} $\vec{N}(X,T,\Z^{d})$ \emph{of} $(X,T,\Z^{d})$ as the collection of all matrices $M\in GL(d,\Z)$ with ${N_{M}(X,T,\Z^{d})\neq \emptyset}$.
		
		\item A topological dynamical system $(X,T,\Z^{d})$ is said to be \emph{coalescent} if every endomorphism of $(X,T,\Z^{d})$ is an automorphism.  
	\end{itemize}

	Note that the linear representation semigroup of a topological dynamical system is an invariant under conjugation. Now, if $\phi\in N_{M_{1}}(X,T,\Z^{d})$ and $\psi\in N_{M_{2}}(X,T,\Z^{d})$, then $\phi\psi$ is in $N_{M_{1}M_{2}}(X,T,\Z^{d})$, so the sets $N_{M}(X,T,\Z^{d})$ are not semigroups (except if $M$ is the identity matrix). Now, even though the matrices $M\in GL(d,\Z)$ are invertible in $\Z^{d}$, the linear representation semigroup $\vec{N}(X,T,\Z^{d})$ is not necessarily a group, since the existence of a homomorphism associated with a matrix $M$ does not necessarily imply the existence of a homomorphism associated with the matrix $M^{-1}$.
	
	The groups $\left\langle T\right\rangle$ and $\Aut(X,T,\Z^{d})$ are normal subgroups of $N^{*}(X,T,\Z^{d})$ (the group of isomorphisms), and the centers of $N^{*}(X,T,\Z^{d})$ and $\Aut(X,T,\Z^{d})$ are the same.  In fact, we have the following short exact sequences
		\begin{align}
			1 & \to & \left\langle T\right\rangle\quad\quad\quad & \to & \Aut(X,T,\Z^{d})\quad & \to & \Aut(X,T,\Z^{d})/\left\langle T\right\rangle \quad & \to & 1\,\,\\
			1 & \to & \Aut(X,T,\Z^{d})\quad & \to&  N^{*}(X,T,\Z^{d}) \quad& \to & \vec{N^{*}}(X,T,\Z^{d})\quad\quad & \to& 1. \label{ExactSequenceForNormalizer}
		\end{align}
	
	If $\pi:(X,T,\Z^{d})\to (Y,T,\Z^{d})$ is a factor map between two minimal systems, and there exists $y\in Y$ such that $|\pi^{-1}(\{y\})|=1$, then this property is satisfied in a $G_{\delta}$ dense subset $Y_{0}\subseteq Y$. In this case, we say that $\pi$ is \emph{almost 1-to-1}. If for some $c>0$, $|\pi^{-1}(\{y\})|=c$ for all $y$ in a $G_{\delta}$ dense subset of $Y$, then we say that $\pi$ is \emph{almost} $c$-\emph{to-1}. If $|\pi^{-1}(\{y\})|\leq c<\infty$ for all $y\in Y$, we say that $\pi$ is \emph{finite-to-1}.
	
	An important type of topological dynamical systems are the equicontinuous ones. A topological dynamical system $(X,T,\Z^{d})$ is said to be \emph{equicontinuous} if the set of maps $\{T^{{\bm n}}\colon {\bm n}\in \Z^{d}\}$ forms an equicontinuous family of homeomorphisms. The equicontinuous systems are, in some sense, the simplest dynamical systems, in fact, there exists a complete characterization of them, and every topological dynamical system has at least one equicontinuous factor: the system given by one point. In fact, for every topological dynamical system there exists a \emph{maximal equicontinuous factor}, i.e., a factor $\pi_{eq}:(X,T,\Z^{d})\to(X_{eq},T_{eq},\Z^{d})$ such that $(X_{eq},T_{eq},\Z^{d})$ is an equicontinuous system and for every equicontinuous factor $\pi:(X,T,\Z^{d})\to (Y,T,\Z^{d})$, there exists a factor map $\phi:(X_{eq},T_{eq},\Z^{d})\to (Y,T_{eq},\Z^{d})$ such that $\pi=\phi\circ \phi_{eq}$. 
	
	\subsection{Measure-preserving systems}
	
	A \emph{measure-preserving system} is a 4-tuple $(X,\mu,T,G)$, where $(X,\mathcal{F},\mu)$ is a probability space and $G$ is a countable group of measurable and measure-preserving transformations acting on $X$ (where the action is denoted by $T$), i.e., $\forall A \in \mathcal{F},\forall g\in G,\ \mu(T^{g^{-1}}A)=\mu(A)$.
	
	We say that $(X,\mu,T,G)$ is \emph{ergodic} if for all $A\in \mathcal{F}$ we have that
	$$\left[(\forall g \in G)\ \mu(T^{g^{-1}}(A)\Delta A)=0\right] \implies \mu(A)=0\ \vee\ \mu(A)=1.$$
	
	We now recall the notions of measurable homomorphisms in the measure-theoretic framework.
	
	Let $(X,\mu,T,G)$ and $(Y,\nu,T,G)$ be measure-preserving systems and $M\in GL(d,\Z)$. A \emph{measurable homomorphism associated with} $M$ is a measure-preserving map $\phi:X'\to Y'$ where $X'$, $Y'$ are measurable subsets of $X, Y$ respectively, with $\mu(X')=\nu(Y')=1$ and for any $g\in G$, $T^{g}(X')\subseteq X'$, $T^{g}(Y')\subseteq Y'$ such that for any ${\bm n}\in \Z^{d}$ we have that $\phi \circ T^{{\bm n}}=T^{M{\bm n}}\circ \phi$ in $X'$.
	
	If there is a measurable factor map $\phi$ between $X$ and $Y$, then $X$ is said to be an \emph{extension} of $Y$. If $\phi$ is a bi-measurable bijection, we say that $\phi$ is a \emph{measurable conjugacy} and in this case $(X,\mu,T,G)$ and $(Y,\nu,T,G)$ are \emph{metrically isomorphic}.
	
	For topological dynamical $\Z^{d}$-actions, we always have at least one invariant probability measure (in fact, at least one ergodic probability measure). We define $\mathcal{M}(X,T,\Z^{d})$ the set of all invariant probability measures. This set is convex and compact on the weak-* topology. We say that $(X,T,\Z^{d})$ is \emph{uniquely ergodic} if ${|\mathcal{M}(X,T,\Z^{d})|=1}$, and \emph{strictly ergodic} if it is minimal and uniquely ergodic.
	
	In the special case of strictly ergodic topological dynamical systems $(X,T,\Z^{d})$, $(Y,T,\Z^{d})$ we denote $m\Hom(X,Y,T,\Z^{d})$, $m\Fac(X,Y,T,\Z^{d})$ the collection of all measurable homomorphisms and factor maps between $(X,T,\Z^{d})$ and $(Y,T,\Z^{d})$, respectively.
	
	\subsection{d-dimensional Odometer systems}
	
	Let $Z_{0}\geq Z_{1}\geq \ldots \geq Z_{n}\geq Z_{n+1}\geq \ldots$ be a nested sequence of finite-index subgroups of $\Z^{d}$ such that $\bigcap\limits_{n\geq 0}Z_{n}=\{{\bm 0}\}$, and let $\alpha_{n}:\Z^{d}/Z_{n+1}\to \Z^{d}/Z_{n}$ be the function induced by the inclusion map. Following the notation in \cite{cortez2008godometers}, we consider the inverse limit of these groups
	$$\overleftarrow{\Z^{d}}_{(Z_{n})}=\lim\limits_{\leftarrow n} (\Z^{d}/Z_{n},\alpha_{n}),$$
	
	\noindent i.e., $\overleftarrow{\Z^{d}}_{(Z_{n})}$ is the subset of the product $\prod\limits_{n\geq 0} \Z^{d}/Z_{n}$ consisting of the elements $\overleftarrow{g}=({\bm g}_{n})_{n\geq 0}$ such that $\alpha_{n}({\bm g}_{n+1})={\bm g}_{n}\ (\text{mod}\ Z_{n})$ for all $n\geq 0$. This set is a group equipped with the addition defined coordinate-wise, i.e.,
	$$\overleftarrow{g}+\overleftarrow{h} = ({\bm g}_{n}+{\bm h}_{n})_{n\geq 0}.$$
	
	Every group $\Z^{d}/Z_{n}$ is endowed with the discrete topology, so $\prod\limits_{n\geq 0}(\Z^{d}/Z_{n})$ is a compact metric space. The odometer $\overleftarrow{\Z^{d}}_{(Z_{n})}$ is a compact topological group whose topology is spanned by the cylinder sets
	$$[{\bm a}]_{n}=\left\{\overleftarrow{g}\in \overleftarrow{\Z^{d}}_{(Z_{n})}: {\bm g}_{n}={\bm a}\right\},$$
	
	\noindent with ${\bm a}\in \Z^{d}/Z_{n}$, and $n\geq 0$. Now, consider the group homomorphism $\kappa_{(Z_{n})}:\Z^{d}\to \prod\limits_{n\geq 0} \Z^{d}/Z_{n}$  defined for ${\bm n}\in \Z^{d}$ by
	$$\kappa_{(Z_{n})}({\bm n})=[{\bm n}\ (\text{mod}\ Z_{n})]_{n\geq 0}.$$
	
	The image of $\Z^{d}$ by $\kappa_{(Z_{n})}$ is dense in $\overleftarrow{\Z^{d}}_{(Z_{n})}$, so the $\Z^{d}$-action ${\bm n}(\overleftarrow{g})=\kappa_{(Z_{n})}({\bm n})+\overleftarrow{g}$, with ${\bm n}\in \Z^{d}$, $\overleftarrow{g}\in \overleftarrow{\Z^{d}}_{(Z_{n})}$, is well defined and $(\overleftarrow{\Z^{d}}_{(Z_{n})},+_{(Z_{n})},\Z^{d})$ is a minimal equicontinuous system as proved in \cite{cortez2008godometers}. We call $(\overleftarrow{\Z^{d}}_{(Z_{n})},+_{(Z_{n})},\Z^{d})$ an \emph{odometer system}. Odometer systems have been extensively studied before (see \cite{cortez2006toeplitz,cortez2008godometers,downarowicz2008finiterank}). The next result characterizes the factor odometers systems of a fixed odometer system.
	
	\begin{lemma}\cite[Lemma 1]{cortez2006toeplitz}\label{CharacterizationFactorOdometer}
		Let $(\overleftarrow{\Z^{d}}_{(Z_{n}^{j})},+_{(Z_{n}^{j})},\Z^{d})$ be two odometer systems $(j=1,2)$.  There exists a factor map $\pi:(\overleftarrow{\Z^{d}}_{(Z_{n}^{1})},+_{(Z_{n}^{1})},\Z^{d})\to (\overleftarrow{\Z^{d}}_{(Z_{n}^{2})},+_{(Z_{n}^{2})},\Z^{d})$ if and only if for every $Z_{n}^{2}$ there exists some $Z_{m}^{1}$ such that $Z_{m}^{1}\leq Z_{n}^{2}$.
	\end{lemma}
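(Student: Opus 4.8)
The plan is to treat the two implications separately, exploiting that both systems are translations by the dense subgroup $\tau_{j}(\Z^{d})$ on the compact abelian (profinite) group $\overleftarrow{\Z^{d}}_{(Z_{n}^{j})}$, whose topology admits the clopen subgroups $[{\bm 0}]_{n}=\{\overleftarrow{g}\colon {\bm g}_{n}={\bm 0}\}$ as a neighbourhood basis of the identity; these are exactly the kernels of the canonical projections $p_{n}^{j}\colon \overleftarrow{\Z^{d}}_{(Z_{n}^{j})}\to \Z^{d}/Z_{n}^{j}$.

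For the direction ($\Leftarrow$), I would use the cofinality hypothesis to pick, for every $n$, an index $m(n)$ (which can be taken nondecreasing) with $Z_{m(n)}^{1}\leq Z_{n}^{2}$. The inclusion $Z_{m(n)}^{1}\subseteq Z_{n}^{2}$ induces a well-defined surjective group homomorphism $\phi_{n}\colon \Z^{d}/Z_{m(n)}^{1}\to \Z^{d}/Z_{n}^{2}$, ${\bm a}+Z_{m(n)}^{1}\mapsto {\bm a}+Z_{n}^{2}$. Since all the maps involved are the natural quotients attached to inclusions of subgroups, these $\phi_{n}$ commute with the bonding maps $\alpha_{n}$ of both systems, so they assemble into a morphism of inverse systems and hence into a continuous map $\pi\colon \overleftarrow{\Z^{d}}_{(Z_{n}^{1})}\to \overleftarrow{\Z^{d}}_{(Z_{n}^{2})}$, $\overleftarrow{g}\mapsto (\phi_{n}({\bm g}_{m(n)}))_{n\geq 0}$. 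Because each $\phi_{n}$ is a group homomorphism and $\phi_{n}({\bm k}+Z_{m(n)}^{1})={\bm k}+Z_{n}^{2}$, the map $\pi$ is a group homomorphism with $\pi\circ \tau_{1}=\tau_{2}$, whence $\pi(\tau_{1}({\bm k})+\overleftarrow{g})=\tau_{2}({\bm k})+\pi(\overleftarrow{g})$ for all ${\bm k}\in\Z^{d}$, i.e.\ $\pi$ intertwines the two $\Z^{d}$-actions. Finally $\pi$ is a factor map: its image is a compact, hence closed, subgroup containing the dense subgroup $\tau_{2}(\Z^{d})$, so $\pi$ is onto.

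For the direction ($\Rightarrow$), suppose $\pi$ is a factor map. I would first normalise it to a homomorphism: set $\psi=\pi-\pi(\overleftarrow{0})$, so $\psi(\overleftarrow{0})={\bm 0}$ and, by equivariance, $\psi(\overleftarrow{g}+\tau_{1}({\bm k}))=\psi(\overleftarrow{g})+\tau_{2}({\bm k})$ for all ${\bm k}\in\Z^{d}$. Fixing $\overleftarrow{g}$, the continuous map $\overleftarrow{h}\mapsto \psi(\overleftarrow{g}+\overleftarrow{h})-\psi(\overleftarrow{g})-\psi(\overleftarrow{h})$ vanishes on the dense set $\tau_{1}(\Z^{d})$, hence vanishes identically; thus $\psi$ is a continuous group homomorphism with $\psi\circ\tau_{1}=\tau_{2}$. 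Now fix $Z_{n}^{2}$ and consider $p_{n}^{2}\circ\psi\colon \overleftarrow{\Z^{d}}_{(Z_{n}^{1})}\to \Z^{d}/Z_{n}^{2}$, a continuous homomorphism into a finite discrete group. Its kernel is an open subgroup of the profinite group $\overleftarrow{\Z^{d}}_{(Z_{n}^{1})}$, and by compactness every open subgroup contains one of the basic clopen subgroups $[{\bm 0}]_{m}^{1}=\ker p_{m}^{1}$. Choosing such an $m$, I would evaluate on $\tau_{1}({\bm k})$ with ${\bm k}\in Z_{m}^{1}$: then $p_{m}^{1}(\tau_{1}({\bm k}))={\bm k}\bmod Z_{m}^{1}={\bm 0}$, so $\tau_{1}({\bm k})\in \ker p_{m}^{1}\subseteq \ker(p_{n}^{2}\circ\psi)$, giving $p_{n}^{2}(\tau_{2}({\bm k}))={\bm k}\bmod Z_{n}^{2}={\bm 0}$, i.e.\ ${\bm k}\in Z_{n}^{2}$. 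Hence $Z_{m}^{1}\leq Z_{n}^{2}$, which is the desired cofinality.

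The routine parts are the compatibility of the $\phi_{n}$ with the bonding maps and the surjectivity-by-density argument. The only genuinely delicate points, which I would treat carefully, are the reduction of an arbitrary factor map to a group homomorphism (handled above via density of $\tau_{1}(\Z^{d})$ together with continuity) and the profinite fact that an open subgroup of $\overleftarrow{\Z^{d}}_{(Z_{n}^{1})}$ must contain some $[{\bm 0}]_{m}^{1}$. The latter is exactly where the inverse-limit (equivalently, equicontinuous) structure of the odometer is used, and it is what converts the topological inclusion $\ker p_{m}^{1}\subseteq \ker(p_{n}^{2}\circ\psi)$ into the purely algebraic statement $Z_{m}^{1}\leq Z_{n}^{2}$.
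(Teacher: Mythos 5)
The paper itself gives no proof of this lemma: it is imported verbatim from \cite[Lemma 1]{cortez2006toeplitz}, so there is no internal argument to compare against. Your proof is correct and is essentially the standard one for this fact. The ($\Leftarrow$) direction is fine: taking $m(n)$ nondecreasing (possible because the $Z_{m}^{1}$ are nested) makes the quotient maps $\phi_{n}$ compatible with the bonding maps, the resulting $\pi$ satisfies $\pi\circ\tau_{1}=\tau_{2}$, and surjectivity follows from compactness of the image plus density of $\tau_{2}(\Z^{d})$. The ($\Rightarrow$) direction is also sound, and its two key steps are handled properly: the reduction of the factor map to a continuous group homomorphism $\psi$ with $\psi\circ\tau_{1}=\tau_{2}$ (the map $\overleftarrow{h}\mapsto \psi(\overleftarrow{g}+\overleftarrow{h})-\psi(\overleftarrow{g})-\psi(\overleftarrow{h})$ vanishes on the dense subgroup $\tau_{1}(\Z^{d})$, hence everywhere), and the extraction of the algebraic inclusion $Z_{m}^{1}\leq Z_{n}^{2}$ from the topological inclusion $\ker p_{m}^{1}\subseteq \ker(p_{n}^{2}\circ\psi)$. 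Two cosmetic remarks: compactness is not needed to see that an open subgroup of $\overleftarrow{\Z^{d}}_{(Z_{n}^{1})}$ contains some $[{\bm 0}]_{m}$ --- the sets $[{\bm 0}]_{m}$ are nested clopen subgroups forming a neighbourhood basis of the identity directly from the inverse-limit topology; and your ($\Rightarrow$) argument never uses surjectivity of $\pi$, only continuity and equivariance, which makes the statement marginally stronger and is harmless here.
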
 
	
\subsection{Symbolic Dynamics}
Let $\A$ be a finite alphabet and $d\geq 1$ be an integer. We define a topology on $\A^{\Z^{d}}$ by endowing $\A$ with the discrete topology and considering in $\A^{\Z^{d}}$ the product topology, which is generated by cylinders. Since $\A$ is finite, $\A^{\Z^{d}}$ is a metrizable compact space. In this space $\Z^{d}$ acts by translations (or shifts), defined for every ${\bm n}\in \Z^{d}$ as
$$S^{{\bm n}}(x)_{{\bm k}}=x_{{\bm n}+{\bm k}},\ x\in \A^{\Z^{d}},\ {\bm k}\in \Z^{d}.$$
	
The $\Z^{d}$-action $(\A^{\Z^{d}},S,\Z^{d})$ is called the \emph{fullshift}. 
	
Let $P\subseteq \Z^{d}$ be a finite subset. A \emph{pattern} is an element $\texttt{p}\in \A^{P}$. We say that $P$ is the \emph{support} of $\texttt{p}$, and we denote $P=\supp(\texttt{p})$. A pattern \emph{occurs in} $x\in \A^{\Z^{d}}$, if there exists ${\bm n}\in \Z^{d}$ such that $\texttt{p}=x|_{{\bm n}+P}$ (identifying ${\bm n}+P$ with $P$ by translation). In this case we denote it $\texttt{p}\sqsubseteq x$ and we call this ${\bm n}$ an \emph{occurrence in} $x$ of $\texttt{p}$.
	
A \emph{subshift} $(X,S,\Z^{d})$ is given by a closed subset $X\subseteq \A^{\Z^{d}}$ which is invariant by the $\Z^{d}$-action. A subshift also can be defined by its \emph{language}. For $P\Subset \Z^{d}$ we define
$$\mathcal{L}_{P}(X)=\{\texttt{p}\in \A^{P}: \exists x \in X,\ \texttt{p}\sqsubseteq x\}.$$
	
We define the \emph{language} of a subshift $X$ by
$$\mathcal{L}(X)=\bigcup\limits_{P\Subset \Z^{d}}\mathcal{L}_{P}(X).$$
	
Let $(X,S,\Z^{d})$ be a subshift and $x\in X$. We say that ${\bm p}\in \Z^{d}$ is a \emph{period} of $x$ if for all ${\bm n}\in \Z^{d}$, $x_{{\bm n}+ {\bm p}}=x_{{\bm n}}$. We say that the subshift $(X,S,\Z^{d})$ is \emph{aperiodic} if there are no nontrivial periods.
	
Let $\B$ be another finite alphabet and $Y\subseteq \B^{\Z^{d}}$ be a subshift. For $P\Subset \Z^{d}$, we define a $P$-\emph{block map} as a map of the form $\Phi: \mathcal{L}_{P}(X)\to \B$. This induces a factor map $\phi:X\to Y$ given by
$$\phi(x)_{{\bm n}}= \Phi(x|_{{\bm n}+ P}).$$
	
The map $\phi$ is called the \emph{sliding block code} induced by $\Phi$ and $P$ is the support of the map $\phi$. In most of the cases we may assume that the support of the sliding block codes is a ball of the form $B({\bm 0},r)$, for $r\in \NN$. We define the \emph{radius} (and we denote by $r(\phi)$) as the infimum of $r\in\NN$ such that we can define a $B({\bm 0},r)$-block map which induces it. The next theorem characterizes the factor maps between two subshifts.
	
\begin{theorem}[Curtis-Hedlund-Lyndon] Let $(X,S,\Z^{d})$ and $(Y,S,\Z^{d})$ be two subshifts. A map $\phi:(X,S,\Z^{d})\to (Y,S,\Z^{d})$ is a factor map if and only if there exists a $B({\bm 0},r)$-block map $\Phi:\mathcal{L}_{B({\bm 0},r)}(X)\to \mathcal{L}_{1}(Y)$, such that $\phi(x)_{{\bm n}}=\Phi(x|_{{\bm n}+B({\bm 0},r)})$, for all ${\bm n}\in \Z^{d}$ and $x\in X$.
\end{theorem}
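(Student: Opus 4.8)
The plan is to prove the two implications separately, the substance lying entirely in the forward direction, where compactness converts topological information (continuity) into combinatorial information (a finite local rule). Throughout I use that the alphabet $\B$ of $Y$ is finite and discrete and that $X$ is a compact, totally disconnected metric space.

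For the easy implication, suppose $\phi$ is induced by a $B({\bm 0},r)$-block map $\Phi$ via $\phi(x)_{{\bm n}}=\Phi(x|_{{\bm n}+B({\bm 0},r)})$. Shift-equivariance is immediate from the formula: for ${\bm m}\in \Z^{d}$ one has $(S^{{\bm m}}x)|_{{\bm n}+B({\bm 0},r)}=x|_{{\bm m}+{\bm n}+B({\bm 0},r)}$, whence $\phi(S^{{\bm m}}x)_{{\bm n}}=\phi(x)_{{\bm m}+{\bm n}}=(S^{{\bm m}}\phi(x))_{{\bm n}}$. Continuity follows because each output coordinate depends on only finitely many input coordinates: if $x$ and $x'$ agree on $B({\bm 0},R+r)$, then $\phi(x)$ and $\phi(x')$ agree on $B({\bm 0},R)$. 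Surjectivity onto $Y$, which is part of the definition of a factor, is carried by the hypothesis that $\phi$ is a map into $Y$.

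For the forward implication I would reduce everything to the reading of the zeroth coordinate. Consider $c\colon X\to \B$ defined by $c(x)=\phi(x)_{{\bm 0}}$; as the projection $y\mapsto y_{{\bm 0}}$ is continuous and $\phi$ is continuous, $c$ is continuous. Since $\B$ is finite and discrete, each fiber $c^{-1}(a)$, $a\in \B$, is clopen, and these finitely many fibers partition the compact space $X$. Every clopen subset of $X\subseteq \A^{\Z^{d}}$ is a finite union of cylinders---it is open, hence a union of cylinders, and compact, hence a finite subunion---so each fiber is determined by the coordinates inside some finite window. Taking the maximum over the finitely many fibers yields a single radius $r$ such that $c(x)$ depends only on $x|_{B({\bm 0},r)}$. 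This uniformity of the radius---the passage from ``for each $x$ a local window exists'' to ``one window works for all $x$''---is the crux of the argument and the only place where compactness is essential; I expect it to be the main obstacle to make fully rigorous.

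With $r$ in hand, define $\Phi\colon \mathcal{L}_{B({\bm 0},r)}(X)\to \B$ by $\Phi(x|_{B({\bm 0},r)})=\phi(x)_{{\bm 0}}$, which is well defined by the previous step; its image lies in $\mathcal{L}_{1}(Y)$ since $\phi(x)_{{\bm 0}}$ is a symbol occurring in the point $\phi(x)\in Y$. Finally I would recover the value at an arbitrary site ${\bm n}$ from equivariance, using the factor relation $S^{{\bm n}}\phi(x)=\phi(S^{{\bm n}}x)$ (the case $M=\id$):
$$\phi(x)_{{\bm n}}=(S^{{\bm n}}\phi(x))_{{\bm 0}}=\phi(S^{{\bm n}}x)_{{\bm 0}}=\Phi\bigl((S^{{\bm n}}x)|_{B({\bm 0},r)}\bigr)=\Phi\bigl(x|_{{\bm n}+B({\bm 0},r)}\bigr),$$
which is exactly the asserted block-code formula, completing the equivalence.
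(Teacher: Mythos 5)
Your proposal is correct and follows essentially the same route as the paper: the paper proves this (via its homomorphism variant, taking $M=\id$) by taking a radius $r$ such that agreement on $B({\bm 0},r)$ forces agreement of $\phi(\cdot)_{{\bm 0}}$, defining $\Phi$ on $B({\bm 0},r)$-patterns, and recovering all coordinates by shift-equivariance, exactly as you do. The only difference is that the paper simply asserts the existence of such an $r$ and omits the easy direction, whereas you justify the uniform radius explicitly through compactness (clopen fibers of $x\mapsto\phi(x)_{{\bm 0}}$ being finite unions of cylinders), which is the standard argument the paper leaves implicit.
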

	
For homomorphisms we have a similar characterization, but we need to make a slight variation of this theorem.
	
\begin{theorem}[Curtis-Hedlund-Lyndon theorem for homomorphisms] Let $(X,S,\Z^{d})$ and $(Y,S,\Z^{d})$ be two subshifts and ${M\in GL(d,\Z)}$. A map $\phi:(X,S,\Z^{d})\to (Y,S,\Z^{d})$ is a homomorphism associated with $M$ if and only if there exists a $B({\bm 0},r)$-block map $\Phi:\mathcal{L}_{B({\bm 0},r)}(X)\to \mathcal{L}_{1}(Y)$, such that $\phi(x)_{{\bm n}}=\Phi(x|_{M^{-1}{\bm n}+B({\bm 0},r)})$, for all ${\bm n}\in \Z^{d}$ and $x\in X$.
\end{theorem}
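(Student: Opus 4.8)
The plan is to adapt the classical Curtis--Hedlund--Lyndon argument, the only genuinely new ingredient being the bookkeeping of the matrix $M$; the point that makes the stated formula meaningful is that $M\in GL(d,\Z)$ forces $M^{-1}\in GL(d,\Z)$, so that $M^{-1}{\bm n}\in \Z^{d}$ for every ${\bm n}\in \Z^{d}$ and the pattern $x|_{M^{-1}{\bm n}+B({\bm 0},r)}$ is well defined. For the easy direction, suppose $\phi$ is induced by such a $\Phi$, where we read $x|_{M^{-1}{\bm n}+B({\bm 0},r)}$ as the recentred pattern ${\bm j}\mapsto x_{M^{-1}{\bm n}+{\bm j}}$ on $B({\bm 0},r)$. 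Then $\phi(x)_{{\bm n}}$ depends on finitely many coordinates of $x$, so $\phi$ is continuous, and a direct computation gives equivariance: for ${\bm m}\in \Z^{d}$ one has
$$\phi(S^{{\bm m}}x)_{{\bm n}}=\Phi\big(({\bm j}\mapsto x_{{\bm m}+M^{-1}{\bm n}+{\bm j}})\big)=\phi(x)_{M{\bm m}+{\bm n}}=(S^{M{\bm m}}\phi(x))_{{\bm n}},$$
using $M^{-1}(M{\bm m}+{\bm n})={\bm m}+M^{-1}{\bm n}$. Hence $\phi\circ S^{{\bm m}}=S^{M{\bm m}}\circ \phi$, and (assuming, as the definition of homomorphism requires, that the image exhausts $Y$) $\phi$ is a homomorphism associated to $M$.

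For the converse, assume $\phi$ is a homomorphism associated to $M$. The map $x\mapsto \phi(x)_{{\bm 0}}\in \mathcal{B}$ is continuous into a finite discrete set, so the fibres $\phi^{-1}(\{y\in Y\colon y_{{\bm 0}}=b\})$, $b\in \mathcal{B}$, form a finite clopen partition of the compact space $X$. Every clopen subset of a subshift is a finite union of cylinders, so there is a radius $r\in \NN$ such that each piece of this partition is a union of cylinders supported on $B({\bm 0},r)$; equivalently, $\phi(x)_{{\bm 0}}$ is determined by $x|_{B({\bm 0},r)}$. This lets us define $\Phi\colon \mathcal{L}_{B({\bm 0},r)}(X)\to \mathcal{L}_{1}(Y)$ by $\Phi(x|_{B({\bm 0},r)})=\phi(x)_{{\bm 0}}$, which is well defined and lands in $\mathcal{L}_{1}(Y)$ since $\phi(x)_{{\bm 0}}$ is a letter occurring in a point of $Y$.

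It remains to propagate this local rule to every coordinate using equivariance. For ${\bm m}\in \Z^{d}$, evaluating $\phi\circ S^{{\bm m}}=S^{M{\bm m}}\circ\phi$ at ${\bm 0}$ yields $\phi(x)_{M{\bm m}}=\phi(S^{{\bm m}}x)_{{\bm 0}}=\Phi((S^{{\bm m}}x)|_{B({\bm 0},r)})$. Writing an arbitrary ${\bm n}\in \Z^{d}$ as ${\bm n}=M{\bm m}$ with ${\bm m}=M^{-1}{\bm n}\in \Z^{d}$ (this is where $M\in GL(d,\Z)$ is used), and noting that $(S^{M^{-1}{\bm n}}x)|_{B({\bm 0},r)}$ is exactly the recentred pattern $x|_{M^{-1}{\bm n}+B({\bm 0},r)}$, we obtain $\phi(x)_{{\bm n}}=\Phi(x|_{M^{-1}{\bm n}+B({\bm 0},r)})$ for all ${\bm n}$, as desired. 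I expect the only real work to be the compactness step producing the uniform radius $r$; the matrix $M$ enters solely through the substitution ${\bm n}\mapsto M^{-1}{\bm n}$, which is legitimate precisely because $M$ and its inverse preserve $\Z^{d}$.
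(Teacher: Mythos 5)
Your proof is correct and follows essentially the same route as the paper's: obtain the radius $r$ from (uniform) continuity, define the local map $\Phi(x|_{B({\bm 0},r)})=\phi(x)_{{\bm 0}}$, and propagate it via $\phi(x)_{M{\bm m}}=\phi(S^{{\bm m}}x)_{{\bm 0}}$ with ${\bm m}=M^{-1}{\bm n}$, which is legitimate since $M\in GL(d,\Z)$. The paper only records this nontrivial implication, so your additional easy direction (together with the correct caveat that surjectivity must be assumed for the block map to yield a homomorphism in the paper's sense) and your clopen-partition justification of the radius are just fuller write-ups of the same argument.
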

	
\begin{proof}
We will only prove the nontrivial implication. Let $\phi:(X,S,\Z^{d})\to (Y,S,\Z^{d})$ be a homomorphism and let $r>0$ be such that $x|_{B({\bm 0},r)}=y|_{B({\bm 0},r)}$, implies $\phi(x)_{{\bm 0}}=\phi(y)_{{\bm 0}}$. Then, the local map $\Phi(x|_{B({\bm 0},r)})=\phi(x)_{{\bm 0}}$ is well defined by the very definition of $r$. Finally, note that $\phi(x)_{{\bm h}}= S^{{\bm h}}\phi(x)_{{\bm 0}}= \phi(S^{M^{-1}{\bm h}}x)_{{\bm 0}}=\Phi(x|_{M^{-1}{\bm h}+B({\bm 0},r)})$, which proves the claim.
\end{proof}

This means, for any homomorphism $\phi$ we can define a \emph{radius} (also denoted by $r(\phi)$), as the infimum of $r\in\NN$ such that we can define a $B({\bm 0},r)$-block map which induces it. In the case $r(\phi)=0$, we say that $\phi$ is induced by a \emph{letter-to-letter map}.
	
\subsection{Nondeterministic directions of a topological dynamical system}\label{SectionNonDeterminsiticDirections}

An interesting notion in the study of higher-dimensional dynamical systems is the so-called \emph{nonexpansive subspaces}, introduced by M. Boyle and D. Lind in \cite{boyle1997expansive}. When the phase space $X$ is infinite such subspaces always exist \cite[Theorem 3.7]{boyle1997expansive}. We will only focus on hyperplanes in $\R^{d}$, which leads to the notion of deterministic/nondeterministic directions. Let $\SS^{d-1}$ be the unit $(d-1)$-dimensional sphere. For ${\bm v}\in \SS^{d-1}$ define $H_{{\bm v}}=\{{\bm x}\in \R^{d}\colon \left\langle {\bm x},{\bm v}\right\rangle < 0 \}$ to be the open half-space with outward unit normal ${\bm v}$. We identify the set $\mathbb{H}_{d}$ of all half-spaces in $\R^{d}$ with the sphere $\SS^{d-1}$ using the parametrization ${\bm v} \longleftrightarrow H_{\bm v}$.

\begin{definition}
	Let $(X,S,\Z^{d})$ be a subshift and ${\bm v}$ be a unit vector of  $\R^{d}$. Then ${\bm v}$ is \emph{deterministic} for $(X,S,\Z^{d})$ if for all $x,y\in X$ we have that
	$$x|_{H_{{\bm v}}\cap \Z^{d}}=y|_{H_{{\bm v}}\cap \Z^{d}} \implies x=y.$$
	
	If ${\bm v}$ does not satisfy this condition, we say that ${\bm v}$ is \emph{nondeterministic for} $(X,S,\Z^{d})$. 
\end{definition}

For a subshift $(X,S,\Z^{d})$ we denote $\DD(X,S,\Z^{d})$ and $\ND(X,S,\Z^{d})$ the sets of deterministic and nondeterministic directions for $(X,S,\Z^{d})$, respectively. Using an analogous argument of \cite[Lemma 3.4]{boyle1997expansive} we conclude $\DD(X,S,\Z^{d})$ is an open set and $\ND(X,S,\Z^{d})$ is a compact set.

In \cite{guillon2015determinism} was introduced the notion of \emph{direction of determinism} for two-dimensional subshifts and in \cite{cyr2015nonexpansive} these objects were used to prove a weak version of Nivat's conjecture.

The following result establishes a link between nondeterministic directions and the linear representation group of a subshift.

\begin{proposition}\label{NormalizerActsInNonExpansiveHalfspaces}
	Let $(X,S,\Z^{d})$ be a subshift. Then, for all ${\bm v}\in \ND(X,S,\Z^{d})$ and $M\in \vec{N}(X,S,\Z^{d})$ the vector $(M^{*})^{-1}{\bm v}/\Vert (M^{*})^{-1}{\bm v}\Vert$ is a nondeterministic direction for $(X,S,\Z^{d})$, where $M^{*}$ is the algebraic adjoint of $M$.
\end{proposition}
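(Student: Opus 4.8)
The plan is to use the Curtis--Hedlund--Lyndon theorem for homomorphisms to convert the algebraic hypothesis on $M$ into a geometric estimate, and then to transport a nonexpansiveness witness along $\phi$. Fix $M\in\vec{N}(X,S,\Z^{d})$ and a homomorphism $\phi$ associated to $M$, with radius $r=r(\phi)$ and local map $\Phi$, so that $\phi(x)_{{\bm n}}=\Phi(x|_{M^{-1}{\bm n}+B({\bm 0},r)})$ for all ${\bm n}\in\Z^{d}$; note $M^{-1}\in GL(d,\Z)$, so these reading windows are centred at integer points. Writing ${\bm w}=(M^{*})^{-1}{\bm v}$, the identity $\langle {\bm x},{\bm w}\rangle=\langle {\bm x},(M^{*})^{-1}{\bm v}\rangle=\langle M^{-1}{\bm x},{\bm v}\rangle$ gives $H_{{\bm w}}=M\,H_{{\bm v}}$. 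The first step is the forward-transport estimate: if $x,y\in X$ agree on $\{{\bm k}\colon \langle {\bm k},{\bm v}\rangle<s\}\cap\Z^{d}$, then for every ${\bm n}$ with $\langle {\bm n},{\bm w}\rangle<s-r$ the whole window $M^{-1}{\bm n}+B({\bm 0},r)$ lies inside $\{\langle\cdot,{\bm v}\rangle<s\}$ (since $\langle M^{-1}{\bm n},{\bm v}\rangle=\langle {\bm n},{\bm w}\rangle$ and $\Vert{\bm v}\Vert=1$), whence $\phi(x)_{{\bm n}}=\phi(y)_{{\bm n}}$. Thus $\phi(x)$ and $\phi(y)$ agree on the inward-pushed half-space $\{\langle\cdot,{\bm w}\rangle<s-r\}\cap\Z^{d}$.

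Second, I would isolate a purely translational lemma: if two \emph{distinct} points of $X$ agree on some half-space $\{\langle\cdot,{\bm u}\rangle<c\}\cap\Z^{d}$ with $c\in\R$ arbitrary (possibly negative), then ${\bm u}/\Vert{\bm u}\Vert\in\ND(X,S,\Z^{d})$. Indeed, choosing ${\bm t}\in\Z^{d}$ with $\langle {\bm t},{\bm u}\rangle\le c$ (possible since the values $\langle {\bm t},{\bm u}\rangle$ are unbounded below) and replacing the pair by its $S^{{\bm t}}$-translates keeps the two points distinct while enlarging the agreement region to $\{\langle\cdot,{\bm u}\rangle<c-\langle {\bm t},{\bm u}\rangle\}\supseteq H_{{\bm u}}$; the translated pair then witnesses nonexpansiveness of $H_{{\bm u}}$. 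This lemma lets one absorb the additive constant $r$ lost to the block map.

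Combining the two, since ${\bm v}\in\ND(X,S,\Z^{d})$ one picks distinct $a,b\in X$ agreeing on $H_{{\bm v}}\cap\Z^{d}$; the forward-transport estimate with $s=0$ shows that $\phi(a)$ and $\phi(b)$ agree on $\{\langle\cdot,{\bm w}\rangle<-r\}\cap\Z^{d}$, and the translational lemma yields $(M^{*})^{-1}{\bm v}/\Vert(M^{*})^{-1}{\bm v}\Vert\in\ND(X,S,\Z^{d})$, which is the assertion. The point that is not automatic, and which I expect to be the main obstacle, is that the pushed-forward pair must remain distinct, i.e. $\phi(a)\neq\phi(b)$: a homomorphism is only assumed \emph{surjective}, so a priori $\phi$ could identify the chosen witness, and then forward transport yields nothing. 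To secure a non-collapsed $H_{{\bm v}}$-asymptotic pair I would argue by compactness, producing for each threshold $k$ distinct pairs agreeing on $\{\langle\cdot,{\bm v}\rangle<k\}$ with a disagreement site pinned to a fixed bounded slab about the bounding hyperplane by a compensating translation, and extracting a convergent subsequence whose limit is a distinct $H_{{\bm v}}$-asymptotic pair with controlled disagreement. Turning this anchoring into a guarantee that $\phi$ does not merge the limiting pair -- rather than the transport estimates, which are routine -- is where the real work lies.
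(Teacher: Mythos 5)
Your two technical steps coincide exactly with the paper's proof. The paper's argument is: take $x\neq y$ agreeing on $H_{\bm v}\cap\Z^{d}$, push them through $\phi$ to get agreement of $\phi(x)$ and $\phi(y)$ on the eroded half-space $(MH_{\bm v})^{\circ r(\phi)}$ (your forward-transport estimate, using $\langle M^{-1}{\bm n},{\bm v}\rangle=\langle{\bm n},(M^{*})^{-1}{\bm v}\rangle$ and $MH_{\bm v}=H_{{\bm w}/\Vert{\bm w}\Vert}$ with ${\bm w}=(M^{*})^{-1}{\bm v}$), and then apply an appropriate shift $S^{\bm n}$ to convert agreement on a translated half-space into agreement on the half-space itself (your translational lemma). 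Your write-up of these two steps is correct and in fact more explicit than the paper's.

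Where you diverge is the point you yourself flag as the main obstacle: distinctness of the image pair. The paper dispatches it in four words --- ``Since $\phi$ is invertible'' --- i.e.\ it takes the normalizer element to be a homeomorphism, consistent with the algebraic picture in the introduction (the normalizer of $\left\langle S\right\rangle$ inside $\Homeo(X)$) and with the later definition of $NC(X_{\zeta},S,\Z^{d})$ via $N_{M}(X,T,\Z^{d})\cap\Homeo(X)$. Under that convention, injectivity gives $\phi(x)\neq\phi(y)$ immediately, your compactness construction is unnecessary, and your proof is complete modulo that one sentence. If instead one insists on the literal definition in Section 2 (homomorphism $=$ surjective continuous equivariant map), then your worry is genuine, but note two things: first, the paper's own proof does not treat this case either, so you are not missing an idea the paper has; second, your sketched repair does not close the gap --- anchoring the disagreement site in a bounded slab and extracting a limit produces yet another distinct $H_{\bm v}$-asymptotic pair, but nothing in that construction prevents a non-injective $\phi$ from merging the limit pair too, and contraposition fails for the same reason (determinism of ${\bm w}/\Vert{\bm w}\Vert$ only yields $\phi(x)=\phi(y)$, not $x=y$). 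So as written your proposal is incomplete, but the incompleteness is exactly the step the paper resolves by assuming invertibility; you should simply invoke that assumption rather than pursue the compactness route.
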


\begin{proof}
	If ${\bm v}$ is in $\ND(X,S,\Z^{d})$, there exists $x\neq y\in X$ with  $x|_{H_{{\bm v}}\cap \Z^{d}}=y|_{H_{{\bm v}}\cap \Z^{d}}$. Set $M\in \vec{N}(X,T,\Z^{d})$ and $\phi\in N_{M}(X,S,\Z^{d})$. Then, we have that $\phi(x)|_{(MH_{{\bm v}})+{\bm n}}=\phi(y)|_{(MH_{{\bm v}})+{\bm n}}$, where ${\bm n}$ is a vector of radius $r(\phi)$. We note that  $S^{{\bm n}}\phi(x)|_{MH_{{\bm v}}}=S^{{\bm n}}\phi(y)|_{MH_{{\bm v}}}$. We conclude that $(M^{*})^{-1}{\bm v}/\Vert (M^{*})^{-1}{\bm v}\Vert \in (X,S,\Z^{d})$.
\end{proof}		

\subsection{Multidimensional constant-shape substitutions}

\subsubsection{Definitions and basic properties} Let $L\in \mathcal{M}(d,\Z)$ be an expansion matrix, i.e, $\Vert L\Vert>1$ and $\Vert L^{-1}\Vert <1$, such that ${L(\Z^{d})\subseteq \Z^{d}}$. Let $F$ be a fundamental domain of $L(\Z^{d})$ in $\Z^{d}$ with ${\bm 0}$ in $F$ and $\A$ be a finite alphabet. A \emph{multidimensional constant-shape substitution} $\zeta$ is a map $\A\to \A^{F}$. The set $F$ is called the \emph{support} of the substitution. These are multidimensional analogue of the so-called one-dimensional constant-length substitutions. \cref{examplesubstitution} shows an example of a constant-shape substitution with $L=\left(\begin{array}{cc}
	2 & 0\\0 & 2
\end{array}\right)$ and $F=\left\{\dbinom{0}{0},\dbinom{1}{0},\dbinom{0}{1},\dbinom{-1}{-1}\right\}$.

\begin{figure}[H]
	\centering
	\begin{tikzpicture}
		\node(s1) at (0,0)[scale=0.05]{\includegraphics{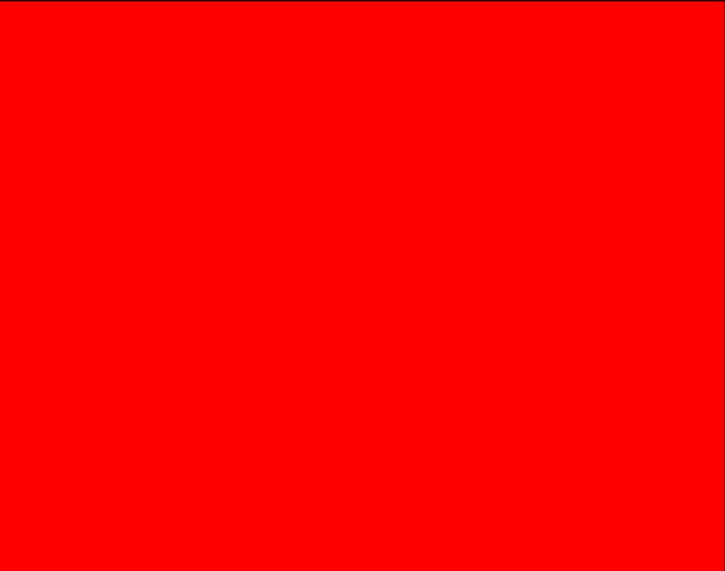}};
		
		\node(s2) at (0.7,0)[scale=1]{$\mapsto$};
		
		\node(s3) at (2.5,0)[scale=.06]{\includegraphics{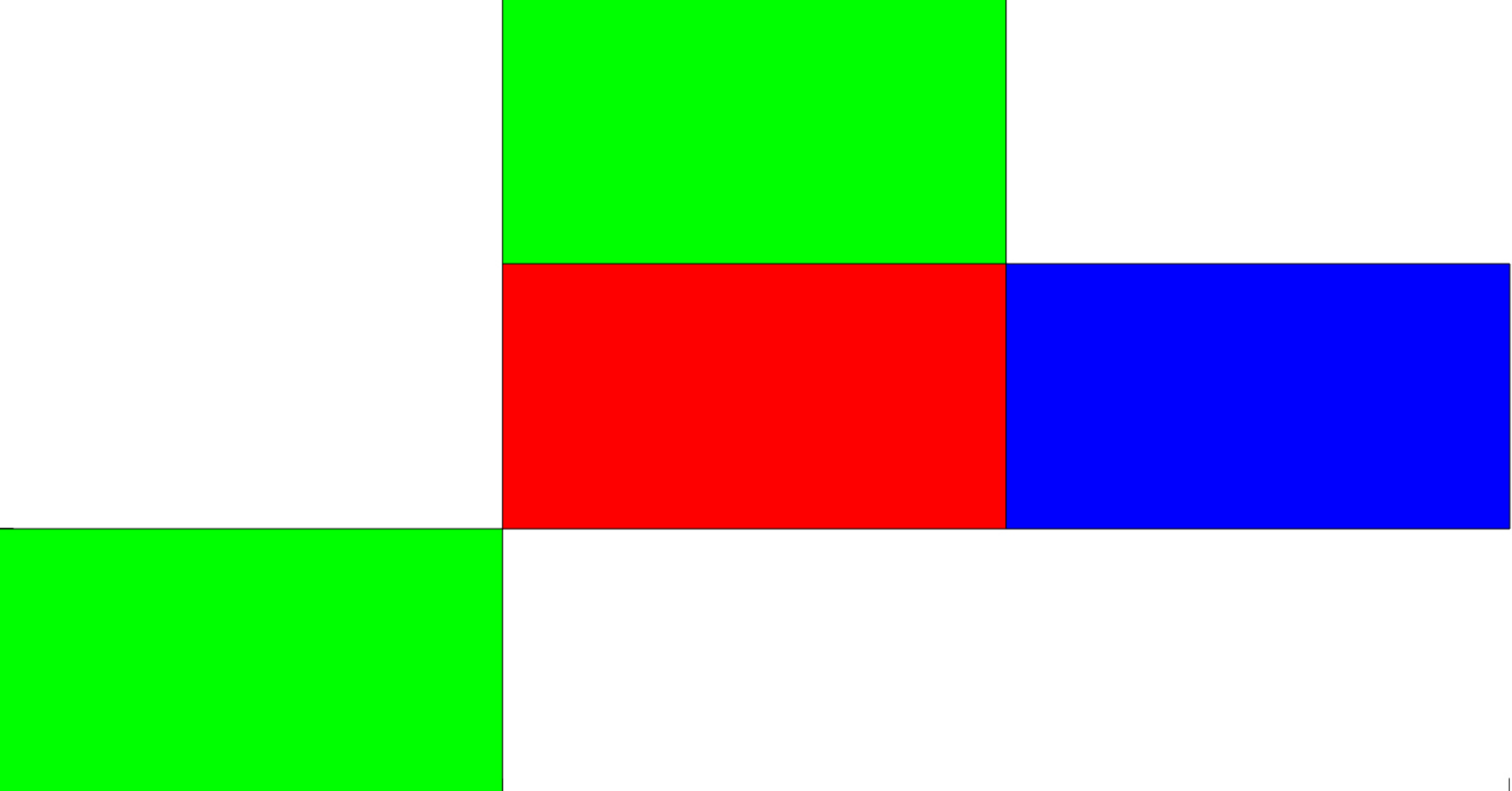}};
		
		\node(s4) at (7,0)[scale=0.05]{\includegraphics{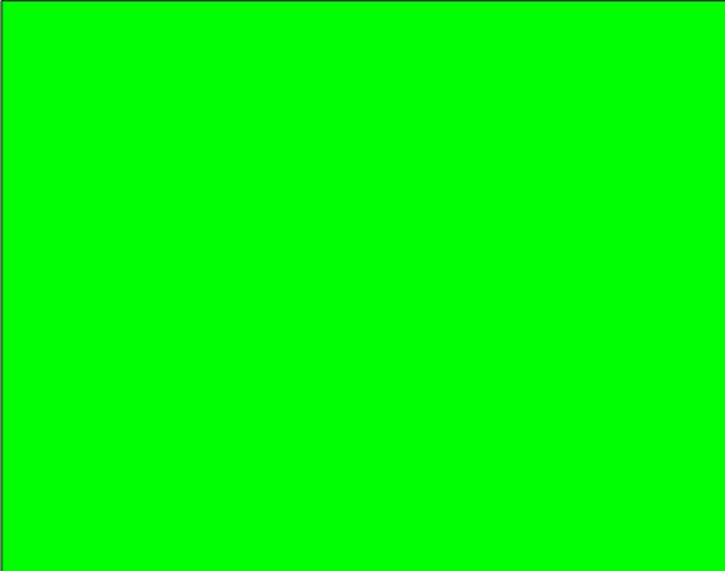}};
		
		\node(s5) at (7.7,0)[scale=1]{$\mapsto$};
		
		\node(s6) at (9.5,0)[scale=.06]{\includegraphics{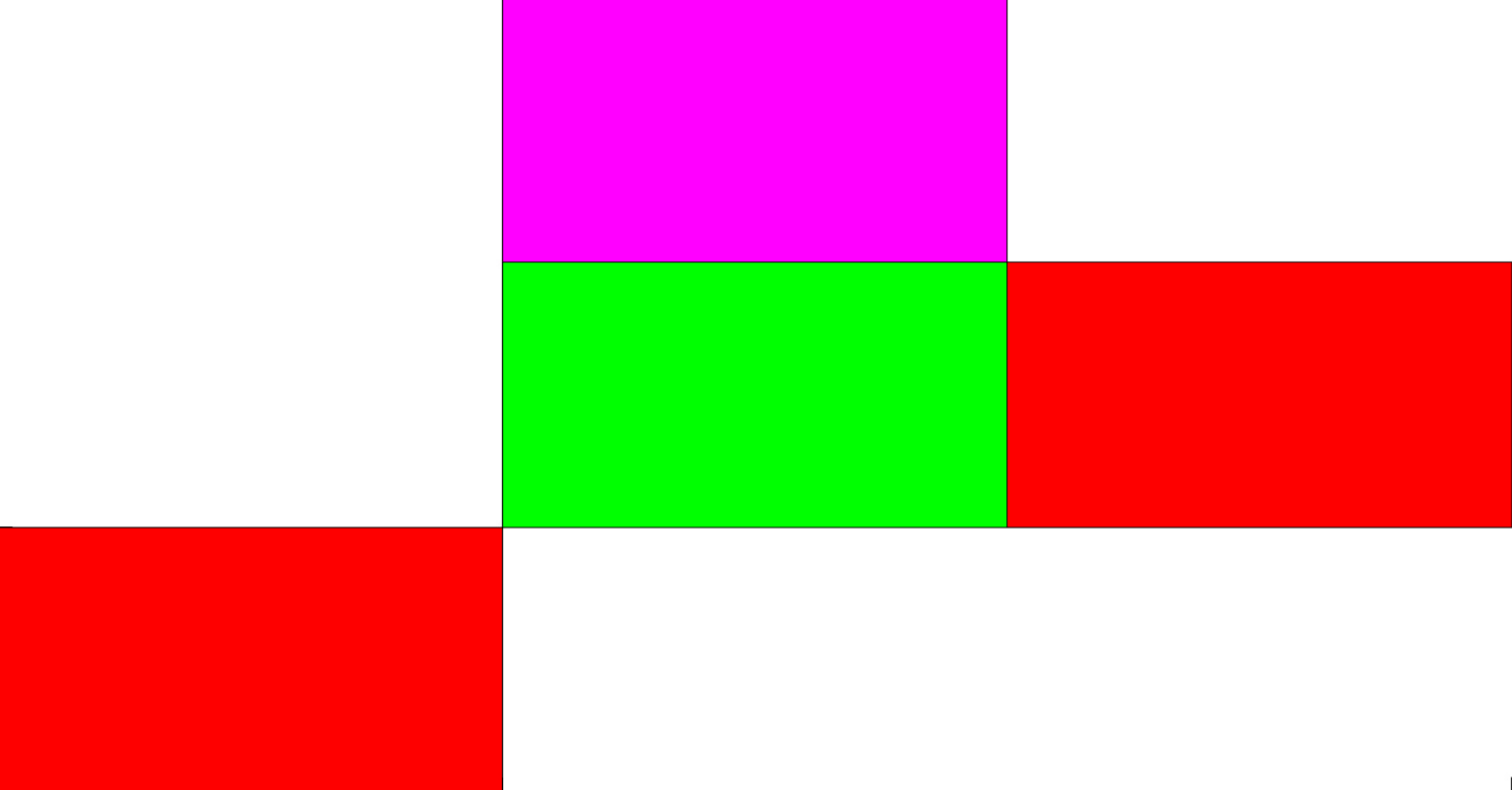}};
		
		\node(s7) at (0,-2)[scale=0.05]{\includegraphics{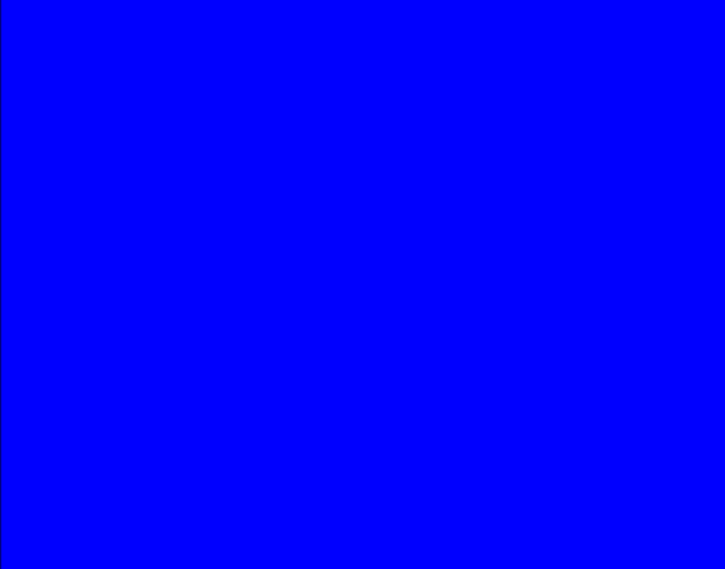}};
		
		\node(s8) at (0.7,-2)[scale=1]{$\mapsto$};
		
		\node(s9) at (2.5,-2)[scale=.06]{\includegraphics{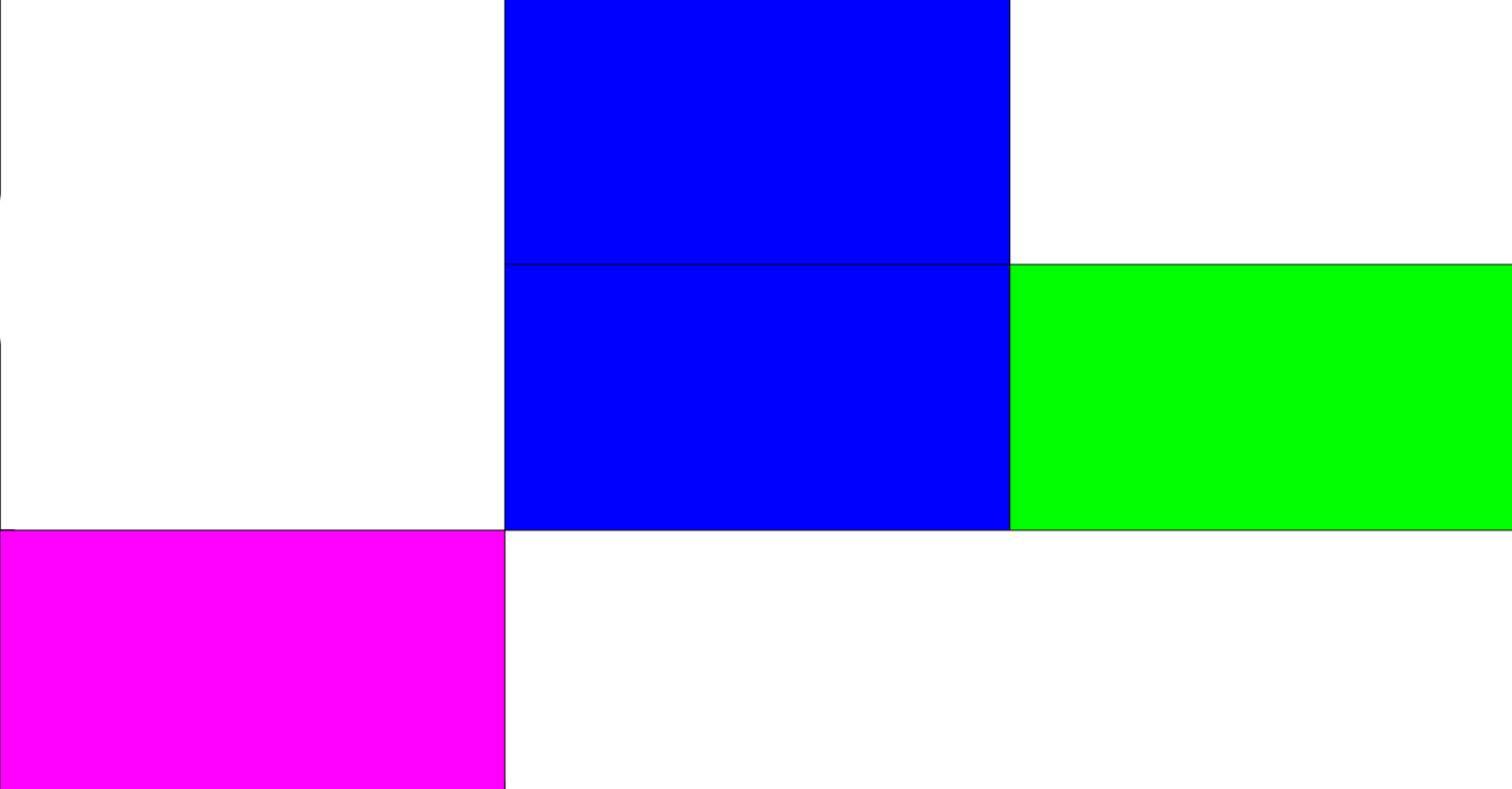}};
		
		\node(s10) at (7,-2)[scale=0.05]{\includegraphics{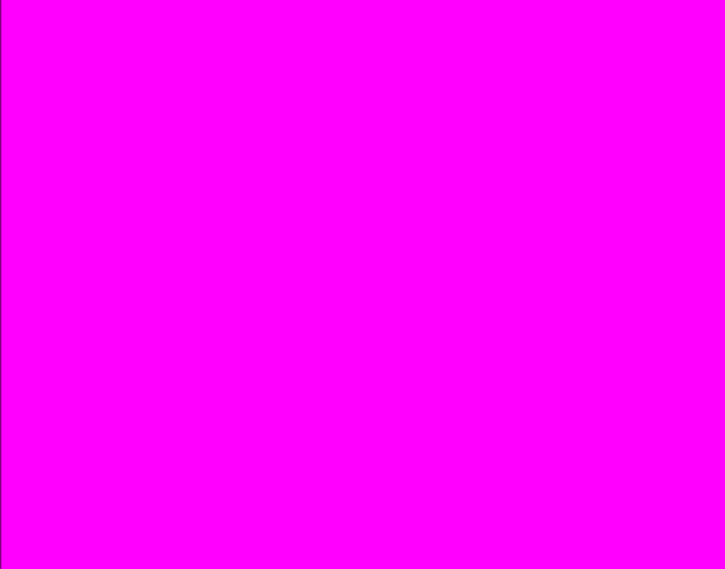}};
		
		\node(s11) at (7.7,-2)[scale=1]{$\mapsto$};
		
		\node(s12) at (9.5,-2)[scale=.06]{\includegraphics{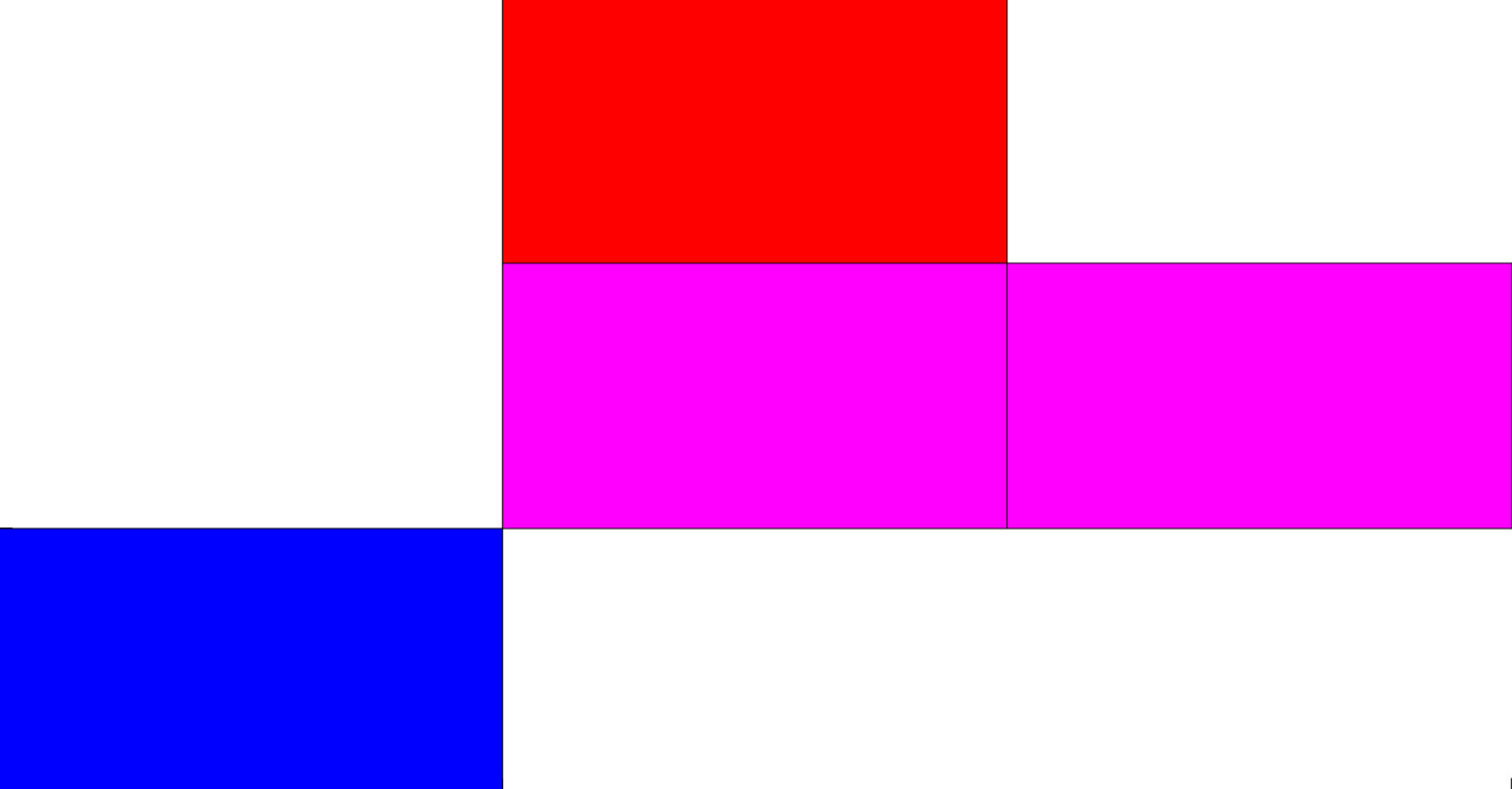}};
		
	\end{tikzpicture}
	\caption{An example of a constant-shape substitution over a four-letter alphabet.}
	\label{examplesubstitution}
\end{figure}

In the literature, constant-shape substitutions with a positive diagonal expansion matrix $L=\diag(l_{i})_{i=1,\ldots,d}$ and support equal to the standard $d$-dimensional parallelepiped $F_{1}=\prod\limits_{i=1}^{d}\llbracket 0,l_{i}-1\rrbracket$ are called \emph{block substitutions}.

Examples of constant-shape substitutions can be generated via constant-length substitutions as follows: Let $\{\zeta_{i}\}_{i=1}^{d}$ be $d$ aperiodic one-dimensional constant-length substitutions with alphabet $\A_{i}$ and length $q_{i}$ for $1\leq i\leq d$. We define the \emph{product substitution of} $\{\zeta_{i}\}_{i=1}^{d}$ as the constant-shape substitution $\zeta$ with alphabet $\A=\prod\limits_{i=1}^{d}\A_{i}$, expansion matrix equal to $L_{\zeta}=\diag(q_{i})$ and support $F_{1}^{\zeta}=\prod\limits_{i=1}^{d}\llbracket 0,q_{i}-1\rrbracket$, defined as $\zeta(a_{1},\ldots,a_{d})_{\bm j}=(\zeta_{1}(a_{1})_{j_{1}},\ldots,\zeta_{d}(a_{d})_{j_{d}})$.

Every element of $\Z^{d}$ can be expressed in a unique way as ${\bm p}= L({\bm j})+{\bm f}$, with ${\bm j} \in \Z^{d}$ and ${\bm f}\in F$, so we can consider the substitution $\zeta$ as a map from $\A^{\Z^{d}}$ to itself given by
$$\zeta(x)_{L({\bm j})+{\bm k}}=\zeta(x(\bm{j}))_{{\bm k}}.$$

Given a substitution $\zeta$, we will denote $L_{\zeta}$ its expansion matrix and $F_{1}^{\zeta}$ its support. For any $n>0$ we define the $n$-th iteration of the substitution $\zeta^{n}:\A\to \A^{F_{n}^{\zeta}}$ with expansion matrix $L_{\zeta}^{n}$ and the supports of these substitutions satisfying the recurrence $F_{n+1}^{\zeta}=F_{1}^{\zeta}+L_{\zeta}(F_{n}^{\zeta})$ for all $n\geq 1$. From now on, we may assume that the sequence of supports $(F_{n}^{\zeta})_{n>0}$ is a F\o lner sequence. 

\cref{FirstsIterationsExample} shows the first three iterations of the substitution given in \cref{examplesubstitution}.

\begin{figure}[H]
	\centering
	\begin{tikzpicture}
		
		\node(s9) at (0,-3)[scale=0.04]{\includegraphics{red.pdf}};
		
		\node(s10) at (0.7,-3)[scale=1]{$\mapsto$};
		
		\node(s11) at (2.5,-3)[scale=.07]{\includegraphics{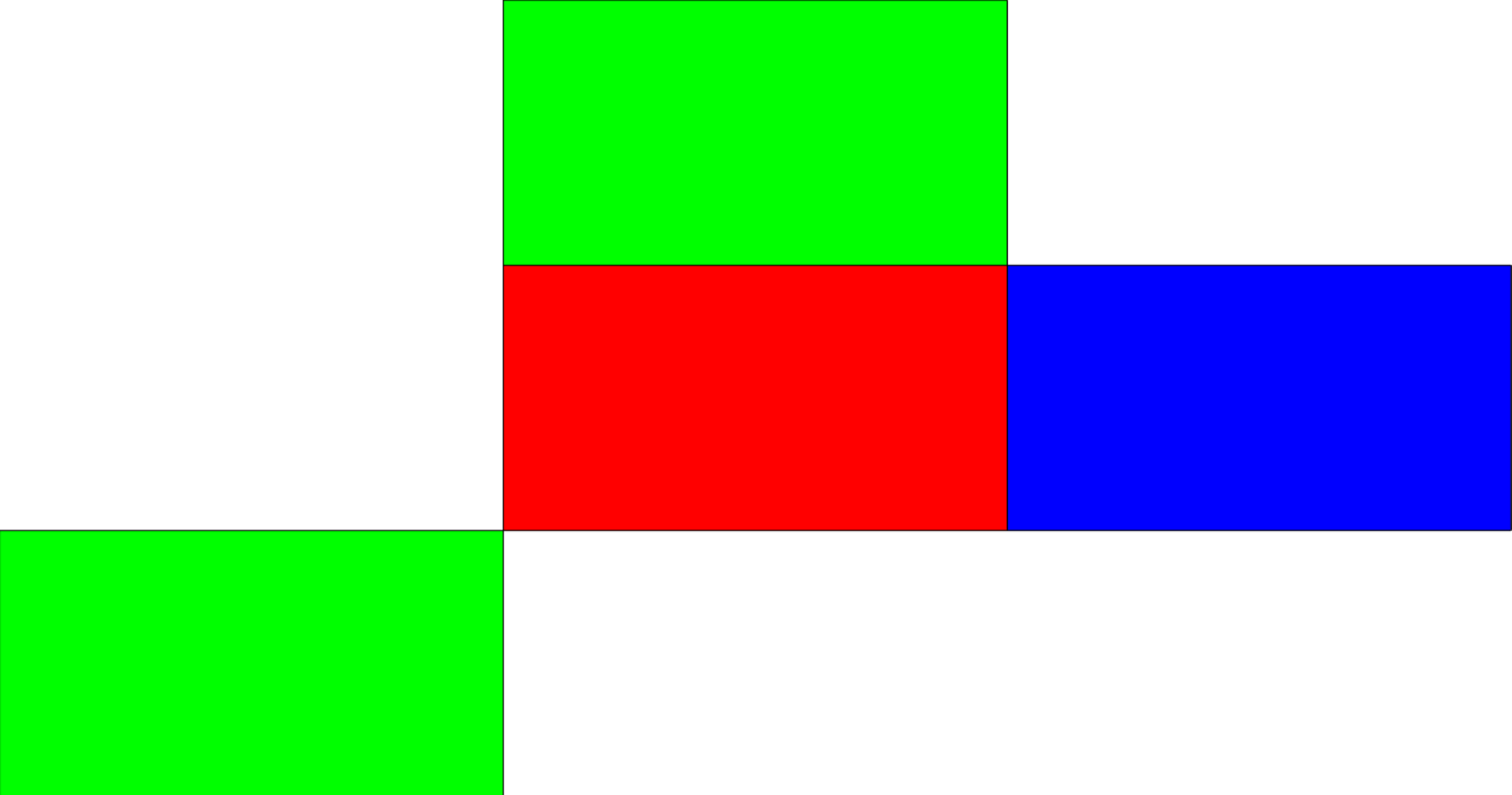}};
		
		\node(s12) at (4.3,-3)[scale=1]{$\mapsto$};
		
		\node(s13) at (7.5,-3)[scale=.15]{\includegraphics{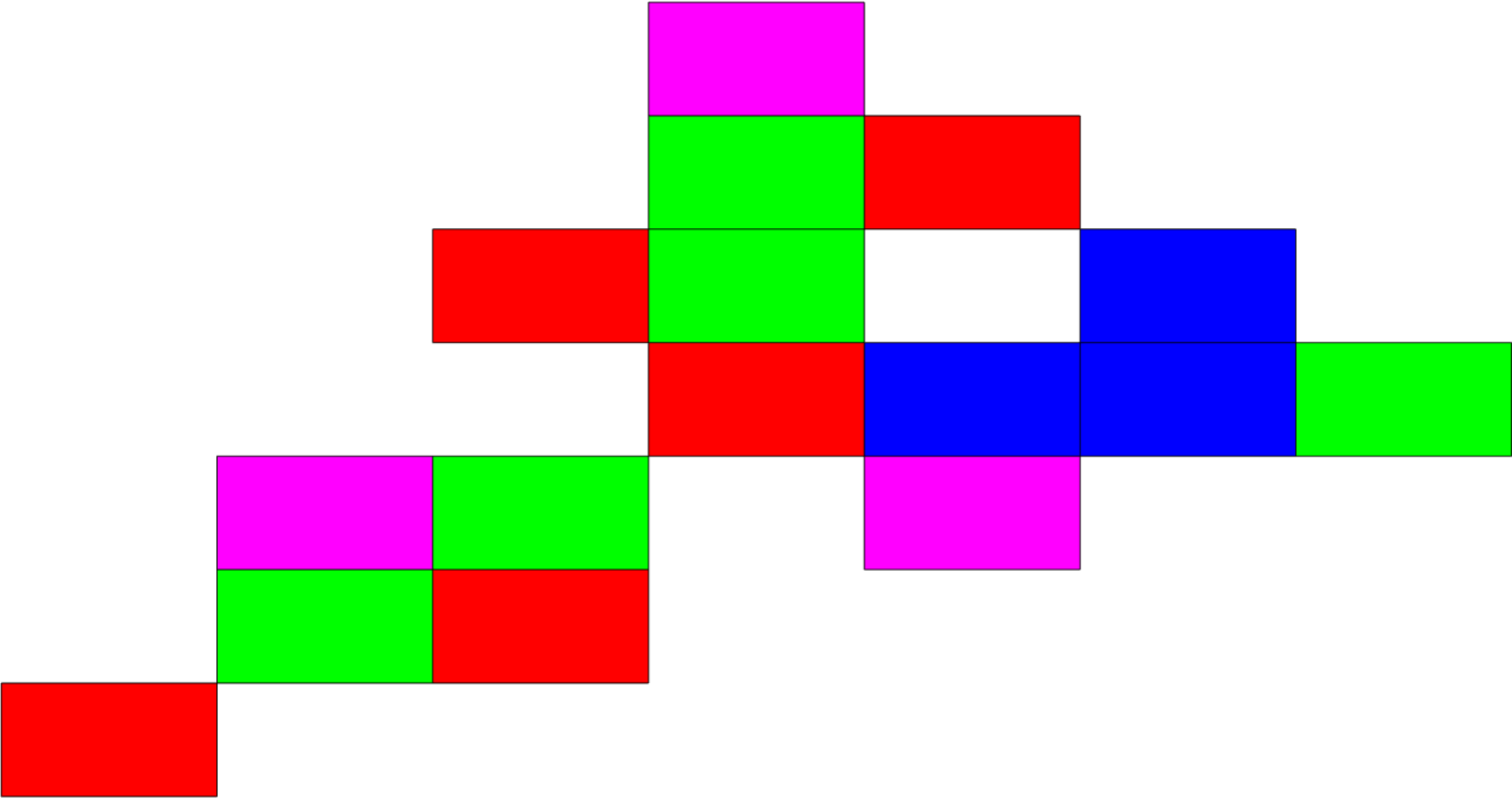}};
		
		\node(s14) at (0.7,-8)[scale=1]{$\mapsto$};
		
		\node(s15) at (7,-8)[scale=.25]{\includegraphics{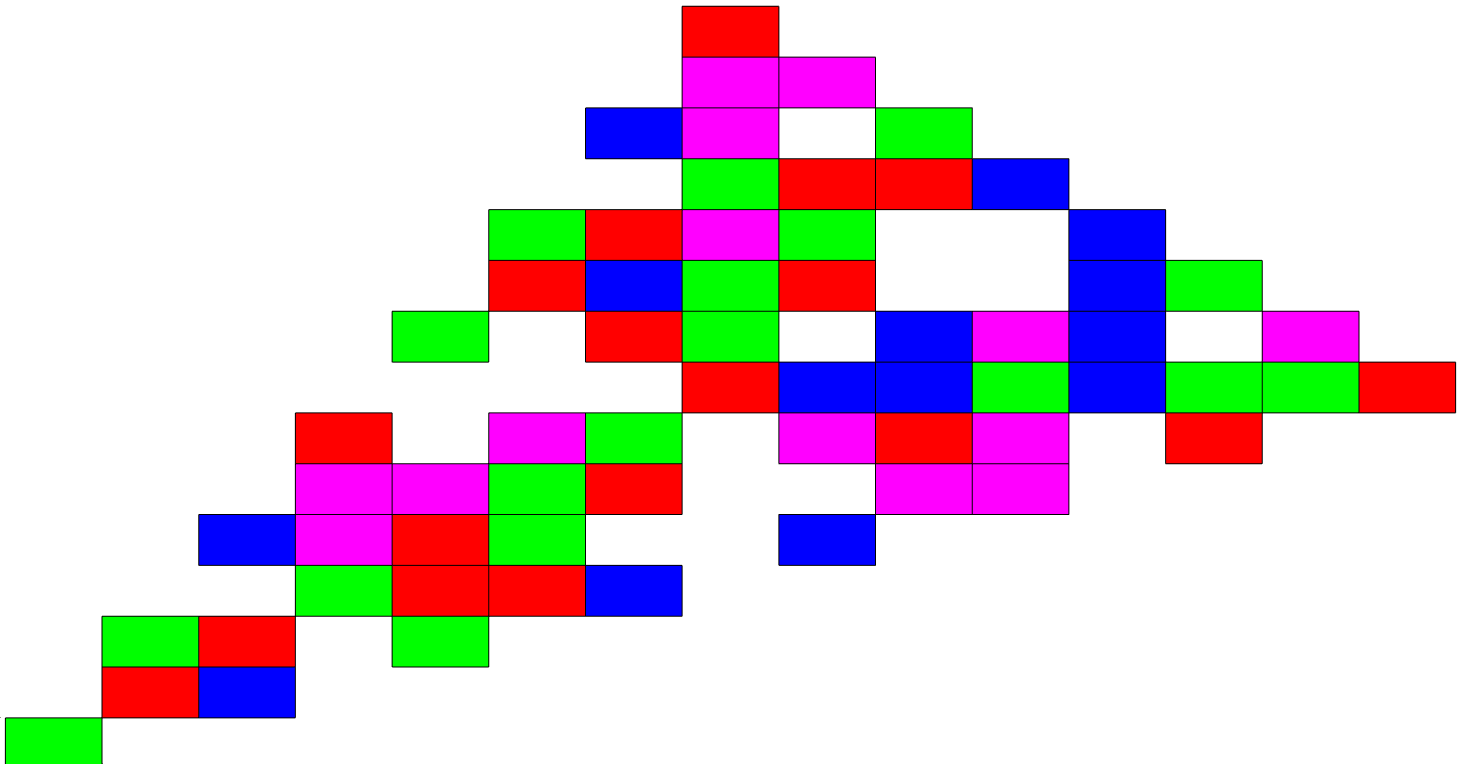}};
	\end{tikzpicture}
	\caption{An example of application of the first three iterations of the substitution illustrated in \cref{examplesubstitution}.}
	\label{FirstsIterationsExample}
\end{figure}

The \emph{language} of a substitution is the set of all patterns that appear in $\zeta^{n}(a)$, for some $n>0$, $a\in \A$, i.e.
$$\mathcal{L}_{\zeta}=\{\texttt{p}\colon \texttt{p}\sqsubseteq \zeta^{n}(a),\ \text{for some }n>0,\ a\in \A\}.$$

Using the language we define the subshift $X_{\zeta}$ associated with a substitution as the set of all sequences $x\in \A^{\Z^{d}}$ such that every pattern occurring in $x$ is in $\mathcal{L}_{\zeta}$. We denote $(X_{\zeta},S,\Z^{d})$ the \emph{substitutive subshift}. 

We will restrict our study to primitive substitutions. A substitution is called \emph{primitive} if there exists a positive integer $n>0$, such that for every $a,b\in \A$, $b$ occurs in $\zeta^{n}(a)$. If $\zeta$ is a primitive constant-shape substitution, the substitutive system $(X_{\zeta},S,\Z^{d})$ is minimal (the proof of Proposition 5.2 in \cite{queffelec2010substitution} for the one-dimensional case can be easily adapted to our case), and the existence of $\zeta$-\emph{periodic points} is well known, i.e., there exists at least one point $x_{0}\in X_{\zeta}$ such that $\zeta^{p}(x_{0})=x_{0}$ for some $p>0$. Under the primitivity assumption, the subshift is preserved replacing the substitution by a power of it, i.e., $X_{\zeta^{n}}$ is equal to $X_{\zeta}$ for all $n>0$. Then, we may assume that the substitution possesses at least one fixed point, i.e., there exists a point $x\in X_{\zeta}$ such that $x=\zeta(x)$. It is well known that this subshift is strictly ergodic (in \cite{lee2003consequences} can be found a proof of the unique ergodicity for substitutive tiling systems seen as substitutive Delone sets for $\R^{d}$-actions that can be adapted for our context thanks to the assumption that the sequence of supports $(F_{n}^{\zeta})_{n>0}$ form a F\o lner sequence). The unique ergodic measure is characterized in terms of the expansion matrix of $\zeta$ and we denote this measure as $\mu_{\zeta}$. For a cylinder set $[\texttt{p}]_{n}$, where $\texttt{p}$ is a pattern in $\mathcal{L}_{\zeta}$, the quantity $\mu_{\zeta}([\texttt{p}])$ represents the frequency of the pattern $\texttt{p}$ in any sequence in $X_{\zeta}$. The frequencies of patterns exist by unique ergodicity.

Since $L_{\zeta}$ is an expansion matrix, then $L_{\zeta}^{-1}$ defines a contraction map in $\R^{d}$. For any ${\bm g}\in F_{1}^{\zeta}$ define the map $f_{{\bm g}}(\cdot)=L_{\zeta}^{-1}(\cdot+{\bm g})$. As mentioned in \cref{SubsectionFractalGeometry}, by the IFS theorem, there exists a nonempty compact subset $T_{\zeta}$ (or denoted $T(L,F_{1})$ when there is no substitution defined) in $\R^{d}$ such that
$$L_{\zeta}(T_{\zeta})=\bigcup\limits_{{\bm g}\in F_{1}^{\zeta}}T_{\zeta}+{\bm g}.$$

As in \cite{vince2000digit} we call this set the \emph{digit tile} of the substitution. Using $T_{0}=\{{\bm 0}\}$ in \eqref{ApproximationDigitTile} we get that
\begin{equation}
	T_{\zeta}=\lim\limits_{n\to \infty} \sum\limits_{i=0}^{n-1}L_{\zeta}^{-i}(F_{1}^{\zeta})=\lim\limits_{n\to \infty} L_{\zeta}^{-n}(F_{n}^{\zeta}).
\end{equation}

\cref{ExamplesApproximationDigitTiles} shows some approximations of some digit tiles.

\begin{figure}[H]
	\begin{tabular}{cccc}
	\includegraphics[scale=0.22]{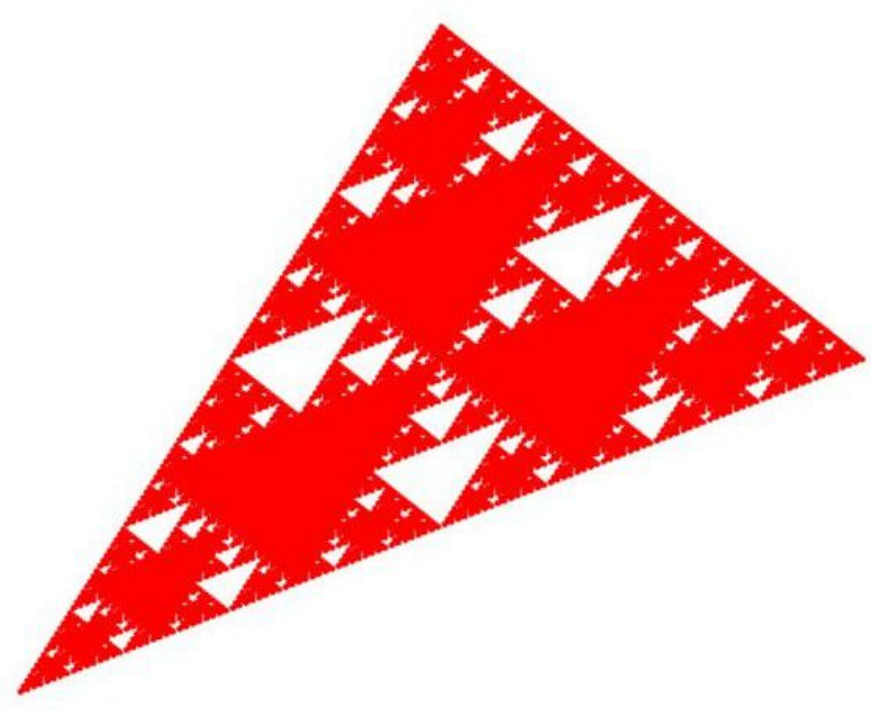} & \includegraphics[scale=0.22]{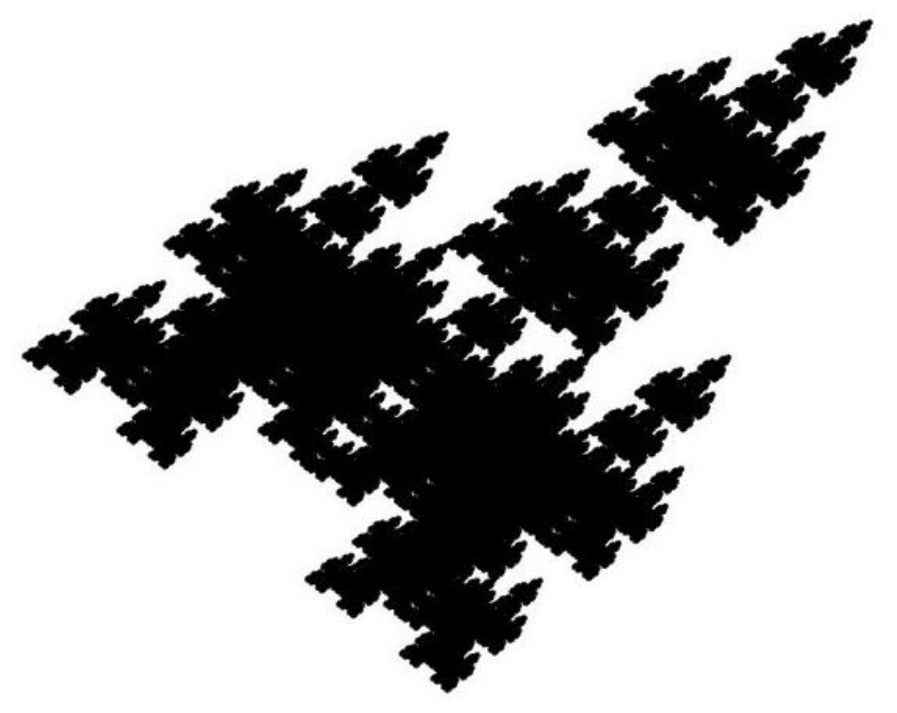} & \includegraphics[scale=0.22]{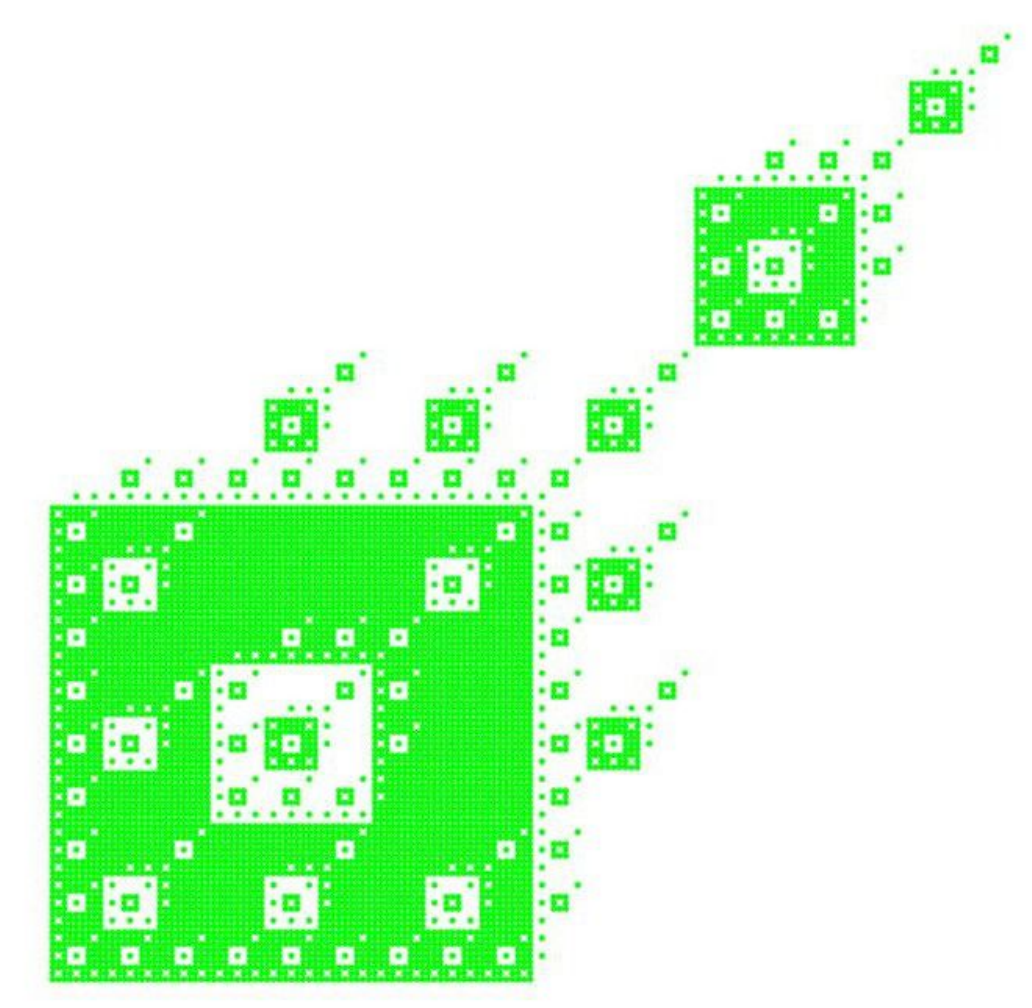} & \includegraphics[scale=0.22]{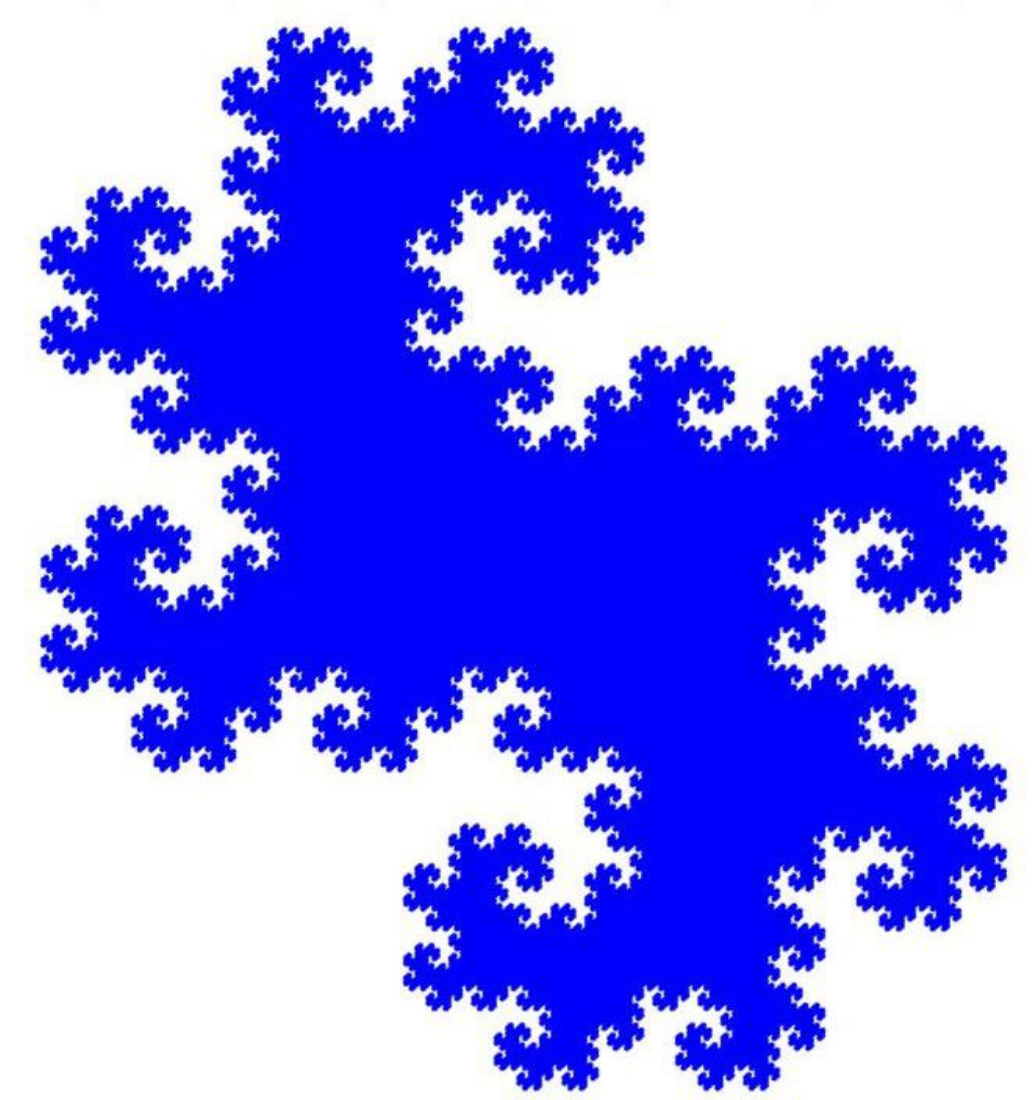}\\
	\multicolumn{1}{c}{(a)} & \multicolumn{1}{c}{(b)} & \multicolumn{1}{c}{(c)} & \multicolumn{1}{c}{(d)}
	\end{tabular}
\caption{Approximation of some digit tiles: (a) Gasket, (b) Rocket, (c) Shooter, (d) Twin Dragon. The names of these tiles come from \cite{vince2000digit}.}
\label{ExamplesApproximationDigitTiles}
\end{figure}

The expansion matrices and fundamental domains of these examples are the following:

\begin{enumerate}[label=(\alph*)]
	\item $\begin{array}{cc}
		L_{(a)} =\left(\begin{array}{cc}2 & 0\\0 & 2
		\end{array}\right),
	& F_{1}^{(a)}  =  \left\{\dbinom{0}{0},\dbinom{1}{0},\dbinom{0}{1},\dbinom{-1}{-1}\right\}.
	\end{array}$

\

\item $\begin{array}{cc}
	L_{(b)} =\left(\begin{array}{cc}3 & 0\\0 & 3
	\end{array}\right),
	& F_{1}^{(b)} = \left\{\dbinom{0}{0},\dbinom{1}{1},\dbinom{2}{2},\dbinom{-1}{0},\dbinom{-2}{0},\dbinom{-1}{1},\dbinom{0}{-1},\dbinom{0}{-2},\dbinom{1}{-1}\right\}.
\end{array}$

\

\item $\begin{array}{cc}
L_{(c)} =\left(\begin{array}{cc}3 & 0\\0 & 3
\end{array}\right),
& F_{1}^{(c)} = \left\{\dbinom{0}{0},\dbinom{1}{0},\dbinom{2}{0},\dbinom{0}{1},\dbinom{0}{2},\dbinom{2}{2},\dbinom{4}{4},\dbinom{2}{1},\dbinom{1}{2}\right\}.
\end{array}$

\

\item $\begin{array}{cc}
L_{(d)} =\left(\begin{array}{cc}1 & -1\\1 & 1
\end{array}\right), & 
F_{1}^{(d)} = \left\{\dbinom{0}{0},\dbinom{1}{0}\right\}.
\end{array}$
\end{enumerate}

\

As in the one-dimensional case, the following proposition shows that for any multidimensional constant-shape substitution there exists a finite subset $K\Subset \Z^{d}$ whose iterations of the substitution fill the whole $\Z^{d}$. This set determine the $\zeta$-periodic points of a primitive constant-shape substitution.

\begin{proposition}\label{FiniteSubsetFillsZd}
	Let $\zeta$ be a multidimensional constant-shape substitution. Then, the set $K_{\zeta}=\bigcup\limits_{m>0}((\id -L_{\zeta}^{m})^{-1}(F_{m}^{\zeta})\cap \Z^{d})$ is finite and satisfies		$$\bigcup\limits_{n\geq 0} L_{\zeta}^{n}(K_{\zeta})+F_{n}^{\zeta}=\Z^{d},$$
	
	\noindent using the notation $F_{0}^{\zeta}=\{{\bm 0}\}$.
\end{proposition}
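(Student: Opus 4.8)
The plan is to prove the two inclusions separately. The inclusion $\bigcup_{n\geq 0}\big(L_{\zeta}^{n}(K_{\zeta})+F_{n}^{\zeta}\big)\subseteq \Z^{d}$ is immediate, since $K_{\zeta}\subseteq\Z^{d}$, $L_{\zeta}(\Z^{d})\subseteq\Z^{d}$ and every $F_{n}^{\zeta}\subseteq\Z^{d}$. All the work lies in the reverse inclusion. The first step is to record the self-similar structure of the supports. From the recurrence $F_{n+1}^{\zeta}=F_{1}^{\zeta}+L_{\zeta}(F_{n}^{\zeta})$ and $F_{0}^{\zeta}=\{{\bm 0}\}$, an induction gives $F_{n}^{\zeta}=\sum_{i=0}^{n-1}L_{\zeta}^{i}(F_{1}^{\zeta})$ as a Minkowski sum, and hence for all $n,m\geq 0$
$$F_{n+m}^{\zeta}=F_{n}^{\zeta}+L_{\zeta}^{n}(F_{m}^{\zeta}).$$
Since $F_{1}^{\zeta}$ is a fundamental domain of $L_{\zeta}(\Z^{d})$, a parallel induction shows that $F_{n}^{\zeta}$ is a fundamental domain of $L_{\zeta}^{n}(\Z^{d})$: every ${\bm p}\in\Z^{d}$ has a unique decomposition ${\bm p}=L_{\zeta}^{n}({\bm j}_{n})+{\bm f}_{n}$ with ${\bm j}_{n}\in\Z^{d}$ and ${\bm f}_{n}\in F_{n}^{\zeta}$.

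Fixing ${\bm p}\in\Z^{d}$, I would then study the \emph{index sequence} $({\bm j}_{n})_{n\geq 0}$, where ${\bm j}_{n}$ is the level-$n$ index of ${\bm p}$ above. Decomposing ${\bm j}_{n}$ at level $1$ gives ${\bm j}_{n}=L_{\zeta}({\bm j}_{n+1})+{\bm g}_{n+1}$ with ${\bm g}_{n+1}\in F_{1}^{\zeta}$, so $L_{\zeta}^{-1}$ drives the recursion. Combining this with the Minkowski identity $F_{n+m}^{\zeta}=F_{n}^{\zeta}+L_{\zeta}^{n}(F_{m}^{\zeta})$ and the uniqueness of the level-$n$ decomposition shows that ${\bm j}_{n+m}$ is precisely the level-$m$ index of ${\bm j}_{n}$; that is, ${\bm j}_{n}=L_{\zeta}^{m}({\bm j}_{n+m})+{\bm h}$ for some ${\bm h}\in F_{m}^{\zeta}$.

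The heart of the argument, and the step I expect to be the main obstacle, is to show that the index sequence eventually lands in a fixed finite set. Because $\Vert L_{\zeta}^{-1}\Vert<1$, the relation ${\bm j}_{n+1}=L_{\zeta}^{-1}({\bm j}_{n}-{\bm g}_{n+1})$ yields the contraction estimate $\Vert {\bm j}_{n+1}\Vert\leq \Vert L_{\zeta}^{-1}\Vert\big(\Vert {\bm j}_{n}\Vert+\Vert F_{1}^{\zeta}\Vert\big)$. Iterating this, I would exhibit a forward-invariant and absorbing closed ball $\overline{B}({\bm 0},R_{0})$ whose radius $R_{0}$ depends only on $\Vert L_{\zeta}^{-1}\Vert$ and $\Vert F_{1}^{\zeta}\Vert$; hence $({\bm j}_{n})$ eventually lies in the finite set $\overline{B}({\bm 0},R_{0})\cap\Z^{d}$. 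The pigeonhole principle then produces indices $n$ and $m>0$ with ${\bm j}_{n+m}={\bm j}_{n}$.

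To conclude, the equality ${\bm j}_{n+m}={\bm j}_{n}$ combined with ${\bm j}_{n}=L_{\zeta}^{m}({\bm j}_{n+m})+{\bm h}$ gives $(\id-L_{\zeta}^{m}){\bm j}_{n}={\bm h}\in F_{m}^{\zeta}$; here $\id-L_{\zeta}^{m}$ is invertible because $L_{\zeta}$ is expansive, so none of its eigenvalues, nor those of $L_{\zeta}^{m}$, equals $1$. Therefore ${\bm j}_{n}\in(\id-L_{\zeta}^{m})^{-1}(F_{m}^{\zeta})\cap\Z^{d}\subseteq K_{\zeta}$, and consequently ${\bm p}=L_{\zeta}^{n}({\bm j}_{n})+{\bm f}_{n}\in L_{\zeta}^{n}(K_{\zeta})+F_{n}^{\zeta}$. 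This establishes the reverse inclusion and finishes the proof.
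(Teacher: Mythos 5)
Your proposal is correct and follows essentially the same route as the paper: your index sequence $({\bm j}_{n})$ is exactly the paper's sequence $({\bm a}_{n})$ obtained by iterated division by $L_{\zeta}$, and both arguments bound it via the contraction estimate $\Vert {\bm j}_{n+1}\Vert\leq \Vert L_{\zeta}^{-1}\Vert(\Vert {\bm j}_{n}\Vert+\Vert F_{1}^{\zeta}\Vert)$ and apply the Pigeonhole Principle to get ${\bm j}_{n}=L_{\zeta}^{m}({\bm j}_{n})+{\bm h}$ with ${\bm h}\in F_{m}^{\zeta}$, hence ${\bm j}_{n}\in K_{\zeta}$. The only point the paper treats that you do not is the finiteness of $K_{\zeta}$ (via the bound $\Vert(\id-L_{\zeta}^{m})^{-1}(F_{m}^{\zeta})\Vert\leq \Vert F_{1}^{\zeta}\Vert\Vert(\id-L_{\zeta})^{-1}\Vert$), which is not part of the displayed equality but is needed for the surrounding claim that $K_{\zeta}\Subset\Z^{d}$.
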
 

\begin{proof}
	Set ${\bm n}\in \Z^{d}$ and consider the sequence $({\bm a}_{m})_{m\geq 0}\subseteq \Z^{d}$ given by ${\bm a}_{0}={\bm n}$ and for any $m\geq 0$, ${\bm a}_{m+1}$ is defined as the unique element in $\Z^{d}$ such that there exists an element ${\bm f}_{m}\in F_{1}^{\zeta}$ with ${\bm a}_{m}=L_{\zeta}({\bm a}_{m+1})+{\bm f}_{m}$. Note that for any $m>0$, ${\bm n}=L_{\zeta}^{m}({\bm a}_{m})+\sum\limits_{i=1}^{m}L^{m-1}({\bm f}_{m})$, and $\Vert {\bm a}_{m+1}\Vert\leq \Vert L_{\zeta}^{-1}\Vert\cdot \Vert {\bm a}_{m}\Vert+\Vert L_{\zeta}^{-1}\Vert\cdot \Vert F_{1}^{\zeta}\Vert$. This implies that for all $m\geq 0$
	$$\Vert {\bm a}_{m}\Vert \leq \Vert L_{\zeta}^{-1}\Vert^{m} \cdot \Vert {\bm a}_{0}\Vert +\dfrac{\Vert L_{\zeta}^{-1}\Vert \cdot \Vert F_{1}^{\zeta}\Vert (1-\Vert L_{\zeta}^{-1}\Vert^{m})}{1-\Vert L_{\zeta}^{-1}\Vert},$$
	
	\noindent hence $({\bm a}_{m})_{m\geq 0}$ is a bounded sequence in $\Z^{d}$. By the Pigeonhole Principle there exist $m\geq0$ and $n>0$ such that ${\bm a}_{m}={\bm a}_{m+n}$, i.e., ${\bm a}_{m}=L_{\zeta}^{n}({\bm a}_{m})+{\bm f}$, for some ${\bm f}\in F_{n}^{\zeta}$. It follows that the set $K_{\zeta}=\bigcup\limits_{m>0}((\id -L_{\zeta}^{m})^{-1}(F_{m}^{\zeta})\cap \Z^{d})$ satisfies the desired property. Now we prove that $K_{\zeta}$ is finite. Note that for any $m>0$
	$$\begin{array}{cl}
		\Vert (\id -L_{\zeta}^{m})^{-1}(F_{m}^{\zeta})\Vert & = \left\Vert \sum\limits_{i=0}^{m-1}(\id -L_{\zeta}^{m})^{-1}L_{\zeta}^{i}(F_{1}^{\zeta})\right\Vert\\
		& \leq \Vert F_{1}^{\zeta}\Vert\left\Vert \sum\limits_{i=0}^{m-1}(\id-L_{\zeta}^{m})^{-1}L_{\zeta}^{i}\right\Vert\\
		& \leq \Vert F_{1}^{\zeta}\Vert \left\Vert (\id -L_{\zeta}^{m})^{-1}(\id-L_{\zeta}^{m})(\id -L_{\zeta})^{-1}\right\Vert\\
		& \leq \Vert F_{1}^{\zeta}\Vert\Vert (\id -L_{\zeta})^{-1}\Vert.
	\end{array}$$
	
	We conclude that $K_{\zeta}$ is a finite set.
\end{proof}

\begin{remark} The following statements can be easily verified.
	\begin{enumerate}[label=(\arabic*)]
		\item In the one-dimensional case, a direct computation shows that for any constant-length substitution $\zeta$, the set $K_{\zeta}$ is equal to $\llbracket-1,0\rrbracket$. Similarly, a direct computation shows that for any $d$-dimensional block substitution, the set $K_{\zeta}$ is equal to $\llbracket -1,0\rrbracket^{d}$.
		
		\item Since $K_{\zeta}$ is a finite set, there exists $j>0$ such that $K_{\zeta}=\bigcup\limits_{m=0}^{j}((\id -L_{\zeta}^{m})^{-1}(F_{m}^{\zeta})\cap \Z^{d})$, therefore, by the fact that the sets $(\id -L_{\zeta}^{m})^{-1}(F_{m}^{\zeta})\cap \Z^{d}$ are nested, up to considering a power of $\zeta$, we can assume that the set $K_{\zeta}$ is of the form $(\id -L_{\zeta})^{-1}(F_{1}^{\zeta})\cap \Z^{d}$.
	\end{enumerate}
	
\end{remark}

The argument used in the proof of \cref{FiniteSubsetFillsZd} is inspired by the Euclidean Division Algorithm. A similar idea can be used to find different sets satisfying specific statements involving the supports $(F_{n}^{\zeta})_{n>0}$ that will be needed for some proofs. From now on, for any ${\bm n}\in \Z^{d}$ we denote ${\bm d_{\bm n}}\in \Z^{d}$ and ${\bm f }_{{\bm n}}\in F_{1}^{\zeta}$ the unique elements such that ${\bm n}=L_{\zeta}({\bm d}_{{\bm n}})+{\bm f}_{{\bm n}}$. The following result will be useful in a series of results throughout this article.

\begin{proposition}\label{SetDforFiniteInvariantOrbit}
	Set $A\Subset \Z^{d}$ and let $F\Subset \Z^{d}$ be such that $F_{1}^{\zeta}\subseteq F$. Define $B=\{{\bm d}_{\bm n}\}_{{\bm n}\in F+A}$. Then, there exists a finite subset $C$ of $\Z^{d}$ satisfying the following conditions:
	\begin{enumerate}
		\item $B\subseteq C$.
		\item $C+F+A\subseteq L_{\zeta}(C)+F_{1}^{\zeta}$.
		\item $\Vert C \Vert \leq \Vert B\Vert+\Vert L_{\zeta}^{-1}\Vert\left(\Vert A\Vert +\Vert F\Vert+\Vert F_{1}^{\zeta}\Vert\right)/\left(1-\Vert L_{\zeta}^{-1}\Vert \right)$.
	\end{enumerate}
\end{proposition}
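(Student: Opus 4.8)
The plan is to build $C$ as the closure of $B$ under the Euclidean-quotient operation ${\bm c}\mapsto {\bm d}_{{\bm c}+{\bm w}}$ indexed by the translations ${\bm w}\in F+A$, in the same spirit as the proof of \cref{FiniteSubsetFillsZd}, and then to confine this closure to a fixed ball using the contraction $\Vert L_{\zeta}^{-1}\Vert<1$. Concretely, I would write $\lambda=\Vert L_{\zeta}^{-1}\Vert$ and $W=\Vert A\Vert+\Vert F\Vert+\Vert F_{1}^{\zeta}\Vert$, set $D_{0}=B$, and define inductively $D_{k+1}=D_{k}\cup\{{\bm d}_{{\bm c}+{\bm w}}\colon {\bm c}\in D_{k},\ {\bm w}\in F+A\}$, taking $C=\bigcup_{k\geq 0}D_{k}$.

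With this definition conditions (1) and (2) are essentially formal. Condition (1) holds because $B=D_{0}\subseteq C$. For condition (2), given ${\bm c}\in C$ and ${\bm w}\in F+A$, recall ${\bm c}+{\bm w}=L_{\zeta}({\bm d}_{{\bm c}+{\bm w}})+{\bm f}_{{\bm c}+{\bm w}}$; here ${\bm d}_{{\bm c}+{\bm w}}\in C$ by construction of the closure and ${\bm f}_{{\bm c}+{\bm w}}\in F_{1}^{\zeta}$, so ${\bm c}+{\bm w}\in L_{\zeta}(C)+F_{1}^{\zeta}$, giving $C+F+A\subseteq L_{\zeta}(C)+F_{1}^{\zeta}$.

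The quantitative heart of the argument is condition (3), which simultaneously forces finiteness. Starting from ${\bm d}_{{\bm c}+{\bm w}}=L_{\zeta}^{-1}\bigl(({\bm c}+{\bm w})-{\bm f}_{{\bm c}+{\bm w}}\bigr)$ and the triangle inequality, together with $\Vert {\bm w}\Vert\leq \Vert F\Vert+\Vert A\Vert$ and $\Vert {\bm f}_{{\bm c}+{\bm w}}\Vert\leq \Vert F_{1}^{\zeta}\Vert$, I obtain the one-step estimate $\Vert {\bm d}_{{\bm c}+{\bm w}}\Vert\leq \lambda(\Vert {\bm c}\Vert+W)$. Every element of $C$ arises from a finite chain ${\bm c}_{0}\in B$, ${\bm c}_{i+1}={\bm d}_{{\bm c}_{i}+{\bm w}_{i}}$, so iterating this estimate and summing the geometric series yields $\Vert {\bm c}_{m}\Vert\leq \lambda^{m}\Vert B\Vert+\frac{\lambda(1-\lambda^{m})}{1-\lambda}W\leq \Vert B\Vert+\frac{\lambda}{1-\lambda}W$, which is precisely the bound asserted in (3). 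Since this confines $C$ to the set $B({\bm 0},R)\cap\Z^{d}$ with $R=\Vert B\Vert+\lambda W/(1-\lambda)$, the set $C$ is finite.

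I do not expect a genuine obstacle: the contraction $\lambda<1$ is what makes the geometric series converge and the closure stabilize inside a single ball, exactly as the Pigeonhole/decay mechanism did in \cref{FiniteSubsetFillsZd}. The only points requiring a little care are verifying that the induction really exhibits each element of $C$ via such a chain (an immediate induction on $k$) and that the intermediate quantities $\Vert {\bm w}\Vert$ and $\Vert {\bm f}_{{\bm c}+{\bm w}}\Vert$ are bounded independently of the stage, which they are since $F+A$ and $F_{1}^{\zeta}$ are fixed finite sets.
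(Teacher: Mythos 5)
Your proposal is correct and takes essentially the same approach as the paper: the paper builds $B_{0}=B$, $B_{n+1}=\{{\bm d}_{{\bm m}}\}_{{\bm m}\in B_{n}+F+A}$ and takes $C=\bigcup_{n}B_{n}$ after proving the same one-step contraction estimate and geometric-series bound $\Vert B_{n}\Vert\leq \Vert B\Vert+\Vert L_{\zeta}^{-1}\Vert\left(\Vert A\Vert+\Vert F\Vert+\Vert F_{1}^{\zeta}\Vert\right)/\left(1-\Vert L_{\zeta}^{-1}\Vert\right)$, and your sets $D_{k}$ are exactly the cumulative unions $\bigcup_{n\leq k}B_{n}$. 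The only cosmetic difference is that the paper deduces finiteness from the stabilization of this increasing union inside the fixed ball, whereas you conclude directly from the containment $C\subseteq B({\bm 0},R)\cap\Z^{d}$; the two are equivalent.
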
 

\begin{proof}
	We define two sequences of finite sets of $\Z^{d}$, $(B_{n})_{n\geq 0}$, $(C_{n})_{n\geq 0}$, with $B_{0}=B$, $C_{0}=B+F+A$, and for any $n\geq 0$, set $B_{n+1}\Subset \Z^{d}$ such that $B_{n+1}=\{{\bm d}_{{\bm n}}\}_{{\bm n}\in C_{n}}$, and $C_{n+1}\Subset \Z^{d}$ such that $C_{n+1}=B_{n+1}+F+A$. Note that
	$$\begin{array}{cl}
		\Vert B_{n+1}\Vert & \leq \Vert L_{\zeta}^{-1}\Vert \left(\Vert C_{n}\Vert +\Vert F_{1}^{\zeta}\Vert\right)\\
		& \leq \Vert L_{\zeta}^{-1}\Vert\left(\Vert B_{n}\Vert +\Vert A\Vert+\Vert F\Vert +\Vert F_{1}^{\zeta}\Vert\right)\\
		& \leq \Vert L_{\zeta}^{-1}\Vert \Vert B_{n}\Vert +\Vert L_{\zeta}^{-1}\Vert\left(\Vert A\Vert +\Vert F\Vert+\Vert F_{1}^{\zeta}\Vert\right).
	\end{array}$$
	
	Hence, for any $n>0$ we have that
	$$\Vert B_{n}\Vert \leq \Vert B\Vert \Vert L_{\zeta}^{-1}\Vert^{n}+\dfrac{1-\Vert L_{\zeta}^{-1}\Vert^{n}}{1-\Vert L_{\zeta}^{-1}\Vert}\left(\Vert L_{\zeta}^{-1}\Vert\left(\Vert A\Vert +\Vert F\Vert+\Vert F_{1}^{\zeta}\Vert\right)\right).$$
	
	Since $\Vert L_{\zeta}^{-1}\Vert$ is strictly smaller than 1, then $\Vert B_{n}\Vert \leq \Vert B\Vert+\Vert L_{\zeta}^{-1}\Vert\left(\Vert A\Vert +\Vert F\Vert+\Vert F_{1}^{\zeta}\Vert\right)/\left(1-\Vert L_{\zeta}^{-1}\Vert \right)$. This implies there exists $N\in \NN$ such that $\bigcup\limits_{n\leq N}B_{n}=\bigcup\limits_{n\leq N+1}B_{n}$. We conclude the proof taking $C=\bigcup\limits_{n=0}^{N}B_{n}$.
\end{proof}

\begin{remark}\label{RemarkSetDforInvariantOrbit} The following particular cases will be useful in the rest of the article.
	\begin{enumerate}
		\item Condition (2) implies that $C+A+F_{1}^{\zeta}\subseteq L_{\zeta}(C)+F_{1}^{\zeta}$ and a direct induction proves that for all $n\geq 0$, the following inclusion holds: $L_{\zeta}^{n}(C+A)+F_{n}^{\zeta}\subseteq L_{\zeta}^{n+1}(C)+F_{n+1}^{\zeta}$. So, for any set $A\Subset \Z^{d}$ and $F=F_{1}^{\zeta}$ the set $C$ is the minimal set (in terms of cardinality) such that for all $n\geq 0$, $L_{\zeta}^{n}(A)+F_{n}^{\zeta}\subseteq L_{\zeta}^{n+1}(C)+F_{n+1}^{\zeta}$, and the sets $\{L_{\zeta}^{n}(C)+F_{n}^{\zeta}\colon n\geq 0\}$ are nested.
		
		\item Using $F=F_{1}^{\zeta}+F_{1}^{\zeta}$ and $A=\{{\bm 0}\}$, we obtain a set $C\Subset \Z^{d}$ such that $C+F_{1}^{\zeta}+F_{1}^{\zeta}\subseteq L_{\zeta}(C)+F_{1}^{\zeta}$. Since ${\bm 0}\in F_{1}^{\zeta}$, then ${\bm 0}$ is in $C$, which implies that $F_{1}^{\zeta}+F_{1}^{\zeta}\subseteq L_{\zeta}(C)+F_{1}^{\zeta}$. Assume for some $n>0$ that $F_{n}^{\zeta}+F_{n}^{\zeta}\subseteq L_{\zeta}^{n}(C)+F_{n}^{\zeta}$. Then, we obtain that
		$$\begin{array}{cl}
			F_{n+1}^{\zeta}+F_{n+1}^{\zeta} & = F_{n}^{\zeta}+L_{\zeta}^{n}(F_{1}^{\zeta})+F_{n}^{\zeta}+L_{\zeta}^{n}(F_{1}^{\zeta}) \\
			& \subseteq L_{\zeta}^{n}(C)+F_{n}^{\zeta}+L_{\zeta}^{n}(F_{1}^{\zeta})+L_{\zeta}^{n}(F_{1}^{\zeta})\\
			& = L_{\zeta}^{n}(C+F_{1}^{\zeta}+F_{1}^{\zeta})+F_{n}^{\zeta}\\
			& \subseteq L_{\zeta}^{n+1}(C)+L_{\zeta}^{n}(F_{1}^{\zeta})+F_{n}^{\zeta}\\
			& = L_{\zeta}^{n+1}(C)+F_{n+1}^{\zeta},
		\end{array}$$
		
		\noindent so, by induction we have proven that for all $n>0$, $F_{n}^{\zeta}+F_{n}^{\zeta}\subseteq L_{\zeta}^{n}(C)+F_{n}^{\zeta}$. 
	\end{enumerate}
\end{remark}

The next result shows that for any aperiodic symbolic factor $(Y,S,\Z^{d})$ of $(X_{\zeta},S,\Z^{d})$ we can change $\zeta$ for an appropriate substitution $\zeta'$, having the same expansion matrix and fundamental domain, such that $(X_{\zeta'},S,\Z^{d})$ and $(X_{\zeta},S,\Z^{d})$ are conjugate, and there exists a factor map $\pi:(X_{\zeta'},S,\Z^{d})\to (Y,S,\Z^{d})$ induced by a letter-to-letter map.

\begin{lemma}\label{AllFactorAre0BlockMap}
	Let $\zeta$ be an aperiodic primitive constant-shape substitution and $\phi:(X_{\zeta},S,\Z^{d})\to(Y,S,\Z^{d})$ be an aperiodic symbolic factor of $(X_{\zeta},S,\Z^{d})$. Then, there exists a substitution $\zeta'$, having the same support and expansion matrix of $\zeta$, such that $(X_{\zeta'},S,\Z^{d})$ and $(X_{\zeta},S,\Z^{d})$ are conjugate and a factor map $\pi:(X_{\zeta'},S,\Z^{d})\to (Y,S,\Z^{d})$ induced by a letter-to-letter map.
\end{lemma}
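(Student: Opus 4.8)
The plan is to recode $X_{\zeta}$ by a sufficiently large window so that the factor $\phi$ becomes letter-to-letter, and then to check that this recoding is itself a constant-shape substitution with the \emph{same} expansion map and support. First, by the Curtis--Hedlund--Lyndon theorem, $\phi$ is induced by a $B({\bm 0},r_{0})$-block map $\Phi\colon \mathcal{L}_{B({\bm 0},r_{0})}(X_{\zeta})\to \mathcal{L}_{1}(Y)$ with $r_{0}=r(\phi)$. Since $\Vert L_{\zeta}^{-1}\Vert<1$, I fix a radius $R\geq r_{0}$ large enough that
\[
\Vert L_{\zeta}^{-1}\Vert\bigl(R+2\Vert F_{1}^{\zeta}\Vert\bigr)\leq R ,
\]
set $W=B({\bm 0},R)\cap \Z^{d}$, and let $\A'=\mathcal{L}_{W}(X_{\zeta})$ be the finite set of $W$-patterns. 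The recoding map $\Psi\colon X_{\zeta}\to (\A')^{\Z^{d}}$, $\Psi(x)_{{\bm n}}=x|_{{\bm n}+W}$, is a sliding block code commuting with $S$; since ${\bm 0}\in W$ its inverse is the $0$-block map sending a pattern to its central symbol, so $\Psi$ is a conjugacy of $X_{\zeta}$ onto the subshift $X':=\Psi(X_{\zeta})$.

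The heart of the argument is to produce a substitution $\zeta'\colon \A'\to (\A')^{F_{1}^{\zeta}}$ with expansion $L_{\zeta}$ and support $F_{1}^{\zeta}$ whose subshift is $X'$. For $\texttt{p}=x|_{{\bm j}+W}\in\A'$ and ${\bm f}\in F_{1}^{\zeta}$ I declare $\zeta'(\texttt{p})_{{\bm f}}$ to be the $W$-pattern of $\zeta(x)$ read at $L_{\zeta}({\bm j})+{\bm f}$, namely $\zeta(x)|_{L_{\zeta}({\bm j})+{\bm f}+W}$. The key estimate, and the main obstacle, is well-definedness: this must depend only on the abstract pattern $\texttt{p}$ and on ${\bm f}$. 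Writing an arbitrary position $L_{\zeta}({\bm j})+{\bm f}+{\bm w}$ with ${\bm w}\in W$ in the Euclidean-division form $L_{\zeta}({\bm k})+{\bm g}$ used in \cref{FiniteSubsetFillsZd}, one has $\zeta(x)_{L_{\zeta}({\bm k})+{\bm g}}=\zeta(x_{{\bm k}})_{{\bm g}}$, while ${\bm k}-{\bm j}=L_{\zeta}^{-1}({\bm f}+{\bm w}-{\bm g})$, so the choice of $R$ forces $\Vert {\bm k}-{\bm j}\Vert\leq \Vert L_{\zeta}^{-1}\Vert(R+2\Vert F_{1}^{\zeta}\Vert)\leq R$, i.e. ${\bm k}\in {\bm j}+W$. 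Hence $x_{{\bm k}}$ is one of the symbols recorded by $\texttt{p}$, and $\zeta(x)|_{L_{\zeta}({\bm j})+{\bm f}+W}$ is indeed determined by $(\texttt{p},{\bm f})$. Since this pattern occurs in $\zeta(x)\in X_{\zeta}$, it lies in $\A'$, so $\zeta'$ is a genuine constant-shape substitution with the prescribed support and expansion map.

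It remains to identify $X_{\zeta'}$ with $X'$. Taking the fixed point $x_{0}=\zeta(x_{0})$ of $\zeta$, the defining relation yields $\zeta'(\Psi(x_{0}))=\Psi(x_{0})$, so $\Psi(x_{0})$ is a fixed point of $\zeta'$ and $\Psi(x_{0})\in X_{\zeta'}$. As $\Psi$ is a conjugacy and $X_{\zeta}$ is minimal, $X'$ is minimal and equals the orbit closure of $\Psi(x_{0})$, whence $X'\subseteq X_{\zeta'}$; conversely every pattern of $(\zeta')^{n}(\texttt{p})$ is, by construction, the $\Psi$-image of a window of a point of $X_{\zeta}$, so $\mathcal{L}_{\zeta'}\subseteq \mathcal{L}(X')$ and $X_{\zeta'}\subseteq X'$. (Primitivity of $\zeta'$ follows from that of $\zeta$ together with minimality, and aperiodicity of $X_{\zeta'}$ from that of $X_{\zeta}$ through the conjugacy $\Psi$.) Thus $\Psi\colon (X_{\zeta},S,\Z^{d})\to (X_{\zeta'},S,\Z^{d})$ is a conjugacy.

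Finally, because $B({\bm 0},r_{0})\subseteq W$, the $0$-block map $\Pi\colon \A'\to \mathcal{L}_{1}(Y)$ given by $\Pi(\texttt{p})=\Phi(\texttt{p}|_{B({\bm 0},r_{0})})$ induces a map $\pi\colon X_{\zeta'}\to Y$ satisfying $\pi(\Psi(x))_{{\bm n}}=\Phi(x|_{{\bm n}+B({\bm 0},r_{0})})=\phi(x)_{{\bm n}}$, that is $\pi\circ\Psi=\phi$. Since $\phi$ is surjective and $\Psi$ is a homeomorphism, $\pi$ is a surjective factor induced by a $0$-block map, which proves the lemma. I expect essentially all the work to concentrate in the second paragraph: the contraction estimate guaranteeing that the recoding window is stable under $\zeta$ is what makes $\zeta'$ a map on $\A'$ rather than on a larger alphabet, and hence what preserves the support $F_{1}^{\zeta}$ and expansion $L_{\zeta}$.
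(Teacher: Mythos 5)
Your proof is correct and is essentially the paper's own argument: recode $X_{\zeta}$ by a window that is stable under the Euclidean-division dynamics of $(L_{\zeta},F_{1}^{\zeta})$, observe that the recoded system is itself a constant-shape substitution with the same support and expansion map, invert the recoding by the $0$-block map reading the central symbol, and conclude that the factor becomes letter-to-letter. The only deviation is cosmetic: where the paper obtains the stable window $D=L_{\zeta}(C)+F_{1}^{\zeta}$ by invoking \cref{SetDforFiniteInvariantOrbit}, you re-derive the same stability inline for a ball $B({\bm 0},R)$ via the contraction estimate $\Vert L_{\zeta}^{-1}\Vert\left(R+2\Vert F_{1}^{\zeta}\Vert\right)\leq R$.
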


\begin{proof}
	Suppose that $\phi:(X_{\zeta},S,\Z^{d})\to (Y,S,\Z^{d})$ is a factor map via a $B({\bm 0},r)$-block map. Set $A=B({\bm 0},r)$, by \cref{SetDforFiniteInvariantOrbit} there exists a set $C\Subset \Z^{d}$ such that $B(0,r)+F_{1}^{\zeta}+C\subseteq L_{\zeta}(C)+F_{1}^{\zeta}$. Set $D=L_{\zeta}(C)+F_{1}^{\zeta}$. We will define a substitution $\zeta^{(D)}$ considering the set $\mathcal{L}_{D}(X_{\zeta})$ as the alphabet, with the same expansion matrix and support of $\zeta$ in the following way: If $\texttt{p}\in \mathcal{L}_{D}(X_{\zeta})$, then for any ${\bm j}\in F_{1}^{\zeta}$ we set $\zeta^{(D)}(\texttt{p})_{{\bm j}}=\zeta(\texttt{p})|_{{\bm j}+D}$. It is straightforward to check that $x\in X_{\zeta}$ is a fixed point of the substitution $\zeta$, if and only if $y\in \mathcal{L}_{D}(X_{\zeta})^{\Z^{d}}$ such that $y_{{\bm n}}=x|_{{\bm n}+D}$ for all ${\bm n}\in \Z^{d}$ is a fixed point of the substitution $\zeta^{(D)}$. With this, we can define the following sliding block codes $\psi_{1}:(X_{\zeta},S,\Z^{d})\to (X_{\zeta^{(D)}},S,\Z^{d})$ given by the $D$-block map $\Psi_{1}(\texttt{p})=\texttt{p}$ and $\psi_{2}:(X_{\zeta^{(D)}},S,\Z^{d})\to (X_{\zeta},S,\Z^{d})$ given by the letter-to-letter map $\Psi_{2}(\texttt{p})=\texttt{p}_{{\bm 0}}$. These maps commute with the shift action and define a conjugacy between $X_{\zeta}$ and $X_{\zeta^{(D)}}$. We then, define a factor map $\phi^{(D)}:(X_{\zeta^{(D)}},S,\Z^{d})\to (Y,S,\Z^{d})$ given by the letter-to-letter map equal to $\psi_{2}\phi$.
\end{proof}

\section{Recognizability property of aperiodic  substitutions and dynamical consequences}\label{SectionRecognizabilityPropertyGeneral} 

In this section, we will study the recognizability property of aperiodic constant-shape substitutions and some dynamical consequences. Every aperiodic substitution satisfies it. To prove this result we first prove that there is a polynomial growth of the repetitivity function for substitutive subshifts (\cref{GrowthRepetititvtyFunction}) that allow us to prove that aperiodic symbolic factors of substitutive subshifts also satisfies a recognizability property (\cref{RecognizabilityFactors} and \cref{RecognizabilitySecondStep}). Then, we will present here several consequences of the recognizability property: there exists a finite number of orbits in $X_{\zeta}$ which are invariant by the substitution map (\cref{FinitelyManyInvariantOrbits}) and we determine the maximal equicontinuous factor of substitutive subshifts and their aperiodic symbolic factors (\cref{MaximalEquiContinuousFactorMultidimensionalSubstitution}). Thanks to these last descriptions, we get that any aperiodic symbolic factor of a substitutive subshift is conjugate to a substitutive subshift (\cref{FactorConjugateSubstitution}).

\subsection{The repetitivity function of a substitutive subshift}\label{RepetitivityFunction}

In order to prove that the recognizability property is satisfied, we study first the repetitivity function of a substitutive subshift. Let $\zeta$ be an aperiodic primitive constant-shape substitution and assume that $x\in X_{\zeta}$ is a fixed point of the substitution. The minimality property implies that the substitutive subshift is \emph{repetitive}, i.e., for every pattern $\texttt{p}\sqsubseteq x$ there is a radius $r>0$ such that for every ${\bm n}\in \Z^{d}$, the ball $B({\bm n},r)$ contains an occurrence of $\texttt{p}$ in $x$. The \emph{repetitivity function} is the map $M_{X_{\zeta}}:\R_{+}\to\R_{+}$ defined for $r>0$ as the smallest radius such that every ball $B({\bm n},M_{X_{\zeta}}(r))$ contains an occurrence in $x$ of every pattern with $\diam(\supp(\texttt{p}))\leq r$. We say that the substitution is \emph{linearly recurrent} or \emph{linearly repetitive} if the repetitivity function has a linear growth, i.e., there exists $C>0$ such that $M_{X_{\zeta}}(r)\leq C\cdot r$. It is well known that aperiodic primitive one-dimensional substitutions are linearly recurrent \cite{durand2000linearly}, but in the multidimensional case this is no longer true, as we can see in \cref{ExampleNonLinearMultidimensionalSubstitution}.

\begin{example}[A non-linearly repetitive  constant-shape substitution]\label{ExampleNonLinearMultidimensionalSubstitution}
	\stepcounter{sigmavariable}
	
	Consider the block substitution $\sigma_{\thesigmavariable}$, given by $L_{\sigma_{\thesigmavariable}}=\left(\begin{array}{cc}
		2 & 0 \\ 0 & 3
	\end{array}\right)$ and $F_{1}^{\sigma_{\thesigmavariable}}=\llbracket0,1\rrbracket\times \llbracket0,2\rrbracket$ defined by
	
	$$\begin{array}{lllllllllllllllll}
		\sigma_{\thesigmavariable}:& \multicolumn{1}{c}{} & \multicolumn{1}{c}{} & \multicolumn{1}{c}{b} & \multicolumn{1}{c}{c} & \multicolumn{1}{c}{} & \multicolumn{1}{c}{} & \multicolumn{1}{c}{} & \multicolumn{1}{c}{} & \multicolumn{1}{c}{a} & \multicolumn{1}{c}{c} & \multicolumn{1}{c}{} & \multicolumn{1}{c}{} & \multicolumn{1}{c}{} & \multicolumn{1}{c}{} & \multicolumn{1}{c}{c} & \multicolumn{1}{c}{b} \\ 
		& \multicolumn{1}{c}{a} & \multicolumn{1}{c}{\mapsto} & \multicolumn{1}{c}{c} & \multicolumn{1}{c}{b} & \multicolumn{1}{c}{\quad} & \multicolumn{1}{c}{\quad} & \multicolumn{1}{c}{b} & \multicolumn{1}{c}{\mapsto} & \multicolumn{1}{c}{c} & \multicolumn{1}{c}{b} & \multicolumn{1}{c}{\quad} & \multicolumn{1}{c}{\quad} & \multicolumn{1}{c}{c} & \multicolumn{1}{c}{\mapsto} & \multicolumn{1}{c}{a} & \multicolumn{1}{c}{c} \\ 
		& \multicolumn{1}{c}{} & \multicolumn{1}{c}{} & \multicolumn{1}{c}{a} & \multicolumn{1}{c}{b} & \multicolumn{1}{c}{} & \multicolumn{1}{c}{} & \multicolumn{1}{c}{} & \multicolumn{1}{c}{} & \multicolumn{1}{c}{b} & \multicolumn{1}{c}{c} & \multicolumn{1}{c}{} & \multicolumn{1}{c}{} & \multicolumn{1}{c}{} & \multicolumn{1}{c}{} & \multicolumn{1}{c}{c} & \multicolumn{1}{c}{b.} \\ 
	\end{array}
	$$
	
	For any $p\geq 1$, we consider the pattern $\texttt{w}_{p}=\sigma_{\thesigmavariable}^{p}(a)|_{\llbracket0,2^{p-1}\rrbracket\times\{0\}}\in \mathcal{L}_{\llbracket0,2^{p-1}\rrbracket\times \{0\}}(X_{\zeta})$. Observe that the pattern $ab$ appears horizontally only in the inferior corner of $\sigma_{\thesigmavariable}(a)$ between the three images of the substitution. So, a direct induction enables to prove that if for some $p\geq 1$ the pattern $\texttt{w}_{p}$ occurs in $\sigma_{\thesigmavariable}^{p}\left(\begin{array}{cc}\alpha & \beta\\ \gamma & \delta\end{array}\right)$, for $\alpha,\beta,\gamma,\delta\in \{a,b,c,\varepsilon\}$ (where $\varepsilon$ denotes the empty pattern), then one of the letters must be $a$. Moreover, $\texttt{w}_{p}$ only appears in the lower left corner of the pattern $\sigma_{\thesigmavariable}^{p}(a)$. These properties imply that there is only one occurrence of $\texttt{w}_{p}$ in $\sigma_{\thesigmavariable}^{p}(\texttt{w}_{p})$, which is in the lower left corner of $\sigma_{\thesigmavariable}^{p}(\texttt{w}_{p})$ as seen in \cref{zetapwp}:
	
	\begin{figure}[H]
		\centering
		\begin{tikzpicture}[scale=0.6]
			\node(b1) at (7, 6.5) [scale=1] {$\sigma_{\thesigmavariable}^{p}(\texttt{w}_{p})$};
			
			\node(b3) at (9,5.5) [scale=1]{$\cdots$};
			
			\node(b3) at (9,4.5) [scale=1]{$\cdots$};
			\node(b3) at (9,3.5) [scale=1]{$\cdots$};
			\node(b3) at (9,2.5) [scale=1]{$\cdots$};
			\node(b3) at (9,1.5) [scale=1]{$\cdots$};
			
			\node(b3) at (9,0.5) [scale=1]{$\cdots$};
			
			\path[thin](0,0) edge (12,0);
			\path[thin](10.,0.) edge (10.,6.);
			\path[thin](12.,0.) edge (12.,6.);
			\path[thin](12.,6.) edge (0.,6.);
			\path[thin](0.,6.) edge (0.,0.);
			\path[thin](2.,6.) edge (2.,0.);
			
			\node(b4) at (1,0.7) [scale=1]{$\texttt{w}_{p}$};
			\path[thin] (0,0.5) edge (2,0.5);
			
			\draw (8.3,3) arc (0:360:3cm);
			
			\draw [decoration={brace,raise=5pt},decorate,line width=1pt]  
			(0,0) -- (0,6);
			
			\node(b5) at (-0.7, 3)[scale=1]{$3^{p}$};
			
			\draw [decoration={brace,raise=5pt},decorate,line width=1pt]  
			(12,0) -- (0,0);
			
			\node(b5) at (6, -0.7)[scale=1]{$4^{p}$};
			
			\node (b6) at (1,6.5)[scale=1]{$\sigma_{\thesigmavariable}^{p}(a)$};
			
			\node (b7) at (3,6.5)[scale=1]{$\sigma_{\thesigmavariable}^{p}(b)$};
			
			\path[thin](4,6) edge (4,0);
		\end{tikzpicture}
		\caption{Decomposition of $\sigma_{\thesigmavariable}^{p}(\texttt{w}_{p})$.}
		\label{zetapwp}
	\end{figure}

	Then, there is a ball of radius $3^{p}/2$ in the support of $\sigma_{\thesigmavariable}^{p}(\texttt{w}_{p})$ with no occurrences of $\texttt{w}_{p}$. Since this is true for any $p$,  this implies that this substitution is not linearly recurrent. 
\end{example}

However, the repetitivity function has at most polynomial growth, with the exponent depending only on the expansion matrix of the substitution.

\begin{lemma}\label{GrowthRepetititvtyFunction}
	Let $\zeta$ be an aperiodic primitive  constant-shape substitution. Then, the repetitivity function $M_{X_{\zeta}}(r)$ is $\mathcal{O}\left(r^{-\frac{\log(\Vert L_{\zeta}\Vert)}{\log\left(\Vert L_{\zeta}^{-1}\Vert\right)}}\right)$.
\end{lemma}

\begin{proof}
	Let $x\in X_{\zeta}$ be a fixed point of $\zeta$. Using $A=\{{\bm 0}\}$ and $F=F_{1}^{\zeta}+F_{1}^{\zeta}$ in \cref{SetDforFiniteInvariantOrbit} we get a finite set $C_{1}\subseteq \Z^{d}$ such that for every $n\geq 0$, $F_{n+1}^{\zeta}+F_{n+1}^{\zeta}\subseteq L_{\zeta}^{n+1}(C_{1})+F_{n+1}^{\zeta}$.
	
	\begin{claim}\label{ClaimFOrIterationsOfRadius}
		For any $r>0$, we have that $L_{\zeta}^{n}(B({\bm 0},r))\cap \Z^{d}\subseteq L_{\zeta}^{n}(B({\bm 0}, r+\Vert \id-L_{\zeta}^{-1}\Vert \cdot \Vert F_{1}^{\zeta}\Vert)\cap \Z^{d})+F_{n}^{\zeta}$.
	\end{claim}

	\begin{proof}[Proof of \cref{ClaimFOrIterationsOfRadius}]
		Set ${\bm n}\in L_{\zeta}^{n}(B({\bm 0},r))\cap \Z^{d}$. Then, there exists ${\bm m}_{1}\in \Z^{d}$ and ${\bm f}\in F_{n}^{\zeta}$ such that ${\bm m}=L_{\zeta}^{n}({\bm m}_{1})+{\bm f}$, which implies that $\Vert {\bm m}_{1}+L_{\zeta}^{-n}({\bm f})\Vert \leq r$. We then get that 
		$$\Vert {\bm m}_{1}\Vert\leq r+\Vert L_{\zeta}^{-n}({\bm f})\Vert \leq r+\Vert \id-L_{\zeta}^{-1}\Vert \cdot \Vert F_{1}^{\zeta}\Vert.$$
	\end{proof}

	Consider $R_{1}>0$ as the maximum radius such that for any $F_{1}^{\zeta}+(B({\bm 0},R_{1})\cap \Z^{d})\subseteq L_{\zeta}(C_{1})+F_{1}^{\zeta}$. Set $R_{2}=R_{1}+\Vert \id-L_{\zeta}^{-1}\Vert \cdot \Vert F_{1}^{\zeta}\Vert$. Note that, by the definition of $C_{1}$ and \cref{ClaimFOrIterationsOfRadius}, for every $n>0$, \begin{equation}
		F_{n}^{\zeta}+(L_{\zeta}^{n}(B({\bm 0},R_{1}))\cap \Z^{d})\subseteq L_{\zeta}^{n}(C_{1}+(B({\bm 0},R_{2})\cap \Z^{d}))+F_{n}^{\zeta}.
	\end{equation}
	
	Consider $T=M_{X_{\zeta}}(\diam(C_{1}+(B({\bm 0},R_{2})\cap \Z^{d})))$ and for any $n>0$, let $T_{n}>0$ be such that every ball of radius $T_{n}$ contains an occurrence of any pattern of the form $\zeta^{n}(\texttt{w})$, with $\texttt{w}\in \mathcal{L}_{C_{1}+(B({\bm 0},R_{2})\cap \Z^{d})}(X_{\zeta})$.
	
	\begin{claim}\label{ClaimFirstForRepetitivity}
		We have that $T_{n}\leq \Vert L_{\zeta}\Vert^{n}(T+1/2)$.
	\end{claim}
	
	\begin{proof}[Proof of \cref{ClaimFirstForRepetitivity}]
		We recall that the lattice $L_{\zeta}^{n}(\Z^{d})$ is $\Vert L_{\zeta}\Vert^{n}/2$-relatively dense, i.e., any ball of radius $\Vert L_{\zeta}\Vert^{n}/2$ contains an element of $L_{\zeta}^{n}(\Z^{d})$. Set ${\bm n}\in \Z^{d}$. Consider ${\bm m}=L_{\zeta}^{n}({\bm p})\in \Z^{d}$ such that $\Vert {\bm n}-{\bm m}\Vert \leq \Vert L_{\zeta}\Vert^{n}/2$. Then, the ball $B({\bm p},T)\cap \Z^{d}$ contains an occurrence for any pattern $\texttt{w}\in \mathcal{L}_{C_{1}+(B({\bm 0},R_{2})\cap\Z^{d})}(X_{\zeta})$. Since $x$ is a fixed point, the set $L_{\zeta}^{n}(B({\bm m},T))\cap \Z^{d}$ contains an occurrence of any pattern of the form $\zeta^{n}({\bm w})$, with $\texttt{w}\in \mathcal{L}_{C_{1}+(B({\bm 0},R_{2})\cap \Z^{d})}(X_{\zeta})$. The fact that $L_{\zeta}^{n}(B({\bm m},T))\subseteq B({\bm m},\Vert L_{\zeta}\Vert ^{n}T)$ and the Cauchy-Schwarz inequality let us conclude that the ball $B({\bm n}, \Vert L_{\zeta}\Vert^{n}(T+1/2))\cap \Z^{d}$ contains an occurrence of any pattern $\zeta^{n}(\texttt{w})$, for $\texttt{w}\in \mathcal{L}_{C_{1}+(B({\bm 0},R_{2})\cap \Z^{d})}(X_{\zeta})$.
	\end{proof}

	Let $r>0$,  $\texttt{p}\in \mathcal{L}(X_{\zeta})$ a pattern such that $\diam(\supp(\texttt{p})\leq r$ and ${\bm n}\in \Z^{d}$ be an occurrence of $\texttt{p}$. Consider $n\geq 0$ such that $R_{1}/\Vert L_{\zeta}^{-1}\Vert^{n-1}\leq \diam(\supp(\texttt{p}))\leq R_{1}/\Vert L_{\zeta}^{-1}\Vert^{n}$. Then, there exists ${\bm n}_{1}\in \Z^{d}$ and ${\bm f}\in F_{n}^{\zeta}$ such that ${\bm n}=L_{\zeta}^{n}({\bm n}_{1})+{\bm f}$. Set $\texttt{w}=x|_{{\bm n}_{1}+C_{1}+(B({\bm 0},R_{2})\cap \Z^{d})}=\texttt{w}$. Noting that $B({\bm 0},R_{1}/\Vert L_{\zeta}^{-1}\Vert)\subseteq L_{\zeta}^{k}(B({\bm 0},R_{1}))$, by \cref{ClaimFOrIterationsOfRadius} we have that $\texttt{p}\sqsubseteq \zeta^{n}(\texttt{w})$. By \cref{ClaimFirstForRepetitivity}, any ball of radius $\Vert L_{\zeta}\Vert^{n}(T+1/2)$ contains an occurrence of $\zeta^{n}(\texttt{w})$ in $x$, so it also contains an occurrence of \texttt{p}. Set $t=-\log(\Vert L_{\zeta}\Vert)/\log(\Vert L_{\zeta}^{-1}\Vert)$. Hence
	$$\dfrac{M_{X}(r)}{r^{t}}\leq \dfrac{\Vert L_{\zeta}^{-1}\Vert^{t(n-1)}\Vert L_{\zeta}\Vert^{n}(T+1/2)}{R_{1}^{t}}=\Vert L_{\zeta}\Vert(T+1/2)R_{1}^{-t}=:C.$$
	
	We finally conclude that $M_{X_{\zeta}}(r)\leq Cr^{t}$, with $t=-\log(\Vert L_{\zeta}\Vert)/\log(\Vert L_{\zeta}^{-1}\Vert)$.
\end{proof}

\begin{remark} The following statements can be easily verified.
	\begin{enumerate}
		\item In the case of a symmetric expansion matrix for the substitution, a bound for $M_{X_{\zeta}}(R)$ is given by its eigenvalues: the repetitivity function $M_{X_{\zeta}}(R)$ is $\mathcal{O}\left(R^{(\log(|\lambda_{1}|))/(\log(|\lambda_{d}|))}\right)$, where $|\lambda_{1}|$, $|\lambda_{d}|$ are the maximum and minimum of the absolute values of the eigenvalues of $L_{\zeta}$, respectively.
		
		\item In the case of a self-similar substitution (where the expansion matrix satisfies $\Vert L_{\zeta}({\bm t})\Vert=\lambda\Vert {\bm t}\Vert$, for some $\lambda >0$), the norm matrix satisfies $\Vert L_{\zeta}\Vert=(\Vert L_{\zeta}^{-1}\Vert)^{-1} = \lambda$, so the repetitivity function has a sublinear growth. Hence self-similar substitutions are linearly recurrent, as it was already proved in \cite{solomyakrecognizability}.
		
		\item The sufficiency of the previous case is not true, there exist  constant-shape substitutions that are not self-similar, but are linearly recurrent.
	\end{enumerate}
\end{remark} 

\subsection{Recognizability of a  constant-shape substitution and their aperiodic symbolic factors}\label{SectionRecognizabilityProperty} The substitution $\zeta$ seen as a map from $X_{\zeta}$ to $\zeta(X_{\zeta})$ is continuous. Moreover, when the substitution is aperiodic and primitive, this map is actually a homeomorphism. This property is satisfied, even in the case where the substitution $\zeta:\A\to\A^{F_{1}^{\zeta}}$ is not \emph{injective on letters}, i.e., when there exists a pair of letters $a,b\in \A$ such that $\zeta(a)=\zeta(b)$. This comes from the notion of \emph{recognizability} of a substitution. 

\begin{definition}
	Let $\zeta$ be a primitive substitution and $x\in X_{\zeta}$ be a fixed point. We say that $\zeta$ is \emph{recognizable on $x$} if there exists some constant $R>0$ such that for all ${\bm i}, {\bm j}\in \Z^{d}$,
	$$x|_{B(L_{\zeta}({\bm i}),R)\cap \Z^{d}}=x|_{B({\bm j},R)\cap \Z^{d}} \implies (\exists {\bm k}\in \Z^{d}) (({\bm j}=L_{\zeta}({\bm k}))\wedge (x_{{\bm i}}=x_{{\bm k}})).$$
\end{definition}

This implies that for every $x\in X_{\zeta}$ there exist a unique $x'\in X_{\zeta}$ and a unique ${\bm j} \in F_{1}^{\zeta}$ such that $x=S^{{\bm j}}\zeta(x')$. With this, the set $\zeta(X_{\zeta})$ is a clopen subset of $X_{\zeta}$ and $\{S^{{\bm j}}\zeta(X_{\zeta})\colon\ {\bm j} \in F_{1}^{\zeta}\}$ is a clopen partition of $X_{\zeta}$ (in \cite[Section 5.6]{queffelec2010substitution} can be found a summary of these statements for the one-dimensional case. The proofs can be found in \cite[Proposition 5.17]{queffelec2010substitution}, \cite[Proposition 5.20]{queffelec2010substitution} and \cite[Corollary 7.2.3]{fogg2002substitutions} that can be easily adapted to our case). These properties are also true for the iterations $\zeta^{n}$, for all $n>0$. The recognizability property was first proved for any aperiodic primitive substitution by B. Moss\'e in \cite{mosse1992puissances} for the one-dimensional case, and in the multidimensional case by B. Solomyak in \cite{solomyakrecognizability} proved a recognizability property for aperiodic self-affine tilings with $\R^{d}$-actions.

In this section we will prove it for aperiodic symbolic factors of substitutive subshifts $(X_{\zeta},S,\Z^{d})$ (\cref{RecognizabilityFactors}). Like in Moss\'e's original proofs, the proof of the recognizability will go in two steps. This property will allows us to determine the maximal equicontinuous factors of aperiodic symbolic factors of substitutive subshifts. 

For the first step of the proof of the recognizability property we use the following propositions. The first one, called repulsion property, is a direct consequence of the growth of the repetitivity function. The proof is an adaptation of the proof of Lemma 2.4 in \cite{solomyakrecognizability}.

\begin{proposition}[Repulsion property]\label{RepulsionProperty} Let $\zeta$ be an aperiodic primitive constant-shape substitution, $x\in X_{\zeta}$ and set $t=-\log(\Vert L_{\zeta}\Vert)/\log(\Vert L_{\zeta}^{-1}\Vert)$. Then, there exists $N>0$ such that, if a pattern $\texttt{p}\sqsubseteq x$ with $B({\bm s},r)\cap \Z^{d}\subseteq \supp(\texttt{p})$, for some ${\bm s}\in \Z^{d}$ and $r>0$ has two occurrences ${\bm j}_{1}, {\bm j}_{2}\in \Z^{d}$ in $x$ such that $r\geq N\Vert {\bm j}_{1}-{\bm j}_{2}\Vert^{t}$, then ${\bm j}_{1}$ is equal to ${\bm j}_{2}$.	
\end{proposition}

\begin{figure}[H]
	\centering
	\begin{tikzpicture}[scale=0.3]
		\node(b1) at (8, 8.5) [scale=1] {$\texttt{p}$};
		
		\path[thin](0,0) edge (16,0);
		\path[thin](16.,0.) edge (16.,8.);
		\path[thin](16.,8.) edge (0.,8.);
		\path[thin](0.,8.) edge (0.,0.);
		
		\node(b8) at (10, 6.5) [scale=1,blue] {$\texttt{p}$};
		
		\path[thin,blue](1,-1) edge (17,-1);
		\path[thin,blue](17.,-1.) edge (17.,7);
		\path[thin,blue](17.,7.) edge (1.,7.);
		\path[thin,blue](1.,7.) edge (1.,-1.);

		\draw[->,thin] (0,0) -- (1,-1);
		
		\node(b15) at (-0.3,-0.3)[scale=1]{${\bm j}_{1}$};
		
		\node(b16) at (0.7,-1.3)[scale=1]{${\bm j}_{2}$};	
		
		\draw[red] (9,4) arc (0:360:3.5cm);
		
		\node(b17) at (5.5,4)[scale=1,red]{$B({\bm s},Nr_{1}^{t})$};
		
		\draw[brown] (1.6,-0.3) arc (0:360:1.9cm);
		
		\node(b18) at (-5,-0.4)[scale=1,brown]{$B({\bm j_{1}}, r_{1})$};		
	\end{tikzpicture}
	\caption{Illustration of a forbidden situation given by the repulsion property (\cref{RepulsionProperty}).}
	\label{illustrationrepulsionproperty}
\end{figure}

\begin{proof}
	For any ${\bm k}\in \Z^{d}$, we consider the pattern $\texttt{w}_{{\bm k}}=x|_{{\bm k}\cup ({\bm k}+{\bm j}_{2}-{\bm j}_{1})}$. Note that $\diam(\supp(\texttt{w}_{\bm k}))=\Vert {\bm j}_{2}-{\bm j}_{1}\Vert$. We are going to prove that the statement is true for $N>0$ such that $M_{X_{\zeta}}(\Vert {\bm j}_{2}-{\bm j}_{1}\Vert)\leq N\Vert {\bm j}_{2}-{\bm j}_{1}\Vert^{t}\leq r$. By \cref{GrowthRepetititvtyFunction} such $N>0$ exists. Indeed, since $r\geq M_{X_{\zeta}}(\Vert {\bm j}_{2}-{\bm j}_{1}\Vert)$, then the support of the pattern $\texttt{p}$ contains an occurrence in $x$ of any pattern $\texttt{w}_{{\bm k}}$. Since ${\bm j}_{1}$ is an occurrence of $\texttt{p}$ in $x$, we get that for any ${\bm k}\in \Z^{d}$, there exists ${\bm n}_{\bm k}\in \Z^{d}$ such that $x_{{\bm j}_{1}+{\bm n}_{{\bm k}}+{\bm k}}=x_{{\bm k}}$ and $x_{{\bm j}_{1}+{\bm n}_{{\bm k}}+({\bm j}_{2}-{\bm j}_{1}+{\bm k})}=x_{{\bm j}_{2}-{\bm j}_{1}+{\bm k}}$, which implies that $x_{{\bm j}_{2}+{\bm n}_{{\bm k}}+{\bm k}}=x_{{\bm j}_{2}-{\bm j}_{1}+{\bm k}}$. The fact that ${\bm j}_{2}$ is an occurrence of $\texttt{p}$ in $x$ let us conclude that for any ${\bm k}\in \Z^{d}$, $x_{{\bm j}_{2}-{\bm j}_{1}+{\bm k}}$ is equal to $x_{{\bm k}}$, i.e., ${\bm j}_{2}-{\bm j}_{1}$ is a period of $x$. Since $\zeta$ is aperiodic, we conclude that ${\bm j}_{1}={\bm j}_{2}$.
\end{proof}

As mentioned in \cite{durand2016constant}, a key argument for the proof of the recognizability property, is to prove the existence of an integer $p>0$ such that for all $a,b\in \A$, if $\zeta^{n}(a)=\zeta^{n}(b)$ for some $n>0$, then $\zeta^{p}(a)=\zeta^{p}(b)$. This was proved in the one-dimensional case in \cite{ehrenfeucht1978simplifcations}. We extend this result in our context.

\begin{proposition}\label{PropositionForNonInyectiveSubstitutions}
	Let $\zeta$ be a constant-shape substitution over an alphabet $\A$. For any patterns  $\texttt{w}_{1},\texttt{w}_{2}\in \mathcal{L}(X_{\zeta})$, we have that
	$$\zeta^{|\A|-1}(\texttt{w}_{1})\neq \zeta^{|\A|-1}(\texttt{w}_{2}) \implies \forall n\geq 0, \zeta^{n}(\texttt{w}_{1})\neq \zeta^{n}(\texttt{w}_{2}).$$
\end{proposition}

\begin{proof}
	We proceed by induction on $m=|\A|$. The statement is obviously true when $|\A|=1$. Assume now that the statement is true for $|\B|\leq m-1$.
	
	Suppose that $\texttt{w}_{1}\neq \texttt{w}_{2}$ (if $\texttt{w}_{1}=\texttt{w}_{2}$ there is nothing to prove). Then $\zeta$ is not injective on letters. Consider the equivalence relation  $a\sim b$ in $\A$, such that $a\sim b$ if $\zeta(a)=\zeta(b)$ and consider $\pi_{\sim}:\A\to\A/\sim$ the canonical projection. We define a morphism $\sigma:\A/\sim \to \A^{F}$ given by $\sigma([a])=\zeta(a)$. This morphism is well-defined by the very definition of $\sim$. Note that $\zeta=\sigma\circ \pi_{\sim}$. Then
	$$(\exists n\geq 1) \zeta^{n}(\texttt{w}_{1})=\zeta^{n}(\texttt{w}_{2}) \iff (\pi_{\sim}\sigma)^{n}\pi_{\sim}(\texttt{w}_{1})=(\pi_{\sim}\sigma)^{n}\pi_{\sim}(\texttt{w}_{2}),$$
	
	\noindent where $\pi_{\sim}\sigma:\A/\sim \to (\A/\sim)^{F}$ is the substitution with alphabet $\A/\sim$ such that for any ${\bm f}\in F_{1}^{\zeta}$ and $[a]\in \A/\sim$. $\pi_{\sim}\sigma([a])_{{\bm f}}$ is equal to $[\zeta(a)_{{\bm f}}]$. Note that $(\exists n\geq 1) \zeta^{n}(\texttt{w}_{1})=\zeta^{n}(\texttt{w}_{2})$ if and only if $(\exists n\geq 0) (\pi_{\sim}\sigma)^{n}\pi_{\sim}(\texttt{w}_{1})=(\pi_{\sim}\sigma)^{n}\pi_{\sim}(\texttt{w}_{2})$. Since the cardinality of $\A/\sim$ is smaller than $m-1$, by the inductive assumption we have that 
	$$(\exists n\geq 1) \zeta^{n}(\texttt{w}_{1})=\zeta^{n}(\texttt{w}_{2}) \iff (\pi_{\sim}\sigma)^{m-2}\pi_{\sim}(\texttt{w}_{1})=(\pi_{\sim}\sigma)^{m-2}\pi_{\sim}(\texttt{w}_{2}) \iff \pi_{\sim}\zeta^{m-2}(\texttt{w}_{1})=\pi_{\sim}\zeta^{m-2}(\texttt{w}_{2}).$$
	
	Now, $\pi_{\sim}\zeta^{m-2}(\texttt{w}_{1})=\pi_{\sim}\zeta^{m-2}(\texttt{w}_{2})$ implies that $\sigma\pi_{\sim}\zeta^{m-2}(\texttt{w}_{1})=\sigma\pi_{\sim}\zeta^{m-2}(\texttt{w}_{2})$, which is equivalent to $\zeta^{m-1}(\texttt{w}_{1})=\zeta^{m-1}(\texttt{w}_{2})$.
\end{proof}

We then proceed to prove the first step of the recognizability property for aperiodic symbolic factors of constant-shape substitutions. As proved in \cref{AllFactorAre0BlockMap}, we may assume that an aperiodic symbolic factor of a substitutive subshift is induced by a letter-to-letter map.

\begin{proposition}[First step of the recognizability property of aperiodic symbolic factors of substitutive subshifts]\label{RecognizabilityFactors}
	Let $\A,\ \mathcal{B}$ be two finite alphabets, $\zeta$ be an aperiodic primitive constant-shape substitution from the alphabet $\A$ and let $\T:\A\to \mathcal{B}$ be a map such that $(\tau(X_{\zeta}),S,\Z^{d})$ is an aperiodic subshift. Let $x\in X_{\zeta}$ be a fixed point of $\zeta$ and $y=\tau(x)$. Then, there exists $R>0$ such that if $y|_{{\bm i}+B({\bm 0},R)}=y|_{{\bm j}+B({\bm 0},R)}$ and ${\bm i}\in L_{\zeta}(\Z^{d})$, then ${\bm j}\in L_{\zeta}(\Z^{d})$.
\end{proposition} 

\begin{proof} By \cref{SetDforFiniteInvariantOrbit} there exists two finite sets $C,D\subseteq \Z^{d}$ such that for every $n>0$, $F_{n}^{\zeta}+F_{n}^{\zeta}\subseteq L_{\zeta}^{n}(C)+F_{n}^{\zeta}$ and $F_{n}^{\zeta}-F_{n}^{\zeta}\subseteq L_{\zeta}^{n}(D)+F_{n}^{\zeta}$. Set $r=N\Vert L_{\zeta}^{-1}\Vert^{|\A|-1}\Vert L_{\zeta}\Vert^{t(|\A|-1)}\Vert C+C+D\Vert^{t}+\Vert \id-L_{\zeta}^{-1}\Vert\cdot \vert F_{1}^{\zeta}\Vert$, where $t=-\log(\Vert L_{\zeta}\Vert)/\log(\Vert L_{\zeta}^{-1}\Vert)$ and $N$ is one given by \cref{RepulsionProperty}. We prove the statement by contradiction. Assume the contrary, then for every $n>0$ there exist ${\bm i}_{n}\in L_{\zeta}(\Z^{d})$, ${\bm j}_{n}\notin L_{\zeta}(\Z^{d})$ such that $$y|_{{\bm i}_{n}+L_{\zeta}^{n}(D+B({\bm 0},r)\cap \Z^{d})+F_{n}^{\zeta}}=y|_{{\bm j}_{n}+L_{\zeta}^{n}(D+B({\bm 0},r)\cap \Z^{d})+F_{n}^{\zeta}}.$$
	
For any $n>0$, we consider ${\bm a}_{n}\in \Z^{d}$ and ${\bm f}_{n}\in F_{n}^{\zeta}$ such that $L_{\zeta}^{n}({\bm a}_{n})+{\bm f}_{n}={\bm i}_{n}$. Note that $L_{\zeta}^{n}({\bm a}_{n})+L_{\zeta}^{n}(B({\bm 0},r)\cap \Z^{d})+F_{n}^{\zeta}\subseteq {\bm i}_{n}+L_{\zeta}^{n}(D+B({\bm 0},r)\cap \Z^{d})+F_{n}^{\zeta}$. Let $\texttt{u}_{n}\in \mathcal{L}_{B({\bm 0},r)\cap \Z^{d}}(X_{\zeta})$ be such that
$$y|_{{\bm i}_{n}+L_{\zeta}^{n}(B({\bm 0},r)\cap \Z^{d})+F_{n}^{\zeta}}=\tau(\zeta^{n}(\texttt{u}_{n}))=y|_{{\bm j}_{n}+L_{\zeta}^{n}(B({\bm 0},r)\cap \Z^{d})+F_{n}^{\zeta}}.$$

	\begin{figure}[H]
		\centering
		\begin{tikzpicture}[scale=0.3]
			\node(b1) at (5, 6.5) [scale=1] {$\tau(\zeta^{n}(\texttt{u}_{n}))$};
			
			\node(b2) at (5,5.5) [scale=1]{$\cdots$};
			
			\node(b3) at (5,4.5) [scale=1]{$\cdots$};
			\node(b4) at (5,3.5) [scale=1]{$\cdots$};
			\node(b5) at (5,2.5) [scale=1]{$\cdots$};
			\node(b6) at (5,1.5) [scale=1]{$\cdots$};
			
			\node(b7) at (5,0.5) [scale=1]{$\cdots$};
			
			\path[thin](0,0) edge (10,0);
			\path[thin](10.,0.) edge (10.,6.);
			\path[thin](10.,6.) edge (0.,6.);
			\path[thin](0.,6.) edge (0.,0.);
			\path[thin](2.,6.) edge (2.,0.);
			\path[thin](8.,6.) edge (8.,0.);
			\path[thin](0.,1.) edge (10.,1.);
			\path[thin](0.,2.) edge (10.,2.);
			\path[thin](0.,4.) edge (10.,4.);
			\path[thin](0.,5.) edge (10.,5.);
			
			\node(b8) at (25, 1.5) [scale=1,blue] {$\tau(\zeta^{n}(\texttt{u}_{n}))$};
			
			\node(b9) at (25,0.5) [scale=1]{$\cdots$};
			
			\node(b10) at (25,-0.5) [scale=1]{$\cdots$};
			\node(b11) at (25,-1.5) [scale=1]{$\cdots$};
			\node(b12) at (25,-2.5) [scale=1]{$\cdots$};
			\node(b13) at (25,-3.5) [scale=1]{$\cdots$};
			
			\node(b14) at (25,-4.5) [scale=1]{$\cdots$};
			
			\path[thin,blue](20,-5) edge (30,-5);
			\path[thin,blue](30.,-5.) edge (30.,1);
			\path[thin,blue](30.,1.) edge (20.,1.);
			\path[thin,blue](20.,1.) edge (20.,-5.);
			\path[thin,blue](22.,1.) edge (22.,-5.);
			\path[thin,blue](28.,1.) edge (28.,-5.);
			\path[thin,blue](20.,-4.) edge (30.,-4.);
			\path[thin,blue](20.,-3) edge (30.,-3.);
			\path[thin,blue](20.,-1) edge (30.,-1.);
			\path[thin,blue](20.,0) edge (30.,0.);
			
			\draw[->,thin] (0,0) -- (20,-5);
			
			\node(b15) at (6,3)[scale=1]{${\bm i}_{n}$};
			\node(b15) at (5.6,2.8)[scale=0.5]{$\times$};
			
			\node(b16) at (26,-1.8)[scale=1]{${\bm j}_{n}$};
			\node(b15) at (25.6,-2.2)[scale=0.5]{$\times$};
			
			\node(b17) at (11,-2.3)[scale=1,rotate=346]{${\bm j}_{n}-{\bm i}_{n}$};				
		\end{tikzpicture}
		\caption{Illustration of the pattern $\tau(\zeta^{n}(\texttt{u}_{n}))$ around the coordinates ${\bm i}_{n}$ (black) and ${\bm j}_{n}$ (blue).}
		\label{tauzetauandtauzetav}
	\end{figure}	
	
	Note that ${\bm j}_{n}-{\bm f}_{n}$ is not necessarily in $L_{\zeta}^{n}(\Z^{d})$, so we set ${\bm b}_{n}\in \Z^{d}$ and ${\bm g}_{n}\in F_{n}^{\zeta}$ such that $L_{\zeta}^{n}({\bm b}_{n})+{\bm g}_{n}={\bm j}_{n}-{\bm f}_{n}$. Now, for any $n>0$ and $E\subseteq \Z^{d}$ we define the following sets
	$$\begin{array}{cl}
		G_{n,E} & =\{{\bm n}\in \Z^{d}\colon (L_{\zeta}^{n}({\bm n})+F_{n}^{\zeta})\cap ({\bm j}_{n}-{\bm f}_{n})+L_{\zeta}^{n}(E)+F_{n}^{\zeta}\neq \emptyset\}\\
		H_{n,E} & = \{{\bm n}\in \Z^{d}\colon (L_{\zeta}^{n}({\bm n})+F_{n}^{\zeta})\subseteq ({\bm j}_{n}-{\bm f}_{n})+L_{\zeta}^{n}(E)+F_{n}^{\zeta}\}.
	\end{array}$$	
	
	Since $x=\zeta(x)$, there exist a pattern $\texttt{v}_{n}\in \mathcal{L}_{G_{n,B({\bm 0},r)\cap \Z^{d}}-{\bm b}_{n}}(X_{\zeta})$, with $L_{\zeta}^{n}({\bm b}_{n})$ being an occurrence of $\zeta^{n}(\texttt{v}_{n})$ in $x$, such that ${\bm j}_{n}+L_{\zeta}^{n}(B({\bm 0},r)\cap \Z^{d})\subseteq L_{\zeta}^{n}(G_{n,B({\bm 0},r)\cap \Z^{d}})+F_{n}^{\zeta}$. In particular, $\zeta^{n}(\texttt{u}_{n})\sqsubseteq \zeta^{n}(\texttt{v}_{n})$ (hence $\tau(\zeta^{n}(\texttt{u}_{n}))\sqsubseteq \tau(\zeta^{n}(\texttt{v}_{n}))$), as illustrated in \cref{zetavyzetau}:
	
	\begin{figure}[H]
		\centering
		\begin{tikzpicture}[scale=0.6]
			\node(b1) at (5, 6.5) [scale=1] {$\tau(\zeta^{n}(\texttt{v}_{n}))$};
			
			\node(b2) at (10.1, 3) [scale=1] {\textcolor{blue}{$\tau(\zeta^{n}(\texttt{u}_{n}))$}};
			
			\node(b3) at (5,5.5) [scale=1]{$\cdots$};
			
			\node(b4) at (5,4.5) [scale=1]{$\cdots$};
			\node(b5) at (5,3.5) [scale=1]{$\cdots$};
			\node(b6) at (5,2.5) [scale=1]{$\cdots$};
			\node(b7) at (5,1.5) [scale=1]{$\cdots$};
			
			\node(b8) at (5,0.5) [scale=1]{$\cdots$};
			
			\path[thin](0,0) edge (10,0);
			\path[thin](10.,0.) edge (10.,6.);
			\path[thin](10.,6.) edge (0.,6.);
			\path[thin](0.,6.) edge (0.,0.);
			\path[thin](2.,6.) edge (2.,0.);
			\path[thin](8.,6.) edge (8.,0.);
			\path[thin](0.,1.) edge (10.,1.);
			\path[thin](0.,2.) edge (10.,2.);
			\path[thin](0.,4.) edge (10.,4.);
			\path[thin](0.,5.) edge (10.,5.);
			\path[thin,blue](0.6,0.3) edge (8.6,0.3);
			\path[thin,blue](8.6,0.3) edge (8.6,5.3);
			\path[thin,blue](8.6,5.3) edge (0.6,5.3);
			\path[thin,blue](0.6,5.3) edge (0.6,0.3);
			\path[thin,blue](0.6,1.3) edge (8.6,1.3);
			\path[thin,blue](0.6,2.3) edge (8.6,2.3);
			\path[thin,blue](0.6,4.3) edge (8.6,4.3);
			\path[thin,blue](2.6,0.3) edge (2.6,5.3);
			\path[thin,blue](6.6,0.3) edge (6.6,5.3);
			
			\node(b10) at (5.5,3) [scale=1]{${\bm j}_{n}$};
			\node(b11) at (5.25,2.85)[scale=1]{$\times$};
		\end{tikzpicture}
		\caption{Illustration of the patterns $\tau(\zeta^{n}(\texttt{v}_{n}))$ and $\tau(\zeta^{n}(\texttt{u}_{n}))$ around ${\bm j}_{n}$.}
		\label{zetavyzetau}
	\end{figure}

	\begin{claim}\label{ClaimForRecognizability1}
		For any $n>0$, ${\bm b}_{n}\in H_{n,B({\bm 0},r)\cap \Z^{d}}$ and $(G_{n,(B({\bm 0},r)\cap \Z^{d})}-{\bm b}_{n})$ is a bounded set.
	\end{claim}

	\begin{proof}[Proof of \cref{ClaimForRecognizability1}]
		Let ${\bm h}_{n}\in F_{n}^{\zeta}$. Note that ${\bm b}_{n}\in H_{n,B({\bm 0},r)\cap \Z^{d}}$ if and only if there exists ${\bm r}_{n}\in B({\bm 0},r)\cap \Z^{d}$ and ${\bm l}_{n}\in F_{n}^{\zeta}$ such that $L_{\zeta}^{n}({\bm b}_{n})+{\bm h}_{n}=L_{\zeta}^{n}({\bm b}_{n})+{\bm g}_{n}+L_{\zeta}^{n}({\bm r}_{n})+{\bm l}_{n}$, i.e., ${\bm h}_{n}=L_{\zeta}^{n}({\bm r}_{n})+{\bm g}_{n}+{\bm l}_{n}$, which is true since $r\geq \Vert D\Vert$.
		
		Now, set ${\bm m}\in (G_{n,B({\bm 0},r)\cap \Z^{d}}-{\bm b}_{n})$, i.e., there exists ${\bm h}_{n}\in F_{n}^{\zeta}$, ${\bm r}_{n}\in B({\bm 0},r)\cap \Z^{d}$ and ${\bm l}_{n}\in F_{n}^{\zeta}$ such that $L_{\zeta}^{n}({\bm m})+{\bm h}_{n}=L_{\zeta}^{n}({\bm b}_{n})+{\bm g}_{n}+L_{\zeta}^{n}({\bm r}_{n})+{\bm l}_{n}$, i.e., ${\bm m}-{\bm b}_{n}={\bm r}_{n}+L_{\zeta}^{-n}({\bm g}_{n}+{\bm l}_{n}-{\bm h}_{n})$, which implies that $\Vert {\bm m}-{\bm b}_{n}\Vert \leq r+\Vert L_{\zeta}^{-n}({\bm g}_{n}+{\bm l}_{n}-{\bm h}_{n})\Vert$. Note that $\Vert L_{\zeta}^{-n}({\bm g}_{n}+{\bm l}_{n}-{\bm h}_{n})\Vert\leq 3\Vert \id-L_{\zeta}^{-1}\Vert \Vert F_{1}^{\zeta}\Vert$, which let us conclude that $\Vert {\bm m}-{\bm b}_{n}\Vert \leq r+3\Vert \id-L_{\zeta}^{-1}\Vert \Vert F_{1}^{\zeta}\Vert$.
	\end{proof}
	
	By the Pigeonhole principle, there are an infinite set $J\subseteq \mathbb{N}$, a finite set $G\Subset \Z^{d}$ such that $G=(G_{n,(B({\bm 0},r)\cap \Z^{d})}-{\bm b}_{n})$ and $H=(H_{n,(B({\bm 0},r)\cap \Z^{d})}-{\bm b}_{n})$ for all $n\in J$ and patterns $\texttt{u}\in \mathcal{L}_{B({\bm 0},r)\cap \Z^{d}}(X_{\zeta}),\texttt{v}\in \mathcal{L}_{G}(X_{\zeta})$ such that for all $n\in J$, $\texttt{u}_{n}=\texttt{u}$ and $\texttt{v}_{n}=\texttt{v}$. Consider  $\texttt{w}\in \mathcal{L}_{H}(X_{\zeta})$ such that $x|_{L_{\zeta}^{n}({\bm b}_{n})+L_{\zeta}^{n}(H)+F_{n}^{\zeta}}=\zeta^{n}(\texttt{w})$. Note that $L_{\zeta}^{n}({\bm b}_{n})$ is an occurrence of $\zeta^{n}(\texttt{w})$ in $x$ and set $\texttt{a}_{n}=x|_{({\bm j}_{n}-{\bm f}_{n}+(L_{\zeta}^{n}(B({\bm 0},r)+F_{n}^{\zeta}\cap \Z^{d}))\setminus(L_{\zeta}^{n}({\bm b}_{n})+L_{\zeta}^{n}(H)+F_{n}^{\zeta})}$ as illustrated in \cref{zetavyzetauyzetaw}:
	
	\begin{figure}[H]
		\centering
		\begin{tikzpicture}[scale=0.6]
			\node(b1) at (5, 6.5) [scale=1] {$\tau(\zeta^{n}(\texttt{v}))$};
			
			\node(b2) at (9.9, 3) [scale=1] {\textcolor{blue}{$\tau(\zeta^{n}(\texttt{u}))$}};
			
			\node(b9) at (0.9, 3) [scale=1] {\textcolor{red}{$\tau(\zeta^{n}(\texttt{w}))$}};
			
			\node(b3) at (5,5.5) [scale=1]{$\cdots$};
			
			\node(b4) at (5,4.5) [scale=1]{$\cdots$};
			\node(b5) at (5,3.5) [scale=1]{$\cdots$};
			\node(b6) at (5,2.5) [scale=1]{$\cdots$};
			\node(b7) at (5,1.5) [scale=1]{$\cdots$};

			\path[thin](0,0) edge (10,0);
			\path[thin](10.,0.) edge (10.,6.);
			\path[thin](10.,6.) edge (0.,6.);
			\path[thin](0.,6.) edge (0.,0.);
			\path[thin](2.,6.) edge (2.,0.);
			\path[thin](8.,6.) edge (8.,0.);
			\path[thin](0.,1.) edge (10.,1.);
			\path[thin](0.,2.) edge (10.,2.);
			\path[thin](0.,4.) edge (10.,4.);
			\path[thin](0.,5.) edge (10.,5.);
			\path[line width=1.5pt,red](2,1) edge (2,5);
			\path[line width=1.5pt,red](2,1) edge (8,1);
			\path[line width=1.5pt,red](2,5)edge(8,5);
			\path[line width=1.5pt,red](8,1)edge(8,5);
			
			\draw[fill=zzttqq,fill opacity=0.10000000149011612] (0.6,0.3) rectangle (2,5.3);
			\draw[fill=zzttqq,fill opacity=0.10000000149011612] (2,5) rectangle (8.6,5.3);
			\draw[fill=zzttqq,fill opacity=0.10000000149011612] (8,0.3) rectangle (8.6,5);
			\draw[fill=zzttqq,fill opacity=0.10000000149011612] (2,0.3) rectangle (8,1);
			
			\node(b8) at (9.4,0.6) [scale=1]{\textcolor{zzttqq}{$\tau(\texttt{a}_{n})$}};
			
			\path[thin,blue](0.6,0.3) edge (8.6,0.3);
			\path[thin,blue](8.6,0.3) edge (8.6,5.3);
			\path[thin,blue](8.6,5.3) edge (0.6,5.3);
			\path[thin,blue](0.6,5.3) edge (0.6,0.3);
			\path[thin,blue](0.6,1.3) edge (8.6,1.3);
			\path[thin,blue](0.6,2.3) edge (8.6,2.3);
			\path[thin,blue](0.6,4.3) edge (8.6,4.3);
			\path[thin,blue](2.6,0.3) edge (2.6,5.3);
			\path[thin,blue](6.6,0.3) edge (6.6,5.3);
			
			\node(b9) at (3.5,2.6)[scale=1]{$L_{\zeta}^{n}({\bm b}_{n})$};
			\node(b12) at (4.15,3)[scale=1]{$\times$};
			
			\node(b10) at (5.5,3) [scale=1]{${\bm j}_{n}$};
			\node(b11) at (5.25,2.85)[scale=1]{$\times$};
		\end{tikzpicture}
		\caption{Illustration of the patterns $\zeta^{n}(\texttt{w})$ and $\texttt{a}_{n}$ in ${\bm j}_{n}$.}
		\label{zetavyzetauyzetaw}
	\end{figure}
	
	Set $m>n\geq |\A|\in J$. Applying $\zeta^{m-n}$ to $\zeta^{n}(\texttt{u})$, we obtain the patterns $\zeta^{m}(\texttt{a}_{n})$ and $\zeta^{m-n}(\zeta^{n}(\texttt{w}))=\zeta^{m}(\texttt{w})$.
	
	\begin{claim}\label{ClaimAnotherRecognizability}
		For any $n>0$ and any $E\subseteq \Z^{d}$, we have that $G_{n,E}\subseteq H_{n,E}+C+C+D$.
	\end{claim}
	
	\begin{proof}[Proof of \cref{ClaimAnotherRecognizability}]
		First, we are going to prove that for any $n>0$ and $E\Subset \Z^{d}$, we have that $G_{n,E}\subseteq H_{n,E+C+C}+D$. Set ${\bm m}\in G_{n,E}$. Then, there exists ${\bm h}_{n}\in F_{n}^{\zeta}$, ${\bm e}_{n}\in E$, ${\bm l}_{n}\in F_{n}^{\zeta}$ such that $L_{\zeta}^{n}({\bm m})+{\bm h}_{n}=L_{\zeta}^{n}({\bm b}_{n})+{\bm g}_{n}+L_{\zeta}^{n}({\bm e}_{n})+{\bm l}_{n}$. Set ${\bm d}_{n}\in D$ such that ${\bm l}_{n}-{\bm h}_{n}+{\bm g}_{n}=L_{\zeta}^{n}({\bm d}_{n})$. Hence ${\bm m}={\bm b}_{n}+{\bm e}_{n}+{\bm d}_{n}$. We prove that ${\bm m}-{\bm d}_{n}\in H_{n,E+C+C}$.
		
		Set ${\bm o}_{n}\in F_{n}^{\zeta}$. Then
		$$\begin{array}{cl}
			L_{\zeta}^{n}({\bm m}-{\bm d}_{n})+{\bm o}_{n} & =L_{\zeta}^{n}({\bm m})+{\bm h}_{n}-L_{\zeta}^{n}({\bm d}_{n})-{\bm h}_{n}+{\bm o}_{n}\\
			& = L_{\zeta}^{n}({\bm b}_{n})+{\bm g}_{n}+L_{\zeta}^{n}({\bm e}_{n})+{\bm l}_{n}-L_{\zeta}^{n}({\bm d}_{n})-{\bm h}_{n}+{\bm o}_{n}\\
			& = L_{\zeta}^{n}({\bm b}_{n})+L_{\zeta}^{n}({\bm e}_{n})+{\bm o}_{n}\\
		\end{array}$$ 
	
	Let ${\bm q}_{n}\in F_{n}^{\zeta}$ and ${\bm c}_{n}\in C$ be such that ${\bm g}_{n}+{\bm q}_{n}=L_{\zeta}^{n}({\bm c}_{n})$. We get that $L_{\zeta}^{n}({\bm m}-{\bm d}_{n})+{\bm o}_{n}=L_{\zeta}^{n}({\bm b}_{n})+{\bm g}_{n}+L_{\zeta}^{n}({\bm e}_{n}+{\bm c}_{n})+({\bm o}_{n}+{\bm q}_{n})$. Since $F_{n}^{\zeta}+{\bm q}_{n}\subseteq L_{\zeta}^{n}(C)+F_{n}^{\zeta}$, we conclude that ${\bm m}\in H_{n,E+C+C}+D$.
	
	To finish the proof, we note that a straightforward computation shows that for any $n>0$ and $A,B\Subset \Z^{d}$, we have that $H_{n,A+B}\subseteq H_{n,A}+B$. We then, conclude that $G_{n,E}\subseteq H_{n,E}+C+C+D$.
	\end{proof}
	
	 If $\zeta^{m-n}(\texttt{a}_{n})$ and $\texttt{a}_{m}$ are different, there is two occurrences of $\zeta^{m}(\texttt{w})$. By \cref{PropositionForNonInyectiveSubstitutions}, these patterns come from two patterns $\texttt{w}_{1},\texttt{w}_{2}\in \mathcal{L}_{H}(X_{\zeta})$ such that $\zeta^{|\A|-1}(\texttt{w}_{1})=\zeta^{|\A|-1}(\texttt{w}_{2})=\zeta^{|\A|-1}(\texttt{w})$, occurring in $\zeta^{|\A|-1}(\texttt{v})$. The distance between these two occurrences is smaller than $\max\limits_{{\bm t}\in C+C+D}\Vert L_{\zeta}^{|\A|-1}({\bm t})\Vert\leq \Vert L_{\zeta}\Vert^{|\A|-1}\Vert C+C+D\Vert$,  and by \cref{ClaimFOrIterationsOfRadius} we have that $L_{\zeta}^{|\A|-1}(B({\bm 0},r))\subseteq \supp(\zeta^{|\A|-1}(\texttt{w}))$, so $\supp(\zeta^{|\A|-1}(\texttt{w}))$ contains a ball of radius $1/\Vert L_{\zeta}^{-1}\Vert^{|\A|-1}r$. By the repulsion property (\cref{RepulsionProperty}), this is a contradiction, so  $\zeta^{m-n}(\texttt{a}_{n})=\texttt{a}_{m}$ as illustrated in \cref{zetavyzetauyzetawlm-njn}:
	
	\begin{figure}[H]
		\centering
		\begin{tikzpicture}[scale=0.6]
			\node(b1) at (5, 6.5) [scale=1] {$\tau(\zeta^{m}(\texttt{v}))$};
			
			\node(b2) at (9.9, 3) [scale=1] {\textcolor{blue}{$\tau(\zeta^{m}(\texttt{u}))$}};
			
			\node(b9) at (0.9, 3) [scale=1] {\textcolor{red}{$\tau(\zeta^{m}(\texttt{w}))$}};
			
			\node(b3) at (5,5.5) [scale=1]{$\cdots$};
			
			\node(b4) at (5,4.5) [scale=1]{$\cdots$};
			\node(b5) at (5,3.5) [scale=1]{$\cdots$};
			\node(b6) at (5,2.5) [scale=1]{$\cdots$};
			\node(b7) at (5,1.5) [scale=1]{$\cdots$};
			
			\node(b8) at (11.5,0.6) [scale=1]{\textcolor{zzttqq}{$\tau(\zeta^{m-n}(\texttt{a}_{n}))=\tau(\texttt{a}_{m})$}};
			
			\path[thin](0,0) edge (10,0);
			\path[thin](10.,0.) edge (10.,6.);
			\path[thin](10.,6.) edge (0.,6.);
			\path[thin](0.,6.) edge (0.,0.);
			\path[thin](2.,6.) edge (2.,0.);
			\path[thin](8.,6.) edge (8.,0.);
			\path[thin](0.,1.) edge (10.,1.);
			\path[thin](0.,2.) edge (10.,2.);
			\path[thin](0.,4.) edge (10.,4.);
			\path[thin](0.,5.) edge (10.,5.);
			\path[line width=1.5pt,red](2,1) edge (2,5);
			\path[line width=1.5pt,red](2,1) edge (8,1);
			\path[line width=1.5pt,red](2,5)edge(8,5);
			\path[line width=1.5pt,red](8,1)edge(8,5);
			
			\draw[fill=zzttqq,fill opacity=0.10000000149011612] (0.6,0.3) rectangle (2,5.3);
			\draw[fill=zzttqq,fill opacity=0.10000000149011612] (2,5) rectangle (8.6,5.3);
			\draw[fill=zzttqq,fill opacity=0.10000000149011612] (8,0.3) rectangle (8.6,5);
			\draw[fill=zzttqq,fill opacity=0.10000000149011612] (2,0.3) rectangle (8,1);
			
			\path[thin,blue](0.6,0.3) edge (8.6,0.3);
			\path[thin,blue](8.6,0.3) edge (8.6,5.3);
			\path[thin,blue](8.6,5.3) edge (0.6,5.3);
			\path[thin,blue](0.6,5.3) edge (0.6,0.3);
			\path[thin,blue](0.6,1.3) edge (8.6,1.3);
			\path[thin,blue](0.6,2.3) edge (8.6,2.3);
			\path[thin,blue](0.6,4.3) edge (8.6,4.3);
			\path[thin,blue](2.6,0.3) edge (2.6,5.3);
			\path[thin,blue](6.6,0.3) edge (6.6,5.3);
			
			\node(b10) at (6,3) [scale=0.8]{$L_{\zeta}^{m-n}({\bm j}_{n})$};
			
			\node(b12) at (1.85,0.8)[scale=1]{$\times$};
			\node(b11) at (5.25,2.85)[scale=1]{$\times$};
		\end{tikzpicture}
		\caption{Illustration of the patterns $\zeta^{m-n}(\texttt{a}_{n})$ in $L_{\zeta}^{m-n}({\bm j}_{n})$.}
		\label{zetavyzetauyzetawlm-njn}
	\end{figure}
	
	To finish the proof, we note that since $\zeta^{n}(\texttt{u})\sqsubseteq \zeta^{n}(\texttt{v})$, there exists ${\bm p}_{m}\in L_{\zeta}^{n}({\bm b}_{m})+L_{\zeta}^{n}(G)+F_{n}^{\zeta}$ such that $x|_{{\bm p}_{m}+L_{\zeta}^{n}(B({\bm 0},r)\cap \Z^{d})+F_{n}^{\zeta}}=\zeta^{n}(\texttt{u})$, which implies that $x|_{L_{\zeta}^{m-n}({\bm p}_{m})+L_{\zeta}^{m}(B({\bm 0},r)\cap \Z^{d})+F_{m}^{\zeta}}=\zeta^{m}(\texttt{u})$. Using the fact that $\zeta^{m-n}(\texttt{a}_{n})=\texttt{a}_{m}$, we get that $L_{\zeta}^{m-n}({\bm p}_{m})+L_{\zeta}^{m}(B({\bm 0},r)\cap \Z^{d})+F_{m}^{\zeta}={\bm j}_{m}-{\bm f}_{m}+L_{\zeta}^{m}(B({\bm 0},r)\cap \Z^{d})+F_{m}^{\zeta}$, i.e., ${\bm j}_{m}-{\bm f}_{m}=L_{\zeta}^{m-n}({\bm p}_{m})\in L_{\zeta}^{m}(\Z^{d})$. Since ${\bm i}_{m}\in L_{\zeta}(\Z^{d})$, then ${\bm f}_{m}\in L_{\zeta}(\Z^{d})$ we conclude that ${\bm j}_{m}\in L_{\zeta}(\Z^{d})$.
\end{proof}

We now prove the second part of the recognizability property. Note that if the substitution is injective on letters, the second step is a direct consequence of the first one. 

\begin{proposition}\label{RecognizabilitySecondStep}[Recognizability property of aperiodic symbolic factors of substitutive subshifts]
	Let $\zeta$ be an aperiodic primitive substitution from the alphabet $\A$ and let $\mathcal{T}:\A\to\B$ be a map such that $(\tau(X_{\zeta}),S,\Z^{d})$ is an aperiodic subshift. Let $x\in X_{\zeta}$ be a fixed point of $\zeta$ and $y=\tau(x)$. Consider $R>0$ as the recognizability radius from \cref{RecognizabilityFactors} for $\zeta^{|\A|}$ and $M=R+2\Vert F_{1}^{\zeta}\Vert( \Vert L_{\zeta}\Vert^{|\A|}-1)/(\Vert L_{\zeta}\Vert)$ Then, for any ${\bm i},{\bm j}\in \Z^{d}$
	$$y|_{B(L({\bm i}),M)}=y|_{B(L({\bm j}),M)} \implies y_{{\bm i}}=y_{{\bm j}}.$$
\end{proposition}

\begin{proof}
	Let ${\bm k}\in \Z^{d}$ and ${\bm f}=\sum\limits_{i=1}^{|\A|}L_{\zeta}^{i}({\bm f}_{i})\in F_{|\A|}^{\zeta}$ be such that $L_{\zeta}^{|\A|}({\bm k})+{\bm f}=L_{\zeta}({\bm i})$. Hence, we have that $L_{\zeta}^{|\A|-1}({\bm k})+\sum\limits_{i=1}^{|\A|}L_{\zeta}^{i-1}({\bm f}_{i})={\bm i}$. By the definition of $R>0$, we have the existence of ${\bm m}\in \Z^{d}$ such that $L_{\zeta}^{|\A|}({\bm m})+{\bm f}=L_{\zeta}({\bm j})$, which implies that ${\bm j}= L_{\zeta}^{|\A|-1}({\bm m})+\sum\limits_{i=1}^{|\A|}L_{\zeta}^{i-1}({\bm f}_{i})$. Note that, by the definition of $M>0$, we get that $y|_{L_{\zeta}^{|\A|}({\bm k})+F_{|\A|}^{\zeta}}=y|_{L_{\zeta}^{|\A|}({\bm m})+F_{|\A|}^{\zeta}}$. Hence $\tau(\zeta^{|\A|}(x_{{\bm k}}))=\tau(\zeta^{|\A|})(x_{{\bm m}})$. By \cref{PropositionForNonInyectiveSubstitutions} for the substitution $\tau\zeta$ on $\B$, we get that $\tau(\zeta^{|\A|-1}(x_{{\bm k}}))=\tau(\zeta^{|\A|-1})(x_{{\bm m}})$, which then implies that $y_{{\bm i}}=y_{{\bm j}}$.
\end{proof}

\subsection{Invariant orbits of substitutive subshifts} As mentioned in the previous subsection, we assume that aperiodic primitive constant-shape substitutions admit at least one fixed point for the map $\zeta:X_{\zeta}\to X_{\zeta}$. The orbits of these fixed points lead to the notion of $\zeta$-\emph{invariant orbits}. An orbit $\mathcal{O}(x,\Z^{d})$ is called $\zeta$-\emph{invariant} if there exists ${\bm j}\in \Z^{d}$ such that $\zeta(x)=S^{{\bm j}}x$, i.e., the orbit is invariant under the action of $\zeta$ in $X_{\zeta}$. Since for every ${\bm n}\in \Z^{d}$ we have $\zeta\circ S^{{\bm n}}=S^{L_{\zeta}{\bm n}}\circ \zeta$, the definition is independent of the choice of the point in the $\Z^{d}$-orbit of $x$. The orbit of a fixed point of the substitution map is an example of an invariant orbit. In the following, we will prove that for aperiodic primitive constant-shape substitutions there exist finitely many $\zeta$-invariant orbits. This property will be used to prove other properties about some constant-shape substitutions such as coalescence (\cref{Coalescence}) and that the automorphism group is virtually generated by the shift action (\cref{AutomoprhismVirtuallyZd}). 

\begin{proposition}\label{FinitelyManyInvariantOrbits}
	Let $\zeta$ be an aperiodic primitive constant-shape. substitution. Then, there exist finitely many $\zeta$-invariant orbits in the substitutive subshift $X_{\zeta}$. The bound is explicit and depends only on $d$, $|\A|$, $\Vert L_{\zeta}^{-1}\Vert$, $\Vert F_{1}^{\zeta}\Vert$ and $\det(L_{\zeta}-\id)$.
\end{proposition}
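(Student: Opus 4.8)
The plan is to attach to each $\zeta$-invariant orbit a well-defined invariant in the finite group $\Z^{d}/(L_{\zeta}-\id)\Z^{d}$, and then, class by class, to bound the number of orbits by reconstructing their points from a finite seed whose cardinality is controlled by the Euclidean-division estimates of \cref{FiniteSubsetFillsZd} and \cref{SetDforFiniteInvariantOrbit}. First I would fix a representative $x$ of an invariant orbit with $\zeta(x)=S^{{\bm n}}x$ and iterate, using $\zeta\circ S^{{\bm m}}=S^{L_{\zeta}{\bm m}}\circ\zeta$, to get $\zeta^{m}(x)=S^{{\bm n}_{m}}x$ with ${\bm n}_{m}=\sum_{i=0}^{m-1}L_{\zeta}^{i}{\bm n}=(\id-L_{\zeta}^{m})(\id-L_{\zeta})^{-1}{\bm n}$, which is legitimate since $L_{\zeta}$ is expansive and hence $L_{\zeta}-\id$ is invertible. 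Replacing $x$ by $S^{{\bm m}}x$ replaces ${\bm n}$ by ${\bm n}+(L_{\zeta}-\id){\bm m}$, so the class $[{\bm n}]\in\Z^{d}/(L_{\zeta}-\id)\Z^{d}$ is an orbit invariant. Because aperiodicity makes the $\Z^{d}$-action free, two representatives of one orbit sharing the same shift vector must coincide (as $(L_{\zeta}-\id){\bm m}={\bm 0}$ forces ${\bm m}={\bm 0}$). Thus, for the unique representative ${\bm n}$ of each class lying in a fixed fundamental domain of $(L_{\zeta}-\id)\Z^{d}$ (whose norm is bounded explicitly in terms of $L_{\zeta}$), the orbits of that class are in bijection with the solutions $x$ of $\zeta(x)=S^{{\bm n}}x$. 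Since there are $|\det(L_{\zeta}-\id)|$ classes, it remains to bound, for each such ${\bm n}$, the number of these solutions.

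Fix such a bounded ${\bm n}$. Writing each position as $L_{\zeta}{\bm d}_{{\bm p}}+{\bm f}_{{\bm p}}$, the relation $\zeta(x)=S^{{\bm n}}x$ reads $x_{{\bm n}+{\bm p}}=\zeta(x_{{\bm d}_{{\bm p}}})_{{\bm f}_{{\bm p}}}$, so the letter of $x$ at any position is determined by its letter at the desubstituted position ${\bm d}_{{\bm q}-{\bm n}}$. Iterating this index contraction, every coordinate of $x$ is determined by the restriction $x|_{C}$ to a finite set $C$ stable under ${\bm q}\mapsto{\bm d}_{{\bm q}-{\bm n}}$; such a $C$ is produced by \cref{SetDforFiniteInvariantOrbit} applied with $A$ chosen to absorb the translation by ${\bm n}$ and with $F\supseteq F_{1}^{\zeta}$, its condition (3) giving $\Vert C\Vert\leq \Vert B\Vert+\Vert L_{\zeta}^{-1}\Vert(\Vert A\Vert+\Vert F\Vert+\Vert F_{1}^{\zeta}\Vert)/(1-\Vert L_{\zeta}^{-1}\Vert)$. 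Equivalently, unfolding $\zeta^{m}(x)=S^{{\bm n}_{m}}x$ as $x_{{\bm n}_{m}+L_{\zeta}^{m}{\bm j}+{\bm f}}=\zeta^{m}(x_{{\bm j}})_{{\bm f}}$ and using $L_{\zeta}^{-m}{\bm n}_{m}\to(\id-L_{\zeta})^{-1}{\bm n}$ together with $\sup_{m}\Vert L_{\zeta}^{-m}F_{m}^{\zeta}\Vert\leq \Vert F_{1}^{\zeta}\Vert\,\Vert L_{\zeta}^{-1}\Vert/(1-\Vert L_{\zeta}^{-1}\Vert)$ (coming from $T_{\zeta}=\lim_{m}L_{\zeta}^{-m}F_{m}^{\zeta}$), one checks that the seed indices ${\bm j}$ needed to fill $\Z^{d}$ form a set of the same explicit diameter. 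In either case $x$ is determined by $x|_{C}$, so there are at most $|\A|^{|C|}$ such solutions, with $|C|$ bounded by a constant depending only on $d$ and $\Vert C\Vert$.

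Combining the two steps yields an explicit bound of the form $|\det(L_{\zeta}-\id)|\cdot|\A|^{N}$, where $N$ depends only on $d$, $\Vert L_{\zeta}^{-1}\Vert$, $\Vert F_{1}^{\zeta}\Vert$ and the explicitly bounded norm of the chosen representatives ${\bm n}$, hence only on the stated quantities. The step I expect to be most delicate is the seed-filling argument for a general shift ${\bm n}\neq{\bm 0}$: one must verify both that $\bigcup_{m\geq 0}({\bm n}_{m}+L_{\zeta}^{m}C+F_{m}^{\zeta})$ still covers $\Z^{d}$ and that the index contraction ${\bm q}\mapsto{\bm d}_{{\bm q}-{\bm n}}$ has a bounded attracting set whose size is controlled uniformly in ${\bm n}$, which is exactly the content of the Euclidean-division estimates of \cref{FiniteSubsetFillsZd} and \cref{SetDforFiniteInvariantOrbit}. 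The other point requiring care is the use of aperiodicity, through freeness of the $\Z^{d}$-action, both to make $[{\bm n}]$ a genuine orbit invariant and to guarantee that distinct solutions of $\zeta(x)=S^{{\bm n}}x$ lie in distinct orbits.
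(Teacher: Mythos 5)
Your proposal is correct and takes essentially the same route as the paper: the same orbit invariant in $\Z^{d}/(L_{\zeta}-\id)\Z^{d}$ (contributing the factor $|\det(L_{\zeta}-\id)|$ via a fundamental domain of bounded norm), and the same finite-seed determination resting on the Euclidean-division estimates of \cref{FiniteSubsetFillsZd} and \cref{SetDforFiniteInvariantOrbit}, yielding a bound of the form $|\det(L_{\zeta}-\id)|\cdot|\A|^{|C|}$. The only difference is presentational: you count the solutions of $\zeta(x)=S^{{\bm n}}x$ class by class via the backward desubstitution contraction ${\bm q}\mapsto{\bm d}_{{\bm q}-{\bm n}}$ (using aperiodicity for the orbit--solution bijection), whereas the paper pigeonholes on the pair $({\bm j},x|_{D})$ and propagates agreement forward along the sets $E_{n}=\bigl(\sum_{k=0}^{n-1}L_{\zeta}^{k}{\bm j}\bigr)+L_{\zeta}^{n}(D)+F_{n}^{\zeta}$ --- two dual phrasings of the same covering argument.
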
 

\begin{proof}
	Let $x\in X_{\zeta}$ be such that $\zeta(x)=S^{{\bm j}_{x}}x$, for some ${\bm j}_{x}\in \Z^{d}$. For any ${\bm m}\in \Z^{d}$, we have that
	$$\zeta(S^{{\bm m}}x)=S^{L_{\zeta}{\bm m}}\zeta(x)=S^{L_{\zeta}{\bm m}+{\bm j}_{x}}x= S^{(L_{\zeta}-\id){\bm m}+{\bm j}_{x}}S^{{\bm m}}x,$$
	
	\noindent and thus ${\bm j}_{x}-{\bm j}_{S^{{\bm m}}x}\in (L_{\zeta}-\id)\Z^{d}$. Let $H\Subset \Z^{d}$ be a fundamental domain of $(L_{\zeta}-\id)(\Z^{d})$ in $\Z^{d}$ with ${\bm 0}\in H$. We may assume that $x\in X_{\zeta}$ is in a $\zeta$-invariant orbit with ${\bm j}_{x}\in H$. Let $K_{\zeta}\Subset \Z^{d}$ be from \cref{FiniteSubsetFillsZd}. Using $-H$ as the set $A$ and $F=F_{1}^{\zeta}$ in \cref{SetDforFiniteInvariantOrbit} we obtain a set $C\Subset \Z^{d}$ such that $L_{\zeta}^{n}(-H+C)+F_{n}^{\zeta}\subseteq L_{\zeta}^{n+1}(C)+F_{n+1}^{\zeta}$ for all $n>0$. Define $D\Subset \Z^{d}$ as $D=C+K_{\zeta}-H$. Suppose that there are more than $|\A|^{|D|}\cdot |H|$ $\zeta$-invariant orbits. By the Pigeonhole Principle, there exist ${\bm j}\in H$ and two different points $x\neq y \in X_{\zeta}$ such that $x|_{D}=y|_{D}$ and $\zeta(x)=S^{{\bm j}}x$, $\zeta(y)=S^{{\bm j}}y$. Note that
	$$\zeta(x|_{D})=\zeta(x)|_{L_{\zeta}(D)+F_{1}^{\zeta}}=x|_{{\bm j}+L_{\zeta}(D)+F_{1}^{\zeta}}$$
	
	Hence, the patterns $x|_{{\bm j}+L_{\zeta}(D)+F_{1}^{\zeta}}$, $y|_{{\bm j}+L_{\zeta}(D)+F_{1}^{\zeta}}$ are the same. Inductively, we obtain that for every $n\geq 0$
	$$x|_{\left(\sum\limits_{k=0}^{n}L_{\zeta}^{k}{\bm j}\right)+L_{\zeta}^{n+1}(D)+F_{n+1}^{\zeta}}=y|_{\left(\sum\limits_{k=0}^{n}L_{\zeta}^{k}{\bm j}\right)+L_{\zeta}^{n+1}(D)+F_{n+1}^{\zeta}}.$$
	
	Let $E_{0}$ be equal to $D$ and for all $n>0$, define $E_{n}=\left(\sum\limits_{k=0}^{n-1}L_{\zeta}^{k}{\bm j}\right)+L_{\zeta}^{n}(D)+F_{n}^{\zeta}$. We will prove that $\bigcup\limits_{n\geq 0}E_{n} =\Z^{d}$. This implies that $x=y$, which is a contradiction. To do this, we will prove that for every $n\geq 0$ the set $L_{\zeta}^{n}(K_{\zeta})+F_{n}^{\zeta}$ is included in $E_{n+1}$ and we conclude by \cref{FiniteSubsetFillsZd}. Note that $L_{\zeta}^{n}(K_{\zeta})+F_{n}^{\zeta}\subseteq E_{n+1}$ if and only if $L_{\zeta}^{n}(K_{\zeta}-{\bm j})+\left(\sum\limits_{k=0}^{n-1}L_{\zeta}^{k}(F_{1}^{\zeta}-{\bm j})\right)\subseteq L_{\zeta}^{n+1}(D)+F_{n+1}^{\zeta}$.
	
	\begin{claim}\label{SumsOfLkF1}
		For every $n\geq 0$ the set $\sum\limits_{k=0}^{n-1}L_{\zeta}^{k}(F_{1}^{\zeta}-{\bm j})$ is included in $L_{\zeta}^{n}(C)+F_{n}^{\zeta}$.
	\end{claim}
	
	\begin{proof}[Proof of \cref{SumsOfLkF1}]
		For $n=0$, note that $F_{1}^{\zeta}-{\bm j}$ is included in $L_{\zeta}(C)+F_{1}^{\zeta}$ by \cref{SetDforFiniteInvariantOrbit}. Assume that for some $n\geq 0$ $\sum\limits_{k=0}^{n}L_{\zeta}^{k}(F_{1}^{\zeta}-{\bm j})\subseteq L_{\zeta}^{n+1}(C)+F_{n+1}^{\zeta}$. We have that
		$$\begin{array}{cl}
			\sum\limits_{k=0}^{n+1}L_{\zeta}^{k}(F_{1}^{\zeta}-{\bm j}) & = \left(\sum\limits_{k=0}^{n}L_{\zeta}^{k}(F_{1}^{\zeta}-{\bm j})\right)+L_{\zeta}^{n+1}(F_{1}^{\zeta}-{\bm j})\\
			& \subseteq L_{\zeta}^{n+1}(C)+F_{n+1}^{\zeta}+L_{\zeta}^{n+1}(F_{1}^{\zeta}-{\bm j})\\
			& \subseteq L_{\zeta}^{n+1}(C+F_{1}^{\zeta}-{\bm j})+F_{n+1}^{\zeta}\\
			& \subseteq L_{\zeta}^{n+2}(C)+L_{\zeta}^{n+1}(F_{1}^{\zeta})+F_{n+1}^{\zeta}\quad \text{(by \cref{SetDforFiniteInvariantOrbit})}\\
			& = L_{\zeta}^{n+2}(C)+F_{n+2}^{\zeta}
		\end{array}$$
		
		We conclude that for every $n\geq 0$, $\sum\limits_{k=0}^{n}L_{\zeta}^{k}(F_{1}^{\zeta}-{\bm j})\subseteq L_{\zeta}^{n+1}(C)+F_{n+1}^{\zeta}$	
	\end{proof}
	
	By \cref{SumsOfLkF1}, the set $L_{\zeta}^{n}(K_{\zeta}-{\bm j})+\left(\sum\limits_{k=0}^{n-1}L_{\zeta}^{k}(F_{1}^{\zeta}-{\bm j})\right)$ is included in $L_{\zeta}^{n}(K_{\zeta}-{\bm j})+L_{\zeta}^{n}(C)+F_{n}^{\zeta}$ and $L_{\zeta}^{n}(K_{\zeta}+C-{\bm j})$ is a subset of $L_{\zeta}^{n+1}(D)+L_{\zeta}^{n}(F_{1}^{\zeta})$, by definition of $D$. We have that $L_{\zeta}^{n}(K_{\zeta}-{\bm j})+L_{\zeta}^{n}(C)+F_{n}^{\zeta}\subseteq L_{\zeta}^{n+1}(D)+L_{\zeta}^{n}(F_{1}^{\zeta})+F_{n}^{\zeta}=L_{\zeta}^{n+1}(D)+F_{n+1}^{\zeta}$ and we conclude the proof.
\end{proof}

\begin{remark} 
	Let $\zeta$ be an aperiodic primitive substitution with an expansion matrix $L_{\zeta}$ such that $|\det(L_{\zeta}-\id_{\R^{d}})|=1$. This implies that $(L_{\zeta}-\id_{\R^{d}})(\Z^{d})=\Z^{d}$. Let $x\in X_{\zeta}$ be a point in a $\zeta$-invariant orbit, i.e., there exists ${\bm j}\in \Z^{d}$ such that $\zeta(x)=S^{{\bm j}}x$ and set ${\bm m}\in \Z^{d}$ such that $(L_{\zeta}-\id_{\R^{d}})({\bm m})=-{\bm j}$. Then $\zeta(S^{{\bm m}}x)=S^{L_{\zeta}{\bm m}+{\bm j}}\zeta(x)=S^{{\bm m}+(L_{\zeta}-\id_{\R^{d}})({\bm m})+{\bm j}}x=S^{{\bm m}}x$. Hence $S^{{\bm m}}x$ is a fixed point of $\zeta$. We conclude that the only $\zeta$-invariant orbits in this case are the ones given by the fixed points of the substitution.
\end{remark}

In the following we show an example of a $\zeta$-invariant orbit that it is not the orbit of a fixed point of the substitution map.

\begin{example}[An example of a nonfixed point orbit, which is $\zeta$-invariant]
	Consider the one-dimensional substitution $\sigma_{\thesigmavariable}$ of length $3$ given by
	$$\begin{array}{cl}
		\sigma_{\thesigmavariable}: & 0 \mapsto 032,\\
		& 1 \mapsto 123\\
		& 2 \mapsto 013\\
		& 3 \mapsto 102.
	\end{array}$$

	This is an aperiodic primitive constant-length substitution. The set of words of length 2 (or with support $K_{\sigma_{\thesigmavariable}}=\{-1,0\}$) is $\{01,02,03,10,12,13,20,21,23,30,31,32\}$. The words $20,21,30,31$ generate the $4$ fixed points of the square of the substitution $\sigma_{\thesigmavariable}^{2}$:
	
	\begin{figure}[H]
		$$\begin{array}{c}
		\ldots032123102.032102013\ldots\\
		\ldots032123102.123013102\ldots\\
		\ldots123032013.032102013\ldots\\
		\ldots123032013.123013102\ldots
	\end{array}$$
	\caption{The four fixed points of $\sigma_{\thesigmavariable}^{2}$. The dot in the center represents the origin.}
	\end{figure}
	
	 In \cite{host1989homomorphismes} it was proved that the substitutive subshift $(X_{\sigma_{\thesigmavariable}},S,\Z)$ has an automorphism $\phi$ induced by a sliding block code of length 2, given by:
	$$
	\begin{array}{cllllll}
		\Phi: & 01\mapsto 2, & 02\mapsto 0,& 03\mapsto 2, & 10 \mapsto 3, & 12 \mapsto 3, & 13 \mapsto 1,\\
		 & 20 \mapsto 3, & 21 \mapsto 1, & 23 \mapsto 1, & 30 \mapsto 0, & 31 \mapsto 2, & 32 \mapsto 0,
	\end{array}$$
	and also that $S\phi\sigma_{\thesigmavariable}$ is equal to $\sigma_{\thesigmavariable}\phi$. This implies that \begin{equation}\label{CommutationEquationExampleOrbit}
		S^{4}\phi\sigma_{\thesigmavariable}^{2}=\sigma_{\thesigmavariable}^{2}\phi.
	\end{equation}
Let $x\in X_{\sigma_{\thesigmavariable}}$ be a fixed point of $\sigma_{\thesigmavariable}^{2}$. We have that $S^{4}\phi(x)=\sigma_{\thesigmavariable}^{2}(\phi(x))$, so the orbit of $\phi(x)$ is $\sigma_{\thesigmavariable}^{2}$-invariant. The orbit of $\phi(x)$ does not contain a fixed point of $\sigma_{\thesigmavariable}^{2}$. Indeed, suppose that there exists $j\in \Z$ such that $\sigma_{\thesigmavariable}^{2}(S^{j}(\phi(x))=S^{j}(\phi(x))$. By \eqref{CommutationEquationExampleOrbit}, we have that $S^{9j+4}\phi(x)=S^{j}\phi(x)$. The aperiodicity of $(X_{\sigma_{\thesigmavariable}},S,\Z)$ implies that $8j+4=0$, which is a contradiction since $j\in \Z$. We conclude that in the orbit of $\phi(x)$ there is no fixed point of $\sigma_{\thesigmavariable}^{2}$.
\end{example}

\subsection{Substitutive subshifts as extensions of d-dimensional odometers}\label{SectionSubstitutionExtensionOdometer}

The recognizability property establishes a factor map from the substitutive subshift to a $d$-dimensional odometer as follows: For every $n>0$, let $\pi_{n}: X_{\zeta} \to F_{n}^{\zeta}$ be the map satisfying $x \in S^{\pi_{n}(x)}\zeta^{n}(X_{\zeta})$. This map is well defined $(\text{mod}\ L_{\zeta}^{n}(\Z^{d}))$ thanks to the recognizability property. Using basic group theory arguments $\pi_{n+1}(x)=\pi_{n}(x)\ (\Mod L_{\zeta}^{n}(\Z^{d}))$.

Consider the odometer system $(\overleftarrow{\Z^{d}}_{(L_{\zeta}^{n}(\Z^{d}))},+_{(L_{\zeta}^{n}(\Z^{d}))},\Z^{d})$. The map $\pi:(X_{\zeta},S,\Z^{d})\to (\overleftarrow{\Z^{d}}_{(L_{\zeta}^{n}(\Z^{d}))},+_{(L_{\zeta}^{n}(\Z^{d}))},\Z^{d})$ given by $(\pi_{n}(x))_{n>0}$ is a factor map between $(X_{\zeta},S,\Z^{d})$ and $(\overleftarrow{\Z^{d}}_{(L_{\zeta}^{n}(\Z^{d}))},+_{(L_{\zeta}^{n}(\Z^{d}))},\Z^{d})$. Moreover it satisfies the following property.

\begin{lemma}\label{AperiodicUniformylBounded}Let $(X_{\zeta},S,\Z^{d})$ be the substitutive subshift from an aperiodic primitive constant-shape substitution $\zeta$. Then, for any $\overleftarrow{g}=({\bm g}_{n})_{n>0}$ in $\overleftarrow{\Z^{d}}_{(L_{\zeta}^{n}(\Z^{d}))}$ two different cases occur:
	
	\begin{enumerate}[label=\text{K}\arabic*:]
		\item Assume that $\bigcup\limits_{n>0} \left(-{\bm g}_{n}+F_{n}^{\zeta}\right)=\Z^{d}$. Then, there is at most $|\A|$ elements in $|\pi^{-1}(\{\overleftarrow{g}\})|$.
		
		\item On the other hand, $|\pi^{-1}(\{\overleftarrow{g}\})|$ is no greater than $|\A|^{|\overline{K}_{\zeta}|}$, where $\overline{K}_{\zeta}=K_{\zeta}+C$, with $C+F_{1}^{\zeta}+F_{1}^{\zeta}\subseteq L_{\zeta}(C)+F_{1}^{\zeta}$ is obtained by  \cref{SetDforFiniteInvariantOrbit} using $A=\{\bm{0}\}$ and $F=F_{1}^{\zeta}+F_{1}^{\zeta}$, and $K_{\zeta}$ is given by \cref{FiniteSubsetFillsZd}.
	\end{enumerate}
	
	In particular, the factor map $\pi:(X_{\zeta},S,\Z^{d})\to (\overleftarrow{\Z^{d}}_{(L_{\zeta}^{n}(\Z^{d}))},+_{(L_{\zeta}^{n}(\Z^{d}))},\Z^{d})$ is finite-to-1.
\end{lemma}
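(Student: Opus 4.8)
The plan is to use the Recognizability Property to build, for each point of a fibre, a desubstitution tower compatible with $\overleftarrow{g}$, and then to identify the fibre with a thread in an inverse system of finite sets whose cardinality is controlled. Fix $\overleftarrow{g}=({\bm g}_{n})_{n>0}$. Since each $F_{n}^{\zeta}$ is a fundamental domain of $L_{\zeta}^{n}(\Z^{d})$, the condition $\pi(x)=\overleftarrow{g}$ forces $\pi_{n}(x)={\bm g}_{n}$ in $F_{n}^{\zeta}$ for all $n$, so by recognizability of $\zeta^{n}$ there is a unique $x^{(n)}\in X_{\zeta}$ with $x=S^{{\bm g}_{n}}\zeta^{n}(x^{(n)})$. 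Writing ${\bm g}_{n+1}={\bm g}_{n}+L_{\zeta}^{n}({\bm f}_{n}')$ with ${\bm f}_{n}'\in F_{1}^{\zeta}$ (valid because ${\bm g}_{n+1}\in F_{n+1}^{\zeta}=F_{n}^{\zeta}+L_{\zeta}^{n}(F_{1}^{\zeta})$ and ${\bm g}_{n+1}\equiv {\bm g}_{n}\ (\Mod L_{\zeta}^{n}\Z^{d})$), the injectivity of $\zeta$ on $X_{\zeta}$ gives the consistency relation $x^{(n)}=S^{{\bm f}_{n}'}\zeta(x^{(n+1)})$. Thus a fibre point is the same as a compatible tower $(x^{(n)})_{n}$, and the whole argument reduces to showing that $x$ is recovered from bounded local data of the $x^{(n)}$.

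For case K1 the hypothesis $\bigcup_{n>0}(-{\bm g}_{n}+F_{n}^{\zeta})=\Z^{d}$ says that for every ${\bm k}\in\Z^{d}$ there is $n$ with ${\bm g}_{n}+{\bm k}\in F_{n}^{\zeta}$, so that $x_{{\bm k}}=\zeta^{n}(x^{(n)}({\bm 0}))_{{\bm g}_{n}+{\bm k}}$ depends only on the single letter $a_{n}:=x^{(n)}({\bm 0})$. The consistency relation yields $a_{n}=p_{{\bm f}_{n}'}(a_{n+1})$, so $x\mapsto(a_{n})_{n}$ embeds the fibre into the inverse limit $\varprojlim(\A,p_{{\bm f}_{n}'})$. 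Here I would invoke the elementary fact that an inverse system of finite sets each of cardinality at most $N$ has inverse limit of cardinality at most $N$ (the images of the projections stabilize and the maps between the stabilized images are bijective), giving $|\pi^{-1}(\{\overleftarrow{g}\})|\le|\A|$.

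For the general bound K2 I would replace the letter by the restriction $w_{n}:=x^{(n)}|_{\overline{K}_{\zeta}}\in\A^{\overline{K}_{\zeta}}$. The structural point is that $\overline{K}_{\zeta}=K_{\zeta}+C$ is closed under desubstitution, i.e. ${\bm d}_{{\bm m}}\in\overline{K}_{\zeta}$ whenever ${\bm m}\in F_{1}^{\zeta}+\overline{K}_{\zeta}$: writing ${\bm m}={\bm f}+{\bm \kappa}+{\bm c}$ and using ${\bm \kappa}=L_{\zeta}{\bm \kappa}+{\bm f}_{{\bm \kappa}}$ with ${\bm f}_{{\bm \kappa}}\in F_{1}^{\zeta}$ (the defining property of $K_{\zeta}$) together with $C+F_{1}^{\zeta}+F_{1}^{\zeta}\subseteq L_{\zeta}(C)+F_{1}^{\zeta}$, one gets ${\bm m}=L_{\zeta}({\bm \kappa}+{\bm c}')+{\bm f}'$ with ${\bm \kappa}+{\bm c}'\in\overline{K}_{\zeta}$. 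Hence $w_{n}$ is a well-defined function $\psi_{n}$ of $w_{n+1}$, producing an inverse system in $\A^{\overline{K}_{\zeta}}$. Tracking a position by its trajectory ${\bm p}_{0}={\bm k}$, ${\bm p}_{n+1}={\bm d}_{{\bm f}_{n}'+{\bm p}_{n}}$ shows $x_{{\bm k}}$ depends only on $x^{(n)}({\bm p}_{n})$; once ${\bm p}_{n}\in\overline{K}_{\zeta}$, closure keeps it there and $x_{{\bm k}}$ is read off the $w_{n}$. So $x\mapsto(w_{n})_{n}$ embeds the fibre into $\varprojlim(\A^{\overline{K}_{\zeta}},\psi_{n})$, and the same cardinality bound gives $|\pi^{-1}(\{\overleftarrow{g}\})|\le|\A|^{|\overline{K}_{\zeta}|}$; finiteness of either fibre bound yields that $\pi$ is finite-to-$1$.

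I expect the main obstacle to be proving that the trajectory $({\bm p}_{n})$ always enters $\overline{K}_{\zeta}$. The closure of $\overline{K}_{\zeta}$ is a finite computation, but entry requires combining the contraction estimate of \cref{FiniteSubsetFillsZd} (which makes $({\bm p}_{n})$ bounded, hence eventually periodic) with the absorbing role of $C$: the periodic values satisfy $(\id-L_{\zeta}^{s}){\bm p}_{n}={\bm u}-{\bm w}$ with ${\bm u},{\bm w}\in F_{s}^{\zeta}$, so ${\bm p}_{n}$ is a point of $(\id-L_{\zeta}^{s})^{-1}(F_{s}^{\zeta})$ corrected by a uniformly bounded vector, and one must match this bound against the defining estimate for $C$ in \cref{SetDforFiniteInvariantOrbit} to conclude ${\bm p}_{n}\in K_{\zeta}+C=\overline{K}_{\zeta}$.
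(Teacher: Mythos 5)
Your reorganization via desubstitution towers and inverse limits is a faithful repackaging of the paper's pigeonhole argument, and it is complete for K1: the embedding of the fibre into $\varprojlim(\A,p_{{\bm f}_{n}'})$ is correct, and your counting fact holds (if two threads differ at level $n$ they differ at every higher level, so $N+1$ distinct threads would force $N+1$ distinct symbols at some level). Your closure claim for $\overline{K}_{\zeta}$ is also true and is exactly the nestedness $-{\bm g}_{n}+L_{\zeta}^{n}(\overline{K}_{\zeta})+F_{n}^{\zeta}\subseteq -{\bm g}_{n+1}+L_{\zeta}^{n+1}(\overline{K}_{\zeta})+F_{n+1}^{\zeta}$ that the paper proves by induction; but note your justification, ``${\bm \kappa}=L_{\zeta}{\bm \kappa}+{\bm f}_{{\bm \kappa}}$ is the defining property of $K_{\zeta}$,'' is only valid after normalizing to $K_{\zeta}=(\id-L_{\zeta})^{-1}(F_{1}^{\zeta})\cap\Z^{d}$ (a power of a primitive substitution). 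For the general $K_{\zeta}=\bigcup_{m}(\id-L_{\zeta}^{m})^{-1}(F_{m}^{\zeta})\cap\Z^{d}$ you should instead use that $K_{\zeta}$ is the set of desubstitution-periodic points, hence closed under ${\bm p}\mapsto{\bm d}_{{\bm p}}$; this is a small repair.

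The genuine gap is the entry step in K2, exactly where you predicted trouble, and your sketch does not close it. First, boundedness of $({\bm p}_{n})$ does not give eventual periodicity: the recursion ${\bm p}_{n+1}={\bm d}_{{\bm f}_{n}'+{\bm p}_{n}}$ is non-autonomous, driven by the arbitrary sequence $({\bm f}_{n}')$ coming from $\overleftarrow{g}$, so you only get recurrence, i.e.\ ${\bm p}_{n}={\bm p}_{n+s}$ for some $n,s$. Second, and fatally, even granting the resulting identity $(\id-L_{\zeta}^{s}){\bm p}_{n}={\bm u}-{\bm w}$ with ${\bm u},{\bm w}\in F_{s}^{\zeta}$, membership in $\overline{K}_{\zeta}=K_{\zeta}+C$ is not a norm condition: $C$ from \cref{SetDforFiniteInvariantOrbit} is a specific finite set, not a ball, and an integer point of $(\id-L_{\zeta}^{s})^{-1}(F_{s}^{\zeta}-F_{s}^{\zeta})$ need not decompose as an integer point of $(\id-L_{\zeta}^{s})^{-1}(F_{s}^{\zeta})$ plus an element of $C$, so ``matching the bound against the defining estimate for $C$'' cannot certify ${\bm p}_{n}\in\overline{K}_{\zeta}$. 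The correct and available argument is covering rather than recurrence: by \cref{FiniteSubsetFillsZd} together with the nestedness $L_{\zeta}^{n}(K_{\zeta})+F_{n}^{\zeta}\subseteq L_{\zeta}^{n+1}(K_{\zeta})+F_{n+1}^{\zeta}$ (which follows from $K_{\zeta}\subseteq L_{\zeta}(K_{\zeta})+F_{1}^{\zeta}$), every ${\bm k}$ lies in $L_{\zeta}^{n}(K_{\zeta})+F_{n}^{\zeta}$ for all large $n$; since ${\bm g}_{n}\in F_{n}^{\zeta}$, this gives ${\bm g}_{n}+{\bm k}\in L_{\zeta}^{n}(K_{\zeta})+F_{n}^{\zeta}+F_{n}^{\zeta}\subseteq L_{\zeta}^{n}(K_{\zeta}+C)+F_{n}^{\zeta}$ by \cref{RemarkSetDforInvariantOrbit}(2), i.e.\ ${\bm p}_{n}\in\overline{K}_{\zeta}$. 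This absorption property $F_{n}^{\zeta}+F_{n}^{\zeta}\subseteq L_{\zeta}^{n}(C)+F_{n}^{\zeta}$ is precisely why $C$ is defined as it is, and it is how the paper's proof concludes; with this substitution for your periodicity argument, your inverse-limit formulation goes through.
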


\begin{proof}
	We separate the proof in these two cases.
	
	\begin{enumerate}[label=\text{K}\arabic*:]
		\item Assume that $|\pi^{-1}(\{\overleftarrow{ g}\})|>|\mathcal{A}|$. Let $x_{0},\ldots,x_{|\mathcal{A}|}$ be in $\pi^{-1}(\{\overleftarrow{g}\})$. For all $n>0$ and all $j\in \{0,\ldots,|\mathcal{A}|\}$, there exist $y_{j}^{n}\in X_{\zeta}$ such that $x_{j}=S^{{\bm g}_{n}}\zeta^{n}(y_{j}^{n})$. By the Pigeonhole Principle, there exist $j_{1}\neq j_{2}\in \{0,\ldots,|\mathcal{A}|\}$ and an infinite set $E\subseteq \mathbb{N}$ with  $\left.y_{j_{1}}^{n}\right|_{{\bm 0}}=\left.y_{j_{2}}^{n}\right|_{{\bm 0}}$ for $n\in E$. This implies that for any $n\in E$, $x_{j_{1}}|_{-{\bm g}_{n}+F_{n}^{\zeta}}=x_{j_{2}}|_{-{\bm g}_{n}+F_{n}^{\zeta}}$. By definition of $\overleftarrow{g}$, the points $x_{j_{1}}, x_{j_{2}}$ are the same, which is a contradiction.
		
		\item Suppose that $\overleftarrow{g}$ does not satisfy K1. Proceeding as in the previous case, we assure the existence of two indices $j_{1}\neq j_{2}$ such that $y_{j_{1}}^{n}|_{\overline{K}_{\zeta}}=y_{j_{2}}^{n}|_{\overline{K}_{\zeta}}$ for any $n$ in an infinite subset $E\subseteq \NN$. This will imply that $x_{j_{1}}|_{-{\bm g}_{n}+L_{\zeta}^{n}(\overline{K}_{\zeta})+F_{n}}=x_{j_{2}}|_{-{\bm g}_{n}+L_{\zeta}^{n}(\overline{K}_{\zeta})+F_{n}^{\zeta}}$ for any $n\in E$. Now, note that for any $n\in \NN$, ${\bm g}_{n+1}-{\bm g}_{n}\in L^{n}(F_{1}^{\zeta})$, and a direct induction prove $-{\bm g}_{n}+L^{n}(\overline{K}_{\zeta})+F_{n}^{\zeta}$ is included in $-{\bm g}_{n+1}+L^{n+1}(\overline{K}_{\zeta})+F_{n+1}^{\zeta}$. Since $K_{\zeta}\subseteq \overline{K}_{\zeta}$, we conclude by \cref{FiniteSubsetFillsZd} that $x_{j_{1}}=x_{j_{2}}$ which will be a contradiction.
	\end{enumerate}
	
\end{proof}

\begin{remark}\label{RemarksFactorOdometer} Note that the set of points $\overleftarrow{g}\in \overleftarrow{\Z^{d}}_{(L_{\zeta}^{n}(\Z^{d}))}$ satisfying K1 is a $G_{\delta}$-set. Indeed, for any $M>0$ define $U_{M}=\left\{\overleftarrow{g}\in \overleftarrow{\Z^{d}}_{(L_{\zeta}^{n}(\Z^{d}))}\colon \llbracket-M,M\rrbracket^{d}\subseteq-{\bm g}_{n}+F_{n}^{\zeta},\ \text{for some}\ n>0\right\}$. Note that $U_{M}$ is an open subset of $\overleftarrow{\Z^{d}}_{(L_{\zeta}^{n}(\Z^{d}))}$ and the set $\bigcap\limits_{M>0}U_{M}$ are exactly the points satisfying K1. 
		
		Now, for any ${\bm n}$ in $\Z^{d}$ and $M>0$ the set $U_{M}$ is included in $U_{M+|{\bm n}|}$. Hence the set of points on the odometer satisfying K1 is invariant by the $\Z^{d}$-action.
\end{remark}

	In some particular cases, we can compute explicitly the fibers of the factor map. For any ${\bm f}\in F_{1}^{\zeta}$ we define $p_{{\bm f}}$ the restriction of $\zeta$ in ${\bm f}$, i.e., for $a\in \A$, we have that $p_{{\bm f}}(a)=\zeta(a)_{{\bm f}}$. We say the substitution is \emph{bijective} if for all ${\bm f}\in F_{1}^{\zeta}$ the maps $p_{{\bm f}}$ are bijective. We have the following result about bijective substitutions
	
	\begin{proposition}
		Let $\zeta$ be an aperiodic bijective primitive constant-shape substitution with alphabet $\A$. Then, the factor map $\pi:(X_{\zeta},S,\Z^{d})\to (\overleftarrow{\Z^{d}}_{(L_{\zeta}^{n}(\Z^{d}))},+_{(L_{\zeta}^{n}(\Z^{d}))},\Z^{d})$ is almost $|\A|$-to-1.
	\end{proposition}

	\begin{proof}
		The map $p_{{\bm 0}}$ is a permutation of $\A$, so we consider a power of $p_{{\bm 0}}$ such that $p_{{\bm 0}}^{n}$ is equal to the identity. Since $\zeta$ is primitive, we replace it by $\zeta^{n}$ and with this we may assume that $\zeta$ possesses at least $|\A|$ fixed points (one given by each letter of the alphabet). Now, let $\overleftarrow{g}\in \overleftarrow{\Z^{d}}_{(L_{\zeta}^{n}(\Z^{d}))}$ satisfy K1. For any $a\in \A$, consider a fixed point of $\zeta$, denoted as $x_{a}$, with $x_{a}({\bm 0})=a$ and define $x_{a}^{\overleftarrow{g}}=\lim\limits_{n_{m}\to \infty}S^{{\bm g}_{n_{m}}}x_{a}$ for some convergent subsequence. Since the sets $\{S^{{\bm g}_{n}}[\zeta^{n}(a)]\}_{a\in \A}$ are disjoint, for any $a\neq b\in \A$, $x_{a}^{\overleftarrow{g}}$ is different from $x_{b}^{{\overleftarrow{g}}}$. Finally, noticing that $\pi(x_{a}^{\overleftarrow{g}})=\overleftarrow{g}$, we have that $\pi^{-1}(\{\overleftarrow{g}\})\geq |\A|$. By \cref{AperiodicUniformylBounded} we conclude that $\pi^{-1}(\{\overleftarrow{g}\})=|\A|$. Since the set of points satisfying K1 form a $G_{\delta}$ set, we conclude that the factor map $\pi:(X_{\zeta},S,\Z^{d})\to (\overleftarrow{\Z^{d}}_{(L_{\zeta}^{n}(\Z^{d}))},+_{(L_{\zeta}^{n}(\Z^{d}))},\Z^{d})$ is almost $|\mathcal{A}|$-to-1.
	\end{proof}

In general, this $d$-dimensional odometer is not the maximal equicontinuous factor of aperiodic constant-shape substitutions. 

\subsection{The maximal equicontinuous factor of substitutive subshifts}

For completeness, we will describe the maximal equicontinuous factor of substitutive subshifts (see \cite{frank2005multidimensional} for the case with diagonal expansion matrices and \cite{dekking1978spectrum} for the one-dimensional case). The results are well known in the literature but nowhere written.

A subgroup $\mathfrak{L}\leq \Z^{d}$ is called a \emph{lattice} if it is isomorphic to $\Z^{d}$, i.e., it has finite index. For a lattice $\mathfrak{L}$ of $\Z^{d}$, we define the \emph{dual lattice} of $\mathfrak{L}$, as the subgroup
$$\mathfrak{L}^{*}=\{{\bm x}\in \R^{d}\colon \left\langle {\bm x}, {\bm n}\right\rangle \in \Z,\ \forall {\bm n}\in \mathfrak{L} \}.$$

We have that $(\Z^{d})^{*}=\Z^{d}$, and for any $R\in \mathcal{M}(d,\Z)$ the set $R(\Z^{d})$ is a lattice of $\Z^{d}$ with dual lattice equal to $(R^{*})^{-1}(\Z^{d})$, where $R^{*}$ stands for the \emph{algebraic adjoint} of $R$. Let $\mathfrak{L}_{1},\mathfrak{L}_{2}$ be two lattices of $\Z^{d}$. We denote by $\mathfrak{L}_{1} \vee \mathfrak{L}_{2}$ the smallest lattice that contains $\mathfrak{L}_{1}$ and $\mathfrak{L}_{2}$, i.e., if a lattice $\mathfrak{L}$ containing $\mathfrak{L}_{1}$ and $\mathfrak{L}_{2}$, then must contains $\mathfrak{L}_{1} \vee \mathfrak{L}_{2}$. 

Fix $x\in X_{\zeta}$. We define the \emph{set of return times} as
$$\mathcal{R}(X_{\zeta})=\{{\bm j}\in \Z^{d}\colon \exists {\bm k}\in \Z^{d},\ x_{{\bm k}+{\bm j}}=x_{{\bm k}}\}.$$

\noindent By minimality of $X_{\zeta}$, this set is well-defined independently of $x\in X_{\zeta}$ and it is syndetic, i.e., there exists a finite subset $A\Subset \Z^{d}$ such that $\mathcal{R}(X_{\zeta})+A=\Z^{d}$. We define $\mathfrak{L}(\mathcal{R}(X_{\zeta}))$ as the smallest lattice containing $\mathcal{R}(X_{\zeta})$. The \emph{height lattice} $\mathcal{H}(X_{\zeta})$ of a substitutive subshift $\zeta$ is the smallest lattice containing $\mathfrak{L}(\mathcal{R}(X_{\zeta}))$ such that $\mathcal{H}(X_{\zeta})\cap L_{\zeta}(\Z^{d})\leq L_{\zeta}(\mathcal{H}(X_{\zeta}))$. Notice that the last property is equivalent to $\mathcal{H}(X_{\zeta})\cap L_{\zeta}^{n}(\Z^{d})\leq L_{\zeta}^{n}(\mathcal{H}(X_{\zeta}))$, for any $n>0$. The height lattice is \emph{trivial} whenever $\mathcal{H}(X_{\zeta})=\Z^{d}$. 

\begin{example}[An example of a height lattice]
	\stepcounter{sigmavariable}
	\newcounter{onevariable}
	\setcounter{onevariable}{\value{sigmavariable}}
	\stepcounter{sigmavariable}
	\newcounter{twovariable}
	\setcounter{twovariable}{\value{sigmavariable}}
	\stepcounter{sigmavariable}
	
	Consider the product substitution $\sigma_{\theonevariable}$ of the one-dimensional substitutions $\sigma_{\thetwovariable}: 0\mapsto 010$, $1\mapsto 201$, $2\mapsto 102$ (with height 2) and $\sigma_{\thesigmavariable}: a\mapsto abcd$, $b\mapsto bcde$, $c\mapsto cdef$, $d\mapsto defa$, $e\mapsto efab$, $f\mapsto fabc$ (with height 3). The substitution $\sigma_{\theonevariable}$ has an alphabet of cardinality 18 with $L_{\sigma_{\theonevariable}}=\left(\begin{array}{cc}
		3 & 0\\ 0& 4
	\end{array}\right)$ and $F_{1}^{\sigma_{\theonevariable}}=\llbracket 0,2\rrbracket \times \llbracket 0,3\rrbracket$. A direct computation shows that the height lattice $\mathcal{H}((X_{\sigma_{\theonevariable}},S,\Z^{2}))$ is equal to $2\Z\times 3\Z$.
\end{example}

In the following, we will give a description for the height lattice, adapting the proof for the one-dimensional case \cite{dekking1978spectrum,queffelec2010substitution}. For ${\bm k}\in \Z^{d}$, we define $\mathcal{R}_{{\bm k}}(X_{\zeta})=\{{\bm j}\in \Z^{d}\colon x_{{\bm j}+{\bm k}}=x_{\bm k}\}$. Let $\mathfrak{L}(\mathcal{R}_{{\bm k}}(X_{\zeta}))$ be the smallest lattice containing $\mathcal{R}_{{\bm k}}(X_{\zeta})$ and $\mathcal{H}_{{\bm k}}(X_{\zeta})$ be the smallest lattice containing $\mathfrak{L}(\mathcal{R}_{{\bm k}}(X_{\zeta}))$ such that $\mathcal{H}_{{\bm k}}(X_{\zeta})\cap L_{\zeta}(\Z^{d})\leq L_{\zeta}(\mathcal{H}_{{\bm k}}(X_{\zeta}))$. 

\begin{lemma} Let $\zeta$ be an aperiodic primitive  constant-shape substitution. Then, for any ${\bm k}_{1},{\bm k}_{2}\in \Z^{d}$ the sets $\mathcal{H}_{{\bm k}_{1}}(X_{\zeta})$, $\mathcal{H}_{{\bm k}_{2}}(X_{\zeta})$ are the same. In particular, for any ${\bm k}\in \Z^{d}$, $\mathcal{H}_{{\bm k}}(X_{\zeta})$ is equal to $\mathcal{H}(X_{\zeta})$.
\end{lemma}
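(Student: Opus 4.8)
The plan is to reduce the statement to a claim about letters instead of positions, and then to combine the substitutive structure with a ``division by $L_{\zeta}$'' (saturation) procedure. Throughout I fix a fixed point $x=\zeta(x)$ (available after replacing $\zeta$ by a power, which alters neither $X_{\zeta}$ nor, by the equivalence noted just before the lemma, the saturation condition), and for a letter $a$ I write $P_{a}=\{{\bm p}\in\Z^{d}\colon x_{{\bm p}}=a\}$. First I would show that $\mathfrak{L}(\mathcal{R}_{{\bm k}}(X_{\zeta}))$, and hence $\mathcal{H}_{{\bm k}}(X_{\zeta})$, depends on ${\bm k}$ only through the letter $a=x_{{\bm k}}$. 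Indeed $\mathcal{R}_{{\bm k}}(X_{\zeta})=P_{a}-{\bm k}$ with ${\bm k}\in P_{a}$, so ${\bm 0}\in\mathcal{R}_{{\bm k}}(X_{\zeta})$ and the generated group satisfies $\langle\mathcal{R}_{{\bm k}}(X_{\zeta})\rangle=\langle P_{a}-P_{a}\rangle$, independent of the chosen occurrence ${\bm k}$. By minimality $P_{a}$ is syndetic, so this group is a finite-index lattice; I denote it $\mathfrak{L}_{a}$ and its saturation $\mathcal{H}_{a}$. It then suffices to prove $\mathcal{H}_{a}=\mathcal{H}_{b}$ for all letters $a,b$.

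Next I would record the action of the substitution on these lattices. By primitivity there exist $n>0$ and ${\bm f}_{0}\in F_{n}^{\zeta}$ with $\zeta^{n}(b)_{{\bm f}_{0}}=a$. Using $x=\zeta^{n}(x)$, every ${\bm p}\in P_{b}$ yields the occurrence $L_{\zeta}^{n}{\bm p}+{\bm f}_{0}\in P_{a}$, so $L_{\zeta}^{n}(P_{b}-P_{b})\subseteq P_{a}-P_{a}$ and therefore
$$L_{\zeta}^{n}\,\mathfrak{L}_{b}\leq \mathfrak{L}_{a}.$$

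The technical core is a saturation lemma. I regard $\mathcal{H}(\cdot)$ as the operator sending a lattice $\mathfrak{L}$ to the smallest $L_{\zeta}$-saturated lattice containing it, realized as the union of the increasing chain $\Phi^{k}(\mathfrak{L})$, where $\Phi(\mathfrak{M})=\mathfrak{M}+L_{\zeta}^{-1}(\mathfrak{M}\cap L_{\zeta}\Z^{d})$; each $\Phi^{k}(\mathfrak{L})$ is a finite-index lattice and the index $[\Z^{d}:\Phi^{k}(\mathfrak{L})]$ is nonincreasing, so the chain stabilizes at the smallest saturated lattice over $\mathfrak{L}$. I would then check two facts: $\mathcal{H}(\cdot)$ is monotone, and $\mathcal{H}(L_{\zeta}^{n}\mathfrak{L})\geq\mathcal{H}(\mathfrak{L})$. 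For the latter, $\mathcal{H}(L_{\zeta}^{n}\mathfrak{L})$ is $L_{\zeta}^{n}$-saturated (the condition for $L_{\zeta}$ being equivalent to that for $L_{\zeta}^{n}$) and contains $L_{\zeta}^{n}\mathfrak{L}\subseteq L_{\zeta}^{n}\Z^{d}$, whence
$$L_{\zeta}^{n}\mathfrak{L}\leq \mathcal{H}(L_{\zeta}^{n}\mathfrak{L})\cap L_{\zeta}^{n}\Z^{d}\leq L_{\zeta}^{n}\,\mathcal{H}(L_{\zeta}^{n}\mathfrak{L});$$
applying $L_{\zeta}^{-n}$ gives $\mathfrak{L}\leq\mathcal{H}(L_{\zeta}^{n}\mathfrak{L})$, and since the right-hand side is saturated it contains $\mathcal{H}(\mathfrak{L})$.

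Combining these, $\mathcal{H}_{b}=\mathcal{H}(\mathfrak{L}_{b})\leq\mathcal{H}(L_{\zeta}^{n}\mathfrak{L}_{b})\leq\mathcal{H}(\mathfrak{L}_{a})=\mathcal{H}_{a}$, and a symmetric choice of exponent gives the reverse inclusion, so all the $\mathcal{H}_{{\bm k}}(X_{\zeta})$ coincide. To identify the common value with $\mathcal{H}(X_{\zeta})$, I note $\mathcal{R}(X_{\zeta})=\bigcup_{a}(P_{a}-P_{a})$, hence $\mathfrak{L}(\mathcal{R}(X_{\zeta}))=\bigvee_{a}\mathfrak{L}_{a}$: the common saturated lattice contains every $\mathfrak{L}_{a}$ and thus their join, and being saturated it contains $\mathcal{H}(X_{\zeta})$, whereas $\mathfrak{L}_{a}\leq\mathfrak{L}(\mathcal{R}(X_{\zeta}))\leq\mathcal{H}(X_{\zeta})$ together with the saturation of $\mathcal{H}(X_{\zeta})$ gives $\mathcal{H}_{a}\leq\mathcal{H}(X_{\zeta})$. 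I expect the main obstacle to be the saturation lemma: pinning down the interaction between the division operator $\Phi$ and powers of $L_{\zeta}$, and verifying that every lattice in sight stays of finite index.
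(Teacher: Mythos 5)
Your proposal is correct and takes essentially the same route as the paper's proof: a fixed point together with primitivity gives $L_{\zeta}^{n}\mathfrak{L}(\mathcal{R}_{{\bm k}_{1}}(X_{\zeta}))\leq \mathfrak{L}(\mathcal{R}_{{\bm k}_{2}}(X_{\zeta}))$, the saturation property of $\mathcal{H}_{{\bm k}_{2}}(X_{\zeta})$ plus invertibility of $L_{\zeta}^{n}$ pulls this back to $\mathcal{H}_{{\bm k}_{1}}(X_{\zeta})\leq \mathcal{H}_{{\bm k}_{2}}(X_{\zeta})$, and the identification with $\mathcal{H}(X_{\zeta})$ via the union of the return sets is handled as in the paper. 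Your only additions are presentational but sound: the reduction to letters, and the explicit chain $\Phi^{k}(\mathfrak{L})$, which in fact supplies a proof that the smallest saturated lattice exists (something the paper's definition takes for granted), and your saturation lemma only invokes the harmless direction of the power equivalence ($L_{\zeta}$-saturation implies $L_{\zeta}^{n}$-saturation), so no gap arises there.
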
 

\begin{proof}
	Let $x\in X_{\zeta}$ be a fixed point of the substitution, ${\bm k}_{1}, {\bm k}_{2}\in  \Z^{d}$ and $N$ be large enough such that $x_{{\bm k}_{2}}\sqsubseteq \zeta^{N}(x_{{\bm k}_{1}})$. Set ${\bm m}$ such that $x_{{\bm k}_{2}}=\zeta^{N}(x_{{\bm k}_{1}})_{{\bm m}}$. Since $x=\zeta^{N}(x)$ for any ${\bm j}\in \mathcal{R}_{{\bm k}_{1}}(X_{\zeta})$, the set $L_{\zeta}^{N}({\bm k}_{1}+{\bm j})+{\bm m}$ is included in $\mathcal{R}_{{\bm k}_{2}}(X_{\zeta})$. Hence $L_{\zeta}^{N}({\bm j})$ is in $\mathfrak{L}(\mathcal{R}_{{\bm k}_{2}}(X_{\zeta}))$ and therefore in $\mathcal{H}_{{\bm k}_{2}}(X_{\zeta})$. By definition of $\mathcal{H}_{{\bm k}_{2}}(X_{\zeta})$ and invertibility of $L_{\zeta}^{N}$, we conclude that ${\bm j}\in \mathcal{H}_{{\bm k}_{2}}(X_{\zeta})$. Since it is the smallest lattice satisfying the property, $\mathcal{H}_{{\bm k}_{1}}(X_{\zeta})$ is a subgroup of $\mathcal{H}_{{\bm k}_{2}}(X_{\zeta})$, and by reciprocity of the arguments these sets are the same. We conclude the second equality by observing that $\mathcal{H}(X_{\zeta})=\sum\limits_{{\bm k}\in \Z^{d}}\mathcal{H}_{{\bm k}}(X_{\zeta})$. 
\end{proof}

As in the one-dimensional case, to study the maximal equicontinuous factor of a substitutive subshift, we study their \emph{eigenvalues}. A vector ${\bm x}\in \R^{d}$ is said to be an eigenvalue for the topological dynamical system $(X_{\zeta},S,\Z^{d})$ (the measure-preserving system $(X_{\zeta},\mu,S,\Z^{d})$) if there exists a continuous function $f:X_{\zeta}\to\mathbb{C}$ ($f \in L^{2}(X_{\zeta},\mu_{\zeta})$, respectively) such that for every ${\bm n}\in \Z^{d}$, $f\circ S^{{\bm n}}=e^{2\pi i \left\langle {\bm x},{\bm n}\right\rangle} f$ in $X_{\zeta}$ ($f\circ S^{{\bm n}}=e^{2\pi i \left\langle {\bm x},{\bm n}\right\rangle} f$ in $X_{\zeta}$, $\mu_{\zeta}$-a.e. in $X_{\zeta}$, respectively). In \cite{solomyak2007eigenfunctions} it was proved the following result, generalizing the characterization of eigenvalues for the one-dimensional case \cite{dekking1978spectrum}.

\begin{theorem}[\cite{solomyak2007eigenfunctions}]\label{CharacterizationEigenvalues} Let $\zeta$ be an aperiodic primitive constant-shape substitution which has a fixed point in $X_{\zeta}$. Then the following are equivalent for ${\bm x} \in \R^{d}$:
	\begin{enumerate}
		\item The vector ${\bm x}$ is a continuous eigenvalue for the topological dynamical system $(X_{\zeta},S,\R^{d})$.
		\item The vector ${\bm x}$ is a measurable eigenvalue for the measure-preserving system $(X_{\zeta},\mu_{\zeta},S,\R^{d})$.
		\item The vector ${\bm x}$ satisfies the following condition:
		$$\lim\limits_{n\to \infty}e^{2\pi i \left\langle L_{\zeta}^{n}{\bm j},{\bm x}\right\rangle}=1,\ \forall\, {\bm j}\in \mathcal{R}(X_{\zeta}).$$
		\item The vector ${\bm x}$ satisfies the following condition:
		$$\lim\limits_{n\to \infty}e^{2\pi i \left\langle L_{\zeta}^{n}{\bm j},{\bm x}\right\rangle}=1,\ \forall\, {\bm j}\in \mathcal{H}(X_{\zeta}).$$
	\end{enumerate}
\end{theorem}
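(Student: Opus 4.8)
The plan is to establish the cycle $(1)\Rightarrow(2)\Rightarrow(3)\Rightarrow(1)$ together with the algebraic equivalence $(3)\Leftrightarrow(4)$, adapting Dekking's one-dimensional argument. The implication $(1)\Rightarrow(2)$ is immediate, since a continuous eigenfunction lies in $L^{2}(X_{\zeta},\mu_{\zeta})$ and is thus a measurable eigenfunction. For $(2)\Rightarrow(3)$ I would start from a measurable eigenfunction $f$ with eigenvalue ${\bm x}$; because $|f|\circ S^{{\bm n}}=|f|$ and the system is uniquely ergodic (hence ergodic), $|f|$ equals a.e. a positive constant, so I may normalize $|f|\equiv 1$. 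Fix ${\bm j}\in\mathcal{R}(X_{\zeta})$ and ${\bm k}$ with $x_{{\bm k}}=x_{{\bm k}+{\bm j}}$ for the fixed point $x=\zeta^{n}(x)$. The two occurrences of that common letter force $\zeta^{n}$ of the same letter at the positions $L_{\zeta}^{n}{\bm k}$ and $L_{\zeta}^{n}({\bm k}+{\bm j})$, so $S^{L_{\zeta}^{n}{\bm k}}x$ and $S^{L_{\zeta}^{n}({\bm k}+{\bm j})}x$ coincide on $F_{n}^{\zeta}$; since $(F_{n}^{\zeta})_{n>0}$ is F\o lner these points converge to one another as $n\to\infty$.

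To finish $(2)\Rightarrow(3)$ I would exploit that, by minimality, the pattern $x_{{\bm t}}=x_{{\bm t}+{\bm j}}$ occurs with positive frequency. Combining Lusin's theorem (fix a compact set of measure close to $1$ on which $f$ is uniformly continuous) with unique ergodicity (which forces a positive proportion of the equidistributing points $S^{L_{\zeta}^{n}{\bm t}}x$ and their $L_\zeta^n{\bm j}$-translates to land simultaneously in that set and be close) yields $|f(S^{L_{\zeta}^{n}{\bm j}}w)-f(w)|$ arbitrarily small on a set of positive measure. Reading off the eigenvalue relation $f(S^{L_{\zeta}^{n}{\bm j}}w)=e^{2\pi i\langle L_{\zeta}^{n}{\bm j},{\bm x}\rangle}f(w)$ then gives $e^{2\pi i\langle L_{\zeta}^{n}{\bm j},{\bm x}\rangle}\to 1$, which is $(3)$.

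For $(3)\Rightarrow(1)$ I would define $f$ on the (free, by aperiodicity) orbit of the fixed point by $f(S^{{\bm n}}x)=e^{2\pi i\langle {\bm n},{\bm x}\rangle}$ and show it is uniformly continuous, hence extends to a continuous eigenfunction on $X_{\zeta}=\overline{\mathcal{O}(x,\Z^{d})}$. The heart of the matter, and the step I expect to be the main obstacle, is to control the phase of the displacement of two positions that agree on a large ball. Using the Recognizability Property repeatedly (with \cref{RepulsionProperty} guaranteeing uniqueness of the recognized decomposition), if $S^{{\bm m}}x$ and $S^{{\bm m}'}x$ agree on a ball of radius $\Vert L_{\zeta}^{n}\Vert R_{0}$ with $R_{0}$ the recognizability radius, one peels off $n$ levels of supertiles, forcing equal sub-supertile offsets at each level and writing ${\bm m}-{\bm m}'=L_{\zeta}^{n}{\bm j}_{n}$ with ${\bm j}_{n}$ a return time of bounded norm, i.e. lying in a fixed finite set $\mathcal{R}_{0}\subseteq\mathcal{R}(X_{\zeta})$. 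Then $e^{2\pi i\langle {\bm m}-{\bm m}',{\bm x}\rangle}=e^{2\pi i\langle L_{\zeta}^{n}{\bm j}_{n},{\bm x}\rangle}$, and since $\mathcal{R}_{0}$ is finite, condition $(3)$ gives $\max_{{\bm r}\in\mathcal{R}_{0}}|e^{2\pi i\langle L_{\zeta}^{n}{\bm r},{\bm x}\rangle}-1|\to 0$; choosing $n$ large makes the phase within any prescribed $\varepsilon$. The extension is unimodular, hence a genuine continuous eigenfunction, establishing $(1)$.

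Finally, the equivalence $(3)\Leftrightarrow(4)$ is purely algebraic. Since $\mathcal{R}(X_{\zeta})\subseteq\mathcal{H}(X_{\zeta})$, the implication $(4)\Rightarrow(3)$ is trivial. For $(3)\Rightarrow(4)$ consider $E_{{\bm x}}=\{{\bm j}\in\Z^{d}\colon \lim_{n}e^{2\pi i\langle L_{\zeta}^{n}{\bm j},{\bm x}\rangle}=1\}$. This is a subgroup, and because $e^{2\pi i\langle L_{\zeta}^{n}(L_{\zeta}{\bm j}),{\bm x}\rangle}$ is a tail of the sequence for ${\bm j}$, one has ${\bm j}\in E_{{\bm x}}\Leftrightarrow L_{\zeta}{\bm j}\in E_{{\bm x}}$; in particular $E_{{\bm x}}\cap L_{\zeta}(\Z^{d})\subseteq L_{\zeta}(E_{{\bm x}})$, which is exactly the defining stability property of the height lattice. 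By $(3)$, $E_{{\bm x}}\supseteq\mathcal{R}(X_{\zeta})$, hence $E_{{\bm x}}\supseteq\mathfrak{L}(\mathcal{R}(X_{\zeta}))$, which is a lattice because $\mathcal{R}(X_{\zeta})$ is syndetic; thus $E_{{\bm x}}$ is itself a finite-index lattice containing $\mathfrak{L}(\mathcal{R}(X_{\zeta}))$ and satisfying the stability property, so by minimality of $\mathcal{H}(X_{\zeta})$ we get $\mathcal{H}(X_{\zeta})\subseteq E_{{\bm x}}$, which is precisely $(4)$.
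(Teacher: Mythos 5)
First, note that the paper does not actually prove this theorem: items (1)--(3) are quoted from \cite{solomyak2007eigenfunctions}, and the only original content is the remark that (4) can be added because the set of ${\bm j}$ with $\left\langle (L_{\zeta}^{*})^{N}\alpha,{\bm j}\right\rangle\in \Z$ is a lattice. Your $(1)\Rightarrow(2)$ is trivially fine, your $(2)\Rightarrow(3)$ is the standard Host--Solomyak argument and is essentially correct (the first paragraph's claim that agreement on $F_{n}^{\zeta}$ alone forces metric convergence is imprecise, since ${\bm 0}$ may lie on the boundary of $F_{n}^{\zeta}$ --- think of $F_{n}=\llbracket 0,2^{n}-1\rrbracket$ in dimension one --- but your second paragraph implicitly repairs this by working with interior positions, where the F\o lner property gives $|(F_{n}^{\zeta})^{\circ r}|/|F_{n}^{\zeta}|\to 1$), and your $(3)\Leftrightarrow(4)$ via the group $E_{{\bm x}}$ is correct and is exactly the lattice observation the paper's remark alludes to: $E_{{\bm x}}$ is a finite-index subgroup (it contains the syndetic set $\mathcal{R}(X_{\zeta})$, hence the lattice $\mathfrak{L}(\mathcal{R}(X_{\zeta}))$), it satisfies $E_{{\bm x}}\cap L_{\zeta}(\Z^{d})\subseteq L_{\zeta}(E_{{\bm x}})$, so minimality of $\mathcal{H}(X_{\zeta})$ applies.

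The genuine gap is in $(3)\Rightarrow(1)$. Your key claim --- that if $S^{{\bm m}}x$ and $S^{{\bm m}'}x$ agree on a large ball then ${\bm m}-{\bm m}'=L_{\zeta}^{n}{\bm j}_{n}$ with ${\bm j}_{n}$ \emph{of bounded norm}, lying in a fixed finite set $\mathcal{R}_{0}$ --- is false. Recognizability does give ${\bm m}-{\bm m}'\in L_{\zeta}^{n}(\Z^{d})$ with ${\bm j}_{n}=L_{\zeta}^{-n}({\bm m}-{\bm m}')$ a return vector, but $\Vert{\bm j}_{n}\Vert\approx\Vert L_{\zeta}^{-n}\Vert\cdot\Vert{\bm m}-{\bm m}'\Vert$, and $\Vert{\bm m}-{\bm m}'\Vert$ is completely unrelated to the agreement radius: two occurrences of the same large pattern can sit arbitrarily far apart, while $n$ is capped by the logarithm of the agreement radius. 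Since condition (3) is only a pointwise statement in ${\bm j}$, it gives no uniform control of $|e^{2\pi i\left\langle L_{\zeta}^{n}{\bm j},{\bm x}\right\rangle}-1|$ over the unbounded set of return vectors, and uniform continuity of your orbit-defined $f$ does not follow. The repair is to route through (4) and upgrade it to an exact statement: writing $(L_{\zeta}^{*})^{n}{\bm x}={\bm a}_{n}+\boldsymbol{\varepsilon}_{n}$ with ${\bm a}_{n}\in\mathcal{H}(X_{\zeta})^{*}$ and $\boldsymbol{\varepsilon}_{n}\to 0$, the recurrence ${\bm a}_{n+1}-L_{\zeta}^{*}{\bm a}_{n}=L_{\zeta}^{*}\boldsymbol{\varepsilon}_{n}-\boldsymbol{\varepsilon}_{n+1}$ takes values in the discrete set $\mathcal{H}(X_{\zeta})^{*}+L_{\zeta}^{*}\mathcal{H}(X_{\zeta})^{*}\subseteq \frac{1}{h}\Z^{d}$ (with $h=[\Z^{d}:\mathcal{H}(X_{\zeta})]$), so eventually $\boldsymbol{\varepsilon}_{n+1}=L_{\zeta}^{*}\boldsymbol{\varepsilon}_{n}$; expansivity of $L_{\zeta}^{*}$ then forces $\boldsymbol{\varepsilon}_{n}=0$ for some finite $N$, i.e. ${\bm x}\in (L_{\zeta}^{*})^{-N}\mathcal{H}(X_{\zeta})^{*}$, so the phases $e^{2\pi i\left\langle L_{\zeta}^{n}{\bm j},{\bm x}\right\rangle}$ are \emph{exactly} $1$ for all ${\bm j}\in\mathcal{H}(X_{\zeta})$ once $n\geq N$ --- uniformity for free. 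Continuity of the eigenfunction is then immediate from the clopen $L_{\zeta}^{N}(\mathcal{H}(X_{\zeta}))$-minimal partition as in \cref{CharacterizationMinimalPartition} and \cref{RemarkMaximalEquicontinuousFactor}, rather than from your orbit-closure extension; this is also why the set of eigenvalues is $\bigcup_{n\geq 0}(L_{\zeta}^{*})^{-n}(\mathcal{H}^{*}(X_{\zeta}))$, as the paper states after the theorem.
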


\begin{remark} The same results are satisfied for $\Z^{d}$-topological actions adapting the arguments given in \cite{solomyak2007eigenfunctions}. Note that Condition (4) in Theorem \ref{CharacterizationEigenvalues} is not proved in \cite{solomyak2007eigenfunctions} but it can be easily checked noticing the set of points satisfying $\left\langle (L_{\zeta}^{*})^{N}{\bm x},{\bm j}\right\rangle\in \Z$ is a lattice.
\end{remark}

This implies that the set of continuous (and measurable) eigenvalues of $(X_{\zeta},S,\Z^{d})$ corresponds to the set $\bigcup\limits_{n\geq 0} (L_{\zeta}^{*})^{-n}(\mathcal{H}^{*}(X_{\zeta}))$, where $\mathcal{H}^{*}(X_{\zeta})$ is the dual lattice of $\mathcal{H}(X_{\zeta})$. In particular the set of eigenvalues $E(X_{\zeta},S,\Z^{d})$ of $(X_{\zeta},S,\Z^{d})$ is a subset of $\mathbb{Q}^{d}$. A direct consequence of \cref{CharacterizationEigenvalues} is a description of the maximal equicontinuous factor of $(X_{\zeta},S,\Z^{d})$.

\begin{proposition}\label{MaximalEquiContinuousFactorMultidimensionalSubstitution}
	Let $\zeta$ be an aperiodic primitive  constant-shape substitution. The maximal equicontinuous factor of the substitutive subshift of $(X_{\zeta},S,\Z^{d})$ is the odometer system
	$$(\overleftarrow{\Z^{d}}_{(L_{\zeta}^{n}(\mathcal{H}(X_{\zeta})))},+_{L_{\zeta}^{n}(\mathcal{H}(X_{\zeta}))},\Z^{d})=\left(\lim\limits_{\leftarrow n} (\Z^{d}/L_{\zeta}^{n}(\mathcal{H}(X_{\zeta})),\alpha_{n}),+_{(L_{\zeta}^{n}(\mathcal{H}(X_{\zeta})))},\Z^{d}\right).$$
\end{proposition}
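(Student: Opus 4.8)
The plan is to identify the maximal equicontinuous factor through its group of continuous eigenvalues and then dualize. Recall that for a minimal $\Z^{d}$-system the maximal equicontinuous factor is, up to conjugacy, the minimal rotation on the Pontryagin dual $\widehat{\mathcal{E}}$ of the group $\mathcal{E}$ of continuous eigenvalues, regarded as a countable subgroup of $\widehat{\Z^{d}}=\R^{d}/\Z^{d}$ via ${\bm x}\mapsto({\bm n}\mapsto e^{2\pi i\langle{\bm x},{\bm n}\rangle})$, where $\Z^{d}$ acts on $\widehat{\mathcal{E}}$ by translation through the evaluation homomorphism ${\bm n}\mapsto({\bm x}\mapsto e^{2\pi i\langle{\bm x},{\bm n}\rangle})$; concretely this factor is the closure of $x\mapsto(f_{{\bm x}}(x))_{{\bm x}\in\mathcal{E}}$ for a choice of unimodular continuous eigenfunctions $f_{{\bm x}}$. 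By \cref{CharacterizationEigenvalues} and the remark following it, the continuous and the measurable eigenvalues coincide, and
\[
\mathcal{E}:=E(X_{\zeta},S,\Z^{d})=\bigcup\limits_{n\geq 0}(L_{\zeta}^{*})^{-n}(\mathcal{H}^{*}(X_{\zeta}))\pmod{\Z^{d}}.
\]
Thus it suffices to prove that $\widehat{\mathcal{E}}$, with this $\Z^{d}$-action, is $\Z^{d}$-equivariantly isomorphic to the odometer in the statement.

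First I would check that the odometer is well defined. The sequence $(L_{\zeta}^{n}(\mathcal{H}(X_{\zeta})))_{n\geq 0}$ is a nested sequence of finite-index subgroups of $\Z^{d}$ with trivial intersection: finite index is clear since $\mathcal{H}(X_{\zeta})$ is a lattice and $L_{\zeta}$ is invertible; the intersection lies in $\bigcap_{n}L_{\zeta}^{n}(\Z^{d})=\{{\bm 0}\}$ because $L_{\zeta}$ is expanding; and the nesting $L_{\zeta}^{n+1}(\mathcal{H}(X_{\zeta}))\subseteq L_{\zeta}^{n}(\mathcal{H}(X_{\zeta}))$ amounts to $L_{\zeta}(\mathcal{H}(X_{\zeta}))\subseteq\mathcal{H}(X_{\zeta})$. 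This last inclusion holds because a fixed point $x=\zeta(x)$ forces $L_{\zeta}(\mathcal{R}(X_{\zeta}))\subseteq\mathcal{R}(X_{\zeta})$ (if $x_{{\bm k}+{\bm j}}=x_{{\bm k}}$ then $x_{L_{\zeta}({\bm k}+{\bm j})+{\bm f}}=x_{L_{\zeta}{\bm k}+{\bm f}}$ for every ${\bm f}\in F_{1}^{\zeta}$), hence $L_{\zeta}(\mathfrak{L}(\mathcal{R}(X_{\zeta})))\subseteq\mathfrak{L}(\mathcal{R}(X_{\zeta}))$, and one checks that the lattice $\{{\bm h}\in\Z^{d}\colon L_{\zeta}{\bm h}\in\mathcal{H}(X_{\zeta})\}$ again satisfies the saturation property defining $\mathcal{H}(X_{\zeta})$, so by minimality it contains $\mathcal{H}(X_{\zeta})$. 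Therefore $(\overleftarrow{\Z^{d}}_{(L_{\zeta}^{n}(\mathcal{H}(X_{\zeta})))},+,\Z^{d})$ is a bona fide minimal equicontinuous odometer.

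The core of the argument is then a duality computation. The Pontryagin dual of $\overleftarrow{\Z^{d}}_{(L_{\zeta}^{n}(\mathcal{H}(X_{\zeta})))}=\varprojlim_{n}\Z^{d}/L_{\zeta}^{n}(\mathcal{H}(X_{\zeta}))$ is the discrete group $\varinjlim_{n}\widehat{\Z^{d}/L_{\zeta}^{n}(\mathcal{H}(X_{\zeta}))}$. For each $n$, the characters of $\Z^{d}/L_{\zeta}^{n}(\mathcal{H}(X_{\zeta}))$ are exactly the characters of $\Z^{d}$ trivial on $L_{\zeta}^{n}(\mathcal{H}(X_{\zeta}))$, i.e. the finite subgroup
\[
(L_{\zeta}^{n}(\mathcal{H}(X_{\zeta})))^{*}/\Z^{d}=(L_{\zeta}^{*})^{-n}(\mathcal{H}^{*}(X_{\zeta}))/\Z^{d}\ \subseteq\ \R^{d}/\Z^{d},
\]
using the dual-lattice identity $(M\Z^{d})^{*}=(M^{*})^{-1}\Z^{d}$ recalled earlier. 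Because $L_{\zeta}(\mathcal{H}(X_{\zeta}))\subseteq\mathcal{H}(X_{\zeta})$, these finite groups increase with $n$, the connecting maps of the direct limit are the inclusions, and their union is precisely $\mathcal{E}$. Hence $\widehat{\overleftarrow{\Z^{d}}_{(L_{\zeta}^{n}(\mathcal{H}(X_{\zeta})))}}\cong\mathcal{E}$ as discrete groups, and this isomorphism intertwines the dual of the canonical dense embedding $\tau\colon\Z^{d}\to\overleftarrow{\Z^{d}}_{(L_{\zeta}^{n}(\mathcal{H}(X_{\zeta})))}$ with the evaluation pairing ${\bm n}\mapsto({\bm x}\mapsto e^{2\pi i\langle{\bm x},{\bm n}\rangle})$ on $\mathcal{E}$. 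Applying Pontryagin duality once more yields a $\Z^{d}$-equivariant topological isomorphism $\widehat{\mathcal{E}}\cong\overleftarrow{\Z^{d}}_{(L_{\zeta}^{n}(\mathcal{H}(X_{\zeta})))}$, identifying the latter with the maximal equicontinuous factor.

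The main obstacle I anticipate is bookkeeping rather than a deep new idea: one must carefully match the two $\Z^{d}$-actions across the two-fold dualization, and justify the abstract input ``maximal equicontinuous factor $=$ dual of the continuous eigenvalue group.'' The latter requires that the continuous eigenfunctions separate the points of the factor and that continuous and measurable eigenvalues agree, both of which are guaranteed by minimality together with \cref{CharacterizationEigenvalues}. If one prefers to bypass the abstract characterization, the same proof runs concretely: construct the factor map $x\mapsto(f_{{\bm x}}(x))_{{\bm x}\in\mathcal{E}}$ from $X_{\zeta}$ into $\widehat{\mathcal{E}}$, and verify that its image is all of the odometer using the density of $\tau(\Z^{d})$ and maximality against an arbitrary equicontinuous factor via \cref{CharacterizationFactorOdometer}.
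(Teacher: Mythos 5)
Your proof is correct and follows essentially the route the paper intends: the paper states this proposition as a direct consequence of \cref{CharacterizationEigenvalues}, i.e., the identification of the continuous eigenvalue group with $\bigcup_{n\geq 0}(L_{\zeta}^{*})^{-n}(\mathcal{H}^{*}(X_{\zeta}))$ followed by the standard fact that the maximal equicontinuous factor of a minimal system is the rotation on the Pontryagin dual of that group. You merely spell out the details the paper leaves implicit (the nesting $L_{\zeta}(\mathcal{H}(X_{\zeta}))\subseteq \mathcal{H}(X_{\zeta})$, the dual-lattice computation $\widehat{\Z^{d}/L_{\zeta}^{n}(\mathcal{H}(X_{\zeta}))}\cong (L_{\zeta}^{*})^{-n}(\mathcal{H}^{*}(X_{\zeta}))/\Z^{d}$, and the matching of the $\Z^{d}$-actions), all of which are accurate.
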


For completeness, we provide another description for the maximal equicontinuous factor in a general setting. Let $(X,T,\Z^{d})$ be a topological dynamical system, where $X$ is a Cantor set, and $\Gamma \leq \Z^{d}$ a finite-index subgroup. We say that $(X,T,\Z^{d})$ admits a \emph{$\Gamma$-minimal partition} if there exists a closed partition $\bigdotcup\limits_{{\bm g}\in \Z^{d}/\Gamma}X_{{\bm g}}=X$ such that for all ${\bm g}\in \Z^{d}/\Gamma$ the set of return times $\mathcal{RT}(X_{{\bm g}})=\{{\bm n}\in \Z^{d}\colon T^{-{\bm n}}(X_{{\bm g}})\cap X_{{\bm g}}\neq \emptyset\}$ is equal to $\Gamma$ and the topological dynamical system $(X_{{\bm g}},\Gamma)$ is a minimal system. Note that, since its a finite partition, the sets $X_{{\bm g}}$ are clopen. The minimality of the induced actions implies that there is at most one $\Gamma$-minimal partition, up to a permutation of the sets $\{X_{{\bm g}}\}_{{\bm g}\in \Z^{d}/\Gamma}$. 

A $\Gamma$-minimal partition is associated with an equicontinuous factor of $(X,T,\Z^{d})$ in the following way: We enumerate $X_{{\bm g}}$ such that for all ${\bm g}\in \Z^{d}/\Gamma$ and ${\bm n} \in \Z^{d}$ we have that $T^{{\bm n}}X_{{\bm g}}=X_{{\bm g}+{\bm n} \mod \Gamma}$. Then, the map $\pi:(X,T,\Z^{d})\to (\Z^{d}/\Gamma,+_{\Gamma},\Z^{d})$, such that $\pi(x)={\bm g}$ if and only if $x\in X_{{\bm g}}$, is a factor map onto $(\Z^{d}/\Gamma,+_{\Gamma},\Z^{d})$, where $\Z^{d}$ acts by quotient translations onto $\Z^{d}/\Gamma$. The following proposition shows the connection between $\Gamma$-minimal partitions and eigenvalues of a topological dynamical system.

\begin{proposition}\label{CharacterizationMinimalPartition}
	Let $(X,T,\Z^{d})$ be a minimal topological dynamical system and $\Gamma\leq \Z^{d}$ be a finite-index subgroup. The system $(X,T,\Z^{d})$ admits a $\Gamma$-minimal partition, if and only if $\Gamma^{*}\subseteq E(X,T,\Z^{d})$.
\end{proposition}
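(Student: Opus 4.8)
The key fact underlying both directions is that the finite equicontinuous system $(\Z^{d}/\Gamma,+_{\Gamma},\Z^{d})$ has eigenvalue set exactly $\Gamma^{*}$ (modulo $\Z^{d}$). Indeed, for ${\bm x}\in \Gamma^{*}$ the formula $\chi_{{\bm x}}({\bm g})=e^{2\pi i \langle {\bm x},{\bm g}\rangle}$ is well defined on $\Z^{d}/\Gamma$, since $\langle {\bm x},{\bm m}\rangle\in \Z$ for ${\bm m}\in \Gamma$, and it satisfies $\chi_{{\bm x}}({\bm g}+{\bm n})=e^{2\pi i\langle {\bm x},{\bm n}\rangle}\chi_{{\bm x}}({\bm g})$, so ${\bm x}$ is a (continuous) eigenvalue. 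Conversely, evaluating any eigenfunction at ${\bm 0}$ shows it must be a scalar multiple of some $\chi_{{\bm x}}$, and well-definedness on $\Z^{d}/\Gamma$ forces ${\bm x}\in \Gamma^{*}$. I will use this identification throughout, together with the perfect pairing $(\Z^{d}/\Gamma)\times(\Gamma^{*}/\Z^{d})\to \mathbb{S}^{1}$, $({\bm n},{\bm x})\mapsto e^{2\pi i\langle {\bm n},{\bm x}\rangle}$, which gives $\widehat{\Gamma^{*}/\Z^{d}}\cong \Z^{d}/\Gamma$.

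For the direct implication, a $\Gamma$-minimal partition yields the factor map $\pi:(X,T,\Z^{d})\to(\Z^{d}/\Gamma,+_{\Gamma},\Z^{d})$ constructed immediately before the statement. Since pulling back an eigenfunction along a factor map preserves its eigenvalue, for every ${\bm x}\in \Gamma^{*}$ the function $\chi_{{\bm x}}\circ \pi$ is a continuous eigenfunction of $(X,T,\Z^{d})$ with eigenvalue ${\bm x}$. Hence $\Gamma^{*}\subseteq E(X,T,\Z^{d})$.

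For the converse, assume $\Gamma^{*}\subseteq E(X,T,\Z^{d})$. For each ${\bm x}\in \Gamma^{*}$ I choose a continuous eigenfunction $f_{{\bm x}}$; by minimality $|f_{{\bm x}}|$ is constant, so I normalize $f_{{\bm x}}$ to take values in $\mathbb{S}^{1}$, fix a base point $x_{0}\in X$, and impose $f_{{\bm x}}(x_{0})=1$. Uniqueness of normalized eigenfunctions on a minimal system then gives $f_{{\bm x}}f_{{\bm y}}=f_{{\bm x}+{\bm y}}$, while for ${\bm n}\in \Z^{d}\subseteq \Gamma^{*}$ the eigenvalue is trivial, so $f_{{\bm n}}\equiv 1$; thus ${\bm x}\mapsto f_{{\bm x}}(x)$ descends to a character of $\Gamma^{*}/\Z^{d}$. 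Consequently $\Phi(x)=\big(f_{{\bm x}}(x)\big)_{{\bm x}}$ defines a continuous map $\Phi:X\to \widehat{\Gamma^{*}/\Z^{d}}\cong \Z^{d}/\Gamma$ with $\Phi\circ T^{{\bm n}}=\Phi+{\bm n}\ (\mathrm{mod}\ \Gamma)$, i.e. a factor map onto the finite system. Setting $X_{{\bm g}}=\Phi^{-1}({\bm g})$ produces a clopen partition, and the relation $T^{{\bm n}}X_{{\bm g}}=X_{{\bm g}+{\bm n}}$ gives at once $\mathcal{RT}(X_{{\bm g}})=\Gamma$.

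The remaining and most delicate point is the $\Gamma$-minimality of each $X_{{\bm g}}$, and this is where I expect the real work. The plan is to use that $(X,T,\Z^{d})$ is minimal with $\Gamma$ of finite index, so $X$ is a finite union of $\Gamma$-minimal sets which are transitively permuted by a transversal of $\Z^{d}/\Gamma$. Because ${\bm n}\in \Gamma$ gives $\Phi\circ T^{{\bm n}}=\Phi$, the map $\Phi$ is $\Gamma$-invariant and locally constant, so any $\Gamma$-minimal set $Y$ lies in a single fibre $X_{{\bm g}}$. Counting the $[\Z^{d}:\Gamma]$ distinct translates $T^{{\bm n}}Y$, each contained in the distinct fibre $X_{{\bm g}+{\bm n}}$ and together covering $X=\bigsqcup_{{\bm h}}X_{{\bm h}}$, forces $T^{{\bm n}}Y=X_{{\bm g}+{\bm n}}$; hence every fibre is a translate of $Y$, so is $\Gamma$-minimal, and $\{X_{{\bm g}}\}$ is the required partition (surjectivity of $\Phi$ drops out, as all fibres are nonempty). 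The main obstacle is precisely this matching of the abstract $\Gamma$-minimal decomposition of $X$ with the fibres of $\Phi$; once the $\Gamma$-invariance and local constancy of $\Phi$ are in hand, the rest is the routine harmonic analysis of the finite factor.
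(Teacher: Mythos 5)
Your proof is correct, and although both directions rest on the same core mechanism as the paper's -- eigenfunction level sets cut out the partition, and the duality $(\Gamma^{*})^{*}=\Gamma$ forces disjointness of its pieces -- your converse is organized along a genuinely different route. The paper chooses only the $d$ eigenfunctions $f_{j}$ attached to the basis ${\bm x}_{j}=(M^{*})^{-1}(e_{j})$ of $\Gamma^{*}$, defines the fibres $X_{{\bm m}}$ as intersections of their level sets, and obtains minimality from the decomposition $X=\bigcup_{{\bm m}}\overline{\mathcal{O}(T^{{\bm m}}\overline{x},\Gamma)}$ around the base point $\overline{x}$; as written that step exhibits each fibre as the orbit closure of a single point (transitivity), with genuine minimality requiring the same covering argument repeated at an arbitrary point of the fibre. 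You instead normalize the whole family $\{f_{{\bm x}}\}_{{\bm x}\in\Gamma^{*}}$, use uniqueness of normalized eigenfunctions on a minimal system to obtain the character identity $f_{{\bm x}}f_{{\bm y}}=f_{{\bm x}+{\bm y}}$ with $f_{{\bm n}}\equiv 1$ for ${\bm n}\in\Z^{d}$, and package everything through the duality $\widehat{\Gamma^{*}/\Z^{d}}\cong\Z^{d}/\Gamma$ into one equivariant map $\Phi:X\to\Z^{d}/\Gamma$; surjectivity ($\Phi(T^{{\bm n}}x_{0})={\bm n}$), clopenness of the fibres, and the return-time computation then come for free, and fibre minimality follows from your abstract $\Gamma$-minimal set $Y$, whose $[\Z^{d}:\Gamma]$ translates lie in distinct fibres, cover $X$, and are therefore equal to the fibres -- an argument that delivers minimality of every fibre at once rather than transitivity from a distinguished point. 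Your direct implication is the paper's in disguise: the pullback $\chi_{{\bm x}}\circ\pi$ is exactly the paper's explicit eigenfunction $\sum_{{\bm g}}e^{2\pi i\left\langle {\bm x},{\bm g}\right\rangle}\mathds{1}_{X_{{\bm g}}}$. The only steps you should still write out, all routine, are: existence of $Y$ (Zorn's lemma on the compact $\Gamma$-system), the fact that $\bigcup_{{\bm n}}T^{{\bm n}}Y$ is a finite union over a transversal (since $Y$ is $\Gamma$-invariant), hence closed, $\Z^{d}$-invariant and equal to $X$ by minimality, and the fact that $\Phi|_{Y}$ is constant because $\Phi$ is continuous and $\Gamma$-invariant while $Y$ is $\Gamma$-minimal.
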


\begin{proof}		
	Let $L\in \mathcal{M}(d,\Z)$ be such that $\Gamma=L(\Z^{d})$. We recall that $L([0,1)^{d})\cap \Z^{d}$ is a fundamental domain of $L(\Z^{d})$ in $\Z^{d}$.
	
	Let $\{X_{{\bm g}}\}_{{\bm g}\in L([0,1)^{d})\cap \Z^{d}}$ be a $\Gamma$-minimal partition satisfying $X_{{\bm g}}=T^{{\bm g}}(X_{{\bm 0}})$ for all ${\bm g}\in L([0,1)^{d})\cap\Z^{d}$. Let ${\bm x}$ be in $(L^{*})^{-1}(\Z^{d})$. Define $f$ as the map $f=\sum\limits_{{\bm g}\in L([0,1)^{d})\cap \Z^{d}}e^{2\pi i\left\langle {\bm x},{\bm g}\right\rangle}\mathds{1}_{X_{{\bm g}}}$. Since the sets $\{X_{{\bm g}}\}_{{\bm g}\in L([0,1)^{d})\cap \Z^{d}}$ are clopen, the map $f$ is continuous. Let $x\in X_{{\bm 0}}$, ${\bm{m}}\in \Z^{d}$ and ${\bm{m}}_{1}\in \Z^{d},$ ${\bm{m}}_{2}\in L([0,1)^{d})\cap \Z^{d}$ be such that ${\bm{m}}=L(\bm{m}_{1})+\bm{m}_{2}$. Note that $T^{{\bm m}}x$ is in $X_{{\bm m}_{2}}$ and since ${\bm x}$ is in $(L^{*})^{-1}(\Z^{d})$, we have that
		\begin{align}\label{ComputationForEigenfunction}
		e^{2\pi i\left\langle {\bm x},{\bm m}\right\rangle}=e^{2\pi i \left\langle {\bm x}, R({\bm m}_{1})+{\bm m_{2}}\right\rangle}=e^{2\pi i(\left\langle {\bm x}, R({\bm m}_{1}) \right\rangle + \left\langle {\bm x}, {\bm m}_{2}\right\rangle)}=e^{2\pi i\left\langle {\bm x},{\bm m}_{2}\right\rangle}.
	\end{align}
	
	We found a continuous map such that $f(T^{{\bm m}}x)=e^{2\pi i \left \langle {\bm x},{\bm m}\right\rangle}f(x)$ for all $x\in X$ and ${\bm m}\in \Z^{d}$. We conclude that ${\bm x} \in E(X,T,\Z^{d})$.
	
	On the other hand, let $y$ be in $X$. For $j\in \{1,\ldots,d\}$ we denote ${\bm x}_{j}=(L^{*})^{-1}(e_{j})$. Since $\Gamma^{*}\subseteq E(X,T,\Z^{d})$ there exists a map $f_{j}:X\to \mathbb{C}$ such that $f_{j}(T^{{\bm m}}y)=e^{2\pi i \left\langle {\bm x}_{j},{\bm m}\right\rangle}f_{j}(y)$ for all ${\bm m}\in \Z^{d}$. Since the eigenspaces are one-dimensional we choose $f_{j}$ such that $f_{j}(y)=1$. By \eqref{ComputationForEigenfunction}, the values of $e^{2\pi i \left\langle {\bm x}_{j},{\bm m}\right\rangle}$ only depend on ${\bm m}\in L([0,1)^{d})\cap \Z^{d}$. Now, for any $j\in \{1,\ldots,d\}$ and ${\bm m}\in L([0,1)^{d})\cap \Z^{d}$ we denote $X_{{\bm m}}^{j}=f_{j}^{-1}(\{e^{2\pi i \left\langle {\bm x}_{j},{\bm m}\right\rangle }\})$. By minimality, for each $j\in \{1,\ldots,d\}$, the set $X$ is equal to $\bigdotcup\limits_{{\bm m}\in L([0,1)^{d})\cap \Z^{d}}X_{{\bm m}}^{j}$. We define $X_{{\bm m}}=\bigcap\limits_{j\in \{1,\ldots,d\}}X_{{\bm m}}^{j}$. We will prove that $\{X_{{\bm m}}\}_{{\bm m}\in L([0,1)^{d})\cap \Z^{d}}$ is a $\Gamma$-minimal partition. 
	
	Indeed, note that $X_{{\bm m}}$ is $\Gamma$-invariant. Now, assume that there exist ${\bm n}_{1}, {\bm n}_{2} \in L([0,1)^{d})\cap \Z^{d}$ such that ${\bm n}_{1}\neq {\bm n}_{2}$ and $X_{{\bm n}_{1}}\cap X_{{\bm n}_{2}}\neq \emptyset$. Using the $\Z^{d}$-action on $X$, this is equivalent to the existence of ${\bm m}\in R([0,1)^{d})\cap \Z^{d}$ with ${\bm m}\neq {\bm 0}$ and $X_{{\bm m}}\cap X_{{\bm 0}}\neq \emptyset$. This means that for all $j\in \{1,\ldots,d\}$, $e^{2\pi i \left \langle {\bm x}_{j},{\bm m}\right\rangle}$ is equal to 1, i.e., $\left\langle {\bm x}_{j},{\bm m}\right\rangle\in \Z$. Since $\Gamma^{*}=\left\langle \{{\bm x}_{1},\ldots,{\bm x}_{d}\}\right\rangle$, we have that ${\bm m}\in (\Gamma^{*})^{*}=\Gamma$ which is a contradiction, so all of these sets are disjoint.
	
	By minimality, we have that $X=\bigdotcup\limits_{{\bm m}\in R([0,1)^{d})\cap \Z^{d}}\overline{\mathcal{O}(T^{{\bm m}}y,\Gamma)}$. Since $\overline{\mathcal{O}(T^{{\bm m}}y,\Gamma)}$ is included in $X_{{\bm m}}$, we conclude that $\{X_{{\bm m}}\}_{{\bm m} \in R([0,1)^{d})\cap \Z^{d}}$ is a clopen partition of $X$ and $\overline{\mathcal{O}(T^{{\bm m}}y,\Gamma)}=X_{{\bm m}}$, so the action of $\Gamma$ on $X_{{\bm m}}$ is minimal. A direct computation shows that the set of return times of each $X_{{\bm m}}$ is $\Gamma$. Hence $\{X_{{\bm m}}\}_{{\bm m}\in R([0,1)^{d})\cap \Z^{d}}$ is a $\Gamma$-minimal partition.
\end{proof}

\begin{remark}\label{RemarkMaximalEquicontinuousFactor} The following statements can be easily verified.
	\begin{enumerate}
		\item In the case of an aperiodic primitive  constant-shape substitution $\zeta$, the recognizability property implies that for all $n>0$, the sets $\{S^{{\bm j}}\zeta^{n}(X_{\zeta})\}_{{\bm j}\in F_{n}^{\zeta}}$ form a $L_{\zeta}^{n}(\Z^{d})$-minimal partition.
		
		\item By \cref{CharacterizationEigenvalues} and \cref{CharacterizationMinimalPartition} for any aperiodic primitive constant-shape substitution there exists a $\mathcal{H}(X_{\zeta})$-minimal partition. 
	\end{enumerate}
\end{remark}

\subsection{Aperiodic symbolic factors of substitutive subshifts are conjugate to substitutive subshifts}

In the following we will extend the results of C. M$\ddot{\text{u}}$lner and R. Yassawi in \cite{yassawi2020automorphisms}, by proving that aperiodic symbolic factors of a substitutive subshift $(X_{\zeta},S,\Z^{d})$ are conjugate to substitutive subshifts where the substitutions have the same expansion matrix and same support as $\zeta$. First we have the following consequence of \cref{RecognizabilityFactors}.

\begin{remark}\label{RemarkRecognizabilityFactorOdometer}
It is straightforward to check that \cref{RecognizabilityFactors} implies that if $x,x'\in X_{\zeta}$ are such that $\tau(x)=\tau(x')$, then $\pi(x)$ is equal to $\pi(x')$, where $\pi:(X_{\zeta},S,\Z^{d})\to (\overleftarrow{\Z^{d}}_{(L_{\zeta}^{n}(\Z^{d})},+_{(L_{\zeta}^{n}(\Z^{d}))},\Z^{d})$ is the factor map.
\end{remark}

To prove the result we will introduce some notions as in \cite{yassawi2020automorphisms}.

Let $\zeta$ be an aperiodic primitive constant-shape substitution. We consider a labeled directed graph $G_{\zeta}$ with vertex set $E=\mathcal{A}^{2}$ and we put an edge $(a,b)$ to $(c,d)$ with label ${\bm f}\in F_{1}^{\zeta}$ if $\zeta(a)_{{\bm f}}=c$, $\zeta(b)_{{\bm f}}=d$. Note that the diagonal $\Delta_{\A}=\{(a,a)\colon a \in \A\}$ is a stable set, i.e., $E(\Delta_{\A})=\Delta_{\A}$. Let $P=(a_{0},b_{0})(a_{1},b_{1})(a_{2},b_{2})$ be a path, by definition, there is an edge from $(a_{0},b_{0})$ to $(a_{1},b_{1})$ with some label ${\bm f}_{0}$ and an edge $(a_{1},b_{1})$ to $(a_{2},b_{2})$ with some label ${\bm f}_{1}$, i.e, $\zeta(a_{0})_{{\bm f}_{0}}=a_{1}$, $\zeta(b_{0})_{{\bm f}_{0}}=b_{1}$, and $\zeta(a_{1})_{{\bm f}_{1}}=a_{2}$, $\zeta(b_{1})_{{\bm f}_{1}}=b_{2}$. Then, we have that $\zeta^{2}(a_{0})_{L_{\zeta}{\bm f}_{0}+{\bm f}_{1}}=a_{2}$, $\zeta^{2}(b_{0})_{L_{\zeta}{\bm f}_{0}+{\bm f}_{1}}=b_{2}$. This means that the paths indicate the simultaneous positions of the letters in the iterates of the substitution.

The following definitions were introduced in \cite{yassawi2020automorphisms}.

\begin{definition}
	Let $\zeta$ be an aperiodic primitive constant-shape substitution.
	\begin{enumerate}
		\item We say that a pair $(a,b)\in \A^{2}\setminus \Delta_{\A}$ is a \emph{periodic-pair} if there is a cycle in $G_{\zeta}$ which starts and ends in $(a,b)$. We define $n(a,b)=\min\{|P|\colon P\ \text{is a cycle in}\ (a,b)\}$ and we denote
		$$n(\zeta)=\lcm\{n(a,b)\colon (a,b)\ \text{is a periodic pair}\},$$
		
		\noindent we call the substitution \emph{pair-aperiodic} if $n(\zeta)=1$.
		
		\item We call a pair $(a,b)\in \A\setminus \Delta_{\A}$ an \emph{asymptotically disjoint pair} if for any $k>0$, there exists a path $P=(a_{0},b_{0})\ldots(a_{k},b_{k})$ in $G_{\zeta}$ of length $k$ with $(a_{0},b_{0})=(a,b)$ and $(a_{k},b_{k})\notin \Delta_{\A}$.
	\end{enumerate}
\end{definition}

\begin{remark}\label{Remarkpairaperiodic} The following statements can be easily verified:
	\begin{enumerate}
		\item As for the case of $\zeta$-periodic points for the substitution, we can replace the substitution $\zeta$ for an appropriate power, i.e, $\zeta^{n(\zeta)}$, so we may assume that the substitution is pair-aperiodic.
		
		\item If the substitution $\zeta$ is bijective, every $(a,b)\in \A^{2}\setminus \Delta_{\A}$ is an asymptotically disjoint pair.
		
		\item Assume that $\zeta$ is pair-aperiodic. If $(a,b)\in \A^{2}$ is not an asymptotically disjoint pair, let $\overline{k}$ be the minimum length of a path from $(a,b)$ such that any path of length $\overline{k}$ has an end in $\Delta_{\A}$. Since the shortest path path has no cycle, we have that $\overline{k}\leq |\A|^{2}$. Indeed, if $\overline{k}>|A|^{2}$, there exists a cycle as a subpath in $P$, i.e., one of the vertex $(c,d)$ is a periodic-pair. Since $\zeta$ is pair-aperiodic, there exists ${\bm f}\in F_{1}^{\zeta}$ such that $\zeta(c)_{{\bm f}}=c\neq d = \zeta(d)_{{\bm f}}$. So we can create a path of length arbitrarily large from $(a,b)$ that does not reach $\Delta_{A}$, which is a contradiction. This implies that $\zeta^{|\A|^{2}}(a)=\zeta^{|\A|^{2}}(b)$.  
	\end{enumerate}
\end{remark}

\begin{definition}
	Let $\A,\mathcal{B}$ be two finite alphabets, $\zeta$ be an aperiodic primitive  constant-shape substitution with alphabet $\A$ and $\tau:\A\to\B$. We say that $a,b\in \A$ with $a\neq b$ are \emph{indistinguishable} (by $(\zeta,\tau)$) if for all $n\geq 0$ we have that $\tau(\zeta^{n}(a))=\tau(\zeta^{n}(b))$.
\end{definition}

With these definitions we are ready to prove the next result which is a multidimensional analogue of Theorem 22 in \cite{yassawi2020automorphisms}. 

\begin{theorem}\label{FactorConjugateSubstitution}
	Let $(Y,S,\Z^{d})$ be an aperiodic symbolic factor (with alphabet $\B$) of a substitutive subshift $(X_{\zeta},S,\Z^{d})$, with $\zeta$ being an aperiodic primitive constant-shape substitution with alphabet $\A$. Then, there exists an aperiodic primitive constant-shape substitution $\zeta'$ with alphabet $\mathcal{C}$ having the same expansion matrix and support of a power of $\zeta$ and a conjugacy $\tau':(X_{\zeta'},S,\Z^{d})\to(Y,S,\Z^{d})$ via a letter-to-letter map.
\end{theorem}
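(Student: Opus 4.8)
The plan is to realise $Y$ as the subshift of the substitution obtained from $\zeta$ by collapsing the letters that $\tau$ can never separate, using recognizability to control the fibres of the collapsing map. First, by \cref{AllFactorAre0BlockMap} I may replace $\zeta$ by a conjugate substitution with the same support and expansion map, so that the given factor $\phi\colon(X_\zeta,S,\Z^d)\to(Y,S,\Z^d)$ is induced by a letter-to-letter map $\tau\colon\A\to\B$. Replacing $\zeta$ by the power $\zeta^{n(\zeta)}$ (\cref{Remarkpairaperiodic}) I may also assume $\zeta$ is pair-aperiodic; this changes the support only to that of a power of $\zeta$, as permitted, and affects neither $X_\zeta$ nor $\tau$.

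Now declare $a\sim b$ when $a,b\in\A$ are indistinguishable, i.e. $\tau(\zeta^n(a))=\tau(\zeta^n(b))$ for all $n\ge 0$. Writing $\zeta^{n+1}(a)=\zeta^{n}(\zeta(a))$ and restricting to the sub-block $L_\zeta^{n}({\bm f})+F_n^\zeta$, the identity $\tau(\zeta^{n+1}(a))=\tau(\zeta^{n+1}(b))$ forces $\tau(\zeta^{n}(\zeta(a)_{\bm f}))=\tau(\zeta^{n}(\zeta(b)_{\bm f}))$ for every ${\bm f}\in F_1^\zeta$ and every $n$; hence $\sim$ is a $\zeta$-congruence and descends to a constant-shape substitution $\zeta'$ on $\C=\A/\!\sim$ with the same support and expansion map. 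Since the quotient $q_0\colon\A\to\C$ is onto and intertwines $\zeta$ and $\zeta'$, the induced factor $q\colon X_\zeta\to X_{\zeta'}$ satisfies $q(X_\zeta)=X_{\zeta'}$ and $\zeta'$ is primitive. As $a\sim b$ implies $\tau(a)=\tau(b)$, the map $\tau$ descends to $\bar\tau\colon\C\to\B$ with $\bar\tau_*\circ q=\phi$, and the candidate conjugacy is the $0$-block map $\tau':=\bar\tau_*\colon X_{\zeta'}\to Y$, which is onto because $\phi$ is.

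The heart of the proof is the injectivity of $\tau'$, i.e. $\tau(x)=\tau(y)\Rightarrow q(x)=q(y)$ for $x,y\in X_\zeta$. By \cref{RemarkRecognizabilityFactorOdometer}, $\tau(x)=\tau(y)$ gives $\pi(x)=\pi(y)=({\bm g}_n)_n$, so for every $n$ the points share the \emph{same} recognizability decomposition $x=S^{{\bm g}_n}\zeta^n(x^{(n)})$, $y=S^{{\bm g}_n}\zeta^n(y^{(n)})$; in particular the level-$n$ tiles of $x$ and $y$ sit at the same places and $\tau(\zeta^n(x^{(n)}_{\bm k}))=\tau(\zeta^n(y^{(n)}_{\bm k}))$ for all ${\bm k}$. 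Suppose $q(x)\ne q(y)$, so $x_{\bm n}\not\sim y_{\bm n}$ for some ${\bm n}$; then the pair $(a,b)=(x_{\bm n},y_{\bm n})$ has, along some path in $G_\zeta$, a descendant $(e,f)$ with $\tau(e)\ne\tau(f)$. Climbing the chain of level-$n$ ancestor pairs of ${\bm n}$, each of which still has $(e,f)$ as a descendant and is therefore $\not\sim$, the finiteness of $\A^2$ yields a repetition: a cycle in $G_\zeta$ through a periodic pair $(c^\ast,d^\ast)\not\sim$ that occurs as an ancestor pair at arbitrarily high levels of $(x,y)$. Pair-aperiodicity forces a self-loop at $(c^\ast,d^\ast)$, so $(c^\ast,d^\ast)$ in fact occurs as a level-$j$ ancestor pair for every $j$ up to that height; taking $j$ equal to the depth at which $(c^\ast,d^\ast)$ reaches a $\tau$-separated descendant and reading that descendant off inside the corresponding genuine tile exhibits a position where $\tau(x)$ and $\tau(y)$ differ, a contradiction. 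Hence $\tau'$ is an injective continuous shift-commuting surjection between compact spaces, thus a conjugacy, and $X_{\zeta'}$ is aperiodic because $Y$ is.

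The main obstacle is precisely this injectivity step: one must turn the purely combinatorial failure $x_{\bm n}\not\sim y_{\bm n}$ into an actual disagreement between the infinite configurations $\tau(x)$ and $\tau(y)$. The two ingredients that make it work are the common recognizability decomposition of \cref{RemarkRecognizabilityFactorOdometer}, which aligns the tiles of $x$ and $y$, and the reduction to a pair-aperiodic substitution, which guarantees that the recurring periodic pair carries a length-one cycle and can therefore be placed as an ancestor pair at the exact level needed, removing any modular obstruction coming from the cycle length.
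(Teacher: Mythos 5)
Your proof is correct, and the construction coincides with the paper's: after the same reductions via \cref{AllFactorAre0BlockMap} and \cref{Remarkpairaperiodic}, both arguments quotient $\A$ by indistinguishability, check that $\sim$ is a $\zeta$-congruence so that $\zeta'$ on $\C=\A/\!\sim$ and the letter map $\tau'$ are well defined, and reduce injectivity to showing that $\tau$-equal points have $\sim$-equal coordinates, with \cref{RemarkRecognizabilityFactorOdometer} used to align the recognizability decompositions of the two points. Where you genuinely diverge is the injectivity step. The paper works downstairs, in the pair graph $G_{\zeta'}$ of the quotient: it first proves, by a separate path-lifting pigeonhole argument, that $\zeta'$ is itself pair-aperiodic, and then shows that each coordinate pair $(\overline{x}_{\bm n},\overline{x}'_{\bm n})$ fails to be an asymptotic disjoint pair, so that $\zeta'^{2|\A|^{2}}$ collapses it by \cref{Remarkpairaperiodic}; the contradiction there is with the distinguishability of all pairs of $\C$. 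You instead stay upstairs in $G_{\zeta}$: the chain of aligned ancestor pairs, pigeonhole recurrence, and the self-loop supplied by pair-aperiodicity of $\zeta$ let you place the recurring distinguishable periodic pair at exactly the depth $j_{0}$ at which it is $\tau$-separated, exhibiting a coordinate where $\tau(x)\neq\tau(y)$ directly. This buys you something real: you never need the lemma that the quotient $\zeta'$ is pair-aperiodic, which is not part of the statement and is the most technical portion of the paper's proof; the trade-off is that the paper's argument runs at the fixed desubstitution depth $2|\A|^{2}$, whereas yours invokes arbitrarily deep levels (though pigeonhole would in fact bound your recurrence within $|\A|^{2}$ levels too). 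Your closing remark, that the length-one cycle removes the modular obstruction coming from the cycle length, is precisely the role pair-aperiodicity plays in both versions. One phrase is loose but harmless: after propagating through the self-loop, $(c^{\ast},d^{\ast})$ occurs as an aligned pair at every level $j\leq m$, not necessarily as the ancestor pair of the original coordinate ${\bm n}$; any aligned occurrence at level $j_{0}$ suffices for reading off the disagreement, so the argument is unaffected.
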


\begin{proof}
	By \cref{AllFactorAre0BlockMap} and \cref{Remarkpairaperiodic} we can assume that the factor map $\tau:(X_{\zeta},S,\Z^{d})\to (Y,S,\Z^{d})$ is induced via a letter-to-letter map and $\zeta$ is a pair-aperiodic substitution.
	
	We define an equivalence relation $a\sim b$ in $\A$, such that $a\sim b$ if $a, b$ are indistinguishable. We define a substitution $\zeta'$ on $\A/\sim $ as  $\zeta'([a])_{{\bm f}}=[\zeta(a)_{{\bm f}}]$, for ${\bm f}\in F_{1}^{\zeta}$, Note that the map $\T':\A/\!\! \sim\,\to \mathcal{B}$, given by $\T'([a])=\tau(a)$ are well defined. These maps satisfy the following property:
	
	\begin{align}\label{Distinguishablepairs}
		\text{Every pair in }\A/\sim\text{ is distinguishable.}
	\end{align}
	
	It is straightforward to check that primitivity of $\zeta$ implies primitivity of $\zeta'$.
	
	We will prove that every periodic pair in $\A/\sim$ by $\zeta'$ is the quotient of a periodic pair in $\A$ by $\zeta$. Assume now that $([a],[b])$ is a periodic pair, i.e, there exists a cycle $P_{1}=([a_{0}],[b_{0}])\ldots([a_{k}],[b_{k}])$ in $G_{\zeta'}$ with $([a_{0}],[b_{0}])=([a_{k}],[b_{k}])=([a],[b])$. Let  $P_{2}=(c_{0},d_{0})\ldots(c_{k},d_{k})$ be a path in $G_{\zeta}$ with $[c_{i}]=[a_{i}]$ and $[d_{i}]=[b_{i}]$ for $0\leq i\leq k$ having the same label of edges as $P_{1}$. Now, repeating this process we get a path $P_{3}$ in $G_{\zeta}$ of length $(\max\{|[a]|,|[b]|\}+1)k$ from $(c,d)$ repeating the labels of the path $P_{1}$ with a period $k$. By the Pigeonhole Principle, there exist two subpaths $P_{4}=(e_{0},f_{0})\ldots(e_{l_{1}k},f_{l_{1}k})$, $P_{5}=(g_{0},h_{0})\ldots (g_{l_{1}}k,h_{l_{2}k})$ of $P_{3}$, having the same labels of the edges of $P_{1}$ repeating them with period $k$, such that $e_{0}=e_{l_{1}k}$, $h_{0}=h_{l_{2}k}$ and $[e_{0}]=[a_{0}]$, $[h_{0}]=[b_{0}]$. Now consider the cycle in $G_{\zeta}$ $(u_{0},v_{0})\ldots(u_{l_{1}l_{2}k^{2}},v_{l_{1}l_{2}k^{2}})$ where $u_{l_{1}kj}\ldots u_{l_{1}k(j+1)}=e_{0},\ldots,e_{l_{1}k}$ and $v_{l_{2}km}\ldots v_{l_{2}k(m+1)}=h_{0}\ldots h_{l_{2}k}$ for all $0\leq j<l_{2}k$, $0\leq m<l_{1}k$. Since $\zeta$ is pair-aperiodic, there exists ${\bm f}\in F_{1}^{\zeta}$ such that $\zeta(e_{0})_{{\bm f}}=e_{0}$, $\zeta(h_{0})_{{\bm f}}=h_{0}$. We then conclude that $\zeta'([a])_{{\bm f}}=[a]$, $\zeta'([b])_{{\bm f}}=[b]$, i.e., $\zeta'$ is pair-aperiodic.
	
	On the other hand, for all $n>0$, we have that $\tau(\zeta^{n}(a))=\tau'(\zeta^{'n}([a]))$, hence $Y$ has the same language as $\tau'(X_{\zeta'})$, so they are equal, since subshifts are uniquely determined by their language.
	
	Finally, we prove that $\tau':(X_{\zeta'},S,\Z^{d})\to (Y,S,\Z^{d})$ is a conjugacy. Let $x,x'\in X_{\zeta'}$, with $\tau'(x)=\tau'(x')$. By the recognizability property of $X_{\zeta'}$, we can write $x=S^{{\bm f}_{1}}\zeta^{2|\A|^{2}}(y)$, $x'=S^{{\bm f}_{2}}\zeta^{2|\A|^{2}}(y')$. By \cref{RemarkRecognizabilityFactorOdometer} we have that ${\bm f}_{1}={\bm f}_{2}$.  
	
	If there exists ${\bm n}\in \Z^{d}$ such that $(y_{{\bm n}},y'_{{\bm n}})\in \A^{2}$ is an asymptotically disjoint pair we can find a periodic pair $([a],[b])$ and a path $P=([a_{0}],[b_{0}])\ldots([a_{k}],[b_{k}])$ in $G_{\zeta'}$ with $([a_{0}],[b_{0}])=(y_{{\bm n}},y'_{{\bm n}})$ and $([a_{k}],[b_{k}])=([a],[b])$. with $k\leq |A|^{2}$. Since $\zeta'$ is pair-aperiodic, we have that there exists ${\bm f}\in F_{2|A|^{2}}^{\zeta}$ such that $(\zeta')^{2|\A|^{2}}(y_{{\bm n}})_{{\bm f}}=[a]$, $(\zeta')^{2|\A|^{2}}(y'_{{\bm n}})_{{\bm f}}=[b]$. Note that $\tau'(\zeta'^{2|A|^{2}}(y_{{\bm n}}))=\zeta'^{2|\A|^{2}}(\tau'(y_{{\bm n}}))=\zeta'^{2|\A|^{2}}(\tau'(y_{{\bm n}}))=\tau'(\zeta'^{2|A|^{2}}(y'_{{\bm n}}))$, so $\tau'(a)$ is equal to $\tau'(b)$. This implies that $([a],[b])$ are indistinguishable, which contradicts \eqref{Distinguishablepairs}.
	
	If $(y_{{\bm n}},y'_{{\bm n}})$ is not an asymptotically disjoint pair for any ${\bm n}\in \Z^{d}$,  by Remark \ref{Remarkpairaperiodic} we have that $\zeta^{2|\A|^{2}}(y_{{\bm n}})=\zeta^{2|\A|^{2}}(y'_{{\bm n}})$, i.e., $x=x'$. We conclude $\tau'$ is bijective and then a conjugacy.
\end{proof}

\section{Measurable morphisms between constant-shape substitutions}\label{Sectionproof}

In this section, we study different types of homomorphisms between substitutive subshifts. First, we extend a result of B. Host and F. Parreau from \cite{host1989homomorphismes} to the multidimensional case (\cref{MainTheorem}), which establishes that any measurable factor between two substitutive subshifts given by aperiodic constant-shape substitutions, under some combinatorial property, induces a continuous one. Then, we deduce some consequences: Every substitutive subshift given by an aperiodic constant-shape substitution satisfying the combinatorial property is coalescent (\cref{Coalescence}), the quotient group $\Aut(X_{\zeta},S,\Z^{d})/\left\langle S \right\rangle$ is finite, generalizing the results in \cite{bustos2022admissible} and we give some conditions where the automorphism group of a substitutive subshift is isomorphic to a direct product of $\Z^{d}$ with a finite group (\cref{TrivialFactorsImpliesDirectProduct}). Finally, we extend \cref{MainTheorem} to homomorphisms associated with matrices commuting with a power of the expansion matrix of the substitution (\cref{NormalizerHostParreau}). This leads to the same rigidity properties about these homomorphisms (\cref{CommutingCentralizerVirtuallyZd}) and for some restricted normalizer group. Notice that in \cref{SectionBijectiveSubstitutions} we will give sufficient conditions to ensure that the former result is a complete characterization of the normalizer group. 	

\subsection{Measurable factors imply continuous ones for substitutive subshifts}\label{SubsectionMeasurableFactors}

Let $\zeta$ be a constant-shape substitution. Recall that the substitutive subshift $(X_{\zeta},S,\Z^{d})$ is uniquely ergodic and we denote $\mu_{\zeta}$ the unique ergodic measure. For any $n>0$, the image measure of $\mu_{\zeta}$ by $\zeta^{n}$ is equal to$|F_{n}^{\zeta}|\cdot \mu_{\zeta}|_{\zeta^{n}(X_{\zeta})}$, and as in the one-dimensional case \cite{queffelec2010substitution}, the unique ergodic measure satisfies
$$\forall U\in \mathcal{F}_{X_{\zeta}},\  \mu_{\zeta}(U)=\dfrac{1}{|F_{n}^{\zeta}|}\displaystyle\int_{X_{\zeta}} \left|\left\{{\bm f}\in F_{n}^{\zeta}: S^{{\bm f}}\zeta^{n}(x)\in U\right\}\right|d\mu_{\zeta}(x),$$

\noindent where $\mathcal{F}_{X_{\zeta}}$ corresponds to the Borel sets of $X_{\zeta}$.

As in \cite{host1989homomorphismes} we use the notion of \emph{reducibility} of a constant-shape substitution. For any pair of distinct letters $a\neq b\in \A$ and $n>0$ we consider the sequence 

$$d_{n}(\zeta^{n}(a),\zeta^{n}(b))=\dfrac{\left|\left\{{\bm f} \in F_{n}^{\zeta}\colon \zeta^{n}(a)_{{\bm f}}\neq \zeta^{n}(b)_{{\bm f}}\right\}\right|}{|F_{n}^{\zeta}|}.$$

This sequence is decreasing for all of the pairs $a,b\in \A$. We say that the  constant-shape substitution is \emph{reduced} if ${\min\limits_{\substack{n\in \NN\\a \neq b\in \A}}d_{n}(\zeta^{n}(a),\zeta^{n}(b))>0}$. For instance, every bijective constant-shape substitution is reduced. The following theorem is a multidimensional analogue of Theorem 1.3 in \cite{host1989homomorphismes}:

\begin{theorem}\label{MainTheorem} Let $(X_{\zeta_{1}},S,\Z^{d})$, $(X_{\zeta_{2}},S,\Z^{d})$ be two substitutive subshift from two aperiodic primitive  constant-shape substitutions $\zeta_{1}$, $\zeta_{2}$ from finite alphabets $\A$ and $\B$, with the same expansion matrix $L$ and same support $F_{1}$. If $\zeta_{2}$ is reduced, then for every measurable factor $\phi:(X_{\zeta_{1}},\mu_{\zeta_{1}},S,\Z^{d})\to (X_{\zeta_{2}},\mu_{\zeta_{2}},S,\Z^{d})$, there exists ${\bm j}\in \Z^{d}$ such that $S^{{\bm j}}\phi$ is equal $\mu_{\zeta_{1}}$-a.e. to a continuous factor map $\psi:(X_{\zeta_{1}},S,\Z^{d})\to(X_{\zeta_{2}},S,\Z^{d})$, satisfying the following two properties: 
	
	\begin{enumerate}[label=(\arabic*),ref=\text{(}\arabic*\text{)}]
		\item \label{FixedRadiusofCommutingFactors} The factor map $\psi$ is a sliding block code of radius $\Vert F_{1}^{\zeta}\Vert\left(1+\Vert L_{\zeta}^{-1}\Vert \left(2+1/(1-\Vert L_{\zeta}^{-1}\Vert)\right)\right)$.
		\item\label{CommutationPropertySlidingBlickCodeSubstitution} There exist an integer
		$n>0$ and ${\bm p}\in F_{n}^{\zeta}$ such that, $S^{{\bm p}}\psi\zeta_{1}^{n}=\zeta_{2}^{n}\psi$.
	\end{enumerate} 
\end{theorem}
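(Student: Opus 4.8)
The plan is to follow the renormalization philosophy of Host and Parreau \cite{host1989homomorphismes}, transported to the multidimensional self-induced structure coming from recognizability. The structural starting point is that both $(X_{\zeta_1},S,\Z^d)$ and $(X_{\zeta_2},S,\Z^d)$ are finite-to-one extensions of the \emph{same} odometer $\Omega=\overleftarrow{\Z^d}_{(L^n(\Z^d))}$ (\cref{AperiodicUniformylBounded}), since the two substitutions share the expansion map $L$ and support $F_1$; write $\pi_i\colon X_{\zeta_i}\to\Omega$ for the corresponding factor maps. Recognizability (\cite{solomyakrecognizability}) makes each $\zeta_i\colon X_{\zeta_i}\to\zeta_i(X_{\zeta_i})$ a homeomorphism onto a clopen set, so the substitution can be inverted on its image; this is the self-induction that drives the whole argument.

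First I would normalize $\phi$ on the equicontinuous factor. The composition $\pi_2\circ\phi$ is a measurable factor from $X_{\zeta_1}$ onto the equicontinuous system $\Omega$; since for these substitutions measurable and continuous eigenvalues coincide (\cref{CharacterizationEigenvalues}) and the maximal equicontinuous factor is the system of \cref{MaximalEquiContinuousFactorMultidimensionalSubstitution}, which absorbs all equicontinuous factors, this map is $\mu_{\zeta_1}$-a.e. equal to a continuous $\Z^d$-equivariant map $\Omega\to\Omega$. Every such map is a translation $\omega\mapsto\omega+\overleftarrow{g}$ for some $\overleftarrow{g}=(\bm g_n)_n\in\Omega$. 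A short computation shows that the commutation relation sought in \ref{CommutationPropertySlidingBlickCodeSubstitution} forces $(\,\id-L^n\,)\overleftarrow{g}=\tau(\bm p)$ on $\Omega$; hence I would choose $\bm j\in\Z^d$ so that, after replacing $\phi$ by $S^{\bm j}\phi$, the translation $\overleftarrow{g}$ is carried by a point that is eventually periodic under $\omega\mapsto L\omega$. Such points are exactly the ones attached to the finitely many $\zeta_2$-invariant orbits (\cref{FinitelyManyInvariantOrbits}, via the set $K_{\zeta_2}$ of \cref{FiniteSubsetFillsZd}), and this is what lets the renormalization below close up into the periodic relation of \ref{CommutationPropertySlidingBlickCodeSubstitution}.

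With $\phi$ so normalized, the renormalization operator $R(\phi)=\zeta_2^{-1}\,S^{-\bm p}\,\phi\,\zeta_1$ is $\mu_{\zeta_1}$-a.e. well defined (the image lands in the clopen set $\zeta_2(X_{\zeta_2})$ after the shift $S^{-\bm p}$, by the previous step) and is again an element of $m\Fac(X_{\zeta_1},X_{\zeta_2},S,\Z^d)$. Iterating gives a sequence $\phi_k=R^k(\phi)$. Using the ergodic averaging formula recalled before the statement together with the finite-to-one bound of \cref{AperiodicUniformylBounded}, I would show that a subsequence of $(\phi_k)$ converges in $L^1(\mu_{\zeta_1})$ to a measurable factor $\psi$ satisfying the exact identity $S^{\bm p}\psi\zeta_1^n=\zeta_2^n\psi$, i.e. property \ref{CommutationPropertySlidingBlickCodeSubstitution}. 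Here the \emph{reducedness} of $\zeta_2$ is the decisive quantitative input: the uniform lower bound $\min_{n,\,a\neq b}d_n(\zeta_2^n(a),\zeta_2^n(b))=\delta>0$ prevents distinct letters from being asymptotically confused along the tower, so that the conditional distribution of $\phi(x)_{\bm 0}$ given the tower address of $x$ must concentrate on a single value; this upgrades the measurable limit $\psi$ to a genuine \emph{sliding block code}, yielding continuity.

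Finally, for the explicit radius in \ref{FixedRadiusofCommutingFactors} I would unwind \ref{CommutationPropertySlidingBlickCodeSubstitution} for $n=1$: the value $\psi(x)_{\bm 0}$ is determined by $x$ on a window built from the set $C$ produced by \cref{SetDforFiniteInvariantOrbit} applied with $A=\{\bm 0\}$ and $F=F_1^{\zeta}+F_1^{\zeta}$, whose diameter estimate $(3)$ gives exactly the bound $\Vert F_1^{\zeta}\Vert\bigl(1+\Vert L_\zeta^{-1}\Vert(2+1/(1-\Vert L_\zeta^{-1}\Vert))\bigr)$. The main obstacle is the passage from measurable to continuous, concentrated in the third paragraph: one must show that the a.e.-defined renormalized factor depends on a bounded window, and it is precisely reducedness together with the self-induction underlying $R$ (recognizability) that forces this, exactly as in the one-dimensional argument of \cite{host1989homomorphismes}.
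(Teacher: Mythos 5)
Your overall machinery matches the paper's (recognizability, the common odometer, reducedness as the quantitative input, and the set $C$ from \cref{SetDforFiniteInvariantOrbit} producing exactly the stated radius), but the logical order in your second paragraph inverts the actual difficulty, and this creates a genuine gap. You correctly identify that $\pi_{2}\circ\phi$ is a.e. a translation by some $\overleftarrow{g}\in\Omega$ (in the paper's notation, $\overleftarrow{g}$ is the sequence $(-{\bm p}_{n}(\phi))_{n}$), and that property (2) forces $(L^{n}-\id)\overleftarrow{g}=\tau({\bm p})$. But you then ``choose ${\bm j}$'' so that $S^{{\bm j}}\phi$ has an eventually $L$-periodic translation part. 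No such choice is available a priori: the points of $\Omega$ that become eventually periodic after adding some $\tau({\bm j})$, ${\bm j}\in\Z^{d}$, form a countable set, while $\Omega$ is uncountable, so the rationality of $\overleftarrow{g}$ is precisely the nontrivial conclusion of the theorem, not a normalization one can perform at the outset. Since your renormalization operator $R(\phi)=\zeta_{2}^{-1}S^{-{\bm p}}\phi\zeta_{1}$ with a \emph{fixed} ${\bm p}$ is only well defined after this rationality is known, the third paragraph rests on an unproved (indeed question-begging) step. The paper avoids this circularity by defining the renormalized maps $\phi_{n}$ with the $n$-dependent constants ${\bm p}_{n}(\phi)$ (which exist a.e. for every $n$ with no normalization), and then \emph{proves} eventual periodicity: Lusin plus the F\o lner property show $d(\phi_{n},\psi_{n})\to 0$ for block codes $\psi_{n}$ of the fixed radius (\cref{convergenceofslidingblockcodes}); since there are finitely many such codes, the Pigeonhole Principle yields $m,k$ with $d(\phi_{m},\phi_{m+k})<\eta/C(R)$; and the separation lemma (\cref{EqualityofNormalizers}) upgrades this to the exact equality $\phi_{m}=\phi_{m+k}$ a.e., from which the commutation relation and the choice of ${\bm j}={\bm p}_{n}(\phi)-{\bm p}$ follow.

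A secondary weakness sits in your convergence claim: you assert that a subsequential $L^{1}$ limit of $(\phi_{k})$ satisfies the \emph{exact} identity $S^{{\bm p}}\psi\zeta_{1}^{n}=\zeta_{2}^{n}\psi$, but a limit argument alone does not close the relation exactly; one needs the limit to coincide with one of the finitely many block codes and the tower addresses to match on the nose, which is again the pigeonhole-plus-separation mechanism rather than compactness. Your intuition about reducedness is right in spirit --- the bound $\eta>0$ is what makes two factors at distance less than $\eta/C(R)$ literally equal, via an induction showing ${\bm p}_{n}(\phi)={\bm p}_{n}(\psi)$ and $\mu_{\zeta_{1}}(U_{n})<1/C(R)$ for all $n$ --- so the repair is to replace your normalization and limit steps by that finiteness argument; with that substitution your outline becomes the paper's proof.
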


\begin{remark}\label{RemarkMainTheorem} The following statements can be easily verified.
	\begin{enumerate}
		\item If $L_{\zeta}$ is a diagonal matrix, then $\Vert L_{\zeta}^{-1}\Vert \leq 1/2$, so the radius of $\psi$ is bounded by $3\Vert F_{1}^{\zeta}\Vert$.
		
		\item Since the set of sliding block codes of radius $\Vert F_{1}^{\zeta}\Vert\left(1+\Vert L_{\zeta}^{-1}\Vert \left(2+1/(1-\Vert L_{\zeta}^{-1}\Vert)\right)\right)$ between $X_{\zeta_{1}}$ and $X_{\zeta_{2}}$ is finite, we may consider an appropriate iteration of $\zeta_{1}$ and $\zeta_{2}$ such that any factor $\psi$ satisfying \ref{CommutationPropertySlidingBlickCodeSubstitution} in \cref{MainTheorem} satisfies $S^{{\bm p}}\psi \zeta_{1}=\zeta_{2}\psi$.
		
		\item As also mentioned in \cite{host1989homomorphismes}, we may assume that ${\bm p}\notin (L_{\zeta}^{n}-\id)(\Z^{d})$, because it is equivalent to find a factor map commuting with the substitution map, i.e., $\psi \zeta^{n}=\zeta^{n}\psi$, with ${\bm p}\in F_{1}^{\zeta}$.
	\end{enumerate}
	
\end{remark}

If a substitution is not reduced, we consider an equivalence relation calling two letters $a,b$ equivalent when $d_{n}(\zeta^{n}(a),\zeta^{n}(b))\to 0$. If two letters $a,b\in \A$ are equivalent, then $(\zeta(a))_{{\bm f}}\sim (\zeta(b))_{{\bm f}}$ for all ${\bm f}\in F_{1}^{\zeta}$. We define a substitution $\tilde{\zeta}$ on $\A/\sim$ given by $(\tilde{\zeta}([a]))_{{\bm f}}=[\zeta(a)_{{\bm f}}]$ for ${\bm f}\in F_{1}^{\zeta}$. We have a natural letter-to-letter factor map $\tilde{\phi}:(X_{\zeta},S,\Z^{d})\to(X_{\tilde{\zeta}},S,\Z^{d})$, and it is called the \emph{reduced substitution} of $\zeta$.

In the one-dimensional case, if $(X_{\zeta},\mu_{\zeta},S,\Z)$ does not have purely discrete spectrum, it can be proved, using the results in \cite{dekking1978spectrum}, that $(X_{\tilde{\zeta}},S,\Z)$ is aperiodic. An ergodic system $(X,\mu,T,\Z)$ has \emph{purely discrete spectrum} if the vector space spanned by eigenfunctions of the Koopman operator $U_{T}(f)=f\circ T \in L^{2}(X,\mu)$ is dense in $L^{2}(X,\mu)$. It is well known that any ergodic system with purely discrete spectrum is conjugate to an ergodic rotation on a compact abelian group \cite{halmos1942operator}.

In the multidimensional case this is not true in general, as we can see in \cref{ExampleNotAperiodicReducedSubstitution}.

\begin{example}[An aperiodic constant-shape substitution, with a periodic reduced substitution]\label{ExampleNotAperiodicReducedSubstitution}
	\stepcounter{sigmavariable}
	\setcounter{onevariable}{\value{sigmavariable}}
	\stepcounter{sigmavariable}
	\setcounter{twovariable}{\value{sigmavariable}}
	\stepcounter{sigmavariable}
	
	Consider the substitution $\sigma_{\theonevariable}$ with $L_{\sigma_{\theonevariable}}=\left(\begin{array}{cc}
		2 & 0\\ 0 & 2
	\end{array}\right)$ and $F_{1}^{\sigma_{\theonevariable}}=\llbracket 0,1\rrbracket^{2}$, given by
	$$\begin{array}{cccccccccccc}
		\sigma_{\theonevariable}: \\
		&0\mapsto & \begin{array}{cc}
			1 & 3\\ 0 & 2
		\end{array},& \quad & 1\mapsto & \begin{array}{cc}
			0 & 2\\ 0 & 2
		\end{array},& \quad &  2\mapsto & \begin{array}{cc}
			3 & 1\\ 2 & 0
		\end{array},& \quad & 3\mapsto & \begin{array}{cc}
			2 & 0\\ 2 & 0
		\end{array}.
	\end{array}$$	
	
	This substitution corresponds to the product substitution between the Thue-Morse substitution ($\sigma_{\thetwovariable}: 0\mapsto 01,\ 1\mapsto 10$) and the doubling sequence substitution ($\sigma_{\thesigmavariable}: a\mapsto ab,\ b\mapsto aa$). Note that $a\sim b$, since for all $n>0$, $\sigma_{\thesigmavariable}^{n}(a)$ and $\sigma_{\thesigmavariable}^{n}(b)$ differ only in their last letters. The following is a pattern of $\sigma_{\theonevariable}$:
	
	\begin{figure}[H]
	$$\begin{array}{cccccc}
		1331311331131331\\
		0220200220020220\\
		0220200220020220\\
		0220200220020220\\
		1331311331131331\\
		0220200220020220
	\end{array}$$
	\caption{A patttern of the substitution $\sigma_{\theonevariable}$.}
	\end{figure}

	The system $(X_{\sigma_{\thetwovariable}}\times \overleftarrow{\Z}_{(2^{n}\Z)},S\times +_{(2^{n}\Z)},\Z^{2})$ is a topological factor of $(X_{\sigma_{\theonevariable}},S,\Z^{2})$, hence the substitutive subshift $(X_{\sigma_{\theonevariable}},S,\Z^{2})$ does not have purely discrete spectrum. The reduced substitution of $\sigma_{\theonevariable}$ is defined with the same expansion matrix and support, given by:
	
	$$\begin{array}{cccccccccccc}
		\tilde{\sigma}_{\theonevariable}: \\
		&a\mapsto & \begin{array}{cc}
			a & b\\ a & b
		\end{array},& \quad & b\mapsto & \begin{array}{cc}
			b & a\\ b & a,
		\end{array}
	\end{array}$$
	
	\noindent where every element in $\{0\}\times \Z$ is a nontrivial period of $\tilde{\sigma}_{\theonevariable}$.
\end{example}

However, as proved in \cite{host1989homomorphismes} for the one-dimensional case, if the reduced substitution system is aperiodic, then $(X_{\tilde{\zeta}},\mu_{\zeta},S,\Z^{d})$ is metrically isomorphic to $(X_{\zeta},\mu_{\tilde{\zeta}},S,\Z^{d})$.

\begin{proposition}\label{SpectrumNotPurelyDiscrete}
	Let $\zeta$ be an aperiodic primitive  constant-shape substitution. If the reduced substitution $\tilde{\zeta}$ is aperiodic, then the natural factor map between $(X_{\zeta},\mu_{\zeta},S,\Z^{d})$ and $(X_{\tilde{\zeta}},\mu_{\tilde{\zeta}},S,\Z^{d})$ is a metric isomorphism.
\end{proposition}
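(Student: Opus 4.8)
The plan is to show that the natural letter-to-letter factor $\tilde{\phi}\colon (X_{\zeta},S,\Z^{d})\to (X_{\tilde{\zeta}},S,\Z^{d})$ is $\mu_{\zeta}$-almost everywhere injective, which for a measure-preserving factor between standard probability spaces yields a metric isomorphism onto its (full-measure) image. I would encode the possible failure of injectivity through the fibre-product self-joining $\lambda=\int_{X_{\tilde{\zeta}}}\mu_{y}\times \mu_{y}\,d\mu_{\tilde{\zeta}}(y)$ on $X_{\zeta}\times X_{\zeta}$, where $\mu_{\zeta}=\int \mu_{y}\,d\mu_{\tilde{\zeta}}(y)$ is the disintegration of $\mu_{\zeta}$ over $\mu_{\tilde{\zeta}}$ along $\tilde{\phi}$. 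Since $(\mu_{y}\times\mu_{y})(\Delta)=1$ exactly when $\mu_{y}$ is a Dirac mass, it suffices to prove that $\lambda$ is concentrated on the diagonal $\Delta=\{(x,x)\colon x\in X_{\zeta}\}$.

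The key structural input is that the hypothesis that $\tilde{\zeta}$ is aperiodic makes $(X_{\tilde{\zeta}},S,\Z^{d})$ an aperiodic symbolic factor of $(X_{\zeta},S,\Z^{d})$, so \cref{RecognizabilityFactors} applies to the letter-to-letter map $\tilde{\phi}$ and, via \cref{RemarkRecognizabilityFactorOdometer}, gives $\pi(x)=\pi(x')$ whenever $\tilde{\phi}(x)=\tilde{\phi}(x')$, where $\pi$ is the factor onto the odometer $\overleftarrow{\Z^{d}}_{(L_{\zeta}^{n}(\Z^{d}))}$. Consequently $\lambda$ is supported on $\bigcup_{{\bm f}\in F_{n}^{\zeta}}S^{{\bm f}}\zeta^{n}(X_{\zeta})\times S^{{\bm f}}\zeta^{n}(X_{\zeta})$ with the \emph{same} translate ${\bm f}$ in both coordinates, and $\lambda$-almost every $(x,x')$ desubstitutes simultaneously as $(S^{{\bm f}}\zeta^{n}(\hat{x}),S^{{\bm f}}\zeta^{n}(\hat{x}'))$ with $\tilde{\phi}(\hat{x})=\tilde{\phi}(\hat{x}')$. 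Lifting the measure formula for $\mu_{\zeta}$ through $\zeta^{n}$ recalled before the statement to this simultaneous recognizability, I obtain the renormalization identity
$$\lambda(U)=\frac{1}{|F_{n}^{\zeta}|}\int_{X_{\zeta}\times X_{\zeta}}\bigl|\{{\bm f}\in F_{n}^{\zeta}\colon (S^{{\bm f}}\zeta^{n}(x),S^{{\bm f}}\zeta^{n}(x'))\in U\}\bigr|\,d\lambda(x,x'),$$
valid for every $n>0$ and every Borel set $U\subseteq X_{\zeta}\times X_{\zeta}$.

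Applying this to $U=\{(x,x')\colon x_{{\bm 0}}\neq x'_{{\bm 0}}\}$ and using $(S^{{\bm f}}\zeta^{n}(x))_{{\bm 0}}=\zeta^{n}(x_{{\bm 0}})_{{\bm f}}$ for ${\bm f}\in F_{n}^{\zeta}$ turns the right-hand side into an average of the sequences $d_{n}$, giving for every $n>0$
$$p:=\lambda(\{x_{{\bm 0}}\neq x'_{{\bm 0}}\})=\int_{X_{\zeta}\times X_{\zeta}} d_{n}\bigl(\zeta^{n}(x_{{\bm 0}}),\zeta^{n}(x'_{{\bm 0}})\bigr)\,d\lambda(x,x').$$
Because $\lambda$ is carried by the fibre product, $\tilde{\phi}(x)=\tilde{\phi}(x')$ forces $x_{{\bm 0}}\sim x'_{{\bm 0}}$ for $\lambda$-a.e. $(x,x')$, so the two letters are either equal or equivalent; in both cases $d_{n}(\zeta^{n}(x_{{\bm 0}}),\zeta^{n}(x'_{{\bm 0}}))\to 0$ as $n\to\infty$ by the very definition of the relation $\sim$. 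As $\A$ is finite the integrand is uniformly bounded, and dominated convergence forces $p=0$. By $S$-invariance of $\lambda$ this gives $\lambda(\{x_{{\bm n}}\neq x'_{{\bm n}}\})=0$ for every ${\bm n}\in\Z^{d}$, whence $\lambda(\Delta)=1$ and $\tilde{\phi}$ is a metric isomorphism.

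The main obstacle is the rigorous justification of the renormalization identity: it rests on combining the Recognizability Property of $\zeta$ with that of $\tilde{\zeta}$ (which is precisely where the aperiodicity of $\tilde{\zeta}$ enters, through \cref{RecognizabilityFactors}) in order to guarantee that both coordinates of a $\lambda$-generic pair are cut by one and the same level-$n$ tiling, so that the same ${\bm f}$ governs both. Once this is in place the convergence argument is soft. This mirrors the one-dimensional computation of B. Host and F. Parreau in \cite{host1989homomorphismes}.
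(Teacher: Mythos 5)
Your architecture (encode non-injectivity in the relative product $\lambda=\int\mu_{y}\times\mu_{y}\,d\mu_{\tilde{\zeta}}(y)$ and show $\lambda(\Delta)=1$) is sound and genuinely different from the paper's, and your support localization, the pointwise convergence $d_{n}(\zeta^{n}(x_{{\bm 0}}),\zeta^{n}(x'_{{\bm 0}}))\to 0$, and the Dirac/diagonal endgame are all correct. But there is one real gap, and it is exactly at the point you flag: the renormalization identity does not follow from the justification you offer. Simultaneous desubstitution — which you correctly derive from \cref{RecognizabilityFactors} via \cref{RemarkRecognizabilityFactorOdometer} — only shows that $\lambda$ is \emph{supported} on $\bigcup_{{\bm f}\in F_{n}^{\zeta}}S^{{\bm f}}\zeta^{n}(X_{\zeta})\times S^{{\bm f}}\zeta^{n}(X_{\zeta})$ with a common ${\bm f}$. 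The identity is a much stronger statement: it says $\lambda$ is a fixed point of the renormalization operator, i.e., that the desubstituted pair $(\hat{x},\hat{x}')$ again has law $\lambda$ and the cutting position is uniform on $F_{n}^{\zeta}$ and suitably independent of it. Knowing where $\lambda$ lives says nothing about how its mass is distributed among the pieces nor about the law of $(\hat{x},\hat{x}')$; for the single system the analogous formula is rescued by unique ergodicity, but joinings supported on the fibre product are far from unique (the diagonal joining is another one), so no soft uniqueness argument applies. The identity \emph{is} true for the relative product, but proving it needs an ingredient you never establish: equivariance of the disintegration, $(S^{{\bm f}}\zeta^{n})_{*}\mu_{y}=\mu_{S^{{\bm f}}\tilde{\zeta}^{n}(y)}$ for $\mu_{\tilde{\zeta}}$-a.e.\ $y$. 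This follows from essential uniqueness of disintegrations once one checks that $(S^{{\bm f}}\zeta^{n})_{*}\mu_{y}$ is carried by the $\tilde{\phi}$-fibre over $S^{{\bm f}}\tilde{\zeta}^{n}(y)$, that any such fibre is entirely contained in $S^{{\bm f}}\zeta^{n}(X_{\zeta})$ (again \cref{RemarkRecognizabilityFactorOdometer}), and that $\{S^{{\bm f}}\tilde{\zeta}^{n}(X_{\tilde{\zeta}})\}_{{\bm f}\in F_{n}^{\zeta}}$ is a clopen partition — this last point is where the aperiodicity of $\tilde{\zeta}$ (hence its recognizability) enters a second time, beyond \cref{RecognizabilityFactors}. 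With that equivariance in hand your identity is a short computation and the rest of your proof goes through.

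Alternatively, you can bypass the identity entirely and then your argument collapses onto the paper's: under the first marginal $\mu_{\zeta}$ the position $\pi_{n}(x)$ is exactly uniform on $F_{n}^{\zeta}$, and on $\{x_{{\bm 0}}\neq x'_{{\bm 0}}\}$ the common desubstitution together with $[\hat{x}_{{\bm 0}}]=[\hat{x}'_{{\bm 0}}]$ forces $\pi_{n}(x)$ into $G_{n}=\bigcup_{a,b\in\A}\{{\bm f}\in F_{n}^{\zeta}\colon [(\zeta^{n}a)_{{\bm f}}]=[(\zeta^{n}b)_{{\bm f}}],\ (\zeta^{n}a)_{{\bm f}}\neq(\zeta^{n}b)_{{\bm f}}\}$, whence $\lambda(\{x_{{\bm 0}}\neq x'_{{\bm 0}}\})\leq |G_{n}|/|F_{n}^{\zeta}|$ — but then you still need $|G_{n}|/|F_{n}^{\zeta}|\to 0$, which is precisely the paper's quantitative computation with the sets $\mathcal{D}_{n}^{a,b}$, $\mathcal{E}_{n}^{a,b}$ and the monotonicity of $d_{n}$. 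Indeed, the paper avoids joinings and disintegration altogether: it bounds the measure of the bad set $U=\{x\colon \exists y,\ \tilde{\phi}(x)=\tilde{\phi}(y),\ x_{{\bm 0}}\neq y_{{\bm 0}}\}$ by $|G_{n}|/|F_{n}^{\zeta}|$ using only the single-system measure formula, at the cost of that explicit estimate $\mu_{\zeta}(U)\leq \sum_{a,b}(d_{n}(\zeta^{n}(a),\zeta^{n}(b))-\lim_{k}d_{k}(\zeta^{k}(a),\zeta^{k}(b)))$. Your dominated-convergence step, once the renormalization identity is secured, genuinely replaces that computation by a soft argument — that is what your route buys — but as written the central identity is asserted on insufficient grounds.
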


\begin{proof}
	Let $\pi:(X_{\zeta},S,\Z^{d})\to (\overleftarrow{\Z^{d}}_{(L_{\zeta}^{n})},+_{(L_{\zeta}^{n})},\Z^{d})$ defined in \cref{SectionSubstitutionExtensionOdometer}. By \cref{RecognizabilityFactors}, for every $n>0$,  $\pi_{n}(x)$ is equal to $\pi_{n}(\tilde{\phi}(x))$. In particular, if $x,y\in X_{\zeta}$ satisfies $\tilde{\phi}(x)=\tilde{\phi}(y)$, then $\pi(x),\pi(y)$ are equal.
	
	Set $U=\{x\in X_{\zeta}: \exists y\in X_{\zeta},\ \tilde{\phi}(x)=\tilde{\phi}(y),\ x_{{\bm 0}}\neq y_{{\bm 0}}\}$. It is enough to prove that $U$ is a null-set.
	
	Let $n>0$, ${\bm f}\in F_{n}^{\zeta}$ and $x\in X_{\zeta}$ be such that $S^{{\bm f}}\zeta^{n}(x)\in U$. Then, there exists $y\in X_{\zeta}$ with $\tilde{\phi}(y)=\tilde{\phi}(S^{{\bm f}}\zeta^{n}(x))$ and $y_{0}\neq \zeta^{n}(x_{0})_{{\bm f}}$. Then $\pi_{n}(y)=\pi_{n}(S^{{\bm f}}\zeta^{n}(x))$ and is equal to ${\bm f}$. Moreover, there exists $z\in X_{\zeta}$ with $y=S^{{\bm f}}\zeta^{n}(z)$, so $\tilde{\phi}(x)$ is equal to $\tilde{\phi}(z)$. This implies that $(\zeta^{n}z_{{\bm 0}})_{{\bm j}}, (\zeta^{n}x_{{\bm 0}})_{{\bm j}}$ are equivalent for all ${\bm j}\in F_{n}^{\zeta}$. Note that $(\zeta^{n}z_{{\bm 0}})_{\bm f}=y_{{\bm 0}}$, so it is different from $(\zeta^{n}x_{{\bm 0}})_{{\bm f}}$. We define the set 
	
	$$G_{n}=\bigcup\limits_{a,b\in \A}\left\{{\bm f}\in F_{n}^{\zeta}\colon [(\zeta^{n}a)_{{\bm f}}]= [(\zeta^{n}b)_{{\bm f}}],\ (\zeta^{n}a)_{{\bm f}} \neq (\zeta^{n}b)_{{\bm f}}\right\}.$$
	
	We deduce from the previous paragraph that
	\begin{align*}
		\mu_{\zeta}(U) & =\dfrac{1}{|F_{n}^{\zeta}|}\displaystyle\int \left|\left\{{\bm f}\in F_{n}^{\zeta}: S^{{\bm f}}\zeta^{n}(x)\in U\right\}\right|d\mu(x) \\ & \leq \dfrac{|G_{n}|}{|F_{n}^{\zeta}|}.
	\end{align*}
	
	For any $a,b\in \A$ we denote $\mathcal{D}_{n}^{a,b}=\left\{(c,d)\in \A^{2}\colon \exists {\bm f}\in F_{n}^{\zeta},\  (\zeta^{n}a)_{{\bm f}}=c,\ (\zeta^{n}b)_{{\bm f}}=d,\ [c]\neq [d]\right\}$ and $\mathcal{E}_{n}^{a,b}=\left\{(c,d)\in \A^{2}\colon \exists {\bm f}\in F_{n}^{\zeta},\  (\zeta^{n}a)_{{\bm f}}=c,\ (\zeta^{n}b)_{{\bm f}}=d,\ [c]=[d]\right\}$. Set $\varepsilon>0$ and let $j>0$ be large enough such that for any $a,b\in \A$ 
	$$d_{j}(\zeta^{j}(a),\zeta^{j}(b))\leq \lim\limits_{k\to \infty} d_{k}(\zeta^{k}(a),\zeta^{k}(b))+\varepsilon.$$
	
	\noindent Fix $a,b\in \A$. Note that
	\begin{align*}
		d_{n+j}(\zeta^{n+j}(a),\zeta^{n+j}(b)) & = \dfrac{1}{|F_{n+j}^{\zeta}|}\sum\limits_{(c,d)\in \mathcal{D}_{n}^{a,b}} |F_{j}^{\zeta}|d_{j}(\zeta^{j}(c),\zeta^{j}(d))+\sum\limits_{(c,d)\in \mathcal{E}_{n}^{a,b}} |F_{j}^{\zeta}|d_{j}(\zeta^{j}(c),\zeta^{j}(d))\\
		& = \dfrac{1}{|F_{n}^{\zeta}|}\left(\sum\limits_{(c,d)\in \mathcal{D}_{n}^{a,b}} d_{j}(\zeta^{j}(c),\zeta^{j}(d))+\sum\limits_{(c,d)\in \mathcal{E}_{n}^{a,b}} d_{j}(\zeta^{j}(c),\zeta^{j}(d))\right)\\
		& \leq \dfrac{1}{|F_{n}^{\zeta}|}\left(\sum\limits_{(c,d)\in \mathcal{D}_{n}^{a,b}} (\lim\limits_{k\to \infty}d_{k}(\zeta^{k}(c),\zeta^{k}(d))+\varepsilon)+\sum\limits_{(c,d)\in \mathcal{E}_{n}^{a,b}} \varepsilon\right)\\
		& \leq \dfrac{\varepsilon(|\mathcal{D}_{n}^{a,b}|+|\mathcal{E}_{n}^{a,b}|)}{|F_{n}^{\zeta}|}+\dfrac{1}{|F_{n}^{\zeta}|}\sum\limits_{(c,d)\in \mathcal{D}_{n}^{a,b}}\lim\limits_{k\to \infty}d_{k}(\zeta^{k}(c),\zeta^{k}(d)).
	\end{align*}
	
	\noindent Since this is for every $\varepsilon>0$ and $\lim\limits_{k\to \infty}d_{k}(\zeta^{k}(c),\zeta^{k}(d))\leq 1$, we have that $d_{n+j}(\zeta^{n+j}(a),\zeta^{n+j}(b))\leq |\mathcal{D}_{n}^{a,b}|/|F_{n}^{\zeta}|$ and this is true for every $j$ large enough, so
	$$\lim\limits_{k\to\infty} d_{k}(\zeta^{k}(a),\zeta^{k}(b))\leq \dfrac{|\mathcal{D}_{n}^{a,b}|}{|F_{n}^{\zeta}|},$$
	
	\noindent hence
	\begin{align*}
		\mu_{\zeta}(U) &  \leq \sum\limits_{a,b\in \A}\left(d_{n}(\zeta^{n}(a),\zeta^{n}(b)) - \lim\limits_{k\to \infty} d_{k}(\zeta^{k}(a),\zeta^{k}(b))\right).
	\end{align*}
	
	When $n\to \infty$, the right expression goes to zero, and we conclude that $\mu_{\zeta}(U)=0$.		
\end{proof}

Now, to prove \cref{MainTheorem}, we assume that $\zeta_{2}$ is an aperiodic primitive reduced constant-shape substitution. We denote by $\eta=\min\limits_{\substack{n\in \NN\\a \neq b\in \B}}d_{n}(\zeta_{2}^{n}(a),\zeta_{2}^{n}(b))$ and $R$ the radius from the recognizability property of $\zeta_{2}$. Let $\phi$ be in $m\Fac(X_{\zeta_{1}},X_{\zeta_{2}},S,\Z^{d})$. The map $\pi_{n}(x)-\pi_{n}(\phi x)\ (\Mod L^{n}(\Z^{d}))$ is invariant under the $\Z^{d}$-action, so it is constant $\mu_{\zeta_{1}}$-a.e. We denote this constant by ${\bm p}_{n}(\phi)$ in $F_{n}^{\zeta_{1}}$. The set $S^{{\bm p}_{n}(\phi)} \phi \zeta_{1}^{n}(X_{\zeta_{1}})$ is included in $\zeta_{2}^{n}(X_{\zeta_{2}})$ up to a $\mu_{\zeta_{2}}$-null set. Recall that the recognizability property implies that $\zeta_{1}^{n}$ is a homeomorphism from $X_{\zeta_{1}}$ to $\zeta_{1}^{n}(X_{\zeta_{1}})$, so for $\mu_{\zeta_{1}}$-almost all $x\in X_{\zeta_{1}}$ there exists a unique point $y\in X_{\zeta_{2}}$ such that $S^{{\bm p}_{n}(\phi)}\phi\zeta_{1}^{n}(x) = \zeta_{2}^{n}(y)$, which we denote $\phi_{n}(x)$.  So, for every $\phi\in m\Fac(X_{\zeta_{1}},X_{\zeta_{2}},S,\Z^{d})$ we consider a sequence $({\bm p}_{n}(\phi))_{n\geq 0}$ and a sequence of maps $(\phi_{n})_{n}\in m\Fac(X_{\zeta_{1}},X_{\zeta_{2}},S,\Z^{d})$ such that
$${\bm p}_{n}(\phi)\in F_{n}^{\zeta_{1}},\quad S^{{\bm p}_{n}(\phi)}\phi\zeta_{1}^{n}(x) = \zeta_{2}^{n}(\phi_{n}(x)).$$

It is straightforward to check that the sequence $({\bm p}_{n}(\phi))_{n>0}$ satisfies the recurrence ${\bm p}_{n+1}(\phi)={\bm p}_{n}(\phi)+L_{\zeta_{1}}^{n}{\bm p}_{1}(\phi_{n})\quad {(\text{mod}\ L_{\zeta_{1}}^{n+1}(\Z^{d}))}$. We also have the recurrence $(\phi_{n})_{1}=\phi_{n+1}$.

Now, for $\phi,\theta\in m\Fac(X_{\zeta_{1}},X_{\zeta_{2}},S,\Z^{d})$, we denote $d(\phi,\theta)=\mu_{\zeta_{1}}(\{x\in X_{\zeta_{1}}: (\phi x)_{{\bm 0}}\neq (\theta x)_{{\bm 0}}\})$. We also denote, for any $r>0$, the quantity $C(r)=|B({\bm 0},r)\cap \Z^{d}|$.

\begin{lemma}\label{EqualityofNormalizers}
	If $d(\phi,\theta)$ is smaller than $\eta/C(R)$, then $\phi,\theta$ are equal $\mu_{\zeta_{1}}$-a.e. in $X_{\zeta_{1}}$.
\end{lemma}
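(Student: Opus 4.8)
The plan is to run the multidimensional analogue of Host and Parreau's renormalization argument, using the $\zeta_{1}^{n}$-invariance of $\mu_{\zeta_1}$ together with the two quantitative inputs already in force: the reducedness constant $\eta=\min_{n,\,a\neq b}d_{n}(\zeta_{2}^{n}(a),\zeta_{2}^{n}(b))>0$ (we have reduced $\zeta_{2}$) and the Recognizability radius $R$ of $X_{\zeta_{2}}$. Write $A=\{x\in X_{\zeta_1}\colon (\phi x)_{{\bm 0}}\neq(\psi x)_{{\bm 0}}\}$, so that $d(\phi,\psi)=\mu_{\zeta_1}(A)$, and recall the renormalization data $({\bm p}_{n}(\phi))_{n}\subseteq F_{n}$ and the maps $(\phi_{n})_{n}$ with $S^{{\bm p}_{n}(\phi)}\phi\zeta_{1}^{n}=\zeta_{2}^{n}\phi_{n}$ (and likewise for $\psi$). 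Since $\phi,\psi$ are factors, $S^{{\bm k}}x\in A$ exactly when $(\phi x)_{{\bm k}}\neq(\psi x)_{{\bm k}}$.

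First I would turn the smallness of $d(\phi,\psi)$ into a local coincidence. Setting $B=\{x\colon (\phi x)|_{B({\bm 0},R)\cap\Z^{d}}=(\psi x)|_{B({\bm 0},R)\cap\Z^{d}}\}$, a union bound gives $\mu_{\zeta_1}(B^{c})\le\sum_{{\bm k}\in B({\bm 0},R)\cap\Z^{d}}\mu_{\zeta_1}(S^{-{\bm k}}A)=C(R)\,d(\phi,\psi)<\eta$, so $\phi x$ and $\psi x$ agree on $B({\bm 0},R)\cap\Z^{d}$ on a set of measure $>1-\eta$. On this set the Recognizability Property of $\zeta_{2}$ forces $\phi x$ and $\psi x$ to admit the same $\zeta_{2}$-desubstitution at the origin; since the phase discrepancy $\pi_{1}(\phi x)-\pi_{1}(\psi x)={\bm p}_{1}(\psi)-{\bm p}_{1}(\phi)$ is $\mu_{\zeta_1}$-a.e.\ constant and vanishes on a set of positive measure, I obtain ${\bm p}_{1}(\phi)={\bm p}_{1}(\psi)$. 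The same recognizability input shows that when $(\phi_{1}x')_{{\bm 0}}\neq(\psi_{1}x')_{{\bm 0}}$ the whole supertile $\{S^{{\bm f}}\zeta_{1}(x')\colon {\bm f}\in F_{1}\}$ lies within distance $R$ of a disagreement, hence inside $B^{c}$, so that $d(\phi_{1},\psi_{1})\le\mu_{\zeta_1}(B^{c})<\eta$.

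Next I would extract the reverse comparison from reducedness. With the common phase ${\bm p}_{1}:={\bm p}_{1}(\phi)={\bm p}_{1}(\psi)$, the block identity $\zeta_{2}((\phi_{1}x')_{{\bm 0}})=(\phi\zeta_{1}x')|_{{\bm p}_{1}+F_{1}}$ and its $\psi$-analogue, fed into the $\zeta_{1}$-invariance formula over the translated fundamental domain ${\bm p}_{1}+F_{1}$, give
\begin{equation*}
d(\phi,\psi)=\frac{1}{|F_{1}|}\int_{X_{\zeta_1}}\bigl|\{{\bm h}\in{\bm p}_{1}+F_{1}\colon S^{{\bm h}}\zeta_{1}x'\in A\}\bigr|\,d\mu_{\zeta_1}(x').
\end{equation*}
Only points with $(\phi_{1}x')_{{\bm 0}}\neq(\psi_{1}x')_{{\bm 0}}$ contribute, and for these $d_{1}(\zeta_{2}a,\zeta_{2}b)\ge\eta$ bounds the inner cardinality between $\eta|F_{1}|$ and $|F_{1}|$; hence $\eta\,d(\phi_{1},\psi_{1})\le d(\phi,\psi)\le d(\phi_{1},\psi_{1})$. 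The common phase also lets me reconstruct $\phi$ from $\phi_{1}$ over the clopen partition $\{S^{{\bm f}}\zeta_{1}(X_{\zeta_1})\}_{{\bm f}\in F_{1}}$, yielding the dichotomy $\phi=\psi\iff\phi_{1}=\psi_{1}$. Thus it suffices to prove $\phi_{1}=\psi_{1}$, and the same construction applies verbatim at every level, so that $\phi=\psi$ iff $\phi_{n}=\psi_{n}$ for all $n$.

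The main obstacle, which I expect to be the technical heart, is the closure of this iteration: the comparison is \emph{not} a contraction, indeed $d(\phi,\psi)\le d(\phi_{1},\psi_{1})$, so renormalization can \emph{increase} the distance and the threshold $d(\phi_{n},\psi_{n})<\eta/C(R)$ needed to re-run the recognizability step need not persist. I would therefore close the argument globally rather than by naive iteration: the renormalized factors $(\phi_{n})_{n}$ lie in the weak-$*$ compact space of joinings of $(X_{\zeta_1},\mu_{\zeta_1})$ and $(X_{\zeta_2},\mu_{\zeta_2})$, the sequence $d(\phi_{n},\psi_{n})$ is monotone and bounded hence convergent, and any limit point is a self-similar factor satisfying $S^{{\bm p}}\chi\zeta_{1}=\zeta_{2}\chi$, which by the Curtis--Hedlund--Lyndon theorem is a \emph{continuous} sliding block code of controlled radius. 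As there are only finitely many such block codes, they are $d$-separated by a fixed positive amount, and reconciling this fixed gap with the monotone sequence started below $\eta/C(R)$ should force the limit, and hence $d(\phi,\psi)$ itself, to vanish; pinning down this last compactness-and-finiteness step is where I expect the real work to lie.
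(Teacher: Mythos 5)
Your opening moves are correct and coincide with the paper's: the union bound $\mu_{\zeta_{1}}(B^{c})\le C(R)\,d(\phi,\psi)$, the recognizability argument forcing ${\bm p}_{1}(\phi)={\bm p}_{1}(\psi)$ from a positive-measure set of agreement on $B({\bm 0},R)$, and the sandwich $\eta\,d(\phi_{1},\psi_{1})\le d(\phi,\psi)\le d(\phi_{1},\psi_{1})$. But the obstacle you then identify is illusory, and the compactness endgame you substitute for it does not close. The point you miss is that $\eta$ is by definition an infimum over \emph{all} powers, $\eta=\min\limits_{\substack{n\in\NN\\ a\neq b\in\B}}d_{n}(\zeta_{2}^{n}(a),\zeta_{2}^{n}(b))$, so the sandwich should be run from level $n+1$ directly down to level $0$, not between adjacent levels: a disagreement of $\phi_{n+1}$ and $\psi_{n+1}$ at the origin forces at least $\eta|F_{n+1}^{\zeta_{1}}|$ sites of $F_{n+1}^{\zeta_{1}}$ where $\zeta_{2}^{n+1}\phi_{n+1}(x)$ and $\zeta_{2}^{n+1}\psi_{n+1}(x)$ differ, and given ${\bm p}_{n+1}(\phi)={\bm p}_{n+1}(\psi)$ these are sites where $\phi$ and $\psi$ themselves differ; the invariance formula for $\mu_{\zeta_{1}}$ then yields $\eta\,\mu_{\zeta_{1}}(U_{n+1})\le\mu_{\zeta_{1}}(U_{0})$ with a \emph{single} factor of $\eta$, instead of your iterated $\eta\,d(\phi_{n+1},\psi_{n+1})\le d(\phi_{n},\psi_{n})$ which loses a factor per level. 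Moreover, the threshold needed to re-run the phase step at level $n$ is only $\mu_{\zeta_{1}}(U_{n})<1/C(R)$ (so that agreement on $B({\bm 0},R)$ has positive measure), not $\eta/C(R)$; since $\mu_{\zeta_{1}}(U_{n})\le\mu_{\zeta_{1}}(U_{0})/\eta<1/C(R)$ by hypothesis, the induction (phase equality plus $\mu_{\zeta_{1}}(U_{n})<1/C(R)$) closes at every level with no degradation. This is exactly the paper's proof.

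Your endgame has independent defects. Monotonicity of $d(\phi_{n},\psi_{n})$ already presupposes phase equality at every level, which is the very thing you conceded might fail, so the "monotone bounded hence convergent" sequence is not available without first closing the induction. Limit points being continuous block codes is not Curtis--Hedlund--Lyndon (the maps are only measurable); it is the content of the Lusin-type \cref{convergenceofslidingblockcodes}, and even granting it, the separation argument only shows the limit $\ell$ is either $0$ or at least the minimal gap $\delta$ between the finitely many block codes --- nothing relates the hypothesis $d(\phi,\psi)<\eta/C(R)$ to $\delta$, so $\ell\ge\delta$ cannot be excluded and the limit is not forced to vanish. Most importantly, you are missing the actual concluding mechanism, which is not $d(\phi_{n},\psi_{n})\to 0$: the paper fixes $r>0$, uses the F\o lner property to pick $n$ with $|(F_{n}^{\zeta_{1}})^{\circ r}|\ge|F_{n}^{\zeta_{1}}|/2$, and observes that for $x\in U_{n}^{c}$ (of measure $\ge 1-1/C(R)>0$ uniformly in $n$) the points $S^{{\bm p}}\phi\zeta_{1}^{n}x$ and $S^{{\bm p}}\psi\zeta_{1}^{n}x$ agree on all of $F_{n}^{\zeta_{1}}$, whence $\mu_{\zeta_{1}}\left(\left\{x\colon(\phi x)|_{B({\bm 0},r)}=(\psi x)|_{B({\bm 0},r)}\right\}\right)\ge\tfrac{1}{2}\left(1-1/C(R)\right)>0$ for every $r$; the decreasing intersection over $r$ has positive measure, and since the equality set $\{x\colon\phi(x)=\psi(x)\}$ is shift-invariant, ergodicity gives $\phi=\psi$ $\mu_{\zeta_{1}}$-a.e. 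Without this step, establishing even the full induction would not finish the lemma.
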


\begin{proof}
	For all $n\geq 0$, we denote $U_{n}=\{x\in X_{\zeta_{1}}: (\phi_{n}x)_{{\bm 0}}\neq (\theta_{n}x)_{{\bm 0}}\}$. We will prove by induction on $n\geq 0$ that ${\bm p}_{n}(\phi)={\bm p}_{n}(\theta)$ and $\mu_{\zeta_{1}}(U_{n})<1/C(R)$. By hypothesis, this is true for $n=0$. Now, suppose that ${\bm p}_{n}(\phi)={\bm p}_{n}(\theta)$ and $\mu_{\zeta_{1}}(U_{n})<1/C(R)$ for some $n\geq 0$. We note that the map $\pi_{1}(\phi_{n}x)-\pi_{1}(\theta_{n}x)$ is equal to $({\bm p}_{1}(\theta_{n})-{\bm p}_{1}(\phi_{n}))\ (\text{mod}\ L_{\zeta_{1}}^{n}(\Z^{d}))$ for $\mu_{\zeta_{1}}$-a.e $x$ in $X_{\zeta_{1}}$. By the recognizability property, this map vanishes on the set $\{x\in X_{\zeta_{1}}: (\phi_{n}x)|_{B({\bm 0},R)}= (\theta_{n}x)|_{B({\bm 0},R)}\}$, which has a positive measure by hypothesis. We conclude that ${\bm p}_{n+1}(\phi)={\bm p}_{n+1}(\theta)$.
	
	Now, let $x$ be in $U_{n+1}$. Then, there exist at least $\eta|F_{n+1}^{\zeta_{1}}|$ indices ${\bm f}\in F_{n+1}^{\zeta_{1}}$ such that $(\zeta_{2}^{n+1}\phi_{n+1}(x))_{{\bm f}}\neq (\zeta_{2}^{n+1}\theta_{n+1}(x))_{{\bm f}}$, i.e., $(S^{{\bm p}_{n+1}(\phi)}\phi\zeta_{1}^{n+1}(x))_{{\bm f}}\neq (S^{{\bm p}_{n+1}(\theta)}\theta\zeta_{1}^{n+1}(x))_{{\bm f}}$, so we have that
	$$\eta |F_{n+1}^{\zeta_{1}}|\mu_{\zeta_{1}}(U_{n+1})\leq \int |\{{\bm g} \in F_{n+1}^{\zeta_{1}}+{\bm p}_{n+1}(\phi): S^{{\bm g}}\zeta_{1}^{n+1}x \in U_{0}\}|d\mu_{\zeta_{1}}(x)= |F_{n+1}^{\zeta_{1}}|\mu_{\zeta_{1}}(U_{0})$$
	
	\noindent Hence, $\mu_{\zeta_{1}}(U_{n+1})$ is less than $1/C(R)$. 
	
	Now, we will prove that $\phi=\theta$ for $\mu_{\zeta_{1}}$-a.e. $x\in X_{\zeta_{1}}$. Let $r>0$ be an integer. Since $(F_{n}^{\zeta_{1}})_{n>0}$ is F\o lner, we choose $n>0$ large enough such that $|(F_{n}^{\zeta_{1}})^{\circ r}|/|\{F_{n}^{\zeta_{1}}\}|\geq 1/2$. Set ${\bm p}={\bm p}_{n}(\phi)$. If $x\in U_{0}^{c}$, then $(S^{{\bm p}}\phi\zeta_{1}^{n}x)|_{F_{n}^{\zeta_{1}}}=(S^{{\bm p}}\theta\zeta_{1}^{n}x)|_{F_{n}^{\zeta_{1}}}$, so we have that $(S^{{\bm p}+{\bm f}}\phi\zeta_{1}^{n}x)_{B({\bm 0},r)\cap \Z^{d}}=(S^{{\bm p}+{\bm f}}\theta\zeta_{1}^{n}x)_{B({\bm 0},r)\cap \Z^{d}}$ for ${\bm f} \in (F_{n}^{\zeta_{1}})^{\circ r}$. This implies that
	
	\begin{align*}
		\mu_{\zeta_{1}}\left(\left\{x\in X: (\phi x)|_{B({\bm 0},r)}= (\theta x)_{B({\bm 0},r)}\right\}\right) & \geq \dfrac{|(F_{n}^{\zeta_{1}})^{\circ r}|}{|F_{n}^{\zeta_{1}}|}\mu_{\zeta_{1}}(U_{n}^{c})\\
		& \geq \dfrac{1}{2}\left(1-\dfrac{1}{C(R)}\right)>0.
	\end{align*}
	
	Finally, the set $\{x\in X\colon \phi(x)=\theta(x)\}$ is the decreasing intersection of these sets, so it has a positive measure. By ergodicity, $\phi, \theta$ are equal $\mu_{\zeta_{1}}$-a.e. in $X_{\zeta_{1}}$. 
\end{proof}

\begin{lemma}\label{convergenceofslidingblockcodes} 
	Let $\phi$ be in $m\Fac(X_{\zeta_{1}},X_{\zeta_{2}},S,\Z^{d})$. Then there exists a sequence $(\psi_{n})$ of sliding block codes of radius $\Vert F_{1}^{\zeta_{1}}\Vert\left(1+\Vert L_{\zeta_{1}}^{-1}\Vert \left(2+1/(1-\Vert L_{\zeta_{1}}^{-1}\Vert)\right)\right)$ such that $d(\phi_{n},\psi_{n})\to 0$.
\end{lemma}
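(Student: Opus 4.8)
The plan is to approximate the measurable factor $\phi=\phi_{0}$ by a genuine sliding block code and then transport this approximation to every renormalized map $\phi_{n}$ through the defining relation $S^{{\bm p}_{n}(\phi)}\phi\zeta_{1}^{n}=\zeta_{2}^{n}\phi_{n}$, with the reducedness constant $\eta=\min_{n,a\neq b}d_{n}(\zeta_{2}^{n}(a),\zeta_{2}^{n}(b))>0$ serving as an error-correcting margin. First I would use that $x\mapsto(\phi x)_{{\bm 0}}$ is a measurable map into the finite set $\B$, so by density of cylinder functions (martingale convergence along the filtration by the coordinate $\sigma$-algebras $\mathcal{F}_{B({\bm 0},r)}$) for every $\varepsilon>0$ there are $r>0$ and a block map $\Phi\colon\mathcal{L}_{B({\bm 0},r)}(X_{\zeta_{1}})\to\B$ with $\mu_{\zeta_{1}}(W)<\varepsilon$, where $W=\{z\colon(\phi z)_{{\bm 0}}\neq\Phi(z|_{B({\bm 0},r)})\}$. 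Since $\zeta_{2}$ is reduced, $b\mapsto\zeta_{2}^{n}(b)|_{F_{n}}$ is injective (here $F_{n}=F_{n}^{\zeta_{1}}=F_{n}^{\zeta_{2}}$), so the defining relation shows that $(\phi_{n}x)_{{\bm 0}}$ is exactly the unique $b\in\B$ with $\zeta_{2}^{n}(b)|_{F_{n}}=(\phi\zeta_{1}^{n}x)|_{{\bm p}_{n}(\phi)+F_{n}}$; thus $(\phi_{n}x)_{{\bm 0}}$ is recovered by decoding this super-block.

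Next I would define $\psi_{n}$ by substituting $\Phi$ for $\phi$ in this decoding: set $\widehat{P}_{{\bm f}}(x)=\Phi\big((\zeta_{1}^{n}x)|_{({\bm p}_{n}(\phi)+{\bm f})+B({\bm 0},r)}\big)$ for ${\bm f}\in F_{n}$, and let $(\psi_{n}x)_{{\bm 0}}$ be the letter $b$ whose codeword $\zeta_{2}^{n}(b)|_{F_{n}}$ is nearest to $\widehat{P}(x)$ in Hamming distance. The crucial point is that this value depends on $x$ only through finitely many letters: a position contributing to the super-block has the form $L_{\zeta_{1}}^{n}{\bm j}+F_{n}$ with ${\bm j}\in L_{\zeta_{1}}^{-n}\big(({\bm p}_{n}(\phi)+F_{n})+B({\bm 0},r)-F_{n}\big)$, and since $\Vert L_{\zeta_{1}}^{-n}F_{n}\Vert\le\Vert F_{1}\Vert\,\Vert L_{\zeta_{1}}^{-1}\Vert/(1-\Vert L_{\zeta_{1}}^{-1}\Vert)$ stays bounded while $\Vert L_{\zeta_{1}}^{-n}\Vert\,r\to0$, a direct estimate of this desubstitution region shows that, once $n$ is large, the whole super-block is read off $x$ within a ball of radius $\Vert F_{1}\Vert\big(1+\Vert L_{\zeta_{1}}^{-1}\Vert(2+1/(1-\Vert L_{\zeta_{1}}^{-1}\Vert))\big)$. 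Hence each $\psi_{n}$ is a sliding block code of the claimed radius.

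For the error I would invoke reducedness to make the decoding robust. If $(\psi_{n}x)_{{\bm 0}}\neq(\phi_{n}x)_{{\bm 0}}$, then $\widehat{P}(x)$ is at least as close to some wrong codeword as to the true one $\zeta_{2}^{n}((\phi_{n}x)_{{\bm 0}})|_{F_{n}}$; as any two codewords differ on at least $\eta|F_{n}|$ positions, $\widehat{P}(x)$ must disagree with the true super-block on at least $\tfrac{\eta}{2}|F_{n}|$ positions, i.e.\ $|\mathrm{Bad}(x)|\ge\tfrac{\eta}{2}|F_{n}|$ with $\mathrm{Bad}(x)=\{{\bm f}\in F_{n}\colon S^{{\bm p}_{n}(\phi)+{\bm f}}\zeta_{1}^{n}x\in W\}$. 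Using Markov's inequality, the image-measure identity $(\zeta_{1}^{n})_{*}\mu_{\zeta_{1}}=|F_{n}|\,\mu_{\zeta_{1}}|_{\zeta_{1}^{n}(X_{\zeta_{1}})}$, and the fact that $\{S^{{\bm g}}\zeta_{1}^{n}(X_{\zeta_{1}})\}_{{\bm g}\in{\bm p}_{n}(\phi)+F_{n}}$ is (up to a null set) the shifted level-$n$ recognizability partition of $X_{\zeta_{1}}$, I obtain
\begin{align*}
d(\phi_{n},\psi_{n}) &\le\frac{2}{\eta|F_{n}|}\int|\mathrm{Bad}(x)|\,d\mu_{\zeta_{1}}(x)
=\frac{2}{\eta|F_{n}|}\sum_{{\bm g}\in{\bm p}_{n}(\phi)+F_{n}}|F_{n}|\,\mu_{\zeta_{1}}\!\big(W\cap S^{{\bm g}}\zeta_{1}^{n}(X_{\zeta_{1}})\big)\\
&=\frac{2}{\eta}\,\mu_{\zeta_{1}}(W)<\frac{2\varepsilon}{\eta},
\end{align*}
a bound uniform in $n$. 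Finally, running this for a sequence $\varepsilon_{k}\downarrow0$ with block maps $\Phi_{k}$ of radii $r_{k}$, and setting $\psi_{n}:=\psi_{n}^{(k_{n})}$ for indices $k_{n}\to\infty$ chosen slowly enough that $\Vert L_{\zeta_{1}}^{-n}\Vert\,r_{k_{n}}\to0$ (possible since $\Vert L_{\zeta_{1}}^{-1}\Vert<1$), keeps every radius within the stated bound while forcing $d(\phi_{n},\psi_{n})\le 2\varepsilon_{k_{n}}/\eta\to0$.

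The hard part will be reconciling the two competing requirements: a fixed block map $\Phi$ yields only a uniform error bound $2\varepsilon/\eta$, never a vanishing one, whereas shrinking $\varepsilon$ forces the approximation radius $r$ to grow. The renormalization resolves this — the contraction $L_{\zeta_{1}}^{-n}$ compresses the large radius $r$ down to the bounded desubstitution scale — and reducedness is exactly what prevents the small per-coordinate error $\varepsilon$ from being amplified by $|F_{n}|$ during decoding, keeping it of the same order. Pinning down the precise radius constant requires a careful but routine estimate of the desubstitution region; the conceptual core is the error-correction estimate together with the diagonal choice of $(k_{n})$.
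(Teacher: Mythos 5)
Your overall architecture coincides with the paper's: approximate $\phi$ by a finite-range function (your martingale step plays the role of the paper's Lusin step), transfer the approximation to $\phi_{n}$ through the relation $S^{{\bm p}_{n}(\phi)}\phi\zeta_{1}^{n}=\zeta_{2}^{n}\phi_{n}$, and use the reducedness constant $\eta$ as an error-correcting margin, with the image-measure identity over the coset representatives ${\bm p}_{n}(\phi)+F_{n}^{\zeta_{1}}$ giving an error bound uniform in $n$, finished by a diagonal choice of $\varepsilon_{k}\downarrow 0$. Your decoding step, the triangle-inequality estimate $d_{H}(\widehat{P},\zeta_{2}^{n}(b))\geq \tfrac{\eta}{2}|F_{n}|$ on the error event, and the bound $d(\phi_{n},\psi_{n})\leq \tfrac{2}{\eta}\mu_{\zeta_{1}}(W)$ are all correct; the paper packages the same error-correction count differently (via the sets $J(x)$ and a good set of measure $>1-\varepsilon$), but that difference is cosmetic.

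The genuine gap is the radius. You evaluate $\Phi$ at every ${\bm f}\in F_{n}^{\zeta_{1}}$, so the coordinates of $x$ you read are the integer points ${\bm j}$ with $(L_{\zeta_{1}}^{n}{\bm j}+F_{n}^{\zeta_{1}})\cap({\bm p}_{n}(\phi)+F_{n}^{\zeta_{1}}+B({\bm 0},r))\neq\emptyset$, and your own estimate of this desubstitution region gives, with $t=\Vert L_{\zeta_{1}}^{-1}\Vert$, a ball of radius $3t\Vert F_{1}\Vert/(1-t)+t^{n}r$. One checks $3t/(1-t)\leq 1+2t+t/(1-t)$ if and only if $t\leq 1/2$, so your construction exceeds the stated radius $\Vert F_{1}\Vert\left(1+t\left(2+1/(1-t)\right)\right)$ precisely when $t>1/2$ --- for instance for the Twin Dragon matrix $L=\left(\begin{smallmatrix}1 & -1\\ 1 & 1\end{smallmatrix}\right)$ one has $t=1/\sqrt{2}$ and $3t/(1-t)\approx 7.24$ versus $\approx 4.83$ for the stated constant (in dimension one $t\leq 1/2$ always, which is why the problem is invisible in the classical setting). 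The paper avoids this with an idea your sketch lacks: it evaluates the block map only at F\o lner-interior positions ${\bm f}\in (F_{n}^{\zeta_{1}})^{\circ\ell}$, so every window ${\bm f}+B({\bm 0},\ell)$ stays inside $F_{n}^{\zeta_{1}}$; all reads then lie in ${\bm p}+F_{n}^{\zeta_{1}}\subseteq F_{n}^{\zeta_{1}}+F_{n}^{\zeta_{1}}\subseteq L_{\zeta_{1}}^{n}(C)+F_{n}^{\zeta_{1}}$ with $C$ given by \cref{SetDforFiniteInvariantOrbit} and \cref{RemarkSetDforInvariantOrbit}, and since $F_{n}^{\zeta_{1}}$ is a fundamental domain of $L_{\zeta_{1}}^{n}(\Z^{d})$, a block $L_{\zeta_{1}}^{n}{\bm j}+F_{n}^{\zeta_{1}}$ meeting $L_{\zeta_{1}}^{n}(C)+F_{n}^{\zeta_{1}}$ forces ${\bm j}\in C$. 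The needed coordinates are thus pinned to exactly $C$, whose norm is the stated radius, with no $t^{n}r$ correction and no constraint coupling $n$ to $r$; the price, a boundary fraction $|F_{n}^{\zeta_{1}}\setminus(F_{n}^{\zeta_{1}})^{\circ\ell}|<\tfrac{\eta}{3}|F_{n}^{\zeta_{1}}|$, is absorbed into the $\eta$-separation of codewords. Your argument is repairable by the same device (decode by nearest codeword restricted to $(F_{n}^{\zeta_{1}})^{\circ r}$, adjusting the threshold by the boundary fraction), but as written the claimed radius is unsubstantiated, and under your own estimate it fails whenever $\Vert L_{\zeta_{1}}^{-1}\Vert>1/2$.
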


\begin{proof}
	Set $\varepsilon>0$. By Lusin's theorem, there exist an integer $\ell>0$ and a continuous map $f:X_{\zeta_{1}}\to \mathcal{B}$ such that $f(x)$ only depends on $x|_{B({\bm 0},\ell)}$, and the measure of the set $V=\{x\in X_{\zeta_{1}}\colon (\phi x)_{{\bm 0}}\neq f(x)\}$ is less than $\varepsilon \eta/6$. Since $(F_{n}^{\zeta_{1}})_{n>0}$ is F\o lner, we choose $n>0$ large enough such that $$\dfrac{|(F_{n}^{\zeta_{1}})^{\circ \ell}|}{|F_{n}^{\zeta_{1}}|}>1-\dfrac{\eta}{3}.$$
	
	Set ${\bm p}={\bm p}_{n}(\phi)$. For every $x\in X_{\zeta_{1}}$, we denote $J(x)=\{{\bm f}\in (F_{n}^{\zeta_{1}})^{\circ \ell}\colon\ S^{{\bm f}+{\bm p}}\zeta_{1}^{n}x\notin V\}$ and $W=\{x\in X_{\zeta_{1}}\colon |J(x)|>(1-\eta/2)|F_{n}^{\zeta_{1}}|\}$. Note that
	
	\begin{align*}
		\frac{\varepsilon \eta}{6} & >\mu_{\zeta_{1}}(V)\\
		& \geq \frac{1}{|F_{n}^{\zeta_{1}}|}\displaystyle\int_{W^{c}} |(F_{n}^{\zeta_{1}})^{\circ \ell}|-|J(x)|)d\mu(x)\\
		& \geq (1-\mu_{\zeta_{1}}(W))\frac{\eta}{6}.
	\end{align*}
	
	So we have that $\mu_{\zeta_{1}}(W)>1-\varepsilon$.
	
	Using \cref{SetDforFiniteInvariantOrbit} with $F=F_{1}^{\zeta}+F_{1}^{\zeta}$ and $A=\{{\bm 0}\}$, we find a set $C\Subset \Z^{d}$ such that ${\bm 0}\in C$, $\Vert C \Vert \leq \Vert F_{1}^{\zeta}\Vert\left(1+\Vert L_{\zeta}^{-1}\Vert \left(2+1/(1-\Vert L_{\zeta}^{-1}\Vert)\right)\right)$ and by \cref{RemarkSetDforInvariantOrbit} we have that $F_{n}^{\zeta}+F_{n}^{\zeta}\subseteq L_{\zeta}^{n}(C)+F_{n}^{\zeta}$. If $x,y\in W$ with $x|_{C}=y|_{C}$, then for every ${\bm f}\in (F_{n}^{\zeta_{1}})^{\circ \ell}$, we have that $(S^{{\bm f}+{\bm p}}\zeta_{1}^{n}x)|_{B({\bm 0},\ell)\cap \Z^{d}}=(S^{{\bm f}+{\bm p}}\zeta_{1}^{n}y)|_{B({\bm 0},\ell)\cap \Z^{d}}$, so $f(S^{{\bm f}+{\bm p}}\zeta_{1}^{n}x)=f(S^{{\bm f}+{\bm p}}\zeta_{1}^{n}y)$. Moreover, we note that for ${\bm f}$ in $J(x)\cap J(y)$ that, 
	$(\zeta_{2}^{n}\phi_{n}x)_{{\bm f}} = (S^{{\bm p}}\phi\zeta_{1}^{n}x)_{{\bm f}}=f(S^{{\bm f} + {\bm p}}\phi\zeta_{1}^{n}x) =f(S^{{\bm f} + {\bm p}}\phi\zeta_{1}^{n}y) =(\zeta_{2}^{n}\phi_{n}y)_{{\bm f}}$. Since $x,y\in W$, there is strictly more than $(1-\eta)|F_{n}^{\zeta_{1}}|$ elements in $J(x)\cap J(y)$, so  $(\phi_{n}x)_{{\bm 0}}$ is equal to $(\phi_{n}y)_{{\bm 0}}$ by definition of $\eta$. Hence, for every $x$ in $W$, $(\phi_{n}x)_{{\bm 0}}$ only depends on $x|_{C}$.		
\end{proof}

Finally, to prove \cref{MainTheorem} we use similar arguments given in \cite{host1989homomorphismes} that we describe for completeness. 

\begin{proof}[Proof of Theorem \ref{MainTheorem}]
	For fixed alphabets $\A$ and $\B$, there exist a finite number of sliding block codes of radius $\Vert F_{1}^{\zeta}\Vert\left(1+\Vert L_{\zeta}^{-1}\Vert \left(2+1/(1-\Vert L_{\zeta}^{-1}\Vert)\right)\right)$. By Lemma \ref{convergenceofslidingblockcodes}, there exist two different integers $m,k\geq 0$ such that $d(\phi_{m},\phi_{m+k})<\eta/C(R)$, so by Lemma \ref{EqualityofNormalizers}, we have that $\phi_{m}=\phi_{m+k}$, $\mu_{\zeta_{1}}$-a.e. 
	
	Let $n\geq m$ be a multiple of $k$. Note that $(\phi_{n})_{k}=\phi_{n+k}=(\phi_{m+k})_{n-m}=(\phi_{m})_{n-m}=\phi_{n}$, $\mu_{\zeta_{1}}$-a.e. This implies that for all $r\in \NN$, $\phi_{n}$ is equal to $(\phi_{n})_{rk}$, $\mu_{\zeta_{1}}$-a.e. and then $\phi_{n}$ is equal to a sliding block code of radius $\Vert F_{1}^{\zeta}\Vert\left(1+\Vert L_{\zeta}^{-1}\Vert \left(2+1/(1-\Vert L_{\zeta}^{-1}\Vert)\right)\right)$, $\mu_{\zeta_{1}}$-a.e. in $X_{\zeta_{1}}$. Since $\phi_{n}$ is equal to $\phi_{2n}$, $\mu_{\zeta_{1}}$-a.e., we denote $\psi=\phi_{n}$ and ${\bm p}={\bm p}_{n}(\psi)$. By definition of ${\bm p}$, we have that $S^{{\bm p}}\psi\zeta_{1}^{n}=\zeta_{2}^{n}\psi$. 
	
	Set ${\bm j}={\bm p}_{n}(\phi)-{\bm p}$, then
	$$S^{{\bm j}}\phi\zeta_{1}^{n}=S^{{\bm p}_{n}(\phi)-{\bm p}}\phi\zeta_{1}^{n}=S^{-{\bm p}}\zeta_{2}^{n}\psi=\psi\zeta_{1}^{n},\quad \mu_{\zeta_{1}}-\text{a.e},$$
	this implies that $S^{{\bm j}}\phi$ and $\psi$ coincides in $\zeta_{1}^{n}(X_{\zeta_{1}})$ $\mu_{\zeta_{1}}$-almost everywhere, and by ergodicity in the whole set $X_{\zeta_{1}}$ $\mu_{\zeta_{1}}$-almost everywhere. 
\end{proof}

In \cite{durand2000linearly} it was proved that linearly recurrent subshifts (in particular substitutive subshifts) have a finite number of topological Cantor factors, up to conjugacy. In our context, \cref{FactorConjugateSubstitution} together with \cref{MainTheorem} implies that a substitutive subshift has finitely many reduced substitutive factors, up to conjugacy. However, the reduced hypothesis does not cover all the aperiodic symbolic factors a substitutive subshifts may have, and this property is not invariant by conjugation, leaving the following question:

\begin{question}
	Does all aperiodic substitutive subshifts have finitely many aperiodic symbolic factors, up to conjugacy?
\end{question}

\subsection{Applications of rigidity results on factors}\label{SectionApplicationsOfMainTheorem}

As applications of \cref{MainTheorem}, we get some results on the coalescence and the automorphism group of substitutive subshifts. Since the set of sliding block codes $\Vert F_{1}^{\zeta}\Vert\left(1+\Vert L_{\zeta}^{-1}\Vert \left(2+1/(1-\Vert L_{\zeta}^{-1}\Vert)\right)\right)$ is finite, we will assume here (up to considering a power of $\zeta$) that if a factor map $\psi\in \End(X_{\zeta},S,\Z^{d})$ satisfies Property \ref{CommutationPropertySlidingBlickCodeSubstitution} in \cref{MainTheorem}, then it does so for $n=1$, i.e., there exists ${\bm p}\in F_{1}^{\zeta}$ such that $S^{{\bm p}}\psi \zeta=\zeta \psi$.

\subsubsection{Coalescence of substitutive subshifts}\label{CoalescenceAndFactors} In \cite{durand2000linearly} it was proved that one-dimensional linearly recurrent subshifts (in particular substitutive subshifts) are coalescent, i.e, any endomorphism is an automorphism. Here we use \cref{MainTheorem} to obtain that substitutive subshifts are also coalescent, for aperiodic primitive reduced constant-shape substitutions.

\begin{proposition}\label{Coalescence}
	Let $\zeta$ be an aperiodic primitive reduced  constant-shape substitution. Then the substitutive subshift $(X_{\zeta},S,\Z^{d})$ is coalescent.
\end{proposition}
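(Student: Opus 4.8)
The plan is to reduce every endomorphism to a sliding block code of one fixed radius via \cref{MainTheorem}, and then exploit the finiteness of such codes by a pigeonhole argument on the powers of the endomorphism. First I would note that any $\phi\in \End(X_{\zeta},S,\Z^{d})$ is automatically a measurable factor: since $(X_{\zeta},S,\Z^{d})$ is strictly ergodic, $\phi$ pushes $\mu_{\zeta}$ forward to an $S$-invariant measure, which by unique ergodicity is again $\mu_{\zeta}$, so $\phi\in m\Fac(X_{\zeta},X_{\zeta},S,\Z^{d})$. As $\zeta$ is reduced, \cref{MainTheorem} applies to $\phi$, and more usefully to each of its powers.

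For every $k\geq 1$ the map $\phi^{k}$ is again an endomorphism, hence a measurable factor, so by \cref{MainTheorem} there is a vector ${\bm j}_{k}\in \Z^{d}$ such that $S^{{\bm j}_{k}}\phi^{k}$ agrees $\mu_{\zeta}$-a.e.\ with a sliding block code of radius at most $\Vert F_{1}^{\zeta}\Vert\left(1+\Vert L_{\zeta}^{-1}\Vert\left(2+1/(1-\Vert L_{\zeta}^{-1}\Vert)\right)\right)$. Because $X_{\zeta}$ is minimal and $\mu_{\zeta}$ has full support, two continuous maps that coincide $\mu_{\zeta}$-a.e.\ coincide everywhere, so $S^{{\bm j}_{k}}\phi^{k}$ \emph{is} such a block code. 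Since there are only finitely many sliding block codes from $X_{\zeta}$ to itself of radius bounded by this quantity, the Pigeonhole Principle yields integers $1\leq k<l$ with $S^{{\bm j}_{k}}\phi^{k}=S^{{\bm j}_{l}}\phi^{l}$. Writing ${\bm m}={\bm j}_{l}-{\bm j}_{k}$ and $g=S^{{\bm m}}\phi^{l-k}$, this reads $\phi^{k}=g\circ\phi^{k}$. Now $\phi^{k}$ is surjective (a composition of factor maps), so for any $y=\phi^{k}(x)$ we get $g(y)=g(\phi^{k}(x))=\phi^{k}(x)=y$; hence $g=\id_{X_{\zeta}}$ and $\phi^{l-k}=S^{-{\bm m}}$ is invertible. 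Applying $\phi^{l-k-1}$ to an identity $\phi(x)=\phi(x')$ then forces $\phi^{l-k}(x)=\phi^{l-k}(x')$, so $x=x'$; thus $\phi$ is injective, and being a continuous bijection of a compact metric space that commutes with $S$, it is a conjugacy. This shows every endomorphism is invertible, i.e.\ $(X_{\zeta},S,\Z^{d})$ is coalescent.

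I expect no genuine obstacle in this argument, because the entire difficulty is already absorbed into \cref{MainTheorem}: the point that matters is that the radius bound there is \emph{uniform}, holding for every endomorphism independently of which one, which is precisely what makes the family of candidate block maps finite and the pigeonhole step legitimate. The only steps requiring a word of care are the upgrade from $\mu_{\zeta}$-a.e.\ equality to everywhere equality (via minimality and full support of $\mu_{\zeta}$) and the right-cancellation of $\phi^{k}$, which rests solely on the surjectivity of factor maps. The remaining subtlety is purely bookkeeping—keeping track of the shift vectors ${\bm j}_{k}$ when comparing the different powers $\phi^{k}$ and $\phi^{l}$—and presents no real difficulty.
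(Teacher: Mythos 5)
Your argument is correct, but it takes a genuinely different route from the paper's own proof. The paper applies \cref{MainTheorem} only once, to $\phi$ itself, and then exploits property \ref{CommutationPropertySlidingBlickCodeSubstitution}: from $S^{{\bm p}}\psi\zeta=\zeta\psi$ it deduces that $\psi$ sends $\zeta$-invariant orbits to $\zeta$-invariant orbits, invokes \cref{FinitelyManyInvariantOrbits} to find $n$ and a point $x$ with $S^{{\bm m}}\psi^{n}(x)=x$, and concludes $\psi^{n}=S^{-{\bm m}}$ by minimality. You instead apply \cref{MainTheorem} to \emph{every} power $\phi^{k}$, use only the radius bound of property \ref{FixedRadiusofCommutingFactors} — whose uniformity in the map is indeed the crucial point — to pigeonhole the finite family of bounded-radius block codes, obtain $S^{{\bm j}_{k}}\phi^{k}=S^{{\bm j}_{l}}\phi^{l}$ for some $k<l$, and then cancel $\phi^{k}$ on the right using surjectivity. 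Both routes are sound; yours is closer in spirit to Durand's coalescence argument for linearly recurrent subshifts and has the merit of bypassing \cref{FinitelyManyInvariantOrbits} and the commutation property altogether. What the paper's route buys in exchange is a structural byproduct — that $\psi$ permutes the finitely many $\zeta$-invariant orbits — which it reuses verbatim later in the proof of \cref{CommutingCentralizerVirtuallyZd}, so the invariant-orbit machinery is not wasted there. Two steps you rightly flagged and justified correctly should stay explicit in any write-up: the upgrade from $\mu_{\zeta}$-a.e.\ to everywhere equality (the agreement set of two continuous maps is closed, and a closed set of full measure is the whole space because the unique invariant measure of this minimal system has full support), and the right-cancellation $\phi^{k}=g\circ\phi^{k}\Rightarrow g=\id$, which uses nothing beyond surjectivity of $\phi^{k}$.
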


\begin{proof}
	Set $\phi\in \End(X_{\zeta},S,\Z^{d})$. \cref{MainTheorem} ensures that there exists ${\bm j}\in \Z^{d}$ such that $S^{{\bm j}}\phi$ is equal to a sliding block code $\psi$ of a fixed radius satisfying $S^{{\bm p}}\psi\zeta=\zeta \psi$, for some ${\bm p}\in F_{1}^{\zeta}$. Let $\overline{x}\in X_{\zeta}$ be in a $\zeta$-invariant orbit, i.e., there exists ${\bm j}\in \Z^{d}$ such that $\zeta(\overline{x})=S^{{\bm j}}\overline{x}$. Note that
	$$S^{{\bm p}}\psi\zeta(\overline{x})=S^{{\bm p}+{\bm j}}\psi\overline{x}=\zeta \psi(\overline{x}),$$
	
	\noindent so, if the orbit of $x$ is in a $\zeta$-invariant orbit, then $\psi(x)$ is also in a $\zeta$-invariant orbit. By Proposition \ref{FinitelyManyInvariantOrbits}, there exist finitely many $\zeta$-invariant orbits, hence for $n$ large enough, we can find $x\in X_{\zeta}$ with $x$ and $\psi^{n}(x)$ being in the same orbit, i.e., there exists ${\bm m}\in \Z^{d}$ such that $S^{{\bm m}}\psi^{n}(x)=x$. The minimality of $(X_{\zeta},S,\Z^{d})$ allows to conclude that $\psi^{n}=S^{-{\bm m}}$. Hence $\psi$ is invertible, which implies that $\phi$ is invertible.
\end{proof}

\subsubsection{The automorphism group of substitutive subshifts.} Since the set of sliding block codes of radius $\Vert F_{1}^{\zeta}\Vert\left(1+\Vert L_{\zeta}^{-1}\Vert \left(2+1/(1-\Vert L_{\zeta}^{-1}\Vert)\right)\right)$ between $X_{\zeta}$ to itself is finite, we get the following result as a direct corollary of \cref{MainTheorem}. 

\begin{proposition}\label{AutomoprhismVirtuallyZd}
	Let $(X_{\zeta},S,\Z^{d})$ be a substitutive subshift from an aperiodic primitive reduced  constant-shape substitution $\zeta$. Then, the quotient group $\Aut(X_{\zeta},S,\Z^{d})/\left\langle S\right\rangle$ is finite. A bound for $|\Aut(X_{\zeta},S,\Z^{d})/\left\langle S \right\rangle|$ is given by an explicit formula depending only on $d$, $|\A|$, $\Vert L_{\zeta}^{-1}\Vert$, $\Vert F_{1}^{\zeta}\Vert$.
\end{proposition}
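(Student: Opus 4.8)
The plan is to derive this statement as an essentially immediate consequence of \cref{MainTheorem} applied with $\zeta_1=\zeta_2=\zeta$. First I would note that every automorphism $\phi\in\Aut(X_\zeta,S,\Z^d)$ is in particular a measurable factor: since $\phi$ commutes with the shift action and $(X_\zeta,S,\Z^d)$ is uniquely ergodic, the pushforward $\phi_*\mu_\zeta$ is again $S$-invariant and hence equals $\mu_\zeta$, so $\phi\in m\Fac(X_\zeta,X_\zeta,S,\Z^d)$. Because $\zeta$ is reduced, \cref{MainTheorem} then provides, for each such $\phi$, a vector ${\bm j}\in\Z^d$ and a sliding block code $\psi$ of radius at most $r_0:=\Vert F_1^\zeta\Vert\left(1+\Vert L_\zeta^{-1}\Vert\left(2+1/(1-\Vert L_\zeta^{-1}\Vert)\right)\right)$ with $S^{\bm j}\phi=\psi$ holding $\mu_\zeta$-almost everywhere.

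The first point to nail down is that this almost-everywhere identity is in fact an identity of continuous maps. Both $S^{\bm j}\phi$ (a composition of a shift with the homeomorphism $\phi$) and $\psi$ (a sliding block code) are continuous, and they agree on a set of full $\mu_\zeta$-measure. As $(X_\zeta,S,\Z^d)$ is minimal and uniquely ergodic, $\mu_\zeta$ has full support, so this set is dense in $X_\zeta$; two continuous maps into a Hausdorff space that agree on a dense set coincide everywhere. Hence $S^{\bm j}\phi=\psi$ on all of $X_\zeta$, which gives $\phi=S^{-{\bm j}}\psi$. Since $\langle S\rangle$ is central in $\Aut(X_\zeta,S,\Z^d)$, this exhibits, for each coset $\phi\langle S\rangle$, a representative lying among the sliding block codes of radius at most $r_0$.

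It then remains to count. Writing $\mathcal{S}$ for the finite set of all $B({\bm 0},r_0)$-block maps $\mathcal{L}_{B({\bm 0},r_0)}(X_\zeta)\to\A$, the previous paragraph shows that the natural map $\mathcal{S}\to\Aut(X_\zeta,S,\Z^d)/\langle S\rangle$, $\psi\mapsto\psi\langle S\rangle$, is surjective, whence $|\Aut(X_\zeta,S,\Z^d)/\langle S\rangle|\leq|\mathcal{S}|$. Finally $|\mathcal{S}|\leq|\A|^{|\mathcal{L}_{B({\bm 0},r_0)}(X_\zeta)|}\leq|\A|^{|\A|^{|B({\bm 0},r_0)\cap\Z^d|}}$, and since $|B({\bm 0},r_0)\cap\Z^d|\leq(2r_0+1)^d$ with $r_0$ an explicit expression in $\Vert F_1^\zeta\Vert$ and $\Vert L_\zeta^{-1}\Vert$, the resulting bound depends only on $d$, $|\A|$, $\Vert L_\zeta^{-1}\Vert$ and $\Vert F_1^\zeta\Vert$, as claimed.

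I do not expect a genuine obstacle here, since the statement is truly a corollary; the only steps requiring care are the passage from the $\mu_\zeta$-a.e. equality furnished by \cref{MainTheorem} to a global equality of continuous maps (resting on the full support of the unique invariant measure and on centrality of $\langle S\rangle$), and the bookkeeping verifying that the final cardinality bound is expressible purely through the four stated quantities.
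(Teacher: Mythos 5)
Your proposal is correct and takes essentially the same route as the paper, which presents \cref{AutomoprhismVirtuallyZd} as a direct corollary of \cref{MainTheorem} based precisely on the finiteness of the set of sliding block codes of radius $\Vert F_{1}^{\zeta}\Vert\left(1+\Vert L_{\zeta}^{-1}\Vert \left(2+1/(1-\Vert L_{\zeta}^{-1}\Vert)\right)\right)$ that you count. The details you supply beyond the paper's one-line justification — unique ergodicity forcing $\phi_{*}\mu_{\zeta}=\mu_{\zeta}$ so that automorphisms are measurable factors, and full support of $\mu_{\zeta}$ upgrading the $\mu_{\zeta}$-a.e.\ agreement of two continuous maps to a global identity — are exactly the right ones and are sound.
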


In the special case, where any automorphism of $(X_{\zeta},S,\Z^{d})$ satisfies Property \ref{CommutationPropertySlidingBlickCodeSubstitution} of \cref{MainTheorem} with ${\bm p}={\bm 0}$, i.e., commutes with the substitution map (like in bijective substitutions as we will prove in \cref{SectionBijectiveSubstitutions}), we have a more rigid result.

\begin{corollary}\label{TrivialFactorsImpliesDirectProduct}
	Let $\zeta$ be an aperiodic primitive reduced substitution. If any automorphism $\psi\in \Aut(X_{\zeta},S,\Z^{d})$ satisfying Property \ref{CommutationPropertySlidingBlickCodeSubstitution} in \cref{MainTheorem} commutes with the substitution map, i.e., $\psi\zeta=\zeta\psi$, then the automorphism group is isomorphic to a direct product of $\Z^{d}$ (generated by the shift action) with a finite group. 
\end{corollary}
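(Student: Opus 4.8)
The plan is to exhibit $\Aut(X_{\zeta},S,\Z^{d})$ as an internal direct product of the central subgroup $\left\langle S\right\rangle$ and the centralizer of the substitution map inside the automorphism group. First I would record two structural facts. By the very definition of an automorphism, every $\phi\in\Aut(X_{\zeta},S,\Z^{d})$ commutes with each $S^{\bm n}$, so $\left\langle S\right\rangle$ is contained in the center of $\Aut(X_{\zeta},S,\Z^{d})$; and since $\zeta$ is aperiodic the shift action is free, whence $\left\langle S\right\rangle\cong\Z^{d}$. I would then set $G=\{\psi\in\Aut(X_{\zeta},S,\Z^{d})\colon \psi\zeta=\zeta\psi\}$, which is a subgroup, since from $\psi\zeta=\zeta\psi$ one gets $\zeta\psi^{-1}=\psi^{-1}\zeta$, and a product of two maps commuting with $\zeta$ again commutes with $\zeta$.

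The key step is to show that $G$ is a complement to $\left\langle S\right\rangle$ meeting it trivially. For the intersection I would use the intertwining relation $\zeta S^{\bm n}=S^{L_{\zeta}{\bm n}}\zeta$: an element $S^{\bm n}$ lies in $G$ exactly when $S^{L_{\zeta}{\bm n}}\zeta=S^{\bm n}\zeta$, equivalently $S^{(L_{\zeta}-\id){\bm n}}\zeta=\zeta$, so that $(L_{\zeta}-\id){\bm n}$ is a period of $\zeta(x)$ for any $x\in X_{\zeta}$. Aperiodicity forces $(L_{\zeta}-\id){\bm n}={\bm 0}$, and because $\Vert L_{\zeta}^{-1}\Vert<1$ the map $L_{\zeta}$ has no eigenvalue equal to $1$, hence $L_{\zeta}-\id$ is invertible and ${\bm n}={\bm 0}$; thus $G\cap\left\langle S\right\rangle=\{\id\}$. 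For surjectivity I would invoke \cref{MainTheorem}: any $\phi\in\Aut(X_{\zeta},S,\Z^{d})$ satisfies $S^{\bm j}\phi=\psi$ for some ${\bm j}\in\Z^{d}$, where $\psi$ is a sliding block code of the fixed radius satisfying Property \ref{CommutationPropertySlidingBlickCodeSubstitution}. By the standing hypothesis of the corollary such a $\psi$ obeys $\psi\zeta=\zeta\psi$, i.e.\ $\psi\in G$, so that $\phi=S^{-{\bm j}}\psi\in\left\langle S\right\rangle\,G$ and therefore $\left\langle S\right\rangle\,G=\Aut(X_{\zeta},S,\Z^{d})$.

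Finally I would assemble the direct product. Because $\left\langle S\right\rangle$ is central it commutes elementwise with $G$, so the multiplication map $\left\langle S\right\rangle\times G\to\Aut(X_{\zeta},S,\Z^{d})$ is a group homomorphism; it is injective by the trivial intersection and surjective by the previous step, giving $\Aut(X_{\zeta},S,\Z^{d})\cong\Z^{d}\times G$. The same trivial intersection shows that the quotient map $\Aut(X_{\zeta},S,\Z^{d})\to\Aut(X_{\zeta},S,\Z^{d})/\left\langle S\right\rangle$ restricts to an isomorphism on $G$, so $G\cong\Aut(X_{\zeta},S,\Z^{d})/\left\langle S\right\rangle$ is finite by \cref{AutomoprhismVirtuallyZd}, completing the proof.

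The main obstacle I anticipate is the bookkeeping in the complement step: one must ensure that the representative $\psi$ produced by \cref{MainTheorem} genuinely lands in $G$ rather than only in a coset $S^{\bm p}G$ with ${\bm p}\in F_{1}^{\zeta}$. This is precisely what the hypothesis of the corollary (Property \ref{CommutationPropertySlidingBlickCodeSubstitution} holding with ${\bm p}={\bm 0}$) guarantees; without it the relation $S^{\bm p}\psi\zeta=\zeta\psi$ yields only a semidirect-type splitting, and the centrality of $\left\langle S\right\rangle$ alone would not upgrade it to a genuine direct product.
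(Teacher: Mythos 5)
Your proof is correct and follows essentially the same route as the paper's: existence of the splitting $\phi=S^{-{\bm j}}\psi$ comes from \cref{MainTheorem} plus the corollary's hypothesis (with the a.e.\ equality upgrading to everywhere since both maps are continuous and $\mu_{\zeta}$ has full support), your trivial-intersection claim $G\cap\left\langle S\right\rangle=\{\id\}$ is exactly the paper's uniqueness computation $(\id-L_{\zeta}^{n})({\bm j}_{2}-{\bm j}_{1})={\bm 0}$ in group-theoretic disguise, and finiteness of the complement is the same content whether obtained via \cref{AutomoprhismVirtuallyZd} as you do or directly from \cref{convergenceofslidingblockcodes} as the paper does. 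Your packaging as an internal direct product of the central subgroup $\left\langle S\right\rangle$ with the centralizer $G$ of $\zeta$ is a cleaner formalization of the paper's statement that the pair $({\bm j}_{\phi},\psi_{\phi})$ is unique, but it is not a different argument.
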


\begin{proof}
	Note that an automorphism $\phi$ commutes with the substitution map if and only if $\phi$ is equal to $\phi_{n}$, for all $n>0$.
	
	Now, by \cref{convergenceofslidingblockcodes}, the Property \ref{CommutationPropertySlidingBlickCodeSubstitution} implies Property \ref{FixedRadiusofCommutingFactors} of \cref{MainTheorem}. So, the group of automorphisms commuting with the substitution map is finite. To conclude we just need to observe that the pair $({\bm j}_{\phi},\psi_{\phi})$ in \cref{MainTheorem} associated with any automorphism $\phi$ is unique. Indeed, set $\phi\in \Aut(X_{\zeta},S,\Z^{d})$ and ${\bm j}_{1},{\bm j}_{2}\in \Z^{d}$, $\psi_{1},\psi_{2}\in \Aut(X_{\zeta},S,\Z^{d})$ commuting with the substitution map such that $S^{{\bm j}_{i}}\phi=\psi_{i}$, for $i\in \{1,2\}$. Then, $S^{{\bm j}_{2}-{\bm j}_{1}}\psi_{1}$ is equal to $\psi_{2}$. Hence, for any $n>0$ 
	
	$$\begin{array}{cl}
		S^{{\bm j}_{2}-{\bm j}_{1}}\psi_{1}\zeta^{n} & =\zeta^{n}(S^{{\bm j}_{2}-{\bm j}_{1}}\psi_{1})\\
		& = S^{L_{\zeta}^{n}({\bm j}_{2}-{\bm j}_{1})}\zeta^{n}\psi_{1},
	\end{array}$$

\noindent which implies that $(\id-L_{\zeta}^{n})({\bm j}_{2}-{\bm j}_{1})=0$, so ${\bm j}_{2}={\bm j}_{1}$ and then $\psi_{1}=\psi_{2}$.
\end{proof}	

\subsection{Rigidity properties for homomorphisms between substitutive subshifts and applications} 

This subsection is devoted to homomorphisms between substitutive subshifts. We recall that for $M\in GL(d,\Z)$, a map $\phi:(X,T,\Z^{d})\to (Y,T,\Z^{d})$ between two topological dynamical systems is said to be a homomorphism associated with $M$ if for all ${\bm m}\in \Z^{d}$ we have that $\phi\circ S^{{\bm m}}=S^{M{\bm m}}\circ \phi$. We can also define measurable homomorphisms in the measure-theoretic setting. First, we establish a necessary condition for the matrices $M$ with $m\Hom_{M}(X_{\zeta_{1}},X_{\zeta_{2}},S,\Z^{d})$ being non empty, whenever $\zeta_{1}$, $\zeta_{2}$ are two aperiodic primitive constant-shape substitutions with the same expansion matrix and support. Then, we prove an analogue of \cref{MainTheorem} (\cref{NormalizerHostParreau}) establishing that measurable homomorphisms induce continuous ones when the matrix $M$ commutes with some power of the expansion matrix. Finally, we give an explicit bound on the norm of these matrices for the quotient group of a restricted normalizer semigroup with respect to the shift action (\cref{CommutingCentralizerVirtuallyZd}).

\begin{lemma}\label{LemmaNessesaryConditionNormalizer}
	Let $\zeta_{1},\zeta_{2}$ be two aperiodic primitive  constant-shape substitutions having the same expansion matrix $L$ and same support $F$. If $M\in GL(d,\Z)$ is such that $m\Hom_{M}(X_{\zeta_{1}},X_{\zeta_{2}},S,\Z^{d})\neq \emptyset$, then for all $n>0$ there exists $m(n)>0$ such that \begin{equation}\label{heightcondition}\tag{Normalizer Condition}ML^{m(n)}(\mathcal{H}(X_{\zeta_{1}}))\leq L^{n}(\mathcal{H}(X_{\zeta_{2}})).\end{equation}
\end{lemma}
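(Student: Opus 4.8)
The plan is to reduce the statement to a transport property of eigenvalues under the homomorphism, and then to convert the resulting inclusion of eigenvalue groups into the asserted inclusion of lattices by duality. Recall from the discussion following \cref{CharacterizationEigenvalues} that, since $\zeta_{1}$ and $\zeta_{2}$ share the expansion map $L$, their (continuous, equivalently measurable) eigenvalue groups are
\[E(X_{\zeta_{j}},S,\Z^{d})=\bigcup_{m\geq 0}(L^{*})^{-m}\left(\mathcal{H}^{*}(X_{\zeta_{j}})\right),\qquad j=1,2,\]
where $\mathcal{H}^{*}$ denotes the dual lattice of the height lattice. Everything will hinge on pushing eigenfunctions of $X_{\zeta_{2}}$ forward to $X_{\zeta_{1}}$.

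First I would prove the \emph{eigenvalue transport}: if $\phi\in mN_{M}(X_{\zeta_{1}},X_{\zeta_{2}},S,\Z^{d})$ then $M^{*}\left(E(X_{\zeta_{2}},S,\Z^{d})\right)\subseteq E(X_{\zeta_{1}},S,\Z^{d})$. Indeed, let $\bm{x}\in E(X_{\zeta_{2}},S,\Z^{d})$ with measurable eigenfunction $f\in L^{2}(X_{\zeta_{2}},\mu_{\zeta_{2}})$, $f\neq 0$, so $f\circ S^{\bm{n}}=e^{2\pi i\langle\bm{x},\bm{n}\rangle}f$ for all $\bm{n}\in\Z^{d}$. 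Since $\phi$ is measure-preserving, $g:=f\circ\phi$ lies in $L^{2}(X_{\zeta_{1}},\mu_{\zeta_{1}})$ and is nonzero because $\Vert g\Vert_{2}=\Vert f\Vert_{2}$. Using $\phi\circ S^{\bm{n}}=S^{M\bm{n}}\circ\phi$ ($\mu_{\zeta_{1}}$-a.e.), for every $\bm{n}\in\Z^{d}$,
\[g\circ S^{\bm{n}}=f\circ S^{M\bm{n}}\circ\phi=e^{2\pi i\langle\bm{x},M\bm{n}\rangle}\,g=e^{2\pi i\langle M^{*}\bm{x},\bm{n}\rangle}\,g,\]
so $M^{*}\bm{x}$ is a measurable eigenvalue of $(X_{\zeta_{1}},S,\Z^{d})$, hence a continuous one by \cref{CharacterizationEigenvalues}; this proves the claim.

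Next I would record that each union above is \emph{increasing}, i.e. $(L^{*})^{-m}\mathcal{H}^{*}\subseteq(L^{*})^{-(m+1)}\mathcal{H}^{*}$, which is equivalent to $L(\mathcal{H}(X_{\zeta}))\subseteq\mathcal{H}(X_{\zeta})$. For a fixed point $x=\zeta(x)$ one checks directly that $\bm{j}\in\mathcal{R}(X_{\zeta})$ forces $L\bm{j}\in\mathcal{R}(X_{\zeta})$ (if $x_{\bm{k}+\bm{j}}=x_{\bm{k}}$ then $x_{L\bm{k}+L\bm{j}}=\zeta(x_{\bm{k}+\bm{j}})_{\bm{0}}=\zeta(x_{\bm{k}})_{\bm{0}}=x_{L\bm{k}}$), so $\mathfrak{L}(\mathcal{R}(X_{\zeta}))$ is $L$-invariant; as $\mathcal{H}(X_{\zeta})$ is the smallest lattice containing $\mathfrak{L}(\mathcal{R}(X_{\zeta}))$ with $\mathcal{H}\cap L(\Z^{d})\leq L(\mathcal{H})$, the sublattice $\Gamma=\{\bm{h}\in\mathcal{H}:L^{k}\bm{h}\in\mathcal{H}\ \forall k\geq 0\}$ still contains $\mathfrak{L}(\mathcal{R}(X_{\zeta}))$ and still satisfies the height condition, so minimality gives $\Gamma=\mathcal{H}(X_{\zeta})$ and hence $L$-invariance. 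Consequently, fixing $n>0$, the transport property gives $M^{*}\left((L^{*})^{-n}\mathcal{H}^{*}(X_{\zeta_{2}})\right)\subseteq E(X_{\zeta_{1}},S,\Z^{d})$; this is a finitely generated lattice, each of whose generators lies in some term of the increasing union, so taking $m(n)$ to be the largest of the finitely many indices that occur (and enlarging to ensure $m(n)>0$) yields $M^{*}(L^{*})^{-n}\mathcal{H}^{*}(X_{\zeta_{2}})\subseteq(L^{*})^{-m(n)}\mathcal{H}^{*}(X_{\zeta_{1}})$.

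Finally I would dualize. Using $(A\mathfrak{L})^{*}=(A^{*})^{-1}\mathfrak{L}^{*}$, together with $(L^{*})^{*}=L$ and $\mathcal{H}^{**}=\mathcal{H}$, the last inclusion dualizes (reversing inclusion) to
\[L^{m(n)}\mathcal{H}(X_{\zeta_{1}})\subseteq M^{-1}L^{n}\mathcal{H}(X_{\zeta_{2}}),\]
and applying $M$ gives $ML^{m(n)}(\mathcal{H}(X_{\zeta_{1}}))\leq L^{n}(\mathcal{H}(X_{\zeta_{2}}))$, which is exactly \eqref{heightcondition}. The conceptual core is the eigenvalue transport, which is immediate once \cref{CharacterizationEigenvalues} (measurable equals continuous eigenvalues) is invoked and $\phi$ is measure-preserving; the step requiring the most care is the combination of the two dualizations with the verification that the eigenvalue union is increasing, since it is precisely the $L$-invariance of $\mathcal{H}$ that lets a single finite exponent $m(n)$ absorb the whole lattice rather than only individual eigenvectors.
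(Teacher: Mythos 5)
Your proof is correct, and its first half is exactly the paper's: both arguments begin with the eigenvalue transport, pushing a measurable eigenfunction $f$ of $(X_{\zeta_{2}},S,\Z^{d})$ forward to $f\circ\phi$ and using $\phi\circ S^{\bm n}=S^{M{\bm n}}\circ\phi$ to see that $M^{*}{\bm x}$ is a measurable (hence, by \cref{CharacterizationEigenvalues}, continuous) eigenvalue of $(X_{\zeta_{1}},S,\Z^{d})$. Where you genuinely diverge is in converting $M^{*}E(X_{\zeta_{2}},S,\Z^{d})\subseteq E(X_{\zeta_{1}},S,\Z^{d})$ into the lattice inclusion. The paper stays dynamical: via \cref{CharacterizationMinimalPartition} it produces the finite equicontinuous factor $(\Z^{d}/M^{-1}L^{n}(\mathcal{H}(X_{\zeta_{2}})),+,\Z^{d})$ of $(X_{\zeta_{1}},S,\Z^{d})$, deduces a factor map between the corresponding odometers, and concludes with Cortez's criterion \cref{CharacterizationFactorOdometer}. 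You instead argue purely with lattices: you first prove $L(\mathcal{H}(X_{\zeta_{1}}))\subseteq\mathcal{H}(X_{\zeta_{1}})$ (your fixed-point computation for $L\mathcal{R}\subseteq\mathcal{R}$ uses the standing assumptions ${\bm 0}\in F_{1}$ and the existence of a fixed point, and your minimality argument with $\Gamma=\{{\bm h}\in\mathcal{H}\colon L^{k}{\bm h}\in\mathcal{H}\ \forall k\geq 0\}$ checks out, since $\Gamma$ contains $\mathfrak{L}(\mathcal{R}(X_{\zeta_{1}}))$ and inherits the height condition), so the union $\bigcup_{m}(L^{*})^{-m}\mathcal{H}^{*}(X_{\zeta_{1}})$ is increasing; you then absorb the finitely many generators of the lattice $M^{*}(L^{*})^{-n}\mathcal{H}^{*}(X_{\zeta_{2}})$ into a single term, and dualize twice using $(A\mathfrak{L})^{*}=(A^{*})^{-1}\mathfrak{L}^{*}$ and the inclusion reversal for full-rank lattices. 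Both routes are valid, and each has a payoff: yours is more elementary and self-contained, bypassing the odometer machinery and the factor criterion entirely, and it makes explicit the $L$-invariance of the height lattice, which the paper's formulation silently presupposes (the inverse limit $\overleftarrow{\Z^{d}}_{(L^{n}(\mathcal{H}))}$ only makes sense because the subgroups $L^{n}(\mathcal{H})$ are nested); the paper's route, in exchange, exhibits the inclusion as precisely the factor criterion between maximal equicontinuous factors, which is the conceptual form in which the lemma is exploited afterwards.
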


\begin{proof}
	Let $\phi$ be in $m\Hom_{M}(X_{\zeta_{1}},X_{\zeta_{2}},S,\Z^{d})$, and ${\bm x}\in E(X_{\zeta_{2}},\mu_{\zeta_{2}},S,\Z^{d})$. We will prove that $M^{*}{\bm x}\in E(X_{\zeta_{1}},\mu_{\zeta_{1}},S,\Z^{d})$. Indeed, let $f\in L^{2}(X_{\zeta_{2}},\mu_{\zeta_{2}})$ be such that for all ${\bm m}\in \Z^{d}$, $f\circ S^{{\bm m}}=e^{2\pi i \left\langle {\bm x},{\bm m}\right\rangle}\cdot f$, $\mu_{\zeta_{2}}$-a.e. in $X_{\zeta_{2}}$. Then, we have that
	$$(f\circ \phi)\circ S^{{\bm m}}=(f \circ S^{M{\bm m}})\circ \phi=e^{2\pi i \left \langle {\bm x},M{\bm m}\right\rangle}\cdot f\circ \phi=e^{2\pi i \left\langle M^{*}{\bm x},{\bm m}\right\rangle}\cdot f\circ \phi,\ \mu_{\zeta_{1}}\text{-a.e.\ in}\ X_{\zeta_{1}}.$$
	
	By \cref{CharacterizationEigenvalues} and \cref{CharacterizationMinimalPartition}, for any $n>0$, the system $(\Z^{d}/M^{-1}L^{n}(\mathcal{H}(X_{\zeta_{2}})),+,\Z^{d})$ is a finite factor of the odometer system $(\overleftarrow{\Z^{d}}_{(L^{n}(\mathcal{H}(X_{\zeta_{1}})))},+_{{(L^{n}(\mathcal{H}(X_{\zeta_{1}})))}},\Z^{d})$, which implies that the odometer system $(\overleftarrow{\Z^{d}}_{(M^{-1}L^{n}(\mathcal{H}(X_{\zeta_{2}})))},+_{{(M^{-1}L^{n}(\mathcal{H}(X_{\zeta_{1}})))}},\Z^{d})$ is a factor of $(\overleftarrow{\Z^{d}}_{(L^{n}(\mathcal{H}(X_{\zeta_{1}})))},+_{{(L^{n}(\mathcal{H}(X_{\zeta_{1}})))}},\Z^{d})$. By \cref{CharacterizationFactorOdometer}, we conclude that for any $n>0$, there exists $m(n)>0$ such that $L^{m(n)}(\mathcal{H}(X_{\zeta_{1}}))\leq M^{-1}L^{n}(\mathcal{H}(X_{\zeta_{2}})).$
\end{proof}

A consequence of \cref{Coalescence} is that if a homomorphism is associated with a matrix with finite order, then it is an isomorphism.

\begin{lemma}\label{NormalizerCoalescenceFiniteGroup}
	Let $\zeta$ be an aperiodic primitive reduced  constant-shape substitution. If $M\in GL(d,\Z)$ has finite order, then any homomorphism $\phi\in N_{M}(X_{\zeta},S,\Z^{d})$ is invertible.
\end{lemma}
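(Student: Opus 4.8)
The plan is to reduce the assertion to the coalescence of $(X_{\zeta},S,\Z^{d})$ obtained in \cref{Coalescence}. Since $M\in GL(d,\Z)$ has finite order, there exists an integer $k>0$ with $M^{k}=\id$. Recall from the remark following the definition of the normalizer that composing a homomorphism associated to $M_{1}$ with one associated to $M_{2}$ produces a homomorphism associated to $M_{1}M_{2}$; applying this inductively to $\phi\in N_{M}(X_{\zeta},S,\Z^{d})$ gives $\phi^{k}\in N_{M^{k}}(X_{\zeta},S,\Z^{d})=N_{\id}(X_{\zeta},S,\Z^{d})=\Fac(X_{\zeta},S,\Z^{d})$. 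In other words, the iterate $\phi^{k}$ is a factor from $X_{\zeta}$ onto itself, i.e. an endomorphism in the strict sense (trivial associated matrix). The one genuinely useful observation is thus that raising $\phi$ to the order of $M$ kills the matrix and turns the homomorphism into an ordinary self-factor.

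Next I would invoke \cref{Coalescence}: as $\zeta$ is an aperiodic primitive reduced constant-shape substitution, the system $(X_{\zeta},S,\Z^{d})$ is coalescent, so every factor from $X_{\zeta}$ to itself is invertible. Applying this to $\phi^{k}$ yields that $\phi^{k}$ is a homeomorphism of $X_{\zeta}$.

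Finally I would transfer invertibility from $\phi^{k}$ back to $\phi$. Writing $\phi^{k}=\phi^{k-1}\circ\phi$ and using that $\phi^{k}$ is injective forces $\phi$ to be injective, while $\phi$ is surjective simply because it is a homomorphism (homomorphisms are surjective continuous maps by definition). Hence $\phi$ is a continuous bijection of the compact metric space $X_{\zeta}$, and therefore a homeomorphism; its inverse intertwines $S^{{\bm n}}$ and $S^{M^{-1}{\bm n}}$, so $\phi^{-1}\in N_{M^{-1}}(X_{\zeta},S,\Z^{d})$ and $\phi$ is an isomorphism. There is no serious obstacle in this argument once \cref{Coalescence} is available: the entire content is the passage $\phi\mapsto\phi^{k}$ that exploits the finite order of $M$, after which coalescence and a routine compactness argument finish the proof.
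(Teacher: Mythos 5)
Your proposal is correct and follows essentially the same route as the paper: use $M^{k}=\id$ to deduce $\phi^{k}\in\End(X_{\zeta},S,\Z^{d})$, invoke \cref{Coalescence} to get that $\phi^{k}$ is invertible, and conclude that $\phi$ is invertible. The paper states the last implication without elaboration, whereas you spell out the (routine) injectivity-plus-surjectivity argument, which is fine.
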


\begin{proof}
	Since $M$ has finite order, there exists $n>0$ such that $M^{n}=\id_{\R^{d}}$. This implies that $\phi^{n}\in \End(X_{\zeta},S,\Z^{d})$. By \cref{Coalescence}, $\phi^{n}$ is invertible, so $\phi$ is also invertible. 
\end{proof}

Now, we will prove an analogue of \cref{MainTheorem} for homomorphisms associated with matrices commuting with a power of the expansion matrix $L$. As mentioned before, a priori this does not cover all the homomorphisms between substitutive subshifts. 

\begin{theorem}\label{NormalizerHostParreau} Let $(X_{\zeta_{1}},S,\Z^{d})$, $(X_{\zeta_{2}},S,\Z^{d})$ be two substitutive subshifts from two aperiodic primitive constant-shape substitutions $\zeta_{1}$, $\zeta_{2}$ from finite alphabets $\A$ and $\B$, with the same support $F_{1}$ and expansion matrix $L$. Let $M\in GL(d,\Z)$ be a matrix commuting with a power of $L$, i.e., there exists $n>0$ such that $ML^{n}=L^{n}M$. If $\zeta_{2}$ is reduced, then for every measurable homomorphism $\phi$, associated with $M$, there exists ${\bm j}\in \Z^{d}$ such that $S^{{\bm j}}\phi$ is equal $\mu_{\zeta_{1}}$-a.e. to a continuous homomorphism $\psi$, associated with $M$, satisfying the following two properties:
	
	\begin{enumerate}[label=(\arabic*),ref=\text{(}\arabic*\text{)}]
		\item $\psi$ is given by a block map of radius $\Vert F_{1}^{\zeta}\Vert\Vert L_{\zeta_{1}}^{-1}\Vert \left(1+\Vert M\Vert\right)\left(2-\Vert L_{\zeta}^{-1}\Vert\right)/\left(1-\Vert L_{\zeta}^{-1}\Vert\right)$.
		\item\label{CommutingNormalizerWithSubstitution} There exist an integer
		$n>0$ and ${\bm q}\in F_{n}^{\zeta}$ such that $S^{{\bm q}}\psi\zeta_{1}^{n}=\zeta_{2}^{n}\psi$.
	\end{enumerate} 
\end{theorem}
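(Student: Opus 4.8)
The plan is to reproduce the architecture of the proof of \cref{MainTheorem}, inserting the matrix $M$ into the combinatorial bookkeeping and checking that the operation of \emph{deriving a homomorphism through the substitution} preserves the class $mN_M$. First I would reduce to the case $ML=LM$: since by hypothesis $ML^{n}=L^{n}M$ for some $n>0$, and $X_{\zeta_{i}^{n}}=X_{\zeta_{i}}$, replacing $\zeta_{1},\zeta_{2}$ by $\zeta_{1}^{n},\zeta_{2}^{n}$ leaves the subshifts, the measures and the set of measurable homomorphisms associated to $M$ unchanged while making $M$ commute with the expansion map of the new substitutions. Consequently $ML^{k}=L^{k}M$ for every $k$, and since $M\in GL(d,\Z)$ we get $M(L^{k}(\Z^{d}))=L^{k}(\Z^{d})$, so $M$ descends to an automorphism of each quotient $\Z^{d}/L^{k}(\Z^{d})$ and of the odometer $\overleftarrow{\Z^{d}}_{(L^{k}(\Z^{d}))}$.

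Next I would set up the analogue of the drift sequence. Writing $\pi_{k}$ for the recognizability cocycle (so $\pi_{k}(S^{{\bm m}}x)=\pi_{k}(x)+{\bm m}\pmod{L^{k}(\Z^{d})}$), the map $x\mapsto M\pi_{k}(x)-\pi_{k}(\phi x)\pmod{L^{k}(\Z^{d})}$ is $\Z^{d}$-invariant, using $\phi\circ S^{{\bm m}}=S^{M{\bm m}}\circ\phi$ and the fact that $M$ preserves $L^{k}(\Z^{d})$; hence it is $\mu_{\zeta_{1}}$-a.e. equal to a constant ${\bm p}_{k}(\phi)\in F_{k}$. As in the factor case this yields $S^{{\bm p}_{k}(\phi)}\phi\zeta_{1}^{k}(X_{\zeta_{1}})\subseteq\zeta_{2}^{k}(X_{\zeta_{2}})$ up to a null set, and one defines $\phi_{k}$ by $S^{{\bm p}_{k}(\phi)}\phi\zeta_{1}^{k}(x)=\zeta_{2}^{k}\phi_{k}(x)$, well defined a.e. by recognizability (injectivity of $\zeta_{2}^{k}$). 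The crucial point, and the place where the commutation hypothesis is indispensable, is that $\phi_{k}$ is again a homomorphism associated to the same matrix $M$: indeed $\zeta_{2}^{k}\phi_{k}(S^{{\bm m}}x)=S^{{\bm p}_{k}(\phi)}\phi S^{L^{k}{\bm m}}\zeta_{1}^{k}x=S^{ML^{k}{\bm m}}\zeta_{2}^{k}\phi_{k}x=S^{L^{k}M{\bm m}}\zeta_{2}^{k}\phi_{k}x=\zeta_{2}^{k}S^{M{\bm m}}\phi_{k}x$, so injectivity of $\zeta_{2}^{k}$ gives $\phi_{k}\in mN_{M}(X_{\zeta_{1}},X_{\zeta_{2}},S,\Z^{d})$. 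Analogous recurrences relating ${\bm p}_{k+1}(\phi)$, ${\bm p}_{k}(\phi)$, ${\bm p}_{1}(\phi_{k})$ and, most importantly, $(\phi_{k})_{1}=\phi_{k+1}$ then follow as before.

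The two quantitative lemmas then transfer. The exact analogue of \cref{EqualityofNormalizers} (if $d(\phi,\psi)<\eta/C(R)$ then $\phi=\psi$ a.e.) holds with the identical proof, since it only invokes recognizability, the reducedness constant $\eta$ of $\zeta_{2}$ and the F\o lner/ergodic averaging identity, none of which sees $M$. For the analogue of \cref{convergenceofslidingblockcodes} I would again approximate $(\phi x)_{{\bm 0}}$ by a continuous $f$ depending on $x|_{B({\bm 0},\ell)}$ via Lusin's theorem, but now exploit $S^{{\bm k}}\phi=\phi S^{M^{-1}{\bm k}}$: for $x$ in a set $W$ of measure $>1-\varepsilon$, the letter $(\phi_{k}x)_{{\bm 0}}$ is recovered from $(\phi\zeta_{1}^{k}x)|_{F_{k}+{\bm p}_{k}(\phi)}$, hence depends only on $\zeta_{1}^{k}x$ restricted to $M^{-1}(F_{k}+{\bm p}_{k}(\phi))+B({\bm 0},\ell)$, and therefore on $x|_{C}$ for a set $C$ produced by \cref{SetDforFiniteInvariantOrbit}. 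Pulling the window back by $L^{-k}$ and using $L^{-k}M^{-1}=M^{-1}L^{-k}$ together with $\Vert L^{-k}(F_{k})\Vert\leq\Vert F_{1}\Vert\Vert L^{-1}\Vert/(1-\Vert L^{-1}\Vert)$, the $M$-rescaling of the reading window (bounded because $\lvert\det M\rvert=1$) is exactly what produces the factor $(1+\Vert M\Vert)$ in the stated radius $\Vert F_{1}^{\zeta}\Vert\Vert L_{\zeta_{1}}^{-1}\Vert(1+\Vert M\Vert)(2-\Vert L_{\zeta}^{-1}\Vert)/(1-\Vert L_{\zeta}^{-1}\Vert)$; thus $\phi_{k}$ is $d$-close to homomorphisms associated to $M$ given by block maps of that radius.

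Finally, since for a fixed $M$ there are only finitely many block maps of bounded radius inducing homomorphisms associated to $M$, the pigeonhole argument from the proof of \cref{MainTheorem} applies unchanged: there are $m<m+t$ with $d(\phi_{m},\phi_{m+t})<\eta/C(R)$, so $\phi_{m}=\phi_{m+t}$ a.e.; iterating the derivation operation produces an index $n$ (a multiple of $t$, with $n\geq m$) such that $\phi_{n}=\phi_{2n}$ a.e. equals such a block-map homomorphism $\psi$, which satisfies $S^{{\bm q}}\psi\zeta_{1}^{n}=\zeta_{2}^{n}\psi$ with ${\bm q}={\bm p}_{n}(\psi)\in F_{n}$, and taking ${\bm j}={\bm p}_{n}(\phi)-{\bm q}$ gives $S^{{\bm j}}\phi=\psi$ a.e. exactly as in the factor case. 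I expect the main obstacle to be the verification in the second step that the derived maps $\phi_{k}$ remain homomorphisms associated to the \emph{same} $M$, since this is the one place that genuinely uses $ML=LM$ and would fail for a general matrix, together with the careful propagation of the $M$-dependent radius bound through \cref{SetDforFiniteInvariantOrbit}.
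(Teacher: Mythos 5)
Your proposal follows the paper's own proof of \cref{NormalizerHostParreau} essentially step for step: the same reduction to $ML=LM$ by passing to powers of the substitutions, the same shift-invariant drift constants (your ${\bm p}_{k}(\phi)$ is the paper's $M{\bm p}_{k}(\phi)$, a harmless relabeling since the commutation makes $M$ a bijection mod $L^{k}(\Z^{d})$), the same two transferred lemmas with the $M$-dependent radius, and the same pigeonhole/derivation endgame yielding $\psi=\phi_{n}$, ${\bm q}={\bm p}_{n}(\psi)$ and ${\bm j}={\bm p}_{n}(\phi)-{\bm q}$. The step you flag as the crux---that each derived map $\phi_{k}$ is a homomorphism associated to the \emph{same} matrix $M$---is precisely what the paper declares straightforward, and your explicit verification of it is correct.
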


\begin{remark}
	Let $\psi\in \Hom_{M}(X_{\zeta_{1}},X_{\zeta_{2}},S,\Z^{d})$ satisfying property (2) of \cref{NormalizerHostParreau}. For any ${\bm m}$ in $\Z^{d}$, we have that $S^{{\bm q}}\psi(\zeta_{1}^{n}(S^{{\bm m}}x))=\zeta_{2}^{n}(\psi(S^{{\bm m}}x))$, and $S^{{\bm q}}\psi(\zeta_{1}^{n}(S^{{\bm m}}x))=S^{{\bm q}+ML_{\zeta_{1}}^{n}{\bm m}}\psi(\zeta_{1}^{n}(x))$, $\zeta_{2}^{n}(\psi(S^{{\bm m}}x))=S^{L_{\zeta_{1}}^{n}M{\bm m}}\zeta_{2}^{n}(\psi(x))$, it follows that, $ML_{\zeta_{1}}^{n}{\bm m}=L_{\zeta_{1}}^{n}M{\bm m}$, i.e., $M$ and $L_{\zeta_{1}}^{n}$ commute. Hence this hypothesis is optimal to obtain property (2). Note that if $L$ is an integer multiple of the identity, then any matrix $M\in GL(d,\Z)$ commutes with $L$.
\end{remark}

The proof of \cref{NormalizerHostParreau} follows the same strategy as the one of \cref{MainTheorem}, except for some small modifications. Since the substitution $\zeta_{1}$ is primitive, we can replace it by some power $\zeta_{1}^{n}$, so we may assume that $M$ commutes with the expansion matrix of $\zeta_{1}$.  We replace the term ${\bm p}_{n}(\phi)$ by the map $\pi_{n}(x)-M^{-1}\pi_{n}(\phi x)\ (\Mod L_{\zeta}^{n}(\Z^{d}))$, with $\pi_{n}(x)$ and $M^{-1}\pi_{n}(\phi x)$ being the representative classes in $F_{n}^{\zeta}$. The commutation assumption implies that, for any $n>0$ the map $M$ defines a bijection in $\Z^{d}/L^{n}(\Z^{d})$, also denoted by $M$, i.e., ${\bm n}={\bm m}\ (\text{mod}\ L^{n}(\Z^{d}))$, if and only if $M{\bm n}=M{\bm m}\ (\text{mod}\ L^{n}(\Z^{d}))$. With this, the map ${\bm p}_{n}(\phi)$ is invariant under the shift action. Since $(X_{\zeta_{1}},\mu_{\zeta_{1}},S,\Z^{d})$ is ergodic, the map ${\bm p}_{n}(\phi)\in F_{n}^{\zeta}$ is a constant map $\mu_{\zeta_{1}}$-a.e. in $X_{\zeta_{1}}$ and the set $S^{M{\bm p}_{n}(\phi)}\phi\zeta_{1}^{n}(X_{\zeta_{1}})$ is included, up to a $\mu_{\zeta_{2}}$-null set, in $\zeta_{2}^{n}(X_{\zeta_{2}})$. We can define the map $\phi_{n}$ for $\mu_{\zeta_{1}}$-a.e. in $X_{\zeta_{1}}$ as the unique point $y\in X_{\zeta_{2}}$ such that $S^{M {\bm p}_{n}(\phi)}\phi\zeta_{1}^{n}(x) = \zeta_{2}^{n}(y)$, where $M{\bm p}_{n}(\phi)$ is the representative element in $F_{n}^{\zeta}$. It is straightforward to check that $\phi_{n}\circ S^{\bm{n}}=S^{M{\bm n}}\circ \phi_{n}$ for all ${\bm n}\in \Z^{d}$, so $\phi_{n}$ is in $m\Hom_{M}(X_{\zeta_{1}},X_{\zeta_{2}},S,\Z^{d})$. The sequences ${\bm p}_{n}(\phi)$ and $(\phi_{n})$ satisfy the same recurrences given in \cref{SubsectionMeasurableFactors}: 
$${\bm p}_{n+1}(\phi)={\bm p}_{n}(\phi)+L_{\zeta}^{n}{\bm p}_{1}(\phi_{n}),\quad (\phi_{n})_{1}=\phi_{n+1}.$$

As in \cref{MainTheorem} we need the following adaptations of \cref{EqualityofNormalizers} and \cref{convergenceofslidingblockcodes} for homomorphisms. The proof are the same, so we omit them.

\begin{lemma}\label{EqualityofNormalizersforNormalizerHostPaeerau}
	If $\phi,\psi\in mN_{M}(X_{\zeta_{1}},X_{\zeta_{2}},S,\Z^{d})$ are such that $d(\phi,\psi)$ is smaller than $\eta/C(R)$, then $\phi,\psi$ are equal $\mu_{\zeta_{1}}$-a.e in $X_{\zeta_{1}}$.
\end{lemma}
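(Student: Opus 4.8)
The plan is to transcribe, almost verbatim, the proof of \cref{EqualityofNormalizers}, keeping careful track of where the matrix $M$ now intervenes. Set $U_{n}=\{x\in X_{\zeta_{1}}\colon (\phi_{n}x)_{{\bm 0}}\neq (\psi_{n}x)_{{\bm 0}}\}$, so that $\mu_{\zeta_{1}}(U_{0})=d(\phi,\psi)<\eta/C(R)$. I would prove by induction on $n\geq 0$ the two statements ${\bm p}_{n}(\phi)={\bm p}_{n}(\psi)$ and $\mu_{\zeta_{1}}(U_{n})<1/C(R)$, the base case $n=0$ being exactly the hypothesis together with $\eta\leq 1$. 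Recall that in the present (twisted) setting ${\bm p}_{n}(\phi)=\pi_{n}(x)-M^{-1}\pi_{n}(\phi x)\ (\Mod L^{n}(\Z^{d}))$ and $S^{M{\bm p}_{n}(\phi)}\phi\zeta_{1}^{n}x=\zeta_{2}^{n}\phi_{n}x$, while the recurrences ${\bm p}_{n+1}(\phi)={\bm p}_{n}(\phi)+L^{n}{\bm p}_{1}(\phi_{n})$ and $(\phi_{n})_{1}=\phi_{n+1}$ persist unchanged.

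For the inductive step I would first upgrade the equality of translation vectors. By the recurrence it suffices to show ${\bm p}_{1}(\phi_{n})={\bm p}_{1}(\psi_{n})$, and by definition these differ by $M^{-1}\bigl(\pi_{1}(\psi_{n}x)-\pi_{1}(\phi_{n}x)\bigr)\ (\Mod L(\Z^{d}))$, a quantity that is $\mu_{\zeta_{1}}$-a.e.\ constant. The Recognizability Property of $X_{\zeta_{2}}$ forces $\pi_{1}(\phi_{n}x)=\pi_{1}(\psi_{n}x)$ on the positive-measure set where $\phi_{n}x$ and $\psi_{n}x$ agree on $B({\bm 0},R)$ (positive by the inductive bound $\mu_{\zeta_{1}}(U_{n})<1/C(R)$), so the constant vanishes. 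Here the hypothesis $ML=LM$ is used precisely to guarantee that $M$ descends to a bijection of $\Z^{d}/L(\Z^{d})$, which allows the factor $M^{-1}$ to be cancelled and yields ${\bm p}_{n+1}(\phi)={\bm p}_{n+1}(\psi)$.

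Next I would bound $\mu_{\zeta_{1}}(U_{n+1})$. For $x\in U_{n+1}$ the reduced hypothesis provides at least $\eta|F_{n+1}^{\zeta_{1}}|$ indices ${\bm f}\in F_{n+1}^{\zeta_{1}}$ with $(\zeta_{2}^{n+1}\phi_{n+1}x)_{{\bm f}}\neq(\zeta_{2}^{n+1}\psi_{n+1}x)_{{\bm f}}$. Rewriting through $S^{M{\bm p}_{n+1}(\phi)}\phi\zeta_{1}^{n+1}x=\zeta_{2}^{n+1}\phi_{n+1}x$ and the equality ${\bm p}_{n+1}(\phi)={\bm p}_{n+1}(\psi)$, then pushing the outer shift through the homomorphism relation $S^{M{\bm m}}\phi=\phi S^{{\bm m}}$, this says that $S^{{\bm h}}\zeta_{1}^{n+1}x\in U_{0}$ for the corresponding $\eta|F_{n+1}^{\zeta_{1}}|$ values ${\bm h}$, which lie in $M^{-1}F_{n+1}^{\zeta_{1}}+{\bm p}_{n+1}(\phi)$. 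Integrating against $\mu_{\zeta_{1}}$ and invoking the ergodic-measure formula on this domain gives $\eta|F_{n+1}^{\zeta_{1}}|\mu_{\zeta_{1}}(U_{n+1})\leq |F_{n+1}^{\zeta_{1}}|\mu_{\zeta_{1}}(U_{0})$, hence $\mu_{\zeta_{1}}(U_{n+1})\leq \mu_{\zeta_{1}}(U_{0})/\eta<1/C(R)$. Finally, exactly as in \cref{EqualityofNormalizers}, a F\o lner argument produces the uniform-in-$r$ lower bound $\mu_{\zeta_{1}}(\{x\colon (\phi x)|_{B({\bm 0},r)}=(\psi x)|_{B({\bm 0},r)}\})\geq \tfrac12\bigl(1-1/C(R)\bigr)$, so the decreasing intersection over $r$ keeps positive measure; since $\{x\colon \phi x=\psi x\}$ is $S$-invariant, ergodicity upgrades this to full measure.

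The single genuinely new point — and the main obstacle — is the repeated assertion that $M^{-1}F_{n+1}^{\zeta_{1}}$, and its translate by ${\bm p}_{n+1}(\phi)$, is again a fundamental domain of $L^{n+1}(\Z^{d})$ in $\Z^{d}$; this is exactly what permits applying the ergodic-measure formula on the deformed domain and turns the count into $|F_{n+1}^{\zeta_{1}}|\mu_{\zeta_{1}}(U_{0})$ with $|M^{-1}F_{n+1}^{\zeta_{1}}|=|F_{n+1}^{\zeta_{1}}|$. It rests on $M\in GL(d,\Z)$ commuting with $L^{n+1}$, so that $M$ induces an automorphism of $\Z^{d}/L^{n+1}(\Z^{d})$ and maps a complete residue system to a complete residue system, together with $|\det M|=1$. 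Once this is verified, the remainder is a routine transcription of the factor-map argument.
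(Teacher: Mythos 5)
Your proof is correct and is essentially the paper's intended argument: the paper explicitly omits the proof of \cref{EqualityofNormalizersforNormalizerHostPaeerau}, stating it is the same as that of \cref{EqualityofNormalizers} with the modifications for homomorphisms set up just before (the twisted ${\bm p}_{n}(\phi)$, the bijection of $\Z^{d}/L^{n}(\Z^{d})$ induced by $M$, and the unchanged recurrences). You moreover correctly identify and verify the one genuinely new point — that ${\bm p}_{n+1}(\phi)+M^{-1}F_{n+1}^{\zeta_{1}}$ is again a complete residue system modulo $L^{n+1}(\Z^{d})$, since $M\in GL(d,\Z)$ commutes with $L^{n+1}$, so the ergodic-measure formula still yields $\eta|F_{n+1}^{\zeta_{1}}|\mu_{\zeta_{1}}(U_{n+1})\leq |F_{n+1}^{\zeta_{1}}|\mu_{\zeta_{1}}(U_{0})$.
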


\begin{lemma}\label{convergenceofslidingblockcodesforNormalizerHostPaeerau} 
	Let $\phi\in mN_{M}(X_{\zeta_{1}},X_{\zeta_{2}},S,\Z^{d})$. Then there exists a sequence $(\psi_{n})$ of homomorphisms associated with $M$ of radius $\Vert F_{1}^{\zeta}\Vert\Vert L_{\zeta_{1}}^{-1}\Vert \left(1+\Vert M\Vert\right)\left(2-\Vert L_{\zeta}^{-1}\Vert\right)/\left(1-\Vert L_{\zeta}^{-1}\Vert\right)$ such that $d(\phi_{n},\psi_{n})\to 0$.
\end{lemma}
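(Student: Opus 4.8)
The plan is to mirror, step by step, the proof of \cref{convergenceofslidingblockcodes}, replacing the factor covariance by the $M$-covariance and keeping track of the linear distortion introduced by $M$. First I would fix $\varepsilon>0$ and, exactly as there, invoke Lusin's theorem to produce an integer $\ell>0$ and a continuous map $f\colon X_{\zeta_{1}}\to\B$ depending only on $x|_{B({\bm 0},\ell)}$ with $\mu_{\zeta_{1}}(V)<\varepsilon\eta/6$, where $V=\{x\colon(\phi x)_{{\bm 0}}\neq f(x)\}$. Using that $(F_{n}^{\zeta_{1}})_{n>0}$ is F\o lner I would choose $n$ large so that $|(F_{n}^{\zeta_{1}})^{\circ\ell}|/|F_{n}^{\zeta_{1}}|>1-\eta/3$, and (having replaced $\zeta_{1}$ by a power) assume $M$ commutes with $L$ itself, so that $M$ and $M^{-1}$, both in $GL(d,\Z)$, descend to bijections of $\Z^{d}/L^{n}(\Z^{d})$, as already noted before \cref{EqualityofNormalizersforNormalizerHostPaeerau}.

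The one genuinely new computation is the identity replacing ``$(\zeta_{2}^{n}\phi_{n}x)_{{\bm f}}=f(S^{{\bm f}+{\bm p}}\zeta_{1}^{n}x)$''. Writing ${\bm p}={\bm p}_{n}(\phi)$ and letting $\bar{\bm p}\in F_{n}^{\zeta}$ be the representative of $M{\bm p}$ modulo $L^{n}(\Z^{d})$, say $M{\bm p}=\bar{\bm p}+L^{n}{\bm s}$, the defining relation $S^{\bar{\bm p}}\phi\zeta_{1}^{n}x=\zeta_{2}^{n}\phi_{n}x$ gives
\[
(\zeta_{2}^{n}\phi_{n}x)_{{\bm f}}=(\phi\,\zeta_{1}^{n}x)_{{\bm f}+\bar{\bm p}}.
\]
Since $M^{-1}\in GL(d,\Z)$ and $M^{-1}L^{n}=L^{n}M^{-1}$, one checks ${\bm f}+\bar{\bm p}=M({\bm p}+M^{-1}{\bm f}-L^{n}M^{-1}{\bm s})$, and the $M$-covariance $\phi S^{{\bm m}}=S^{M{\bm m}}\phi$ together with $S^{-L^{n}{\bm t}}\zeta_{1}^{n}=\zeta_{1}^{n}S^{-{\bm t}}$ then yields
\[
(\zeta_{2}^{n}\phi_{n}x)_{{\bm f}}=\bigl(\phi\,S^{{\bm p}+M^{-1}{\bm f}}\zeta_{1}^{n}(S^{-M^{-1}{\bm s}}x)\bigr)_{{\bm 0}}.
\]
On the good set this equals $f$ evaluated at a window of $\zeta_{1}^{n}(S^{-M^{-1}{\bm s}}x)$ centred near ${\bm p}+M^{-1}{\bm f}$. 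With this in hand I would define $J(x)$ and the set $W$ exactly as in \cref{convergenceofslidingblockcodes}; the counting estimate giving $\mu_{\zeta_{1}}(W)>1-\varepsilon$ transfers verbatim.

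It then remains to locate the window of $x$ determining $(\phi_{n}x)_{{\bm 0}}$. By the reduced hypothesis and recognizability of $X_{\zeta_{2}}$, the value $(\phi_{n}x)_{{\bm 0}}$ is fixed once $(\zeta_{2}^{n}\phi_{n}x)_{{\bm f}}$ is known on an $\eta$-fraction of ${\bm f}\in F_{n}^{\zeta}$, and by the displayed formula this is governed by $\zeta_{1}^{n}x$ on the region ${\bm p}+M^{-1}F_{n}^{\zeta}+B({\bm 0},\ell)$, translated by the carry $M^{-1}{\bm s}$. Because $M^{-1}L=LM^{-1}$ one has $M^{-1}F_{n}^{\zeta}=\sum_{i=0}^{n-1}L^{i}(M^{-1}F_{1}^{\zeta})$, so I would apply \cref{SetDforFiniteInvariantOrbit} with $A=\{{\bm 0}\}$ and $F=M^{-1}F_{1}^{\zeta}+F_{1}^{\zeta}$ and use the induction of \cref{RemarkSetDforInvariantOrbit} to build a finite $C\Subset\Z^{d}$ with ${\bm 0}\in C$ and $F_{n}^{\zeta}+M^{-1}F_{n}^{\zeta}\subseteq L^{n}(C)+F_{n}^{\zeta}$ for all $n$. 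The factor $(1+\Vert M\Vert)$ in the stated radius then arises from the carry: since $L^{-n}F_{n}^{\zeta}$ converges to the digit tile, $\Vert{\bm s}\Vert=\Vert L^{-n}(M{\bm p}-\bar{\bm p})\Vert\le(1+\Vert M\Vert)\Vert F_{1}^{\zeta}\Vert\Vert L^{-1}\Vert/(1-\Vert L^{-1}\Vert)$ uniformly in $n$, and combining this with the bound on $\Vert C\Vert$ from \cref{SetDforFiniteInvariantOrbit} gives a block radius equal to $\Vert F_{1}^{\zeta}\Vert\Vert L^{-1}\Vert(1+\Vert M\Vert)(2-\Vert L^{-1}\Vert)/(1-\Vert L^{-1}\Vert)$. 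Hence for $x,y\in W$ with $x|_{C}=y|_{C}$ the two windows agree, so $(\phi_{n}x)_{{\bm 0}}$ is a $C$-block map $\psi_{n}$ with $d(\phi_{n},\psi_{n})\le\varepsilon$, and letting $\varepsilon\to0$ yields $d(\phi_{n},\psi_{n})\to0$.

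The main obstacle, and the only departure from \cref{convergenceofslidingblockcodes}, is obtaining a block radius uniform in $n$: one must verify that reducing $M{\bm p}_{n}(\phi)$ modulo $L^{n}(\Z^{d})$ into $F_{n}^{\zeta}$ and distorting $F_{n}^{\zeta}$ by $M^{-1}$ keeps the relevant window of $x$ inside $L^{n}(C)+F_{n}^{\zeta}$ for a single finite $C$ whose diameter does not grow with $n$. This is exactly what the commutation $ML^{n}=L^{n}M$ provides, since it makes $M^{-1}$ respect the self-similar decomposition $F_{n}^{\zeta}=\sum_{i<n}L^{i}F_{1}^{\zeta}$ and forces the carry ${\bm s}$ to stay at the scale of the digit tile; without it the $n$-th window would spread and no uniform radius could exist. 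Everything else — Lusin's theorem, the F\o lner choice, and the estimate for $W$ — carries over unchanged, which is why the proof is essentially identical to the factor case up to this bookkeeping.
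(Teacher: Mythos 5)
Your proposal follows exactly the route the paper intends: the paper omits the proof of this lemma, declaring it the same as that of \cref{convergenceofslidingblockcodes} once ${\bm p}_{n}(\phi)$ and $\phi_{n}$ are redefined via $M$ as in the preceding paragraph, and your reconstruction --- Lusin's theorem, the F\o lner choice, the covariance identity $(\zeta_{2}^{n}\phi_{n}x)_{{\bm f}}=\bigl(\phi S^{{\bm p}+M^{-1}{\bm f}}\zeta_{1}^{n}(S^{-M^{-1}{\bm s}}x)\bigr)_{{\bm 0}}$, and the uniform carry bound coming from $ML^{n}=L^{n}M$ --- is precisely that adaptation, correctly identifying the commutation hypothesis as what keeps the window inside $L^{n}(C)+F_{n}^{\zeta}$ for a single finite $C$. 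One small bookkeeping caveat: since your window sits in $F_{n}^{\zeta}+M^{-1}F_{n}^{\zeta}$ translated by $-M^{-1}{\bm s}$, your own estimates produce factors of $\Vert M^{-1}\Vert$ rather than $\Vert M\Vert$ (these differ for general $M\in GL(d,\Z)$), so the asserted match with the stated radius is not literally derived; this affects only the explicit constant, not the substance of the lemma (a radius uniform in $n$ with $d(\phi_{n},\psi_{n})\to 0$), and is no looser than the paper's own omitted constant-chasing.
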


To finish the proof of \cref{NormalizerHostParreau}, we proceed exactly as in the proof of \cref{MainTheorem}

\begin{proof}[Proof of \cref{NormalizerHostParreau}]
	For fixed alphabets $\A$ and $\B$, there exists a finite number of homomorphisms associated with $M$ of radius $\Vert F_{1}^{\zeta}\Vert\Vert L_{\zeta_{1}}^{-1}\Vert \left(1+\Vert M\Vert\right)\left(2-\Vert L_{\zeta}^{-1}\Vert\right)/\left(1-\Vert L_{\zeta}^{-1}\Vert\right)$. By \cref{convergenceofslidingblockcodesforNormalizerHostPaeerau}, there exist two different integers $m,k\geq 0$ such that $d(\phi_{m},\phi_{m+k})<\eta/C(R)$, so by Lemma \ref{EqualityofNormalizersforNormalizerHostPaeerau}, we have that $\phi_{m}=\phi_{m+k}$, $\mu_{\zeta_{1}}$-a.e.. 
	
	Let $n\geq m$ be a multiple of $k$. We have $(\phi_{n})_{k}=\phi_{n+k}=(\phi_{m+k})_{n-m}=(\phi_{m})_{n-m}=\phi_{n}$, $\mu_{\zeta_{1}}$-a.e. This implies for all $r\in \NN$, $\phi_{n}$ is equal to $(\phi_{n})_{rk}$, $\mu_{\zeta_{1}}$-a.e., and then $\phi_{n}$ is equal to a homomorphism associated with $M$ of radius $\Vert F_{1}^{\zeta}\Vert\Vert L_{\zeta_{1}}^{-1}\Vert \left(1+\Vert M\Vert\right)\left(2-\Vert L_{\zeta}^{-1}\Vert\right)/\left(1-\Vert L_{\zeta}^{-1}\Vert\right)$, $\mu_{\zeta_{1}}$-a.e. in $X_{\zeta_{1}}$. Since $\phi_{n}$ is equal to $\phi_{2n}$ $\mu_{\zeta_{1}}$-a.e, we denote $\psi=\phi_{n}$ and ${\bm p}={\bm p}_{n}(\psi)$. By definition of ${\bm p}$, we have $S^{M{\bm p}}\psi\zeta_{1}^{n}=\zeta_{2}^{n}\psi$. 
	
	Set ${\bm j}=M({\bm p}_{n}(\phi)-{\bm p})$, then
	$$S^{{\bm j}}\phi\zeta_{1}^{n}=S^{M({\bm p}_{n}(\phi)-{\bm p})}\phi\zeta_{1}^{n}=S^{-M{\bm p}}\zeta_{2}^{n}\psi=\psi\zeta_{1}^{n},\quad \mu_{\zeta_{1}}-\text{a.e},$$
	this implies that $S^{{\bm j}}\phi$ and $\psi$ coincides in $\zeta_{1}^{n}(X_{\zeta_{1}})$ $\mu_{\zeta_{1}}$-a.e. The ergodicity of $(X_{\zeta_{1}},\mu_{\zeta_{1}},S,\Z^{d})$ lets us conclude that $\phi$ and $\psi$ are equal $\mu_{\zeta_{1}}$-a.e. 
\end{proof}

In the case $\zeta_{1}=\zeta_{2}$, we can consider a restricted normalizer group: the group of isomorphisms associated with matrices commuting with some power of the expansion matrix $L_{\zeta_{1}}$: $$NC(X_{\zeta_{1}},S,\Z^{d})=\bigcup\limits_{\substack{M\in GL(d,\Z)\\ ML_{\zeta_{1}}^{n}=L_{\zeta_{1}}^{n}M,\ \text{for some\ } n}}(N_{M}(X,T,\Z^{d})\cap \Homeo(X)),$$

This set is a group under composition and $\left \langle S \right\rangle$, $\Aut(X_{\zeta_{1}},S,\Z^{d})$ are normal subgroups of $NC(X_{\zeta_{1}},S,\Z^{d})$. We obtain a similar result on this restricted normalizer group as for the automorphism group (\cref{AutomoprhismVirtuallyZd}).

\begin{proposition}\label{CommutingCentralizerVirtuallyZd}
	Let $(X_{\zeta},S,\Z^{d})$ be a subshift from a reduced aperiodic primitive  constant-shape substitution $\zeta$ from a finite alphabet. If the set of matrices $M\in \vec{N}(X_{\zeta},S,\Z^{d})$ commuting with a power of the expansion matrix $L_{\zeta}$ is finite, then the quotient group $NC(X_{\zeta},S,\Z^{d})/\left\langle S \right\rangle$ is finite. A bound for $|NC(X_{\zeta},S,\Z^{d})/\left\langle S\right\rangle|$ is given by an explicit formula depending only on $d$, $|\A|$, $\Vert L_{\zeta}^{-1}\Vert$, $\Vert F_{1}^{\zeta}\Vert$, and $\sup\limits_{N_{M}(X_{\zeta},S,\Z^{d})\neq \emptyset}\Vert M\Vert$. 
\end{proposition}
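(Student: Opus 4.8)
The plan is to mirror the proof of \cref{AutomoprhismVirtuallyZd}, replacing \cref{MainTheorem} by its homomorphism analogue \cref{NormalizerHostParreau}. First I would fix the finite set $\mathcal{M}_{0}=\{M\in \vec{N}(X_{\zeta},S,\Z^{d})\colon ML_{\zeta}^{n}=L_{\zeta}^{n}M\ \text{for some}\ n>0\}$, whose finiteness is exactly the standing hypothesis, and introduce the uniform radius $r_{0}=\Vert F_{1}^{\zeta}\Vert\Vert L_{\zeta}^{-1}\Vert(1+\sup_{M\in\mathcal{M}_{0}}\Vert M\Vert)(2-\Vert L_{\zeta}^{-1}\Vert)/(1-\Vert L_{\zeta}^{-1}\Vert)$, i.e.\ the bound of \cref{NormalizerHostParreau} with $\Vert M\Vert$ replaced by its supremum over $\mathcal{M}_{0}$.

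Then, given any $\phi\in NC(X_{\zeta},S,\Z^{d})$ associated to some $M\in\mathcal{M}_{0}$, \cref{NormalizerHostParreau} provides a vector ${\bm j}_{\phi}\in\Z^{d}$ and a homomorphism $\psi_{\phi}\in N_{M}(X_{\zeta},S,\Z^{d})$ of radius at most $r_{0}$, satisfying Property \ref{CommutingNormalizerWithSubstitution}, with $S^{{\bm j}_{\phi}}\phi=\psi_{\phi}$ $\mu_{\zeta}$-a.e. The key upgrade from ``a.e.'' to ``everywhere'' is that $\phi$ is a homeomorphism (elements of $NC$ are invertible) and $\psi_{\phi}$ is continuous, while $\mu_{\zeta}$ has full support because $(X_{\zeta},S,\Z^{d})$ is strictly ergodic; hence two continuous maps agreeing $\mu_{\zeta}$-a.e.\ agree on a dense set and therefore everywhere. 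Thus $\phi=S^{-{\bm j}_{\phi}}\psi_{\phi}$, so $\psi_{\phi}=S^{{\bm j}_{\phi}}\phi$ is itself an invertible element of $NC(X_{\zeta},S,\Z^{d})$ lying in the same coset as $\phi$ modulo $\left\langle S\right\rangle$.

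The finiteness now follows from a counting argument. For each of the finitely many $M\in\mathcal{M}_{0}$, the homomorphisms associated to $M$ of radius at most $r_{0}$ are determined by $B({\bm 0},r_{0})$-block maps $\mathcal{L}_{B({\bm 0},r_{0})}(X_{\zeta})\to \A$, of which there are at most $|\A|^{|\A|^{|B({\bm 0},r_{0})\cap \Z^{d}|}}$; collecting these over $\mathcal{M}_{0}$ yields a finite set $\Psi$ of representatives. Since every $\phi$ is congruent modulo $\left\langle S\right\rangle$ to some $\psi_{\phi}\in\Psi$, the natural map $\Psi\to NC(X_{\zeta},S,\Z^{d})/\left\langle S\right\rangle$ is surjective, so the quotient is finite with cardinality at most $|\Psi|$. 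Estimating $|\mathcal{M}_{0}|$ by the number of integer matrices of norm at most $\sup_{M}\Vert M\Vert$, and substituting the value of $r_{0}$, gives the explicit bound in terms of $d$, $|\A|$, $\Vert L_{\zeta}^{-1}\Vert$, $\Vert F_{1}^{\zeta}\Vert$ and $\sup_{N_{M}(X_{\zeta},S,\Z^{d})\neq\emptyset}\Vert M\Vert$.

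I expect the main (though mild) obstacle to be the passage from measurable to topological equality together with the verification that $\psi_{\phi}$ genuinely lands in $NC(X_{\zeta},S,\Z^{d})$; once these are in place the finiteness is immediate from surjectivity of $\Psi\to NC/\left\langle S\right\rangle$, and no further information is required. If one wants the map $\phi\mapsto\psi_{\phi}\left\langle S\right\rangle$ to be genuinely well defined on the quotient, the uniqueness of the pair $({\bm j}_{\phi},\psi_{\phi})$ can be recovered exactly as in \cref{TrivialFactorsImpliesDirectProduct}, using that $M$ commutes with a power of $L_{\zeta}$ so that the resulting relation $(\id-L_{\zeta}^{n}){\bm m}=0$ admits only the trivial solution by aperiodicity.
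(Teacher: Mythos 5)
Your proposal is correct and follows essentially the same route as the paper: the paper likewise combines \cref{NormalizerHostParreau} with the finiteness of bounded-radius block codes over the finite set of admissible matrices, exactly as in \cref{AutomoprhismVirtuallyZd}, with the uniqueness of the pair $({\bm j}_{\phi},\psi_{\phi})$ noted just before the proposition. The paper's printed proof merely adds the complementary observation, via the permutation of the finitely many $\zeta$-invariant orbits as in \cref{Coalescence}, that each such representative has finite order modulo $\left\langle S\right\rangle$; the only slip in your write-up, harmless to the argument, is attributing the injectivity of $\id-L_{\zeta}^{n}$ to aperiodicity, whereas it follows from $L_{\zeta}$ being expanding (no eigenvalue of modulus $1$).
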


\begin{proof}
	Let $\psi\in NC(X_{\zeta},S,\Z^{d})$, satisfying Property \ref{CommutingNormalizerWithSubstitution} of \cref{NormalizerHostParreau}. Following the proof of \cref{Coalescence}, $\psi$ acts as a permutation of the $\zeta$-invariant orbits. Since the set of matrices $M\in \vec{N}(X_{\zeta},S,\Z^{d})$ commuting with a power of $L_{\zeta}$ is finite, there exists $n>0$ such that $\psi^{n}$ is an automorphism of $X_{\zeta}$. By \cref{AutomoprhismVirtuallyZd}, we have that $\psi^{n}$ has finite order, which implies that $\psi$ has finite order. The bound for $|NC(X_{\zeta},S,\Z^{d})/\left\langle S\right\rangle|$ is given by \cref{NormalizerHostParreau}.
\end{proof}

\begin{remark}
	Note that, if $L_{\zeta}$ is a diagonal matrix and all of its entries are different, the set of matrices $M\in GL(d,\Z)$ commuting with a power of $L_{\zeta}$ is finite.
\end{remark}

\section{Precisions on bijective  constant-shape substitutions}\label{SectionBijectiveSubstitutions}

Bijective substitutions are of great interest because of their mixed dynamic spectrum. They are never almost 1-to-1 extensions of their maximal equicontinuous factor. Bijective substitutions were studied before in \cite{frank2005multidimensional} for block substitutions, where it was proved that the substitutive subshift is measurable-theoretic isomorphic to a skew product of one-dimensional odometers. Also, \cite{bustos2022admissible} studied the normalizer group of bijective block substitutions. We extend the study by describing the normalizer group for general constant-shape substitutions. To do this, we relate the linear representation group with different types of supports of the substitution and non-diagonal expansion matrices.

We start with a characterization of the automorphism group of substitutive subshifts from aperiodic bijective primitive constant-shape substitutions. Then, we describe the nondeterministic directions of a substitutive subshift, by the supporting hyperplanes to $\conv(F_{n}^{\zeta})$ (\cref{NonExpansiveHalfspacesSupportingConvexHull}). We then emphasize when the convex hull of the digit tile $\conv(T_{\zeta})$ is a polytope, because this implies strong geometrical restrictions on the supporting hyperplanes. Conversely, we provide a checkable combinatorial condition to ensure a vector to be nondeterministic (\cref{CorollaryNonExpansiveHalfspacesBijectiveSubstitutions}). Finally, we deduce dynamical consequences for $(X_{\zeta},S,\Z^{d})$ on the normalizer group. For instance, the normalizer group is virtually generated by the shift action (\cref{FinalTheoremNormalizerGroupPolytopeCase}) and we provide restrictions on the linear representation group (\cref{PropositionAboutMatricesPolytopeCase}). It follows all the results of the previous sections may apply in this case.

\subsection{The automorphism group of substitutive subshifts from bijective constant-shape substitutions.} Since bijective substitutions are reduced, \cref{AutomoprhismVirtuallyZd} implies that the automorphism group of the substitutive subshift $(X_{\zeta},S,\Z^{d})$ is virtually $\Z^{d}$. In fact, we have a more rigid result in the bijective case as shown in the following proposition.

\begin{proposition}
	Let $\zeta_{1},\zeta_{2}$ be two aperiodic bijective primitive constant-shape substitutions with the same expansion matrix $L$ and support $F_{1}$. Then, any factor map $\psi:X_{\zeta_{1}}\to X_{\zeta_{2}}$ satisfying Property \ref{CommutationPropertySlidingBlickCodeSubstitution} in \cref{MainTheorem} is induced by a letter-to-letter map. In particular, the automorphism group $\Aut(X_{\zeta},S,\Z^{d})$ of a substitutive subshift given by an aperiodic bijective primitive constant-shape substitution is isomorphic to the direct product of $\Z^{d}$, generated by the shift action, with a finite group given by a permutation of the letters in the alphabet $\A$.
\end{proposition}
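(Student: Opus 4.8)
The plan is to exploit the bijectivity of $\zeta_2$ to invert the column maps appearing in the intertwining relation of \cref{MainTheorem}, and then use the F\o lner property of the supports to localize the dependence of $\psi$ onto a single letter. First I would invoke \cref{MainTheorem}: after applying a suitable shift we may assume $\psi$ is a sliding block code of finite radius $r=r(\psi)$ satisfying Property \ref{CommutationPropertySlidingBlickCodeSubstitution}, which (passing to a power and iterating the recurrence ${\bm p}_{n+1}={\bm p}_{n}+L^{n}{\bm p}_{1}$ from \cref{SubsectionMeasurableFactors}) gives, for every $n>0$, an element ${\bm p}_{n}=\sum_{k=0}^{n-1}L^{k}{\bm p}\in F_{n}^{\zeta}$ with $S^{{\bm p}_{n}}\psi\zeta_{1}^{n}=\zeta_{2}^{n}\psi$. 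Since $\zeta_{2}$ is bijective, so is $\zeta_{2}^{n}$, and each of its column maps $q^{(n)}_{{\bm f}}\colon\B\to\B$ (${\bm f}\in F_{n}^{\zeta}$) is a bijection of $\B$. Reading the identity on the block over ${\bm 0}$ then yields, for every ${\bm f}\in F_{n}^{\zeta}$,
\[
\psi(x)_{{\bm 0}}=(q^{(n)}_{{\bm f}})^{-1}\bigl(\psi(\zeta_{1}^{n}x)_{{\bm f}+{\bm p}_{n}}\bigr).
\]

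Next I would argue that the right-hand side depends on a single letter of $x$. The value $\psi(\zeta_{1}^{n}x)_{{\bm f}+{\bm p}_{n}}$ is determined by $\zeta_{1}^{n}(x)$ restricted to $({\bm f}+{\bm p}_{n}+B({\bm 0},r))\cap\Z^{d}$, and on any single tile $L^{n}{\bm k}+F_{n}^{\zeta}$ one has $\zeta_{1}^{n}(x)|_{L^{n}{\bm k}+F_{n}^{\zeta}}=\zeta_{1}^{n}(x_{{\bm k}})$, which depends only on the letter $x_{{\bm k}}$. The set of positions whose $r$-neighbourhood lies inside a single tile is $L^{n}(\Z^{d})$-periodic with complement of density $1-|(F_{n}^{\zeta})^{\circ r}|/|F_{n}^{\zeta}|\to 0$; since ${\bm p}_{n}+F_{n}^{\zeta}$ is a transversal of $L^{n}(\Z^{d})$, for $n$ large there exists ${\bm f}\in F_{n}^{\zeta}$ for which ${\bm f}+{\bm p}_{n}+B({\bm 0},r)$ lies in one tile $L^{n}{\bm k}+F_{n}^{\zeta}$. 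For such ${\bm f}$ the displayed formula gives $\psi(x)_{{\bm 0}}=H(x_{{\bm k}})$ for a fixed map $H\colon\A\to\B$ and a fixed ${\bm k}\in\Z^{d}$; hence $\psi$ has a single-point block map, i.e. $\psi=S^{{\bm k}}\tilde\psi$ with $\tilde\psi$ a $0$-block (letter-to-letter) code.

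To upgrade this to radius $0$ I would feed $\psi=S^{{\bm k}}\tilde\psi$ back into Property \ref{CommutationPropertySlidingBlickCodeSubstitution}, obtaining $S^{{\bm s}}\tilde\psi\zeta_{1}=\zeta_{2}\tilde\psi$ with ${\bm s}={\bm p}-(L-\id){\bm k}$. Comparing the single-letter dependence of the two sides, and using aperiodicity so that a nonconstant local rule cannot depend simultaneously on two distinct coordinates, forces ${\bm f}+{\bm s}\in F_{1}^{\zeta}$ for every ${\bm f}\in F_{1}^{\zeta}$, i.e. $F_{1}^{\zeta}+{\bm s}=F_{1}^{\zeta}$; as $F_{1}^{\zeta}$ is finite this gives ${\bm s}={\bm 0}$, so ${\bm p}=(L-\id){\bm k}\in(L-\id)(\Z^{d})$. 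By the normalization in \cref{RemarkMainTheorem} we may then take the commuting representative with ${\bm p}={\bm 0}$, that is $\psi\zeta_{1}=\zeta_{2}\psi$. Re-running the localization with ${\bm p}_{n}={\bm 0}$ and ${\bm f}\in(F_{n}^{\zeta})^{\circ r}$ (nonempty for large $n$ by the F\o lner property) puts the window in the tile ${\bm k}={\bm 0}$, so $\psi(x)_{{\bm 0}}$ depends only on $x_{{\bm 0}}$ and $\psi$ has radius $0$. I expect this last normalization to be the delicate point: the raw block map produced by \cref{MainTheorem} only has a \emph{single-point} window, and one must use the commutation constraint together with the finiteness of $F_{1}^{\zeta}$ to slide that window onto the origin.

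Finally, for the automorphism group I would specialize to $\zeta_{1}=\zeta_{2}=\zeta$. The previous steps show every $\psi\in\Aut(X_{\zeta},S,\Z^{d})$ satisfying Property \ref{CommutationPropertySlidingBlickCodeSubstitution} is, up to the shift of \cref{MainTheorem}, a radius-$0$ code; being invertible with a radius-$0$ inverse it is induced by a bijection $T\colon\A\to\A$, and the relation ${\bm p}={\bm 0}$ says it commutes with $\zeta$. The hypothesis of \cref{TrivialFactorsImpliesDirectProduct} is thus met, so $\Aut(X_{\zeta},S,\Z^{d})$ splits as the direct product of $\langle S\rangle\cong\Z^{d}$ with the finite group of those letter-to-letter codes (finiteness also following from \cref{AutomoprhismVirtuallyZd}). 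This identifies the finite factor with the subgroup of the symmetric group on $\A$ consisting of bijections $T\colon\A\to\A$ whose induced $0$-block map is an automorphism of $(X_{\zeta},S,\Z^{d})$.
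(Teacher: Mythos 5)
Your strategy coincides with the paper's own proof in all essentials: you localize the block map to a single coordinate by inverting the bijective column maps $q^{(n)}_{{\bm f}}$ of $\zeta_{2}^{n}$ and placing the window ${\bm f}+{\bm p}_{n}+B({\bm 0},r)$ inside a single tile via the F\o lner property (the paper does exactly this with the set $F_{n}^{\circ C}$ and the tile containing it), and you then run the same dichotomy the paper runs: either the shift data is compatible, which is excluded by the normalization ${\bm p}\notin (L^{n}-\id)(\Z^{d})$ of \cref{RemarkMainTheorem}, or a single letter determines two distinct coordinates, which is excluded by aperiodicity; the automorphism statement then follows from \cref{TrivialFactorsImpliesDirectProduct} in both treatments. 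One genuinely cleaner detail on your side: deducing ${\bm f}+{\bm s}\in F_{1}^{\zeta}$ for all ${\bm f}\in F_{1}^{\zeta}$, hence $F_{1}^{\zeta}+{\bm s}=F_{1}^{\zeta}$ and ${\bm s}={\bm 0}$ by summing over $F_{1}^{\zeta}$, replaces the paper's bounded-sequence and pigeonhole computation with ${\bm p}_{n}+{\bm m}=L^{n}({\bm r}_{n})$.

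There is, however, a genuine gap in your aperiodicity branch. Writing ${\bm f}+{\bm s}=L{\bm d}+{\bm f}'$ with ${\bm d}\neq {\bm 0}$, your intertwining relation reads $q_{{\bm f}}(H(x_{{\bm j}}))=H(p_{{\bm f}'}(x_{{\bm j}+{\bm d}}))$ for all $x\in X_{\zeta_{1}}$ and ${\bm j}\in \Z^{d}$, where $H$ is the letter map of $\tilde\psi$. The principle you invoke --- that a nonconstant single-letter rule cannot be computed from two distinct coordinates in an aperiodic subshift --- is false as a blanket statement: in the orbit closure of $(y_{n}, n\bmod 2)_{n\in\Z}$ with $y$ the Thue--Morse sequence, the nonconstant second-component rule at $n$ is also a function of the letter at $n+1$, yet the subshift is aperiodic. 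What your relation gives directly is only that $x_{{\bm j}+{\bm d}}$ determines the $H$-\emph{image} of $x_{{\bm j}}$ (since $q_{{\bm f}}$ is bijective); to iterate this into a genuine period one must invert $H$, i.e., one needs the letter map $H$ to be injective, and for a general factor between two different bijective substitutions (where $|\A|>|\B|$ is possible) this is precisely the nontrivial point. The paper does not skip it: before iterating the determinism into the period along $k({\bm n}_{2}-{\bm n}_{1})$, it argues that the inducing map $\Psi_{1}$ is bijective, using distinct fixed points of the bijective substitution. Once injectivity of $H$ is in hand your argument closes as intended: $\theta=(q_{{\bm f}}\circ H)^{-1}\circ H\circ p_{{\bm f}'}$ satisfies $x_{{\bm j}}=\theta(x_{{\bm j}+{\bm d}})$, minimality makes $\theta$ surjective hence a bijection of finite order $N$, and $N{\bm d}$ is a nontrivial period, a contradiction. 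The same omission resurfaces in your ``in particular'': invertibility of $\psi$ does not by itself make $H$ a bijection of $\A$; one obtains it, for instance, by applying the radius-$0$ statement to $\psi^{-1}$ as well and composing the two letter maps. Adding injectivity of $H$ as an explicit lemma would make your proposal match the paper's proof step for step.
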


\begin{proof}
	Let $\psi\in \Fac(X_{\zeta_{1}},X_{\zeta_{2}},S,\Z^{d})$ satisfying Property \ref{CommutationPropertySlidingBlickCodeSubstitution} in \cref{MainTheorem}, i.e., there exists ${\bm p}\in F_{1}$ such that $S^{{\bm p}}\psi \zeta_{1}=\zeta_{2}\psi$. Suppose that ${\bm p}\neq {\bm 0}$. Let $n>0$ be large enough such that the set $F_{n}^{\circ C}=\{{\bm f}\in F_{n}\colon {\bm f}+C\subseteq F_{n}\}$ is nonempty, where $C$ is the set defined in \cref{convergenceofslidingblockcodes}. Then, for any $x\in X_{\zeta_{1}}$, the coordinate $x_{{\bm 0}}$ determines the pattern $\zeta_{1}^{n}(x)|_{F_{n}}$. Hence the pattern $(S^{{\bm p}_{n}}\psi \zeta_{1}^{n})|_{{\bm p}_{n}+(F_{n})^{\circ C}}$ is also completely determined by the letter in $x_{{\bm 0}}$, where ${\bm p}_{n}=\sum\limits_{i=0}^{n-1}L^{i}({\bm p})$. Set ${\bm m}\in \Z^{d}$ such that $({\bm p}_{n}+(F_{n})^{\circ C})\cap (L^{n}({\bm m})+F_{n})\neq \emptyset$. Since $S^{{\bm p}_{n}}\psi \zeta_{1}^{n}$ is equal to $\zeta_{2}^{n}\psi$ and $\zeta_{2}$ is bijective, the coordinate $x_{{\bm 0}}$ determines $\psi(x)_{{\bm m}}$. This implies that $S^{-{\bm m}}\psi$ is a factor map induced by a letter-to-letter map. Set $\phi=S^{-{\bm m}}\psi$, we get that
	$$S^{{\bm p}_{n}+{\bm m}-L^{n}({\bm m})} \phi\zeta_{1}^{n}=\zeta_{2}^{n}\phi,$$
	
	\noindent and, by bijectivity, the coordinate $x_{{\bm 0}}$ determines two coordinates of $\psi$, unless for any $n\in \NN$ large enough ${\bm p}_{n}+{\bm m}$ is in $L_{\zeta}^{n}(\Z^{d})$, i.e., for any $n$ large enough, there exists ${\bm r}_{n}\in \Z^{d}$ such that ${\bm p}_{n}+{\bm m}=L^{n}({\bm r}_{n})$. Note that ${\bm p}+{\bm r}_{n}=L({\bm r}_{n+1})$, which implies that
	$$\Vert {\bm r}_{n+1}\Vert \leq \Vert L^{-1}\Vert (\Vert {\bm r}_{n}\Vert+\Vert {\bm p}\Vert),$$
	
	\noindent so $({\bm r}_{n})_{n>0}$ is a bounded sequence. Hence, there exist $n>0$ and $N>1$ such that ${\bm r}_{n+N}={\bm r}_{n}$, which implies that ${\bm p}_{N-1}\in (L^{N}-\id)(\Z^{d})$. This is not possible by \cref{RemarkMainTheorem}.
	
	If ${\bm p}_{n}+{\bm m}\notin L_{\zeta}^{n}(\Z^{d})$, then $x_{{\bm 0}}$ determines two coordinates ${\bm n}_{1}$, ${\bm n}_{2}$ of $\psi(x)$, so determines the coordinates ${\bm 0}$ and ${\bm n}_{2}-{\bm n}_{1}$ of $\psi_{1}(x)=S^{-{\bm n}_{1}}\psi(x)$. Since $\psi_{1}$ is also induced via a letter-to-letter map. Note that, the map $\Psi_{1}:\A\to\B$ inducing $\psi_{1}$ is bijective. If not, there are two fixed points $x,y$ with $\Psi_{1}(x_{{\bm 0}})=\Psi_{2}(y_{{\bm 0}})$ and $x_{{\bm 0}}\neq y_{{\bm 0}}$ generate two points with the same image, which is a contradiction. It follows that $x_{{\bm 0}}$ determines $x_{{\bm n}_{2}-{\bm n}_{1}}$, and then $x_{k({\bm n}_{2}-{\bm n}_{1})}$ for all $k\in \Z$, so $x$ has a nontrivial period, which is a contradiction. Finally, we conclude by \cref{TrivialFactorsImpliesDirectProduct}.
	\end{proof}

\subsection{Nondeterministic directions of substitutive subshifts from extremally permutative constant-shape substitutions}

In this section, we give a characterization of the nondeterministic directions (defined in \cref{SectionNonDeterminsiticDirections}) of a substitutive subshift $(X_{\zeta},S,\Z^{d})$, in the case where $\zeta$ is \emph{extremally permutative}. A starting remark is that, for each $n>0$, the set of directions $\SS^{d-1}$ is stratified by the opposite normal fan $\N(\conv(F_{n}^{\zeta}))$ (see \cref{Subsectionconvexgeometry}). Our description of the nondeterministic directions is given in terms of union of these fans. 

Following the notion of left and right permutative morphisms (see \cite{berthe2019recognizability}), we say that a constant-shape substitution $\zeta$ is \emph{extremally permutative} if the restriction $p_{{\bm f}}$ of $\zeta$ in ${\bm f}$ is bijective for all ${\bm f}\in \Ext(\conv(F_{1}^{\zeta}))$. Since $\Ext(\conv(A+B))\subseteq \Ext(\conv(A))+\Ext(\conv(B)))$, a substitution is extremally permutative if and only if for any $n>0$ and ${\bm f}\in \Ext(\conv(F_{n}^{\zeta})$ the restriction $p_{{\bm f}}$ of $\zeta^{n}$ in ${\bm f}$ is bijective. Since $(F_{n}^{\zeta})_{n>0}$ is a F\o lner sequence, there exists $n>0$ such that $\conv(F_{n}^{\zeta})$ is a nondegenerate polytope, so up to considering a power of $\zeta$, we may assume that $\conv(F_{1}^{\zeta})$ is a nondegenerate polytope. Using the recognizability property of substitutions and some basic results in convex geometry we prove the following result.

\begin{theorem}\label{NonExpansiveHalfspacesSupportingConvexHull}
	Let $\zeta$ be an aperiodic extremally permutative primitive constant-shape substitution. Then, the set of nondeterministic directions $\ND(X_{\zeta},S,\Z^{d})$ of its substitutive subshift $(X_{\zeta},S,\Z^{d})$ is the intersection of $\SS^{d-1}$ with a nonempty union of limits of nested sequences of opposite normal cones of the form $\hat{N}_{{\bm G}_{n}}(\conv(F_{n}^{\zeta}))$, where ${\bm G}_{n}$ is a face of $\conv(F_{n}^{\zeta})$, for some integer $n>0$.
\end{theorem}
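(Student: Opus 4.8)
The plan is to describe $\ND(X_\zeta,S,\Z^d)$ through the fibers of the factor map $\pi\colon (X_\zeta,S,\Z^d)\to(\overleftarrow{\Z^d}_{(L_\zeta^n(\Z^d))},+,\Z^d)$ onto the odometer. After replacing $\zeta$ by a power so that $\conv(F_1^{\zeta})$ is a nondegenerate polytope, I would first record the reduction that $\bm v\in\ND(X_\zeta,S,\Z^d)$ if and only if some fiber $\pi^{-1}(\overleftarrow g)$ contains two distinct points $x\ne y$ with $x|_{H_{\bm v}\cap\Z^d}=y|_{H_{\bm v}\cap\Z^d}$. One inclusion is definitional; for the other, if $x\ne y$ agree on $H_{\bm v}$ then, by shift-equivariance of the Recognizability Property, each $\pi_n(\cdot)$ can be read off a recognizability window placed far inside $H_{\bm v}$, so $\pi_n(x)=\pi_n(y)$ for all $n$ and the two points share the level-$n$ supertiling. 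I may therefore desubstitute $x$ and $y$ \emph{synchronously} at every level.

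For the inclusion $\ND(X_\zeta,S,\Z^d)\subseteq\bigcup\hat N_{{\bm G}_n}(\conv(F_n^{\zeta}))\cap\SS^{d-1}$, set $D=\{\bm m:x_{\bm m}\ne y_{\bm m}\}\subseteq\{\langle\cdot,\bm v\rangle\ge 0\}$. Here lies the essential use of bijectivity on extremities: if a level-$n$ supertile $\bm c+F_n^{\zeta}$ meets $D$, its two generating letters $a\ne b$ differ, and for every extreme vertex $\bm f\in\Ext(\conv(F_n^{\zeta}))$ the map $p_{\bm f}$ of $\zeta^n$ is a bijection, so $\zeta^n(a)_{\bm f}\ne\zeta^n(b)_{\bm f}$; that is, $\bm c+\Ext(\conv(F_n^{\zeta}))\subseteq D$. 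Since a linear functional attains its minimum over a polytope at a vertex, having all extreme vertices of $\bm c+\conv(F_n^{\zeta})$ in $\{\langle\cdot,\bm v\rangle\ge0\}$ forces the whole tile into $\{\langle\cdot,\bm v\rangle\ge0\}$, hence disjoint from $H_{\bm v}$. Letting $n\to\infty$ along a nested family of such supertiles produces arbitrarily large tiles of shape $F_n^{\zeta}$ that hug the boundary hyperplane $\partial H_{\bm v}$ without entering $H_{\bm v}$; writing ${\bm G}_n$ for the face of $\conv(F_n^{\zeta})$ on which $\langle\cdot,\bm v\rangle$ is minimal, this hugging is exactly the statement $\bm v\in\hat N_{{\bm G}_n}(\conv(F_n^{\zeta}))$ together with an anchoring of ${\bm G}_n$ against $\partial H_{\bm v}$. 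As only finitely many faces occur at each level, a pigeonhole argument selects a single $n$ and face, giving $\bm v\in\hat N_{{\bm G}_n}(\conv(F_n^{\zeta}))\cap\SS^{d-1}$.

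Conversely, I would show that whenever tiles can hug $\partial H_{\bm v}$ in this way one can manufacture a genuine nondeterministic pair, which simultaneously yields the reverse inclusion, the fullness of the cones, and nonemptiness. Using the fiber structure of case K2 in \cref{AperiodicUniformylBounded}, choose an odometer point $\overleftarrow g$ whose nested origin–supertiles $-\bm g_n+F_n^{\zeta}$ lie in $\{\langle\cdot,\bm v\rangle\ge0\}$, and let $x_a^{\overleftarrow g},x_b^{\overleftarrow g}$ be accumulation points of shifted fixed points with $a\ne b$; these agree on $H_{\bm v}$ (each position there sits in a supertile whose extreme vertex, pinned by $H_{\bm v}$-agreement and bijectivity, forces equality) while differing inside the flipped infinite supertile. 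The combinatorics of this construction depend only on the face ${\bm G}_n$, not on the particular $\bm v$ realizing it, so every $\bm v'$ in the relative interior of the same opposite normal cone is treated identically; this promotes "$\bm v\in\ND(X_\zeta,S,\Z^d)$'' to "the whole cone lies in $\ND(X_\zeta,S,\Z^d)$'', and nonemptiness follows from \cite{boyle1997expansive} (or by taking $\bm v$ inward-normal to a facet along which tiles stack).

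The hard part will be the bookkeeping that upgrades these pointwise statements into an equality of sets respecting the normal fan. I must show that the nested supertiles can be chosen to hug $\partial H_{\bm v}$ coherently as $n\to\infty$, so the minimizing faces ${\bm G}_n$ stabilize into a genuine inward normal cone rather than drifting, and that the freedom to flip the generating letter of an infinite supertile is compatible with membership in a single fiber. Tracking how the normal direction transforms under desubstitution (the $L_\zeta^{*}$-action, as in \cref{NormalizerActsInNonExpansiveHalfspaces}) and matching it against the refinement of $\hat{\N}(\conv(F_n^{\zeta}))$ by $\hat{\N}(\conv(F_{n+1}^{\zeta}))$ is the delicate technical core; the remainder is recognizability together with convex-geometry housekeeping.
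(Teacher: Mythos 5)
Your forward inclusion, as set up, proves less than the theorem asserts. Since the opposite normal cones of $\conv(F_{n}^{\zeta})$ cover all of $\R^{d}$, every unit vector automatically lies in the opposite normal cone of the face where $\left\langle \cdot,{\bm v}\right\rangle$ is minimized, so placing each ${\bm v}\in \ND(X_{\zeta},S,\Z^{d})$ in some $\hat{N}_{{\bm G}_{n}}(\conv(F_{n}^{\zeta}))$ is vacuous. The content of the theorem is fan-saturation: the \emph{entire} cone through ${\bm v}$ consists of nondeterministic directions and coincides exactly with a cone of the fan for a single $n$. The paper obtains this from the convex hull of the difference set $D$ itself: after translating so that ${\bm 0}$ lies in the smallest-dimensional face ${\bm F}_{0}$ of $\conv(D)$, the \emph{same} witnessing pair $(x_{1},x_{2})$ certifies nondeterminism for every ${\bm u}\in \hat{N}_{{\bm F}_{0}}(\conv(D))\cap\SS^{d-1}$ (because $D$ avoids each open half-space $H_{\bm u}$) — a one-line observation your proposal never makes — and then proves $\hat{N}_{{\bm F}_{0}}(\conv(D))=\hat{N}_{{\bm G}}(\conv(F_{n}^{\zeta}))$ by a dimension-matching argument: for each face ${\bm F}_{1}$ of $\conv(D)$ containing ${\bm 0}$, points ${\bm t}\in {\bm F}_{1}\cap D$ are written as ${\bm h}_{n}({\bm t})-\pi_{n}(x_{1})-L_{\zeta}^{n}({\bm z}_{n})$ with ${\bm h}_{n}({\bm t})$ and $\pi_{n}(x_{1})$ on a common face ${\bm H}_{n}$ of $\conv(F_{n}^{\zeta})$ and distinct for large $n$, forcing $\hat{N}_{{\bm H}_{n}}(\conv(F_{n}^{\zeta}))=\hat{N}_{{\bm F}_{1}}(\conv(D))$, after which $\hat{N}_{{\bm F}_{0}}(\conv(D))$ is recovered as the intersection over such ${\bm F}_{1}$. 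You correctly identify precisely this step (preventing the minimizing faces ${\bm G}_{n}$ from ``drifting'') as the delicate core — but you defer it rather than prove it, and that step \emph{is} the theorem; your pigeonhole over ``finitely many faces at each level'' does not substitute for it, especially since outside the polytope case the fans $\hat{\N}(\conv(F_{n}^{\zeta}))$ need not stabilize.

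The converse construction you propose to close the gap is both unsound and unnecessary. Flipping the seed letter over a fiber does not produce points agreeing on $H_{\bm v}$: off the infinite supertile $\bigcup_{n}(-{\bm g}_{n}+F_{n}^{\zeta})$, the limits $x_{a}^{\overleftarrow{g}}$ and $x_{b}^{\overleftarrow{g}}$ are determined by whole neighborhoods of the fixed points, not by the seeds, and your parenthetical justification (``pinned by $H_{\bm v}$-agreement and bijectivity'') assumes the agreement it is meant to establish. To get agreement off the supertile one needs two \emph{legal} patterns on $K_{\zeta}$ differing exactly on a prescribed subset — the ``set of differences'' together with conditions H1--H2 of \cref{ConditionForBeingNonExpansiveBijectiveSubstitutions} — and this genuinely fails for some cones: in the 2D Thue--Morse substitution of \cref{ExampleDifferentBehaviorForNonExpansiveHalfSpaces}, supertiles hug every quadrant direction, yet only the four axis directions are nondeterministic, so your claimed promotion ``hugging implies nondeterministic'' would yield $\ND(X_{\zeta},S,\Z^{2})=\SS^{1}$, which is false there. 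For the theorem as stated no converse is needed: nonemptiness is just Boyle--Lind (an infinite subshift always has a nonexpansive half-space), and the sufficiency statement you are reaching for is the separate \cref{ConditionForBeingNonExpansiveBijectiveSubstitutions} and \cref{CorollaryNonExpansiveHalfspacesBijectiveSubstitutions}, not part of this proof.
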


This theorem gives topological constraints on the set of nondeterministic directions. Actually, we will see that the convex hull of any digit tile is a polytope when $L_{\zeta}=\lambda \id_{\R^{d}}$, i.e. it has a finite number of extreme points (\cref{NecessaryAndSufficientConditionForPolytope}). In this case by \cref{NonExpansiveHalfspacesSupportingConvexHull}, the set of nondeterministic directions $\ND(X_{\zeta},S,\Z^{d})$ is a finite union of closed intervals (eventually degenerated). More explicitly, in the two-dimensional case, we obtain the following corollary, showing in particular that it cannot be a Cantor set.

\begin{corollary}
	In the two dimensional case, under the hypothesis of \cref{NonExpansiveHalfspacesSupportingConvexHull}, either the set $\ND(X_{\zeta},S,\Z^{2})$ has nonempty interior, either it has at most 2 accumulation points.
\end{corollary}

\begin{proof}
	Assume that the set of nondeterministic directions $\ND(X_{\zeta},S,\Z^{2})$ has empty interior. By \cref{NonExpansiveHalfspacesSupportingConvexHull}, the elements of $\ND(X_{\zeta},S,\Z^{2})$ are limits of normal vectors to edges of $\conv(F_{n}^{\zeta})$ for some $n>0$. In \cite{strichartz1999geometry} it was proved that such vectors are normalized vectors of the form $(L_{\zeta}^{*})^{-k}{\bm u}_{k}$, with ${\bm u}_{k}\in \SS^{1}$ being a normal vector to an edge of $\conv(F_{1}^{\zeta})$ for some $k>0$. Hence, their accumulation points are accumulation points of orbits of the projective action $L_{\zeta}^{*}$ on the circle $\SS^{1}$. A standard analysis of this action (you can check Theorem 3 in \cite{leader1991limit}) provides that the cardinality of the accumulation points is at most 2, when a power of one of the $L_{\zeta}$-eigenvalues is a real number. Otherwise, the projective orbits of $L_{\zeta}^{*}$ are dense in the circle. Since $\ND(X_{\zeta},S,\Z^{2})$ is closed, it is the whole circle, which is a contradiction.
\end{proof}

\begin{proof}[Proof of \cref{NonExpansiveHalfspacesSupportingConvexHull}]
	Let ${\bm v}$ be a nondeterministic direction for $(X_{\zeta},S,\Z^{d})$, and $x_{1}\neq x_{2}\in X_{\zeta}$ such that $x_{1}|_{H_{\bm v}}=x_{2}|_{H_{\bm v}}$. Consider the set $D=\{{\bm n}\in \Z^{d}\colon x_{1}({\bm n})\neq x_{2}({\bm n})\}$. Since $D\subseteq \R^{d}\setminus H_{{\bm v}}$, its convex hull is also contained in $\R^{d}\setminus H_{{\bm v}}$. We have 2 possibilities:
	
	\begin{enumerate}
		\item The convex hull $\conv(D)$ has at least one extreme point. By \cref{PropertyConvexHull}, all the extreme points of $\conv(D)$ belong to $D$. Now, if ${\bm v}\in \Z^{d}$ is an extremal ray of $\conv(D)$, then for any extremal point ${\bm n}\in D$, the map $\dist({\bm n}+t{\bm v},H_{{\bm v}})$ must be increasing (if not, there exists $t^{*}>0$ with ${\bm n}+t^{*}{\bm v}\in H_{{\bm v}}$, which is a contradiction). Hence, the distance map to $H$ restricted to $\conv(D)$ is minimized in the extreme points of $\conv(D)$. Since the extreme points of $\conv(D)$ are in $D$, we can use the shift action in $x_{1}$, $x_{2}$ and assume that $x_{1}({\bm 0})\neq x_{2}({\bm 0})$.
		
		\item If $\conv(D)$ does not have extreme points, then contains a line. In this case, the hyperplane $\partial H_{{\bm v}}$ must be parallel to this line. Using similar arguments, we can assume that $x_{1}({\bm 0})\neq x_{2}({\bm 0})$. 
	\end{enumerate}
	
	So we can assume that ${\bm 0}$ is in a face ${\bm F}_{0}$ of smallest dimension of $\overline{\conv(D)}$ and then ${\bm v}\in \hat{N}_{{\bm F}_{0}}(\overline{\conv(D)})$. In fact, any element in $\hat{N}_{{\bm F}_{0}}(\overline{\conv(D)})\cap \SS^{d-1}$ is a nondeterministic direction for $(X_{\zeta},S,\Z^{d})$.

	Now, for any $k>0$ consider $R^{(k)}>0$ as the recognizability radius for $\zeta^{k}$ given by \cref{RecognizabilityFactors} and $R=4R^{(k)}$. Since $x_{1}$ and $x_{2}$ coincide in an arbitrarily large ball, they have the same image under the maximal equicontinuous factor, hence $\pi_{n}(x_{1})=\pi_{n}(x_{2}) \in F_{n}^{\zeta}$ for any $n>0$. By \cref{AperiodicUniformylBounded}, there exist $n>0$ and two words $\texttt{w}_{1}^{(n)}, \texttt{w}_{2}^{(n)}\in \mathcal{L}_{\overline{K}_{\zeta}}(X_{\zeta})$ such that $x_{i}|_{B({\bm 0},R)\cap \Z^{d}}\sqsubseteq \zeta^{n}(\texttt{w}_{i}^{(n)})$, for $i\in \{1,2\}$. By the Pigeonhole Principle, there exist an infinite set $E\subseteq \NN$, two patterns $\texttt{w}_{1}, \texttt{w}_{2}\in \mathcal{L}_{\overline{K}_{\zeta}}(X_{\zeta})$ and ${\bm k}_{1}, {\bm k}_{2}\in \overline{K}_{\zeta}$ such that for all $n\in E$, $x_{i}|_{B({\bm 0},R)\cap \Z^{d}}=\zeta^{n}(\texttt{w}_{i})|_{L_{\zeta}^{n}({\bm k}_{i})+\pi_{n}(x_{1})+B({\bm 0},R)\cap \Z^{d}}, i\in \{1,2\}$. The recognizability property implies the origin is in the boundary of $\conv(F_{k}^{\zeta}-\pi_{k}(x_{1}))$. Letting $k$ to infinity, for all $n>0$, the origin is in the boundary of $\conv(F_{n}^{\zeta}-\pi_{n}(x_{1}))$. If ${\bm G}_{n}$ is the face of smallest dimension containing $\pi_{n}(x_{1})$ in $\conv(F_{n}^{\zeta})$, then $\hat{N}_{{\bm F}_{0}}(\overline{\conv(D)})$ is included in $\hat{N}_{{\bm G}_{n}}(\conv(F_{n}^{\zeta}))$, for all $n>0$.
	
 We will show now the converse. We separate the proof in two cases.
	
	Suppose first that $\conv(D)$ is closed. Let ${\bm F}_{1}$ be a face of $\conv(D)$ containing ${\bm 0}$ of codimension $1$. By \cref{PropertyConvexHull}, we have that ${\bm F}_{1}=\conv({\bm F}_{1}\cap D)$. Fix ${\bm t}\in {\bm F}_{1}\cap D$ different from ${\bm 0}$. Since $F_{n}^{\zeta}$ is a fundamental domain of $L_{\zeta}^{n}(\Z^{d})$, there are ${\bm h}_{n}\in F_{n}^{\zeta}$ and ${\bm z}_{n}({\bm t})\in \Z^{d}$ such that ${\bm t}={\bm h}_{n}({\bm t})-\pi_{n}(x_{1})+L_{\zeta}^{n}({\bm z}_{n}({\bm t}))$ and ${\bm t}$, being in the boundary of $\conv(D)$, lies in the boundary of $\conv(F_{n}^{\zeta}-\pi_{n}(x_{1})+L_{\zeta}^{n}({\bm z}_{n}({\bm t})))$. Since this last set is a translated of $\conv(F_{n}^{\zeta}-\pi_{n}(x_{1}))$ and are both subsets of $\conv(D)$, a basic geometrical argument ensures that for any $n>0$, ${\bm h}_{n}({\bm t})$ and $\pi_{n}(x_{1})$ are in the same face of $\conv(F_{n}^{\zeta})$. The same arguments imply that ${\bm h}_{n}({\bm t}_{1})$, ${\bm h}_{n}({\bm t}_{2})$ are in the same face for any ${\bm t}_{1},{\bm t}_{2}\in {\bm F}_{1}\cap D$. Furthermore, ${\bm h}_{n}({\bm t})$ and $\pi_{n}(x_{1})$ are different for any $n\in E$ large enough. Indeed, assume the converse, taking $R>\Vert t\Vert$, we have that $x_{1}|_{\bm t}=\zeta^{n}(\texttt{w}_{i})_{L_{\zeta}^{n}({\bm k}_{3})+\pi_{n}(x_{1})}$, for some ${\bm k}_{3}\in \overline{K}_{\zeta}$ for infinitely many $n\in E$. Since
	$$\Vert {\bm t} \Vert = \Vert  L_{\zeta}^{n}({\bm k}_{1}-{\bm k}_{3})\Vert,\ \text{for infinitely many}\ n\in E,$$
	
	\noindent we have that necessarily ${\bm k}_{1}={\bm k}_{3}$, which is a contradiction. 
	
	\noindent Consider the face ${\bm H}_{n}$ of $\conv(F_{n}^{\zeta})$ of smallest dimension generated by $\{{\bm h}_{n}({\bm t})\}_{{\bm t}\in {\bm F}_{1}\cap D}$. Notice that $\hat{N}_{{\bm F}_{1}}(\conv(D))\subseteq \bigcap\limits_{n>0} \hat{N}_{{\bm H}_{n}}(\conv(F_{n}^{\zeta}))$. We will prove that $\hat{N}_{{\bm F}_{1}}(\conv(D))= \bigcap\limits_{n>0} \hat{N}_{{\bm H}_{n}}(\conv(F_{n}^{\zeta}))$. By construction of $x_{1}$, $x_{2}$, the set $\{{\bm z}_{n}({\bm t})\}_{{\bm t}\in {\bm F}_{1}\cap D}$ is bounded (for all $n$ large enough it belong to $\overline{K}_{\zeta}-\overline{K}_{\zeta}$), so there exists ${\bm t}\in {\bm F}_{1}$ and $\varepsilon>0$ small enough such that for all ${\bm t}'\in B({\bm t},\varepsilon)\cap {\bm F}_{1}$ we have that ${\bm z}_{n}({\bm t}')={\bm z}_{n}({\bm t})$ for all $n\in E$ large enough. Hence ${\bm H}_{n}$ is a face of codimension 1. An argument of dimensions ensures that $\hat{N}_{{\bm F}_{1}}(\conv(D))=\bigcap\limits_{n\in E}\hat{N}_{{\bm H}_{n}}(\conv(F_{n})^{\zeta})$.
	
	Suppose now that $\conv(D)$ is not closed. Let ${\bm F}_{1}$ be a face of $\overline{\conv(D)}$ of codimension 1 containing ${\bm 0}$ and ${\bm w}\in \hat{N}_{{\bm F}_{1}}(\overline{\conv(D)})$. We will find a sequence of faces ${\bm H}_{n}$ of $\conv(F_{n}^{\zeta})$ such that $\hat{N}_{{\bm F}_{1}}(\overline{\conv(D)})=\bigcap\limits_{n>0}\hat{N}_{{\bm H}_{n}}(\conv(F_{n}^{\zeta}))$. By definition, we have that
	\begin{equation}\label{NormalConeFaceCodim1}
		\forall {\bm t}\in {\bm F}_{1}, \left\langle {\bm w},{\bm t}\right\rangle=\inf\limits_{{\bm n}\in D}\left\langle {\bm w},{\bm n}\right\rangle =0.
	\end{equation}
	
	Set ${\bm t}\in {\bm F}_{1}$ and consider a sequence $({\bm t}^{m})_{m>0}\subseteq \conv(D)$ converging to ${\bm t}$. By Caratheodory's theorem for any $m>0$, we can write ${\bm t}^{m}=\sum\limits_{i=0}^{d}t_{i}^{m}{\bm f}_{i}^{m}$, with $t_{i}\geq 0$,  $\sum\limits_{i=0}^{d}t_{i}^{m}=1$ and ${\bm f}_{i}^{m}\in D$. Then, \eqref{NormalConeFaceCodim1} implies that for all $i$, $t_{i}^{m}\left\langle {\bm w},{\bm f}_{i}^{m}\right\rangle \xrightarrow[m\to \infty]\ 0$. The only difficulty to get the result, concern the indices $i$ such that $\liminf\limits_{m\to \infty} t_{i}^{m}>0$. For such $i$, we have that $\left\langle {\bm w},{\bm f}_{i}^{m}\right\rangle \xrightarrow[m\to \infty]\ 0$. Using the recognizability property, for all $n\in E$ we write ${\bm f}_{i}^{m}={\bm h}(m,i,n)-\pi_{n}(x_{1})+L_{\zeta}^{n}({\bm z}(m,i,n))$, with ${\bm h}(m,i,n)\in F_{n}^{\zeta}$ and ${\bm z}(m,i,n)\in \Z^{d}$. Since $\left\langle {\bm w}, {\bm h}(m,i,n)-\pi_{n}(x_{1})\right\rangle\geq 0$ and $\left\langle {\bm w}, L_{\zeta}^{n}({\bm z}(m,i,n))\right\rangle \geq 0$, we have that for all $n>0$
	$$\left\langle {\bm w}, {\bm h}(m,i,n)-\pi_{n}(x_{1})\right\rangle\xrightarrow[m\to \infty]\ 0\quad \wedge\quad \left\langle {\bm w}, L_{\zeta}^{n}({\bm z}(m,i,n))\right\rangle\xrightarrow[m\to \infty]\ 0.$$
	
	Since $F_{n}^{\zeta}$ is finite, we conclude that for all $n\in E$, there exists $m(n)$ such that for all $m\geq m(n)$,
	\begin{equation}\label{EquationCones}
		\left\langle {\bm w}, {\bm h}(m,i,n)\right\rangle=\left\langle {\bm w},\pi_{n}(x_{1})\right\rangle =  0.
	\end{equation}
	
	The same argument as the former one when $\conv(D)$ is closed, gives that ${\bm h}(m,i,n)\neq \pi_{n}(x_{1})$ and if $i\neq j$, ${\bm h}(m,i,n)\neq {\bm h}(m,j,n)$ for all $n$ large enough.
	
	Now, for any $n>0$, we define ${\bm H}_{n}$ as the face of $\conv(F_{n}^{\zeta})$ of  smallest dimension containing $\pi_{n}(x_{1})$ and $\left\{{\bm h}(m,i,n)\colon {\bm t}\in {\bm F}_{1}, 0\leq i\leq d, m\geq m(n) \ \text{with}\ \liminf\limits_{m\to \infty} t_{i}^{m}>0\right\}$. In particular, \eqref{EquationCones} shows that $\hat{N}_{{\bm F}_{1}}(\overline{\conv(D)}) \subseteq \bigcap\limits_{n>0} \hat{N}_{{\bm H}_{n}}(\conv(F_{n}^{\zeta}))$. We claim $\bigcap\limits_{n>0} \hat{N}_{{\bm H}_{n}}(\conv(F_{n}^{\zeta}))=\hat{N}_{{\bm F}_{1}}(\overline{\conv(D)})$. First,  note that taking subsequences if its necessary, for all $n\in E$ we get the following limits
	$$\lim\limits_{m\to \infty}\sum\limits_{i=1}^{d}t_{i}^{m}{\bm h}(m,i,n)={\bm h}_{n}({\bm t})\quad \wedge\quad \lim\limits_{m\to \infty}\sum\limits_{i=1}^{d}t_{i}^{m}{\bm z}(m,i,n)={\bm z}_{n}({\bm t}).$$
	
	Hence, for all $n\in E$, ${\bm h}_{n}(t)$ is in ${\bm H}_{n}$. Also ${\bm z}_{n}({\bm t})$ is in $\conv(\overline{K}_{\zeta}-\overline{K}_{\zeta})$ for all $n\in E$ large enough. A geometric argument shows that there exists ${\bm t}\in {\bm F}_{1}$ and $\varepsilon>0$ small enough such that for all ${\bm t}'\in {\bm F}_{1}\cap B({\bm t},\varepsilon)$, ${\bm z}_{n}({\bm t}')={\bm z}_{n}({\bm t})$. So ${\bm H}_{n}$ is a face of codimension 1 for all $n\in E$ large enough. We then conclude that $\bigcap\limits_{n\in E}{\hat N}_{{\bm H}_{n}}(\conv(F_{n}^{\zeta}))=\hat{N}_{{\bm F}_{1}}(\overline{\conv(D)})$. 
	
	Thus, the extremal rays of $\hat{N}_{{\bm F}_{0}}(\overline{\conv(D)})$ are equal to sets of the form $\bigcap\limits_{n>0}\hat{N}_{{\bm H}_{n}}(\conv(F_{n}^{\zeta}))$, with ${\bm H}_{n}$ being faces, eventually, of codimension 1 of $\conv(F_{n}^{\zeta})$ containing $\pi_{n}(x_{1})$.
\end{proof}

Then, to determine the nondeterministic directions for $(X_{\zeta},S,\Z^{d})$ we need to study the supporting hyperplanes to $\conv(F_{n}^{\zeta})$. To do this, we focus on the convex hull of the digit tile of the substitution. In general, this convex hull is not a polytope, i.e., it can have infinitely many extreme points, even if the expansion matrix is diagonal, as we see in \cref{ExampleNonPolytopeDigitSet}:

\begin{example}[A digit tile, with a nonpolytope convex hull]\label{ExampleNonPolytopeDigitSet}
	Consider $L=\left(\begin{array}{cc}
		2 & 0 \\ 0 & 3
	\end{array}\right)$ and $F_{1}=\left\{(0,0),(0,1),(0,2),(1,0),(1,2),(1,-2)\right\}$.
	
	\begin{center}
		\begin{figure}[H]
			\begin{tabular}{cc}
			\begin{tikzpicture}[scale=0.7]
				\draw[help lines, color=gray!30, dashed] (-0.9,-1.9) grid (1.9,2.9);
				\draw[->,thick] (-1,0)--(2,0) node[right]{$x$};
				\draw[->,thick] (0,-1)--(0,2) node[above]{$y$};
				
				\coordinate (A1) at (0,0);
				\coordinate (A2) at (0,1);
				\coordinate (A3) at (0,2);
				\coordinate (A4) at (1,0);
				\coordinate (A5) at (1,2);
				\coordinate (A6) at (1,-2);
				
				\draw (A1) -- (A3) -- (A5) -- (A6) -- (A1);         
				
				\fill [blue!20, fill opacity = .5]   (A1)--(A3)--(A5)--(A6)--(A1)--cycle;
				
				\foreach \v in {A1,A2,A3,A4,A5,A6}  \draw[fill=gray] (\v) circle (2pt);		
			\end{tikzpicture} & \includegraphics[scale=0.2]{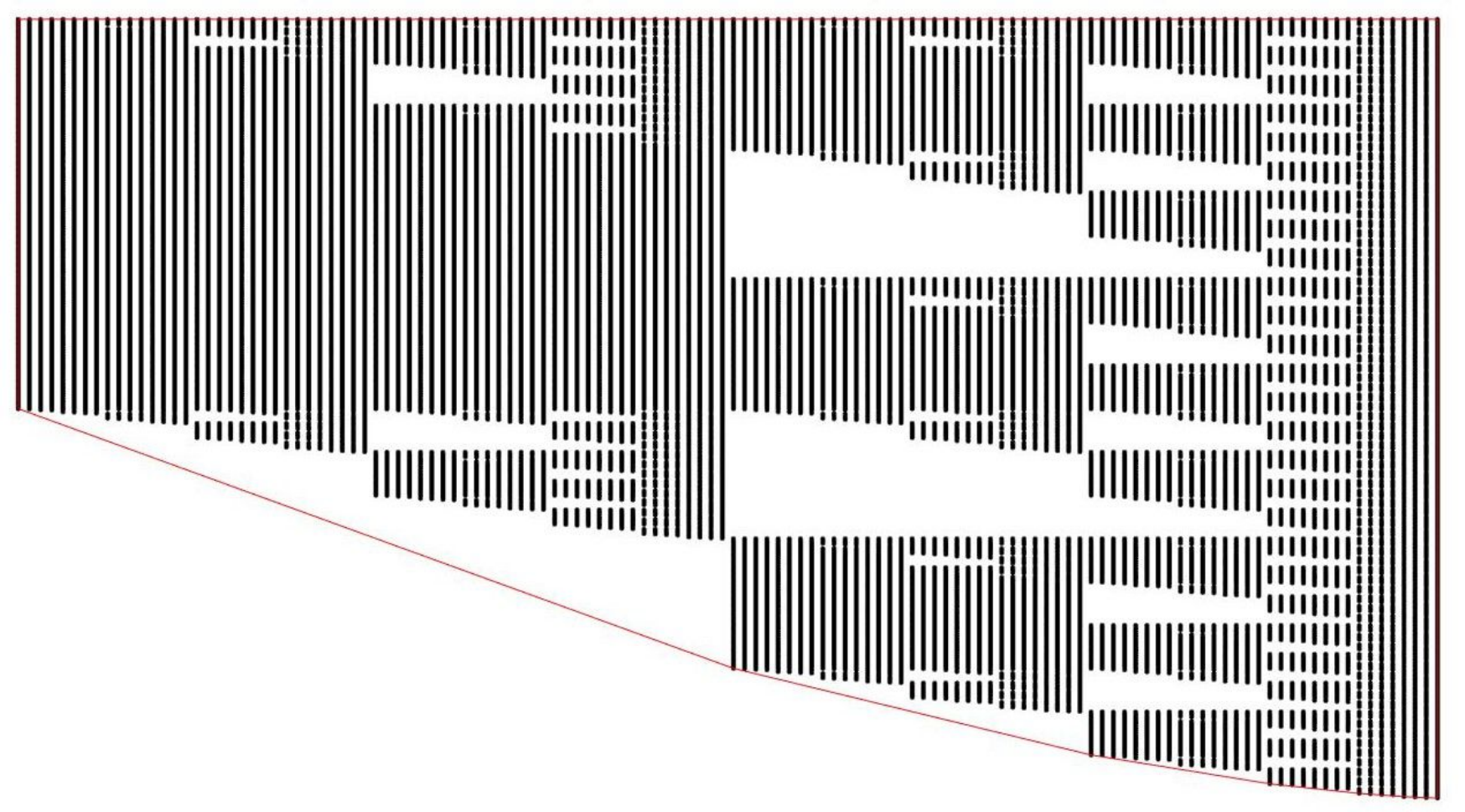}
		\end{tabular}
			\caption{The fundamental domain and an approximation of the digit tile of \cref{ExampleNonPolytopeDigitSet}.}
		\end{figure}
	\end{center}
	
	A direct computation shows that, for any $n>0$, the set of extreme points of $\conv(F_{n})$ is the set $\{(0,0),(0,3^{n}-1),(2^{n}-1,3^{n}-1)\}\cup\{(2^{n}-2^{k},3^{k}-3^{n})\colon 0\leq k\leq n-1\}$, which implies that
	
	$$\Ext(\conv(T(L,F_{1})))=\{(0,0),(0,1),(1,1),(1,-1)\}\cup\{(1-2^{-k},-1+3^{-k})\colon k\geq 0\}.$$
\end{example}

\subsection{The polytope case} 	
Here, we focus in the case when the convex hull of the digit tile is a polytope. We present some known results about this set that we will use in the rest of this article.

\begin{definition}
	We say that a substitution $\zeta$ is a \emph{polytope substitution} if it is extremally permutative, and the convex hull of the digit tile $T_{\zeta}=T(L_{\zeta},F_{1}^{\zeta})$ is a polytope.
\end{definition}

From now on, we only consider polytope substitutions. This geometrical hypothesis implies several algebraic restrictions on the expansion matrix $L_{\zeta}$ (\cref{AlgebraicPropertiesExpansionMap}) and some dynamical consequences for the substitutive subshift $(X_{\zeta},S,\Z^{d})$ (\cref{FinalTheoremNormalizerGroupPolytopeCase}).  

We recall here some results characterizing the polytope case in terms of the extreme points of $\conv(F_{n}^{\zeta})$ \cite{kirat2010remarksselfaffine}, and the inward unit normal vectors of the $(d-1)$-dimensional faces of $\conv(T_{\zeta})$ \cite{strichartz1999geometry}. 

\begin{theorem}\label{NecessaryAndSufficientConditionForPolytope}
	Let $T$ be the digit tile for an expansion matrix $L\in M_{d}(\R)$ and a fundamental domain $F_{1}\subseteq \R^{d}$. The following statements are equivalent:
	\begin{enumerate}
		\item The convex hull of the digit tile $T(L,F_{1})$ is a polytope.
		\item \cite[Theorem 4.2]{strichartz1999geometry} The inward unit normal vectors of the $(d-1)$-dimensional faces of $\conv(F_{1})$ are eigenvectors of $(L^{*})^{k}$ for some $k$.
		
		\item \cite[Theorem 2.2]{kirat2010remarksselfaffine} The cardinality of $\Ext(\conv(F_{n}))$ and $\Ext(\conv(F_{n+1}))$ are the same for some $n>0$. In such a case, for any $m>n$, $|\Ext(\conv(F_{m}))|=|\Ext(\conv(F_{n}))|$, and then $|\Ext(\conv(T(L,F_
	{1}))|=|\Ext(\conv(F_{n}))|$.
	\end{enumerate}		 
\end{theorem}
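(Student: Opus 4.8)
The plan is to pass everything through the Minkowski-sum description of the supports and the normal-fan dictionary of Section~2. Since $F_{n}=\sum_{i=0}^{n-1}L^{i}(F_{1})$ and the convex hull of a Minkowski sum is the Minkowski sum of the convex hulls, I would first record
$$\conv(F_{n})=\sum_{i=0}^{n-1}L^{i}(\conv(F_{1})),$$
together with $\conv(T(L,F_{1}))=\lim_{n\to\infty}L^{-n}(\conv(F_{n}))$, which follows from \eqref{ApproximationDigitTile} and the continuity of $\conv$ for the Hausdorff metric. The two standard facts I will lean on are: the opposite normal fan of a linear image $A(P)$ is $(A^{*})^{-1}\hat{\N}(P)$ (inward normals transform contravariantly), and the normal fan of a Minkowski sum is the common refinement of the normal fans of the summands. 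Writing $N_{1}$ for the set of inward unit normals of the $(d-1)$-dimensional faces of $\conv(F_{1})$, these give that $\hat{\N}(\conv(F_{n}))$ is the common refinement of the fans $(L^{*})^{-i}\hat{\N}(\conv(F_{1}))$, $0\le i<n$.

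For (1)$\Leftrightarrow$(2) I would argue directionally. Because $L^{i}(\conv(F_{1}))$ is a Minkowski summand of $\conv(F_{n})$ and a facet of a summand always contributes a facet to the sum, each transported direction $(L^{*})^{-i}{\bm v}$ with ${\bm v}\in N_{1}$ and $i<n$ occurs as an inward facet normal of $\conv(F_{n})$. Hence if $\conv(T(L,F_{1}))$ is a polytope, its finitely many facet normals (obtained as the Hausdorff limit of the facets of $L^{-n}\conv(F_{n})$) force each projective orbit $\{(L^{*})^{-i}{\bm v}\colon i\ge 0\}$ to be finite; since $L$ is expanding and $(L^{*})^{-1}$ contracts, finiteness of the orbit is equivalent to projective periodicity, i.e.\ $(L^{*})^{-k}{\bm v}$ parallel to ${\bm v}$, which is exactly ${\bm v}$ being an eigenvector of $(L^{*})^{k}$. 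Conversely, if every ${\bm v}\in N_{1}$ is such an eigenvector, the fans $(L^{*})^{-i}\hat{\N}(\conv(F_{1}))$ take only finitely many distinct values, so their common refinement is the refinement of finitely many fans, hence a single polyhedral fan with finitely many maximal cones, giving a polytope. This is the content of \cite[Theorem 4.2]{strichartz1999geometry}, which I would cite for the precise normalization while supplying the normal-fan translation as the bridge to our notation.

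For (1)$\Leftrightarrow$(3) I would use the same refinement picture: $|\Ext(\conv(F_{n}))|$ equals the number of maximal cones of $\hat{\N}(\conv(F_{n}))$, and this is nondecreasing because $\conv(F_{n+1})=\conv(F_{n})+L^{n}(\conv(F_{1}))$ only refines the fan by the extra summand $(L^{*})^{-n}\hat{\N}(\conv(F_{1}))$. If $\conv(T)$ is a polytope, the finiteness of the transported directions from the previous paragraph makes the common refinement stabilize, so the counts are eventually constant and the Hausdorff limit yields $|\Ext(\conv(T(L,F_{1})))|=|\Ext(\conv(F_{n}))|$. The hard part will be the converse: showing that a single equality $|\Ext(\conv(F_{n}))|=|\Ext(\conv(F_{n+1}))|$ already forces permanent stabilization for all $m>n$. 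A priori the refinement could pause at one stage and resume later, and ruling this out is not a formal consequence of monotonicity; it requires the self-affine rigidity of the family $(\conv(F_{n}))_{n}$ — namely that the normal directions newly introduced at step $n+1$ are the $(L^{*})^{-1}$-images of those introduced at step $n$ — which is precisely the mechanism in \cite[Theorem 2.2]{kirat2010remarksselfaffine}, and I would invoke that result directly for this implication.
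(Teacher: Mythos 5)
The first thing to note is that the paper contains no proof of this theorem: it is explicitly introduced as a recollection of known results ("The results are well known in the literature but nowhere written" refers to the equicontinuous factor; here the theorem itself carries the attributions), with item (2) credited to \cite[Theorem 4.2]{strichartz1999geometry} and item (3) to \cite[Theorem 2.2]{kirat2010remarksselfaffine}. Since your proposal ultimately defers the two genuinely hard implications to exactly these two references, in substance you and the paper take the same route; what you add is the normal-fan dictionary that the paper leaves implicit, and that scaffolding is correct: $\conv(F_{n})=\sum_{i=0}^{n-1}L^{i}(\conv(F_{1}))$ follows from the support recursion, inward normals do transform by $(A^{*})^{-1}$ under a linear image, every facet normal of a Minkowski summand is a facet normal of the sum (the face of $P+Q$ in direction ${\bm v}$ is $F_{{\bm v}}(P)+F_{{\bm v}}(Q)$), and counting maximal cones of the common refinement correctly yields the monotonicity $|\Ext(\conv(F_{n+1}))|\geq|\Ext(\conv(F_{n}))|$ — which, incidentally, is consistent with the paper's example where $|\Ext(\conv(F_{1}))|=3$ grows to $|\Ext(\conv(F_{2}))|=6$. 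You are also right, and honest, that a single equality of consecutive vertex counts forcing permanent stabilization is not a formal consequence of monotonicity and is precisely the content of the Kirat--Lau theorem.

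One soft spot deserves flagging in your (1)$\Rightarrow$(2) sketch: Hausdorff convergence $L^{-n}(\conv(F_{n}))\to\conv(T(L,F_{1}))$ does not by itself let you read facet normals of the limit off the approximants, because facets can degenerate to lower-dimensional faces in the limit. The clean repair, which stays entirely inside your framework, is the set identity $L(T)=T+F_{1}$; taking convex hulls gives $L(\conv(T))=\conv(T)+\conv(F_{1})$, so $\conv(F_{1})$ is a Minkowski summand of $L(\conv(T))$. Writing $N_{T}$ for the (finite, by hypothesis) set of inward facet-normal rays of $\conv(T)$ and $N_{1}$ for that of $\conv(F_{1})$, this yields $(L^{*})^{-1}N_{T}\supseteq N_{T}\cup N_{1}$, and since $(L^{*})^{-1}$ is a bijection on rays, a cardinality count forces $N_{1}\subseteq N_{T}$ and $(L^{*})^{-1}N_{T}=N_{T}$; thus $(L^{*})^{-1}$ permutes the finite set $N_{T}$ and every ${\bm v}\in N_{1}$ satisfies $(L^{*})^{k}{\bm v}\in\R_{+}{\bm v}$ for some $k$, which is exactly statement (2). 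Since you explicitly fall back on Strichartz for the precise statement, this is a gap in exposition rather than in correctness, and your proposal as a whole matches what the paper does.
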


\begin{remark}
	In the case $L=\lambda \id_{\R^{d}}$, with $\lambda >1$, a direct computation shows that the statements (2) and (3) of \cref{NecessaryAndSufficientConditionForPolytope} are satisfied without taking any power of $L$.
\end{remark}

A big family for the polytope case is when a power of the expansion matrix $L$ is an integer multiple of the identity, because for any fundamental domain $F$ of $L(\Z^{d})$, the convex hull of the digit tile generated by $L$ and $F$ is a polytope. In particular, all the convex hull of the digit tiles of the examples in \cref{ExamplesApproximationDigitTiles} are polytopes. Nevertheless, it is not the only case where \cref{NecessaryAndSufficientConditionForPolytope} can be applied. 

\begin{example}
	\begin{enumerate}
		\item (Example of a non self-similar matrix with a polytope digit tile)\label{ExamplePolytopeNonSelfSimilar}
		Consider $L=\left(\begin{array}{cc}
			2 & 0 \\ -1 & 3
		\end{array}\right)$ and $F_{1}=\left\{(0,0),(1,2),(-2,-1),(-2,-3),(1,0),(-1,-1)\right\}$.
		
		\begin{center}
			\begin{figure}[H]
				\begin{tabular}{cc}
					\begin{tikzpicture}[scale=0.7]
						\draw[help lines, color=gray!30, dashed] (-1.9,-2.9) grid (1.9,2.9);
						\draw[->,thick] (-2,0)--(2,0) node[right]{$x$};
						\draw[->,thick] (0,-3)--(0,2) node[above]{$y$};
						
						\coordinate (A1) at (-2,-1);
						\coordinate (A2) at (0,0);
						\coordinate (A3) at (1,2);
						\coordinate (A4) at (-1,-1);
						\coordinate (A5) at (1,0);
						\coordinate (A6) at (-2,-3);
						
						\draw (A1) -- (A3) -- (A5) -- (A6) -- (A1);         
						
						\fill [blue!20, fill opacity = .5]   (A1)--(A3)--(A5)--(A6)--(A1)--cycle;
						
						\foreach \v in {A1,A2,A3,A4,A5,A6}  \draw[fill=gray] (\v) circle (2pt);		
					\end{tikzpicture} & \includegraphics[scale=0.2]{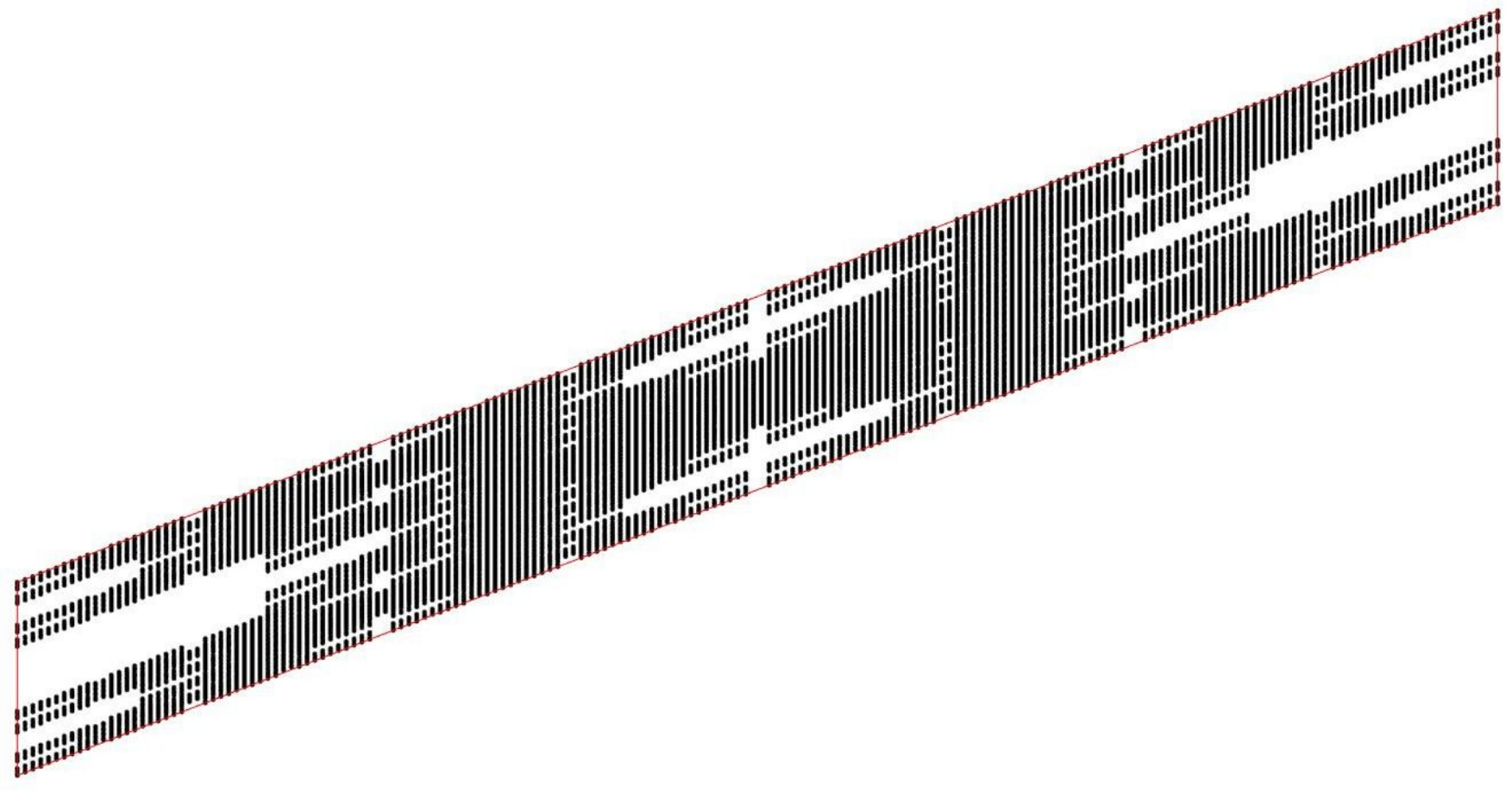}
				\end{tabular}
				\caption{The fundamental domain and an approximation of the digit tile of a non self-similar matrix.}
			\end{figure}
		\end{center}
		
		We have that $L^{*}=\left(\begin{array}{cc}
			2 & -1 \\ 0 & 3
		\end{array}\right)$ with eigenvectors equal to $\{(-1,1),(1,0)\}$. A direct computation shows that the set of extreme points of $\conv(F_{n})$ is equal to $\{(2^{n}-1,2^{n}-(3^{n}+1)/2),(2^{n}-1,(2^{n+1}+3^{n}-3)/2),(-2(2^{n}-1),(3^{n}+3-2^{n+2})/2),(-2(2^{n}-1),(5-3^{n}-2^{n+2})/2)\}$, so the set of extreme points of $\conv(T(L,F_{1}))$ is $\{(1,1/2),(1,3/2),(-2,-3/2),(-2,-5/2)\}$.
	
	\item As an example where the statement (3) in \cref{NecessaryAndSufficientConditionForPolytope} to be applied is not necessarily satisfied in $n=1$, consider $L=\left(\begin{array}{cc}
		-2 & 0 \\ 0 & -2
	\end{array}\right)$ and $F_{1}=\left\{(0,0),(1,0),(0,1),(-1,-1)\right\}$. We have that
	$$\begin{array}{cl}
		F_{2}= & \{(-1,-3),(0,-2),(1,-2),(-3,-1),(-1,-1),(0,-1),(-2,0),(-1,0),\\
		& \ \ (0,0),(1,0),(-2,1),(0,1),(1,1),(2,2),(3,2),(2,3)\},
	\end{array}$$
	
	\noindent so $\conv(F_{2})$ has 3 extreme points, while $\conv(F_{1})$ has 6 extreme points as shown in \cref{NegativeEigenvaluesProduceMoreExtremePoints}
	
	\begin{center}
		\begin{figure}[H]
			
			\begin{tikzpicture}[scale=0.5]
				\draw[help lines, color=gray!30, dashed] (-0.9,-2.9) grid (2.9,3.9);
				\draw[->,thick] (-1,0)--(3,0) node[right]{$x$};
				\draw[->,thick] (0,-3)--(0,3) node[above]{$y$};
				
				\coordinate (A1) at (0,0);
				\coordinate (A2) at (0,1);
				\coordinate (A3) at (1,0);
				\coordinate (A4) at (-1,-1);
				
				\draw (A4) -- (A2) -- (A3) -- (A4);         
				
				\fill [blue!20, fill opacity = .5]   (A4)--(A2)--(A3)--(A4)--cycle;
				
				\foreach \v in {A1,A2,A3,A4}  \draw[fill=gray] (\v) circle (2pt);
				
				\node (b1) at (-0.5,1.2)[scale=1]{$F_{1}$};

				\draw[help lines, color=gray!30, dashed] (7.1,-2.9) grid (13.9,3.9);
				\draw[->,thick] (7,0)--(14,0) node[right]{$x$};
				\draw[->,thick] (10,-3)--(10,3) node[above]{$y$};
				
				\coordinate (A5) at (9,-3);
				\coordinate (A6) at (10,-2);
				\coordinate (A7) at (11,-2);
				\coordinate (A8) at (7,-1);
				\coordinate (A9) at (9,-1);
				\coordinate (A10) at (10,-1);
				\coordinate (A11) at (8,0);
				\coordinate (A12) at (9,0);
				\coordinate (A13) at (10,0);
				\coordinate (A14) at (11,0);
				\coordinate (A15) at (8,1);
				\coordinate (A16) at (10,1);
				\coordinate (A17) at (11,1);
				\coordinate (A18) at (12,2);
				\coordinate (A19) at (13,2);
				\coordinate (A20) at (12,3);
				
				\draw (A5) -- (A8) -- (A15) -- (A20) -- (A19) --(A7) -- (A5);         
				
				\fill [blue!20, fill opacity = .5]   (A5)--(A8)--(A15)--(A20)--(A19)--(A7)--(A5)--cycle;
				
				\foreach \v in {A5,A6,A7,A8,A9,A10,A11,A12,A13,A14,A15,A16,A17,A18,A19,A20}  \draw[fill=gray] (\v) circle (2pt);
				
				\node (b2) at (7.5,1.2)[scale=1]{$F_{2}$};

				\coordinate (A21) at (23,0);
				\coordinate (A22) at (24,0);
				\coordinate (A23) at (23,1);
				\coordinate (A24) at (22,-1);
				\coordinate (A25) at (21,0);
				\coordinate (A26) at (22,0);
				\coordinate (A27) at (21,1);
				\coordinate (A28) at (20,-1);
				\coordinate (A29) at (23,-2);
				\coordinate (A30) at (24,-2);
				\coordinate (A31) at (23,-1);
				\coordinate (A32) at (22,-3);
				\coordinate (A33) at (25,2);
				\coordinate (A34) at (26,2);
				\coordinate (A35) at (25,3);
				\coordinate (A36) at (24,1);
				\coordinate (A37) at (27,0);
				\coordinate (A38) at (28,0);
				\coordinate (A39) at (27,1);
				\coordinate (A40) at (26,-1);
				\coordinate (A41) at (25,0);
				\coordinate (A42) at (26,0);
				\coordinate (A43) at (25,1);
				\coordinate (A44) at (24,-1);
				\coordinate (A45) at (27,-2);
				\coordinate (A46) at (28,-2);
				\coordinate (A47) at (27,-1);
				\coordinate (A48) at (26,-3);
				\coordinate (A49) at (29,2);
				\coordinate (A50) at (30,2);
				\coordinate (A51) at (29,3);
				\coordinate (A52) at (28,1);
				\coordinate (A53) at (23,4);
				\coordinate (A54) at (24,4);
				\coordinate (A55) at (23,5);
				\coordinate (A56) at (22,3);
				\coordinate (A57) at (21,4);
				\coordinate (A58) at (22,4);
				\coordinate (A59) at (21,5);
				\coordinate (A60) at (20,3);
				\coordinate (A61) at (23,2);
				\coordinate (A62) at (24,2);
				\coordinate (A63) at (23,3);
				\coordinate (A64) at (22,1);
				\coordinate (A65) at (25,6);
				\coordinate (A66) at (26,6);
				\coordinate (A67) at (25,7);
				\coordinate (A68) at (24,5);
				\coordinate (A69) at (19,-4);
				\coordinate (A70) at (20,-4);
				\coordinate (A71) at (19,-3);
				\coordinate (A72) at (18,-5);
				\coordinate (A73) at (17,-4);
				\coordinate (A74) at (18,-4);
				\coordinate (A75) at (17,-3);
				\coordinate (A76) at (16,-5);
				\coordinate (A77) at (19,-6);
				\coordinate (A78) at (20,-6);
				\coordinate (A79) at (19,-5);
				\coordinate (A80) at (18,-7);
				\coordinate (A81) at (21,-2);
				\coordinate (A82) at (22,-2);
				\coordinate (A83) at (21,-1);
				\coordinate (A84) at (20,-3);
				
				\draw[help lines, color=gray!30, dashed] (15.9,-6.9) grid (29.9,7.9);
				\draw[->,thick] (16,0)--(30,0) node[right]{$x$};
				\draw[->,thick] (23,-7)--(23,7) node[above]{$y$};
				
				\node (b2) at (19,2)[scale=1]{$F_{3}$};
				
				\draw (A46) -- (A50) -- (A51) -- (A66) -- (A67) --(A59) -- (A60) -- (A75) -- (A76) -- (A80) -- (A78) --(A48) -- (A46);
				
				\fill [blue!20, fill opacity = .5] (A46) -- (A50) -- (A51) -- (A66) -- (A67) --(A59) -- (A60) -- (A75) -- (A76) -- (A80) -- (A78) --(A48) -- (A46) -- cycle;
				\foreach \v in {A21,A22,A23,A24,A25,A26,A27,A28,A29,A30,A31,A32,A33,A34,A35,A36,A37,A38,A39,A40,A41,A42,A43,A44,A45,A46,A47,A48,A49,A50,A51,A52,A53,A54,A55,A56,A57,A58,A59,A60,A61,A62,A63,A64,A65,A66,A67,A68,A69,A70,A71,A72,A73,A74,A75,A76,A77,A78,A79,A80,A81,A82,A83,A84}  \draw[fill=gray] (\v) circle (2pt);
			\end{tikzpicture}
			\caption{The sets $F_{1}$, $F_{2}$ and $F_{3}$. Note that $\conv(F_{2})$ and $\conv(F_{3})$ have 6 extreme points.}
			\label{NegativeEigenvaluesProduceMoreExtremePoints}
		\end{figure}
	\end{center}
\end{enumerate}

\end{example}

In \cite{kirat2010remarksselfaffine} the following result was proved about the extreme points of $\conv(T(L,F_{1}))$ as well as the extreme points of $\conv(F_{m})$ for any $m>n$, where $n$ is such that $|\Ext(\conv(F_{n}))|=|\Ext(\conv(F_{n+1}))|$.

\begin{proposition}\cite[Theorem 4.8]{kirat2010remarksselfaffine}
	If $|\Ext(\conv(F_{n}))|=|\Ext(\conv(F_{n+1}))|$, then all the extreme points of $\conv(T(L,F_{1}))$ are of the form $\sum\limits_{j>0}L^{-(n+1)j}\left(\sum\limits_{i=0}^{n}L^{i}({\bm f}_{i})\right)$, with $\sum\limits_{i=0}^{n}L^{i}({\bm f}_{i})$ being an extreme point of $\conv(F_{n+1})$. 
\end{proposition}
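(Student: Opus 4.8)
The plan is to exploit the self-affine fixed-point equation for the convex hull of the digit tile and then to read off the extreme points through the (opposite) normal fan. First I would take convex hulls in the defining relation $L(T)=\bigcup_{{\bm g}\in F_{1}}(T+{\bm g})$; using $\conv(A+B)=\conv(A)+\conv(B)$ this collapses to the clean identity $L(\conv(T))=\conv(T)+\conv(F_{1})$, whose $m$-fold iterate is $L^{m}(\conv(T))=\conv(T)+\conv(F_{m})$ since $\conv(F_{m})=\sum_{i=0}^{m-1}L^{i}(\conv(F_{1}))$. For $m=n+1$ this gives $L^{n+1}(\conv(T))=\conv(T)+\conv(F_{n+1})$, the relation that will manufacture the stated closed form once the combinatorics of the normal fan are controlled.

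Next I would pass to opposite normal fans. Since $\hat{\N}$ sends Minkowski sums to common refinements and satisfies $\hat{\N}(LQ)=(L^{*})^{-1}\hat{\N}(Q)$, one gets $\hat{\N}(\conv(F_{m}))=\bigwedge_{i=0}^{m-1}(L^{*})^{-i}\hat{\N}(\conv(F_{1}))$ and $\hat{\N}(\conv(T))=\bigwedge_{j\geq 1}(L^{*})^{j}\hat{\N}(\conv(F_{1}))$. The hypothesis $|\Ext(\conv(F_{n}))|=|\Ext(\conv(F_{n+1}))|$ says the refinement stops creating new maximal cones at step $n$; feeding this into the two wedges and using the elementary principle that a refinement with the same number of maximal cones is an equality (together with \cref{NecessaryAndSufficientConditionForPolytope}(3), which makes $|\Ext(\conv(T))|$ equal to this common value), I would upgrade it to the single statement that all three fans coincide, $\hat{\N}(\conv(T))=\hat{\N}(\conv(F_{n+1}))=\bigwedge_{i\geq 0}(L^{*})^{-i}\hat{\N}(\conv(F_{1}))=:\mathcal{F}$, and that $\mathcal{F}$ is invariant under $(L^{*})^{-1}$.

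With $\mathcal{F}$ in hand, an extreme point ${\bm x}$ of $\conv(T)$ exposed by a direction ${\bm v}$ interior to a maximal cone of $\mathcal{F}$ equals $\sum_{j\geq 1}L^{-j}{\bm f}_{j}$, where ${\bm f}_{j}$ is the vertex of $\conv(F_{1})$ selected by $(L^{*})^{-j}{\bm v}$. Grouping this series into consecutive blocks of length $n+1$, a direct reindexing shows the $b$-th block assembles into $\sum_{i=0}^{n}L^{i}{\bm f}^{(b)}_{i}$, which is an extreme point of $\conv(F_{n+1})$ because all its digits are simultaneously selected by the single direction $(L^{*})^{-(n+1)(b+1)}{\bm v}$. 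Hence every extreme point already has the shape $\sum_{b\geq 1}L^{-(n+1)b}{\bm w}_{b}$ with ${\bm w}_{b}\in\Ext(\conv(F_{n+1}))$, and it only remains to prove that all the blocks coincide.

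The main obstacle is exactly this last step: proving the symbolic address $({\bm f}_{j})_{j}$ is purely periodic with period dividing $n+1$, equivalently that $(L^{*})^{-(n+1)}$ fixes every maximal cone of $\mathcal{F}$. The $(L^{*})^{-1}$-invariance of $\mathcal{F}$ already forces $(L^{*})^{-1}$ to permute the finitely many maximal cones, so each address is at least eventually periodic, and the fixed-point relation $L^{n+1}(\conv(T))=\conv(T)+\conv(F_{n+1})$ promotes this to pure periodicity. To pin the period to a divisor of $n+1$ I would invoke \cref{NecessaryAndSufficientConditionForPolytope}(2): the inward facet normals of $\conv(F_{1})$ are eigenvectors of some power $(L^{*})^{k}$, so that power acts trivially on $\hat{\N}(\conv(F_{1}))$ and hence on $\mathcal{F}$; after replacing $\zeta$ by a power, as is done throughout, one arranges $k\mid (n+1)$. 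Once $(L^{*})^{-(n+1)}$ fixes each maximal cone, the selecting direction $(L^{*})^{-(n+1)(b+1)}{\bm v}$ stays in one cone for all $b$, the ${\bm w}_{b}$ all equal a single ${\bm w}=\sum_{i=0}^{n}L^{i}{\bm f}_{i}\in\Ext(\conv(F_{n+1}))$, and summing the resulting series yields ${\bm x}=\sum_{j>0}L^{-(n+1)j}({\bm w})$, which is the assertion. I expect the genuinely delicate bookkeeping to be matching the eigen-power $k$ of \cref{NecessaryAndSufficientConditionForPolytope}(2) against the stabilization index $n$, and handling non-generic directions ${\bm v}$ on cone boundaries, which I would sidestep by perturbing ${\bm v}$ within its exposing cone.
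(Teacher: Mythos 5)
The paper itself contains no proof of this proposition --- it is imported verbatim from \cite[Theorem 4.8]{kirat2010remarksselfaffine} --- so your argument has to stand on its own. Most of its scaffolding is sound: the identity $L(\conv(T))=\conv(T)+\conv(F_{1})$ and its iterates, the formula $\hat{\N}(\conv(F_{m}))=\bigwedge_{i=0}^{m-1}(L^{*})^{-i}\hat{\N}(\conv(F_{1}))$, the upgrade of the cardinality hypothesis to equality of the fans and to $(L^{*})^{-1}$-invariance of $\mathcal{F}$, and the address expansion ${\bm x}=\sum_{j\geq 1}L^{-j}{\bm f}_{j}$ with block regrouping are all correct (pure periodicity of the address is in fact automatic: $(L^{*})^{-1}$ acts as a permutation of the finitely many maximal cones, and permutations have no transient part). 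You also correctly isolate the crux: since ${\bm x}$ has the stated form if and only if $L^{n+1}{\bm x}={\bm x}+{\bm w}$ with ${\bm w}\in\Ext(\conv(F_{n+1}))$ (use the unique decomposition of the extreme point $L^{n+1}{\bm x}$ of $\conv(T)+\conv(F_{n+1})$), the proposition is exactly the assertion that $(L^{*})^{-(n+1)}$ fixes every maximal cone of $\mathcal{F}$. But your repair of that step fails twice over. First, ``replacing $\zeta$ by a power'' is not available: the proposition concerns the given $L$, $F_{1}$ and the given index $n$ --- the exponent $n+1$ and the set $\Ext(\conv(F_{n+1}))$ in the conclusion refer to the original data --- so you cannot pass to $L^{q}$ mid-proof to arrange $k\mid(n+1)$.

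Second, and more seriously, the crux is not a consequence of the cardinality hypothesis at all. Take $L=\left(\begin{smallmatrix}0&2\\ 2&0\end{smallmatrix}\right)$ and $F_{1}=\{0,1\}^{2}$, a fundamental domain of $L(\Z^{2})=2\Z^{2}$ containing ${\bm 0}$. Then $F_{m}=\llbracket 0,2^{m}-1\rrbracket^{2}$, so $|\Ext(\conv(F_{2}))|=|\Ext(\conv(F_{3}))|=4$ and the hypothesis holds with $n=2$; moreover $\conv(T(L,F_{1}))=[0,1]^{2}$, and every intermediate conclusion of your argument holds (all fans equal the quadrant fan, which is $(L^{*})^{-1}$-invariant). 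Yet $(L^{*})^{-1}$ swaps the two quadrants containing $(1,-1)$ and $(-1,1)$, while $(L^{*})^{-2}=\frac{1}{4}\id$, so $(L^{*})^{-3}$ acts as this nontrivial swap: the step you need is false. Accordingly the conclusion itself fails for $n=2$: the candidate points $\sum_{j>0}L^{-3j}({\bm w})=(L^{3}-\id)^{-1}{\bm w}$, ${\bm w}\in\Ext(\conv(F_{3}))=\Ext([0,7]^{2})$, are $(0,0)$, $(1,1)$, $(1/9,8/9)$ and $(8/9,1/9)$, which miss the extreme points $(1,0)$ and $(0,1)$ of $\conv(T)$. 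So no bookkeeping between $k$ and $n$ can close your argument from the stated hypothesis alone; a complete proof must import the extra normalization the paper only installs later (conditions \ref{PC1} and \ref{PC3}: the facet normals are eigenvectors of $L^{*}$ itself, with positive eigenvalues), under which $(L^{*})^{-1}$ fixes every facet-normal ray, hence every maximal cone of $\mathcal{F}$, and then your block argument does close for every $n$ --- with only ``eigenvectors of $(L^{*})^{k}$ for some $k$'' and an arbitrary index $n$ satisfying the hypothesis, it does not.
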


\noindent This implies that $\conv(T(L,F_{1}))$ is equal to $(L^{m}-\id)^{-1}\conv(F_{m})$ for all $m>n$.

Now, assume that we are under the condition $|\Ext(\conv(F_{1}))|=|\Ext(\conv(T(L,F_{1}))|$ and for all $n>0$, $\conv(T(L,F_{1}))=(L^{n}-\id)^{-1}\conv(F_{n})$. Let ${\bm u}$ be an inward unit normal vector of a $(d-1)$-dimensional face of $\conv(T(L,F_{1}))$. For each $n>0$, $((L^{n}-\id)^{*})^{-1}{\bm u}$ is an inward normal vector of a $(d-1)$-dimensional face of $F_{n}$. By \cref{NecessaryAndSufficientConditionForPolytope} (1), there exists $k>0$ such that $((L-\id)^{*})^{-1}{\bm u}$ is an eigenvector of $(L^{*})^{k}$. Hence by commutation, ${\bm u}$ is an eigenvector of $(L^{*})^{k}$. Since $\conv(T(L,F_{1}))$ is a polytope, we can take $n>0$ large enough such that any of the inward unit normal vectors of $\conv(F_{1})$ is an eigenvector of the same power $(L^{*})^{n}$. Hence, by the same arguments, up to considering a power of $L$, we may assume that all of the inward unit normal vectors of the $(d-1)$-dimensional faces of $\conv(F_{1})$ are eigenvectors of $L^{*}$. This is equivalent to the hyperplane $\partial H[{\bm u}]=\{{\bm t}\in \R^{d}\colon \left\langle {\bm t},{\bm u}\right\rangle =0\}$ (the vector space of an affine hull of a face of $\conv(F_{1})$) generated by ${\bm u}$, being preserved by $L$, i.e., $L\partial H[{\bm u}]=\partial H[{\bm u}]$. This implies that the normal fan $\N(\conv(F_{n}))$ is the same for all $n>0$, and it is equal to the one of $\conv(T(L,F_{1}))$.

Since for some $n>0$, $\conv(F_{n})$ is nondegenerate (by the F\o lner condition), it has $d$ linearly independent inward normal vectors (that have integer coordinates with no common divisor), which are eigenvectors of $L^{*}$. The polytope condition implies then, the following algebraic restrictions on the expansion matrix $L$. The proof is left to the reader.

\begin{proposition}\label{AlgebraicPropertiesExpansionMap}
	If $|\Ext(\conv(F_{1}^{\zeta})|=|\Ext(\conv(T(L,F_{1})))|$, then the eigenvalues of $L$ are integer numbers. 
	
	Moreover, if ${\bm u}_{1},{\bm u}_{2},{\bm u}_{3}$ are linearly dependent inward unit normal vectors of $(d-1)$-dimensional faces of $\conv(T(L,F_{1}))$, then $L$ restricted to the vector space generated by these vectors acts as an integer multiple of the identity. 
\end{proposition}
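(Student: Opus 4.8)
The plan is to run off the reduction carried out in the paragraph preceding the statement: after replacing $L$ by a suitable power we may assume that every inward unit normal vector of a $(d-1)$-dimensional face of $\conv(T(L,F_{1}))$ is an honest eigenvector of the transpose $L^{*}$, and that the normal fan $\hat{\N}(\conv(F_{n}^{\zeta}))$ is constant in $n$ and equals that of $\conv(T(L,F_{1}))$. Since $\conv(F_{n}^{\zeta})$ is nondegenerate for $n$ large, its facet normals contain $d$ linearly independent vectors ${\bm u}_{1},\dots,{\bm u}_{d}$, which (being facet normals of a lattice polytope) may be taken to be primitive integer vectors, and which are eigenvectors of $L^{*}$ by \cref{NecessaryAndSufficientConditionForPolytope}.

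For the first assertion I would write $L^{*}{\bm u}_{i}=\lambda_{i}{\bm u}_{i}$ and argue in two steps. Because $L(\Z^{d})\subseteq\Z^{d}$, the matrix $L^{*}=L^{\top}$ has integer entries, so $\lambda_{i}{\bm u}_{i}=L^{*}{\bm u}_{i}\in\Z^{d}$; reading off any nonzero coordinate of ${\bm u}_{i}$ shows $\lambda_{i}\in\mathbb{Q}$. On the other hand $\lambda_{i}$ is a root of the characteristic polynomial of $L^{*}$, which is monic with integer coefficients, hence $\lambda_{i}$ is an algebraic integer; a rational algebraic integer lies in $\Z$. As the ${\bm u}_{i}$ form a basis, the $\lambda_{i}$ exhaust the spectrum of $L^{*}$, and since $L$ and $L^{*}=L^{\top}$ have the same characteristic polynomial, every eigenvalue of $L$ is an integer.

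For the second assertion, let ${\bm u}_{1},{\bm u}_{2},{\bm u}_{3}$ be three pairwise non-collinear, linearly dependent inward unit normals; by the reduction they are eigenvectors of $L^{*}$, with eigenvalues $\lambda_{1},\lambda_{2},\lambda_{3}$ that are integers by the first part. They span a $2$-plane $W$, and the dependence relation $a_{1}{\bm u}_{1}+a_{2}{\bm u}_{2}+a_{3}{\bm u}_{3}={\bm 0}$ has all $a_{i}\neq 0$ precisely because no two of the ${\bm u}_{i}$ are collinear. Applying $L^{*}$ gives $a_{1}\lambda_{1}{\bm u}_{1}+a_{2}\lambda_{2}{\bm u}_{2}+a_{3}\lambda_{3}{\bm u}_{3}={\bm 0}$, and subtracting $\lambda_{3}$ times the original relation yields $a_{1}(\lambda_{1}-\lambda_{3}){\bm u}_{1}+a_{2}(\lambda_{2}-\lambda_{3}){\bm u}_{2}={\bm 0}$. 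Since ${\bm u}_{1},{\bm u}_{2}$ are independent and $a_{1},a_{2}\neq 0$, this forces $\lambda_{1}=\lambda_{2}=\lambda_{3}=:\lambda$, so $L^{*}$ acts on $W=\mathrm{span}({\bm u}_{1},{\bm u}_{2},{\bm u}_{3})$ as $\lambda\,\id_{W}$ with $\lambda\in\Z$. This is the asserted statement, read for the operator $L^{*}$ for which the ${\bm u}_{i}$ are eigenvectors; note that $W$ need not be $L$-invariant, so it is the transpose that acts as an integer homothety on $W$.

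The computations are routine linear algebra; the real content sits in the reduction recalled at the outset, namely that the polytope hypothesis upgrades ``eigenvector of some power $(L^{*})^{k}$'' to ``eigenvector of $L^{*}$'' (without it one only obtains integrality of the eigenvalues of a power of $L$, e.g. $L=\bigl(\begin{smallmatrix}1&1\\1&-1\end{smallmatrix}\bigr)$ has $L^{2}=2\,\id$ yet eigenvalues $\pm\sqrt{2}$). The two points deserving a word of care are: that the three facets must have pairwise \emph{non-parallel} normals, since a pair of antipodal normals ${\bm u}_{2}=-{\bm u}_{1}$ yields only a trivial relation and no link between $\lambda_{3}$ and $\lambda_{1}$; and that the expansion hypothesis $\Vert L^{-1}\Vert<1$ gives $|\lambda|>1$, so the homothety is nontrivial. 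I expect no serious obstacle beyond keeping the $L$ versus $L^{*}$ bookkeeping consistent with the conventions fixed earlier in the section.
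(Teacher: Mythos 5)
Correct — the paper leaves this proof to the reader, and your argument is precisely the intended one assembled in the preceding paragraphs: after the power reduction the facet normals of $\conv(F_{1})$ (equivalently of $\conv(T(L,F_{1}))$, since the normal fans coincide) are primitive integer eigenvectors of the integer matrix $L^{*}$, giving rational-algebraic-integer eigenvalues that exhaust the spectrum, and three pairwise non-collinear eigendirections in a $2$-plane force a homothety. Your two caveats are moreover genuine imprecisions in the statement as written: an antipodal pair ${\bm u}_{2}=-{\bm u}_{1}$ together with any third normal is linearly dependent yet yields no homothety (e.g.\ $L=\bigl(\begin{smallmatrix}2&0\\-1&3\end{smallmatrix}\bigr)$ from the paper's own non-self-similar example, with normals $\pm(1,0)$ and $(1,-1)$ and distinct eigenvalues $2,3$), and it is $L^{*}$, not $L$, that acts as an integer multiple of the identity on $W=\mathrm{span}({\bm u}_{1},{\bm u}_{2},{\bm u}_{3})$, since $W$ need not be $L$-invariant when $\dim W<d$ (the two conclusions agree exactly in the case $W=\R^{d}$ used later for $d=2$).
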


In particular, in the two-dimensional case, if the digit tile has 3 or at least 5 edges, then it follows that the expansion matrix is an integer multiple of the identity.	

Finally, up to taking an appropriate power of a substitution, we may assume the following hypothesis 

\begin{enumerate}[label=\text{(PC} \arabic*\text{)},ref=\text{(PC} \arabic*\text{)}]
	\item The expansion matrix $L$ is diagonalizable, with positive integer eigenvalues.\label{PC1}
	
	\item The convex set $\conv(F_{1})$ is nondegenerate and $|\Ext(\conv(F_{1}))|=|\Ext(\conv(T(L,F_{1}))|$.\label{PC2}
	
	\item\label{PC3} Any inward unit normal vector of a $(d-1)$-dimensional face of $\conv(F_{1})$ is an eigenvector of $L^{*}$.
	\item\label{PC4} The set $K$ given by \cref{FiniteSubsetFillsZd} is equal to $(\id -L)^{-1}(F_{1})\cap \Z^{d}$, i.e., for any ${\bm k}\in K$, there exists ${\bm f}\in F_{1}$ such that ${\bm k}=L({\bm k})+{\bm f}$.
\end{enumerate}

\subsection{Dynamical properties of substitutive subshifts from polytope substitutions}

As we saw in the previous subsection, under the hypothesis \ref{PC1}, \ref{PC2}, \ref{PC3} and \ref{PC4}, the normal fan is the same for the convex hull of the supports of $\zeta^{n}$ for any $n>0$, and for the convex hull of the digit tile, so we have the following interpretation of \cref{NonExpansiveHalfspacesSupportingConvexHull} in the polytope case.

\begin{corollary}[Nondeterministic directions in the polytope case]\label{InterpretationPolytopecaseNormalCones}
	Let $\zeta$ be an aperiodic primitive polytope substitution. The set of nondeterministic directions $\ND(X_{\zeta},S,\Z^{d})$ is the intersection of $\SS^{d-1}$ with a nonempty union of opposite normal cones of the form $\hat{N}_{{\bm G}}(\conv(T_{\zeta}))$, where ${\bm G}$ is a face of $\conv(T_{\zeta})$.
\end{corollary}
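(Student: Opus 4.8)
The plan is to obtain the corollary as a direct translation of \cref{NonExpansiveHalfspacesSupportingConvexHull}, once one knows that in the polytope case the opposite normal fan of $\conv(F_n^{\zeta})$ does not depend on $n$ and coincides with that of $\conv(T_{\zeta})$. Indeed, \cref{NonExpansiveHalfspacesSupportingConvexHull} already writes $\ND(X_{\zeta},S,\Z^{d})$ as the intersection of $\SS^{d-1}$ with a nonempty union of opposite normal cones $\hat{N}_{{\bm G}_{n}}(\conv(F_{n}^{\zeta}))$, with each ${\bm G}_{n}$ a face of $\conv(F_{n}^{\zeta})$ for some $n>0$, so the entire task reduces to replacing each such cone by an opposite normal cone of a face of $\conv(T_{\zeta})$.

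First I would recall the facts assembled in the discussion preceding the statement: under the standing hypotheses \ref{PC1}--\ref{PC4}, every inward unit normal vector ${\bm u}$ of a $(d-1)$-dimensional face of $\conv(F_{1}^{\zeta})$ is an eigenvector of $L^{*}$, and by \cite{kirat2010remarksselfaffine} one has $\conv(T_{\zeta})=(L^{n}-\id)^{-1}\conv(F_{n}^{\zeta})$ for all large $n$. Using the transformation rule $\hat{N}_{A{\bm F}}(A\,C)=(A^{*})^{-1}\hat{N}_{{\bm F}}(C)$ valid for an invertible linear map $A$ and a face ${\bm F}$ of $C$, I would apply it to $A=(L^{n}-\id)^{-1}$, so that $A\conv(F_{n}^{\zeta})=\conv(T_{\zeta})$ and $(A^{*})^{-1}=(L^{*})^{n}-\id$. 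Since each cone of the fan is generated by rays spanned by the common eigenvectors ${\bm u}$ of $L^{*}$, and $(L^{*})^{n}-\id$ sends such a ${\bm u}$ (with eigenvalue $\lambda>1$ by \ref{PC1}) to $(\lambda^{n}-1){\bm u}$, a strictly positive multiple, the map fixes every generating ray; hence $\hat{N}_{A{\bm F}}(\conv(T_{\zeta}))=\hat{N}_{{\bm F}}(\conv(F_{n}^{\zeta}))$ and $\hat{\N}(\conv(F_{n}^{\zeta}))=\hat{\N}(\conv(T_{\zeta}))$ as collections of cones. Together with the dimension identity $\dim\hat{N}_{{\bm F}}(C)=d-\dim{\bm F}$, this furnishes a dimension-preserving bijection ${\bm G}_{n}\mapsto{\bm G}$ between the faces of $\conv(F_{n}^{\zeta})$ and those of $\conv(T_{\zeta})$ with $\hat{N}_{{\bm G}_{n}}(\conv(F_{n}^{\zeta}))=\hat{N}_{{\bm G}}(\conv(T_{\zeta}))$.

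Finally I would substitute this equality termwise into the union supplied by \cref{NonExpansiveHalfspacesSupportingConvexHull}, yielding that $\ND(X_{\zeta},S,\Z^{d})$ is the intersection of $\SS^{d-1}$ with a nonempty union of cones $\hat{N}_{{\bm G}}(\conv(T_{\zeta}))$, ${\bm G}$ a face of $\conv(T_{\zeta})$, which is precisely the assertion; nonemptiness is inherited verbatim from the theorem. The only genuinely delicate point is the identification of the two opposite normal fans: one must verify that $(L^{*})^{n}-\id$ scales \emph{every} ray generating an opposite normal cone positively, and not merely the normals of the top-dimensional faces, so that the two fans agree as sets of cones rather than only up to combinatorial isomorphism. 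This is where \ref{PC1} (positive eigenvalues $>1$) and \ref{PC3} (face normals are eigenvectors of $L^{*}$) are used essentially; granting them, the remainder is routine bookkeeping.
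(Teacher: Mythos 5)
Your proposal is correct and follows essentially the same route as the paper: the paper obtains the corollary directly from \cref{NonExpansiveHalfspacesSupportingConvexHull} together with the identification, established in the discussion preceding the corollary via \ref{PC1}, \ref{PC3} and the identity $\conv(T_{\zeta})=(L^{n}-\id)^{-1}\conv(F_{n}^{\zeta})$, of the opposite normal fan of $\conv(F_{n}^{\zeta})$ with that of $\conv(T_{\zeta})$. Your transformation rule $\hat{N}_{A{\bm F}}(AC)=(A^{*})^{-1}\hat{N}_{{\bm F}}(C)$ with $A=(L^{n}-\id)^{-1}$, and the observation that $(L^{*})^{n}-\id$ scales each generating ray by $\lambda^{n}-1>0$, is exactly the content of that identification, spelled out in more detail than the paper itself provides.
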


Hence, in the two-dimensional case, the former corollary implies strong restrictions on the set of nondeterministic directions. For instance, the number of its connected components is bounded by the number of edges of $\conv(T_{\zeta})$.

Now, as shown in the proof of \cref{NonExpansiveHalfspacesSupportingConvexHull}, to establish which opposite normal vectors of $\conv(F_{1}^{\zeta})$ appears as nondeterministic directions for $(X_{\zeta},S,\Z^{d})$ we study the convex sets $\conv(L_{\zeta}^{n}({\bm k})+F_{n}^{\zeta})$ generated by the points ${\bm k}$ in $K_{\zeta}$, which depend on the combinatorics of the substitution. We say that a subset $W\subseteq K_{\zeta}$ is a \emph{set of differences} if there exist two patterns $\texttt{w}_{1},\texttt{w}_{2}\in \mathcal{L}_{K_{\zeta}}(X_{\zeta})$ such that
$\texttt{w}_{1}({\bm k})$ is equal to $\texttt{w}_{2}({\bm k})$ if and only if ${\bm k}$ is in $K_{\zeta}\setminus W$.
	
The next lemma gives a sufficient condition to ensure that a vector ${\bm v}$ to be a nondeterministic direction for $(X_{\zeta},S,\Z^{d})$, seen as the converse of \cref{NonExpansiveHalfspacesSupportingConvexHull} in the polytope case. As in  \cref{AperiodicUniformylBounded}, we consider a set $C\Subset \Z^{d}$ such that for all $n>0$, $C+F_{n}^{\zeta}+F_{n}^{\zeta}\subseteq L_{\zeta}^{n}(C)+F_{n}^{\zeta}$ and $\overline{K}_{\zeta}=K_{\zeta}+C$.

\begin{lemma}\label{ConditionForBeingNonExpansiveBijectiveSubstitutions}
	Let $W\subseteq K_{\zeta}$ be a set of differences, ${\bm k}\in W$, $n>0$, a point ${\bm f}\in \partial \conv(L_{\zeta}^{n}({\bm k})+F_{n}^{\zeta})$ and ${\bm v}\in \SS^{d-1}$ be such that ${\bm f}+\partial H_{{\bm v}}$ supports $\conv(L_{\zeta}^{n}({\bm k})+F_{n}^{\zeta})$ at ${\bm f}$. Suppose that ${\bm f}$ satisfies the following conditions:
	\begin{enumerate}[label=\text{H}\arabic*.]
		\item \label{FirstConditionToBeNonDeterministic} ${\bm f}+\overline{K}_{\zeta}\subseteq L_{\zeta}^{n}(K_{\zeta})+F_{n}^{\zeta}$,
		\item ${\bm f}+ (\overline{K}_{\zeta}\cap H_{{\bm v}})\subseteq L_{\zeta}^{n}(K_{\zeta}\setminus W)+F_{n}^{\zeta}$.
	\end{enumerate}
	
	Then ${\bm v}$ is nondeterministic for $(X_{\zeta},S,\Z^{d})$.
\end{lemma}

\begin{proof}
	Let $\texttt{w}_{1},\texttt{w}_{2}$ be two patterns such that $\texttt{w}_{1}({\bm k}')=\texttt{w}_{2}({\bm k}')$ if and only if ${\bm k}'\in K_{\zeta}\setminus W$. Note that Condition \ref{FirstConditionToBeNonDeterministic} is equivalent to for all $m>0$, $L_{\zeta}^{m}({\bm f})+L_{\zeta}^{m}(\overline{K}_{\zeta})+F_{m}^{\zeta}\subseteq L_{\zeta}^{n+m}(K_{\zeta})+F_{n+m}^{\zeta}$. Since $\overline{K}_{\zeta}=K_{\zeta}+C$, \cref{RemarkSetDforInvariantOrbit} (2) implies that for all $m>0$, 
	\begin{equation}\label{EquivalentConditionH1}
		L_{\zeta}^{m}({{\bm f}})+F_{m}^{\zeta}+(L_{\zeta}^{m}(K_{\zeta})+F_{m}^{\zeta})\subseteq L_{\zeta}^{n+m}(K_{\zeta})+F_{n+m}^{\zeta}.
	\end{equation}
	
	If ${\bm f}$ is an extreme point of $\conv(L_{\zeta}^{n}({\bm k})+F_{n}^{\zeta})$, there exists ${\bm g}\in \Ext(\conv(F_{1}^{\zeta}))$ such that ${\bm f}=L_{\zeta}^{n}({\bm k})+\sum\limits_{i=0}^{n-1}L_{\zeta}^{i}({\bm g})$. If ${\bm f}$ is in the relative interior of a $k$-dimensional face of $\conv(L_{\zeta}^{n}({\bm k})+F_{n}^{\zeta})$, ($1\leq k\leq d-1$), we consider  ${\bm g}\in \Ext(\conv(F_{1}^{\zeta}))$ such that $L_{\zeta}^{n}({\bm k})+\sum\limits_{i=0}^{n-1}L_{\zeta}^{i}({\bm g})$ and ${\bm f}$ are in the same $k$-dimensional face of $\conv(L_{\zeta}^{n}({\bm k})+F_{n}^{\zeta})$ as shown in \cref{FigureHSupportingatF}.
	
	\begin{center}
		\begin{figure}[H]
			\begin{tikzpicture}[scale=0.7]
				\begin{scope}[on background layer]
					\shade let \p1=({0},{-3}),\p2=({0},{3}),
					\n1={90} in 
					[left color=white,right color=gray,middle color=white,shading angle=\n1]
					(\p1) -- (\p2)  --  ($(\p2)!2cm!-90:(\p1)$) -- ($(\p1)!2cm!90:(\p2)$)
					;
				\end{scope} 
				
				\coordinate (A1) at (0,0);
				\coordinate(A2) at (0,-3);
				\coordinate (A3) at (3,3);
				\coordinate(A4) at (0,-2);
				\coordinate (A5) at (6,-3);
				\coordinate (A6) at (6,0);
				
				\node (b3) at (0.2,2.8)[scale=1]{$H_{{\bm v}}$};

				\draw (A3) -- (A1) -- (A2) -- (A5) -- (A6) -- (A3); 
				
				\fill [brown!20, fill opacity = .5]   (A3)--(A1)--(A2)--(A5)--(A6)--(A3)--cycle;

				\node (b1) at (0.2,-2)[scale=1]{${\bm f}$};
				\node (b2) at (2.2,0)[scale=1]{$L_{\zeta}^{n}({\bm k})+\sum\limits_{i=0}^{n-1}L_{\zeta}^{i}({\bm g})$};
				\node (b4) at (6.2,2)[scale=1]{$\conv(L_{\zeta}^{n}({\bm k})+F_{n}^{\zeta})$};
				
				\foreach \v in {A1,A4}  \draw[fill=gray] (\v) circle (2pt);		
			\end{tikzpicture}
			\caption{The hyperplane $\partial H_{{\bm v}}$ supports $\conv(L_{\zeta}^{n}({\bm k})+F_{n}^{\zeta})$ at ${\bm f}$ and $L_{\zeta}^{n}({\bm k})+\sum\limits_{i=0}^{n-1}L_{\zeta}^{i}({\bm g})$.}
			\label{FigureHSupportingatF}
		\end{figure}
	\end{center}
	
	Now, Condition H2 is equivalent to for all $m>0$
	\begin{equation}\label{LH=H}
		L_{\zeta}^{m}({\bm f}+(\overline{K}_{\zeta}\cap H_{{\bm v}}))+F_{m}\subseteq L_{\zeta}^{n+m}(K_{\zeta}\setminus W)+F_{n+m}^{\zeta}.
	\end{equation}
	
	We will prove that for all $m>0$
	
	\begin{equation}\label{LH=Hbest}
		L_{\zeta}^{m}({\bm f})+\sum\limits_{i=0}^{m-1}L_{\zeta}^{i}({\bm g})+((L_{\zeta}^{m}(K_{\zeta})+F_{m}^{\zeta})\cap H_{{\bm v}})\subseteq L_{\zeta}^{m}({\bm f})+(\overline{K}_{\zeta}\cap H_{{\bm v}})+F_{m}^{\zeta}.
	\end{equation} 
	
	We have that
	\begin{equation}\label{EquationSupportingHyperplaneNormalVector}
		\left\langle {\bm v},{\bm g}\right\rangle= \min\limits_{{\bm i}\in F_{1}^{\zeta}}\left\langle {\bm v},{\bm i}\right\rangle.
	\end{equation}
	
	Fix ${\bm n}\in L_{\zeta}^{m}({\bm f})+\sum\limits_{i=0}^{m-1}L_{\zeta}^{i}({\bm g})+((L_{\zeta}^{m}(K_{\zeta})+F_{m}^{\zeta})\cap H_{{\bm v}})$. There exist ${\bm k}_{1}\in K_{\zeta}$, ${\bm j}\in F_{m}^{\zeta}$ and ${\bm h}\in H_{{\bm v}}$ such that
	$${\bm n}=L_{\zeta}^{m}({\bm f})+\sum\limits_{i=0}^{m-1}L_{\zeta}^{i}({\bm g})+L_{\zeta}^{m}({\bm k}_{1})+{\bm j}=L_{\zeta}^{m}({\bm f})+\sum\limits_{i=0}^{m-1}L_{\zeta}^{i}({\bm g})+{\bm h}.$$
	
	On the other hand, by definition of $\overline{K}_{\zeta}$, there exist ${\bm c}\in \overline{K}_{\zeta}$ and ${\bm l}\in F_{m}^{\zeta}$ such that
	$$\sum\limits_{i=0}^{m-1}L_{\zeta}^{i}({\bm g})+L_{\zeta}^{m}({\bm k}_{1})+{\bm j}=L_{\zeta}^{m}({\bm c})+{\bm l}.$$
	
	To prove \eqref{LH=Hbest}, it is enough to show that ${\bm c}\in H_{{\bm v}}$. Indeed, writing ${\bm l}=\sum\limits_{i=0}^{m-1}L_{\zeta}^{i}({\bm l}^{(i)})$, with ${\bm l}^{(i)}\in F_{1}^{\zeta}$ for $0\leq i \leq m-1$, we have that ${\bm c}=L_{\zeta}^{-m}\left(\sum\limits_{i=0}^{m-1}L_{\zeta}^{i}({\bm g}-{\bm l}^{(i)})+{\bm h}\right)$. So, we get that
	$$\begin{array}{cl}
		\left \langle {\bm v},{\bm c}\right\rangle & = \left\langle {\bm v},L_{\zeta}^{-m}\left(\sum\limits_{i=0}^{m-1}L_{\zeta}^{i}({\bm g}-{\bm l}^{(i)})+{\bm h}\right)\right\rangle \\
		& = \dfrac{1}{\lambda^{m}}\left(\sum\limits_{i=0}^{m-1}\left\langle {\bm v},L_{\zeta}^{i}({\bm g}-{\bm l}^{(i)})\right\rangle\right)+\dfrac{1}{\lambda^{m}}\left\langle {\bm v},{\bm h}\right\rangle \\
		& =
		\dfrac{1}{\lambda^{m}}\left(\sum\limits_{i=0}^{m-1}\lambda^{i}\left\langle {\bm v},{\bm g}-{\bm l}^{(i)}\right\rangle\right)+\dfrac{1}{\lambda^{m}}\left\langle {\bm v},{\bm h}\right\rangle.	
	\end{array}$$
	
	Since ${\bm h}\in H_{{\bm v}}$, then $\left\langle {\bm v},{\bm h}\right\rangle\leq 0$, and by \eqref{EquationSupportingHyperplaneNormalVector} we have that $\left\langle {\bm v},{\bm g}-{\bm l}^{(i)}\right\rangle\leq 0$, for all $1\leq i\leq m-1$. Since $\lambda >0$, we conclude that $\left \langle {\bm v},{\bm c}\right\rangle\leq 0$, which implies that ${\bm c}\in H_{{\bm v}}$.

	By \eqref{EquivalentConditionH1} and \cref{FiniteSubsetFillsZd}, the iterations of the substitution on the patterns $\texttt{w}_{1}$, $\texttt{w}_{2}$ leads to two points $x_{1}\neq x_{2}\in X_{\zeta}$ such that, $x_{i}$ is in $\left[\zeta^{n}(\texttt{w}_{1})\right]_{-L_{\zeta}^{n}({\bm f})-\sum\limits_{i=0}^{n-1}L_{\zeta}^{i}({\bm g})}$, for $i\in\{1,2\}$. Finally, \eqref{LH=Hbest} implies that $x_{1}|_{H_{{\bm v}}}=x_{2}|_{H_{{\bm v}}}$. The fact that ${\bm f}$ is in the set of differences of $\texttt{w}_{1}$ and $\texttt{w}_{2}$ ensures that $x_{1}({\bm 0})\neq x_{2}({\bm 0})$, so $x_{1}\neq x_{2}$. We then conclude that ${\bm v}$ is nondeterministic for $(X_{\zeta},S,\Z^{d})$.
\end{proof}

As described in \cref{NonExpansiveHalfspacesSupportingConvexHull}, depending on which faces of $\conv(L_{\zeta}^{n}({\bm k})+F_{n}^{\zeta})$, the point ${\bm f}$ satisfying Condition  \ref{FirstConditionToBeNonDeterministic}  belongs, we may have more nondeterministic directions, obtaining the following corollary:

\begin{corollary}\label{CorollaryNonExpansiveHalfspacesBijectiveSubstitutions}
	Let $W\subseteq K_{\zeta}$ be a set of differences, ${\bm k}\in W$, $n>0$, and a point ${\bm f}\in \partial \conv(L_{\zeta}^{n}({\bm k})+F_{n}^{\zeta})\cap \partial \conv(L_{\zeta}^{n}(W)+F_{n}^{\zeta})$ satisfying Condition \ref{FirstConditionToBeNonDeterministic} of \cref{ConditionForBeingNonExpansiveBijectiveSubstitutions}. Then any ${\bm v}$ in $\hat{N}_{{\bm F}}(\conv(L_{\zeta}^{n}(W)+F_{n}^{\zeta}))\cap \SS^{d-1}$ (with ${\bm F}$ being the face of smallest dimension where ${\bm f}$ belongs in $\conv(L_{\zeta}^{n}(W)+F_{n}^{\zeta})$) is nondeterministic for $(X_{\zeta},S,\Z^{d})$.
\end{corollary}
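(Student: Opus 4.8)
The plan is to deduce this corollary directly from \cref{ConditionForBeingNonExpansiveBijectiveSubstitutions} by checking that \emph{every} direction ${\bm v}$ in the opposite normal cone $\hat{N}_{{\bm F}}(\conv(L_{\zeta}^{n}(W)+F_{n}^{\zeta}))$ satisfies the two hypotheses of that lemma, now taken with respect to the \emph{smaller} convex body $\conv(L_{\zeta}^{n}({\bm k})+F_{n}^{\zeta})$. The essential observation is the inclusion of convex sets: since ${\bm k}\in W$, we have $L_{\zeta}^{n}({\bm k})+F_{n}^{\zeta}\subseteq L_{\zeta}^{n}(W)+F_{n}^{\zeta}$, and therefore $\conv(L_{\zeta}^{n}({\bm k})+F_{n}^{\zeta})\subseteq \conv(L_{\zeta}^{n}(W)+F_{n}^{\zeta})$.

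First I would verify the supporting hyperplane condition required by \cref{ConditionForBeingNonExpansiveBijectiveSubstitutions}. Fix ${\bm v}\in \hat{N}_{{\bm F}}(\conv(L_{\zeta}^{n}(W)+F_{n}^{\zeta}))\cap \SS^{d-1}$. By the very definition of the opposite normal cone and since ${\bm f}\in {\bm F}$, the linear form $\langle {\bm v},\cdot\rangle$ attains its minimum over $\conv(L_{\zeta}^{n}(W)+F_{n}^{\zeta})$ at ${\bm f}$. Because $\conv(L_{\zeta}^{n}({\bm k})+F_{n}^{\zeta})$ is contained in the larger body and ${\bm f}$ belongs to it (as ${\bm f}\in \partial\conv(L_{\zeta}^{n}({\bm k})+F_{n}^{\zeta})$ by hypothesis), the minimum of $\langle {\bm v},\cdot\rangle$ over the smaller body is also attained at ${\bm f}$; hence ${\bm f}+\partial H_{{\bm v}}$ supports $\conv(L_{\zeta}^{n}({\bm k})+F_{n}^{\zeta})$ at ${\bm f}$, exactly as the lemma demands.

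Next, Condition H1 holds by assumption, so it remains to establish Condition H2, namely
$$({\bm f}+\overline{K}_{\zeta})\cap({\bm f}+H_{{\bm v}})\subseteq L_{\zeta}^{n}(K_{\zeta}\setminus W)+F_{n}^{\zeta}.$$
Here the main (and essentially only) step is a localization argument. Using H1 we may enclose ${\bm f}+\overline{K}_{\zeta}$ in $L_{\zeta}^{n}(K_{\zeta})+F_{n}^{\zeta}$, so it suffices to show that any point ${\bm x}=L_{\zeta}^{n}({\bm k}')+{\bm g}$, with ${\bm k}'\in K_{\zeta}$ and ${\bm g}\in F_{n}^{\zeta}$, lying strictly on the $H_{{\bm v}}$ side of the supporting hyperplane (i.e.\ $\langle {\bm v},{\bm x}\rangle<\langle {\bm v},{\bm f}\rangle$) must have ${\bm k}'\in K_{\zeta}\setminus W$. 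Indeed, if ${\bm k}'\in W$ then ${\bm x}\in L_{\zeta}^{n}(W)+F_{n}^{\zeta}$, whence $\langle {\bm v},{\bm x}\rangle\geq \langle {\bm v},{\bm f}\rangle$ by the minimality established above, contradicting the strict inequality. This yields H2.

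With the supporting condition, H1 and H2 all verified for the arbitrary ${\bm v}\in \hat{N}_{{\bm F}}(\conv(L_{\zeta}^{n}(W)+F_{n}^{\zeta}))\cap\SS^{d-1}$, \cref{ConditionForBeingNonExpansiveBijectiveSubstitutions} applies and shows that ${\bm v}$ is nondeterministic for $(X_{\zeta},S,\Z^{d})$. There is no genuine analytic obstacle: the corollary is a repackaging of the lemma, and the only point requiring care is the bookkeeping in H2 --- namely that choosing ${\bm f}$ on the common boundary $\partial\conv(L_{\zeta}^{n}({\bm k})+F_{n}^{\zeta})\cap\partial\conv(L_{\zeta}^{n}(W)+F_{n}^{\zeta})$ and ${\bm v}$ in the normal cone of the smallest face ${\bm F}$ containing ${\bm f}$ simultaneously forces the supporting hyperplane of the smaller body and excludes every translate indexed by $W$ from the open half-space. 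This is precisely what upgrades the single-direction conclusion of \cref{ConditionForBeingNonExpansiveBijectiveSubstitutions} to the statement that the whole cone $\hat{N}_{{\bm F}}(\conv(L_{\zeta}^{n}(W)+F_{n}^{\zeta}))\cap\SS^{d-1}$ consists of nondeterministic directions.
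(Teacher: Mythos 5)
Your proposal is correct and follows exactly the route the paper intends: the corollary is stated there as an immediate consequence of \cref{ConditionForBeingNonExpansiveBijectiveSubstitutions}, and your argument is precisely the verification left implicit — namely that every ${\bm v}\in \hat{N}_{{\bm F}}(\conv(L_{\zeta}^{n}(W)+F_{n}^{\zeta}))$ supports $\conv(L_{\zeta}^{n}({\bm k})+F_{n}^{\zeta})$ at ${\bm f}$ (by the inclusion of the smaller hull in the larger one), while H1 transfers verbatim and H2 becomes automatic. Your H2 step is sound and can even be streamlined: since $F_{n}^{\zeta}$ is a fundamental domain of $L_{\zeta}^{n}(\Z^{d})$, each point of ${\bm f}+\overline{K}_{\zeta}$ has a \emph{unique} representation $L_{\zeta}^{n}({\bm k}')+{\bm g}$ with ${\bm g}\in F_{n}^{\zeta}$, H1 forces ${\bm k}'\in K_{\zeta}$, and minimality of $\left\langle {\bm v},\cdot\right\rangle$ at ${\bm f}$ over $\conv(L_{\zeta}^{n}(W)+F_{n}^{\zeta})$ excludes ${\bm k}'\in W$ for points strictly on the $H_{{\bm v}}$ side, which is the whole content of the corollary.
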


In addition to this result, the proof of \cref{NonExpansiveHalfspacesSupportingConvexHull} provides that, for some $n>0$, the opposite normal cone $\hat{N}_{{\bm F}}(\conv(L_{\zeta}^{n}(W)+F_{n}^{\zeta}))$ is equal to an opposite normal cone $\hat{N}_{{\bm  G}}(\conv(F_{n}^{\zeta}))$, where ${\bm G}$ is a face of $\conv(F_{n}^{\zeta})$.

In the following we present two examples with different behaviors given by \cref{ConditionForBeingNonExpansiveBijectiveSubstitutions}

\begin{example}[Different behaviors for the nondeterministic directions]\label{ExampleDifferentBehaviorForNonExpansiveHalfSpaces}
	\begin{enumerate}
		\item Consider the 2D-Thue Morse substitution with $L_{\zeta_{TM}}=\left(\begin{array}{cc}
			2 & 0 \\ 0 & 2
		\end{array}\right)$, $F_{1}^{\zeta_{TM}}=\llbracket 0,1 \rrbracket^{2}$, given by
	
		$$\begin{array}{lllllllll}
			\zeta_{TM}:&  & 1 & 0 &  &  & 0 & 1\\ 
				& 0\mapsto & 0 & 1 &  & 1\mapsto & 1 & 0.\\ 
		\end{array}$$
	
		The following is a pattern of $\zeta_{TM}$:
		
		\begin{figure}[H]
			$$\begin{array}{c}
				0110100110010110\\
				1001011001101001\\
				0110100110010110\\
				1001011001101001\\
				1001011001101001\\
				0110100110010110\\
			\end{array}$$
		\caption{A pattern of the substitution $\zeta_{TM}$.}
		\end{figure}
		
		In this case, the set $K_{\zeta_{TM}}$ is equal to $\llbracket-1,0\rrbracket^{2}$. We have that
		$$\mathcal{L}_{K_{\zeta_{TM}}}(X_{TM})=\left\{\begin{array}{cc}
			0 & 1\\
			1 & 0
		\end{array},\begin{array}{cc}
			1 & 0\\
			0 & 1
		\end{array},\begin{array}{cc}
			1 & 0\\
			1 & 0
		\end{array},\begin{array}{cc}
			0 & 1\\
			0 & 1
		\end{array},\begin{array}{cc}
			0 & 0\\
			1 & 1
		\end{array},\begin{array}{cc}
			1 & 1\\
			0 & 0
		\end{array},\begin{array}{cc}
			0 & 0\\
			0 & 0
		\end{array},\begin{array}{cc}
			1 & 1\\
			1 & 1
		\end{array}\right\}.$$
		
		The sets of differences for the 2D-Thue Morse substitution are $\{\{(0,-1),(0,0)\},\{(-1,0),(0,0)\},\{(-1,0),(0,-1)\},\{(-1,-1),(0,0)\},\{(-1,-1),(-1,0)\}\}$. By \cref{ConditionForBeingNonExpansiveBijectiveSubstitutions} it can be proved $\dbinom{1}{0}$, $\dbinom{-1}{0}$, $\dbinom{0}{1}$, $\dbinom{0}{1}$ are the only nondeterministic directions for $(X_{\zeta_{TM}},S,\Z^{2})$.
		
		\item Consider the substitution of the table tiling \cite{robinson1999table}, with $L_{\zeta_{t}}=\left(\begin{array}{cc}
			2 & 0 \\ 0 & 2
		\end{array}\right)$, $F_{1}^{\zeta_{t}}=\llbracket 0,1 \rrbracket^{2}$, given by $\zeta_{t}:$
		
		$$\begin{array}{ccccccccccc}
			0\mapsto & \begin{array}{cc}
				3 & 0\\ 1 & 0
			\end{array},& \quad & 1\mapsto & \begin{array}{cc}
				1 & 1\\ 0 & 2
			\end{array},& \quad &  2\mapsto & \begin{array}{cc}
				2 & 3\\ 2 & 1
			\end{array},& \quad & 3\mapsto & \begin{array}{cc}
				0 & 2\\ 3 & 3
			\end{array}.
		\end{array}$$
	
	The following is a pattern of $\zeta_{t}$:
	
		\begin{figure}[H]
			$$\begin{array}{c}
				3023302302021111\\
				1021102133330202\\
				0230230211110230\\
				3310213302023310\\
				1130231130231130\\
				0210210210210210\\
			\end{array}$$
			\caption{A pattern of the substitution $\zeta_{t}$.}
		\end{figure}
		
		The set $K_{\zeta_{t}}$ is equal to $\llbracket -1,0\rrbracket^{2}$ and the sets of differences is equal to $2^{K_{\zeta_{t}}}\setminus\{\emptyset, K_{\zeta_{t}},\{(-1,-1),(0,0)\},\{(0,-1),(-1,0)\}\}$. By \cref{ConditionForBeingNonExpansiveBijectiveSubstitutions}, it can be proved that the set of nondeterministic directions for $(X_{\zeta},S,\Z^{2})$ is the whole circle $\mathbb{S}^{1}$.
	\end{enumerate}
\end{example}

Now, we proceed to determine the normalizer semigroup $N(X_{\zeta},S,\Z^{d})$ of substitutive subshifts given by polytope substitutions. Set $M\in \vec{N}(X_{\zeta},S,\Z^{d})$. By \cref{NormalizerActsInNonExpansiveHalfspaces} if ${\bm v}$ is a nondeterministic direction for $(X_{\zeta},S,\Z^{d})$, then $M^{*}{\bm v}/\Vert M^{*}{\bm v}\Vert$ is also a nondeterministic direction for $(X_{\zeta},S,\Z^{d})$. Moreover, \cref{NonExpansiveHalfspacesSupportingConvexHull} ensures that the matrix $M$ acts on the opposite normal cones of $\conv(T_{\zeta})$, that appeared as nondeterministic directions for $(X_{\zeta},S,\Z^{d})$. In particular, the matrix $M^{*}$ permutes the hyperplanes defined by the $(d-1)$-dimensional faces of $\conv(T_{\zeta})$ whose unit opposite normal cones are nondeterministic directions for $(X_{\zeta},S,\Z^{d})$. If there are $d$ linearly independent nondeterministic directions for $(X_{\zeta},S,\Z^{d})$, we get the following result about the linear representation semigroup.

\begin{proposition}\label{PropositionAboutMatricesPolytopeCase}
	Let $\zeta$ be an aperiodic primitive polytope substitution. If the set of nondeterministic directions for $(X_{\zeta},S,\Z^{d})$ contains $d$ linearly independent vectors, then
	
	\begin{enumerate}
		\item Any homomorphism $\phi\in N(X_{\zeta},S,\Z^{d})$ is invertible.
		
		\item The linear representation semigroup $\vec{N}(X_{\zeta},S,\Z^{d})$ is a finite group, and it is isomorphic to a subgroup of $GL(d,\Z/3\Z)$.
		
		\item The norm of any matrix $M$ in the linear representation group $\vec{N}(X_{\zeta},S,\Z^{d})$ is bounded by an explicit formula  only depending on the convex hull of the digit tile $T_{\zeta}$.
	\end{enumerate} 
\end{proposition}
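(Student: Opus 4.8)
The plan is to reduce all four assertions to the action of the symmetry matrices on the inward facet normals of $\conv(T_{\zeta})$. First I would invoke \cref{InterpretationPolytopecaseNormalCones}: in the polytope case $\ND(X_{\zeta},S,\Z^{d})$ is the trace on $\SS^{d-1}$ of a nonempty union of opposite normal cones $\hat{N}_{{\bm G}}(\conv(T_{\zeta}))$. Each such cone is generated by the normals of the facets containing ${\bm G}$, and by the face relation on the normal fan (if ${\bm F}$ is a face of ${\bm G}$ then $\hat{N}_{{\bm G}}(\conv(T_{\zeta}))$ is a face of $\hat{N}_{{\bm F}}(\conv(T_{\zeta}))$) every extreme ray of a cone occurring in $\ND$ is itself of the form $\hat{N}_{{\bm F}}(\conv(T_{\zeta}))$ for a facet ${\bm F}$, hence lies in $\ND$, which is closed. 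Thus the nondeterministic inward facet normals ${\bm u}_{1},\dots,{\bm u}_{k}$ generate $\ND$ as a cone; since by hypothesis $\ND$ contains $d$ linearly independent vectors, the ${\bm u}_{i}$ span $\R^{d}$. By the polytope normalizations and \cref{AlgebraicPropertiesExpansionMap}, the normal fans of $\conv(F_{n}^{\zeta})$ and of $\conv(T_{\zeta})$ agree, so each ${\bm u}_{i}$ may be chosen a primitive integer vector.

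The heart of the argument is that each $M\in\vec{N}(X_{\zeta},S,\Z^{d})$ permutes the ${\bm u}_{i}$ with no dilation. By \cref{NormalizerActsInNonExpansiveHalfspaces}, $(M^{*})^{-1}$ sends nondeterministic unit directions to nondeterministic unit directions; being linear and preserving the union of normal cones, it maps the extremal rays $\R_{+}{\bm u}_{i}$ bijectively among themselves, so $(M^{*})^{-1}{\bm u}_{i}=c_{i}{\bm u}_{\sigma(i)}$ with $c_{i}>0$ and $\sigma$ a permutation. Now the key point: since $M\in GL(d,\Z)$ one has $(M^{*})^{-1}=\det(M)\,M\in GL(d,\Z)$, so $(M^{*})^{-1}{\bm u}_{i}$ is again a primitive integer vector, and equating it with $c_{i}{\bm u}_{\sigma(i)}$ forces $c_{i}=1$. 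Hence $(M^{*})^{-1}$ merely permutes the finite spanning set $\{{\bm u}_{i}\}$, and any linear map permuting a finite spanning set has order dividing $\lcm$ of the cycle lengths of $\sigma$; therefore $M$ has finite order, which is (2). I expect this step—recognizing that primitivity kills the scalar $c_{i}$—to be the main obstacle, since without it an integer matrix fixing each ray could still be hyperbolic (e.g. $\left(\begin{smallmatrix}2&1\\1&1\end{smallmatrix}\right)$) and have infinite order, so the geometric permutation alone would not suffice.

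For (3) and (4) I would use that $\vec{N}$ is a group: it is closed under products (as $\phi\in N_{M_{1}}$, $\psi\in N_{M_{2}}$ give $\phi\psi\in N_{M_{1}M_{2}}$), contains $\id$, and every element is invertible by (2). The assignment $M\mapsto\bigl((M^{*})^{-1}\text{ on }\{{\bm u}_{i}\}\bigr)$ is an injective homomorphism of $\vec{N}$ into the symmetric group on the finitely many rays ${\bm u}_{i}$, giving at once that $\vec{N}$ is finite. Being a finite subgroup of $GL(d,\Z)$, Minkowski's lemma (faithfulness of reduction modulo $3$ on the torsion of $GL(d,\Z)$) embeds it into $GL(d,\Z/3\Z)$, proving (3). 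For (4), choose $d$ linearly independent ${\bm u}_{i_{1}},\dots,{\bm u}_{i_{d}}$ and let $U$ be the matrix with these columns; since the columns of $(M^{*})^{-1}U$ all belong to the fixed finite set $\{{\bm u}_{j}\}$, and $\|M\|=\|(M^{*})^{-1}\|$, one obtains $\|M\|\le\bigl(\sqrt{d}\,\max_{j}\|{\bm u}_{j}\|\bigr)\|U^{-1}\|$, a bound depending only on the facet normals of $\conv(T_{\zeta})$, that is, only on the convex hull of the digit tile.

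Finally, (1) would follow by combining (2) with coalescence. An endomorphism $\phi\in N(X_{\zeta},S,\Z^{d})$ lies in some $N_{M}$ with $M\in\vec{N}$; by (2) $M$ has finite order, so \cref{NormalizerCoalescenceFiniteGroup} yields that $\phi$ is invertible. The one delicate point is that \cref{NormalizerCoalescenceFiniteGroup}, through \cref{Coalescence}, is phrased for reduced substitutions, while bijective on extremities need not be reduced; I would remove this gap either by passing to the associated reduced substitution, or directly: writing $M^{m}=\id$, the endomorphism $\phi^{m}$ commutes with $S$, descends to a rotation of the odometer maximal equicontinuous factor, and since the factor map to it is finite-to-$1$ (\cref{AperiodicUniformylBounded}) a degree count forces $\phi^{m}$ to be a conjugacy, whence $\phi$ is invertible.
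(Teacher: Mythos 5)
Your proof is correct and runs on essentially the same mechanism as the paper's: both arguments reduce everything to the fact that a symmetry matrix permutes the finitely many primitive integer inward facet normals of $\conv(T_{\zeta})$ occurring as nondeterministic directions (via \cref{NormalizerActsInNonExpansiveHalfspaces} and \cref{InterpretationPolytopecaseNormalCones}), and that integrality then forces the accompanying scalars to be units, yielding finite order, finiteness of $\vec{N}(X_{\zeta},S,\Z^{d})$, the Minkowski embedding into $GL(d,\Z/3\Z)$, and a norm bound by changing basis to $d$ independent normals. The packaging differs slightly: the paper invokes \ref{PC3} to regard the normals as eigenvectors of a power of $L_{\zeta}^{*}$, works with the lines $\mathbb{Q}{\bm v}_{i}$, and gets $M^{2n!}=\id$ after a separate sign analysis, whereas you work directly with the rays $\R_{+}{\bm u}_{i}$ and kill the positive scalars in one stroke; your route is marginally cleaner since the eigenvector property is never really needed, and both texts assert at the same level of detail that the induced map on $\ND$ permutes the facet-normal rays. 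Two local slips in your write-up are harmless but should be fixed. First, the identity $(M^{*})^{-1}=\det(M)\,M$ is false in general (try $M=\left(\begin{smallmatrix}1&1\\0&1\end{smallmatrix}\right)$); all you need is that $(M^{*})^{-1}\in GL(d,\Z)$, which holds because $\det M=\pm 1$, and that elements of $GL(d,\Z)$ send primitive vectors to primitive vectors. Second, $\Vert M\Vert=\Vert(M^{*})^{-1}\Vert$ fails for $d\geq 3$ (the largest singular value of $M$ and the reciprocal of its smallest need not agree even when $\det M=\pm 1$); but since $\vec{N}$ is a group closed under inversion, you can apply your column estimate to $((M^{-1})^{*})^{-1}=M^{*}$ and recover $\Vert M\Vert=\Vert M^{*}\Vert\leq \sqrt{d}\,\max_{j}\Vert {\bm u}_{j}\Vert\,\Vert U^{-1}\Vert$, which is exactly parallel to the paper's bound $\Vert M\Vert\leq\Vert P\Vert\,\Vert Q_{M}\Vert\,\Vert P^{-1}\Vert$. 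Finally, your caution on item (1) is well placed and goes beyond the paper: the paper's proof simply cites \cref{NormalizerCoalescenceFiniteGroup}, whose hypothesis (through \cref{Coalescence}) is that $\zeta$ is reduced, while a polytope substitution is only assumed bijective on extremities; your proposed repairs (passing to the reduced substitution, or a degree count against the finite-to-one factor of \cref{AperiodicUniformylBounded}) address a gap the paper leaves implicit.
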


\begin{proof}
	By assumption and \cref{InterpretationPolytopecaseNormalCones} there are $d$ linearly independent inward unit normal vectors to the $(d-1)$-dimensional faces of $\conv(T_{\zeta})$ that are nondeterministic directions for $(X_{\zeta},S,\Z^{d})$. Let $n\geq d$ be the number of unit normal vectors to the $(d-1)$-dimensional faces of $\conv(T_{\zeta})$ that are nondeterministic directions. Any matrix $M$ in the linear representation group of $(X_{\zeta},S,\Z^{d})$ permutes the hyperplanes defined by these $(d-1)$-dimensional faces of $\conv(T_{\zeta})$. By condition \ref{PC3}, the normal lines (generated by the inward vectors) of these hyperplanes are invariant by a power of the expansion matrix $L_{\zeta}^{*}$. Hence $M^{*}$ permutes $n$ eigenspaces $\{\mathbb{Q}{\bm v}_{1},\ldots,\mathbb{Q}{\bm v}_{n}\}$ of some power of $L_{\zeta}^{*}$. Moreover, we can assume that these vectors have integer coordinates not having common divisors except $\pm 1$. Note that each vector is unique up to a sign and does not depend on $M$. Since $M$ is in $GL(d,\Z)$, $M^{*}$ is also in $GL(d,\Z)$. This implies that $M^{*}$ sends vectors with integer coordinates with no common divisor, to vectors with the same property. Therefore, for all $1\leq i\leq n$ $(M^{*})^{2n!}{\bm v}_{i}=\pm {\bm v}_{i}$. Since the set $\{{\bm v}_{1},\ldots, {\bm v}_{n}\}$ contains $d$ linearly independent vectors, we get that $(M^{*})^{2n!}$ is the identity matrix, which implies that $M$ has finite order. By \cref{NormalizerCoalescenceFiniteGroup} any homomorphism of $(X_{\zeta},S,\Z^{d})$ is invertible. We recall that a subgroup of $GL(d,\Z)$ is finite if and only if any element in the subgroup has finite order. We then conclude that the linear representation group $\vec{N}(X_{\zeta},S,\Z^{d})$ is finite, and by Minkowski's theorem, we have that $\vec{N}(X_{\zeta},S,\Z^{d})\leq GL(d,\Z/3\Z)$.
	
	Finally, note that $\Vert M\Vert \leq \Vert P\Vert\cdot \Vert Q_{M} \Vert\cdot \Vert P^{-1}\Vert$, where $\Vert P\Vert, \Vert P^{-1}\Vert$ and $\sup\limits_{M\in \vec{N}(X_{\zeta},S,\Z^{d})} \Vert Q_{M}\Vert<\infty$ only depend on the convex hull of the digit tile $T_{\zeta}$.
\end{proof}
	
\begin{remark}In particular, it follows from the proof that if $n=d$ each matrix $Q_{M}$ is a permutation matrix, so  $\vec{N}(X_{\zeta},S,\Z^{d})$ is conjugate to a subgroup of the hyperoctaedral group $W_{d}$. These recover results in \cite{bustos2022admissible} for block substitutions. By the realization result \cite[Theorem 35]{bustos2022admissible} these results obtained are optimal. 
\end{remark}

The following theorem summarizes all the properties satisfied for aperiodic primitive reduced polytope substitutions.

\

\begin{theorem}\label{FinalTheoremNormalizerGroupPolytopeCase}
	Let $\zeta$ be an aperiodic reduced primitive polytope substitution. Then
 \begin{enumerate}
	 	\item \label{CoalescenceOfHomomorphismsPolytopeSubstitutions} The system $(X_{\zeta},S,\Z^{d})$ is coalescent, and also any homomorphism in $N(X_{\zeta},S,\Z^{d})$ is invertible.
 \end{enumerate}
 
 If there are $d$ linearly independent vectors that are nondeterministic directions for $(X_{\zeta},S,\Z^{d})$, we have that
	\begin{enumerate}[resume]			
		\item The normalizer is virtually generated by the shift action.
		
		\item \label{SymmetryGroupPermutesNormalFanPolytopeSubstitutions} The linear representation group $\vec{N}(X_{\zeta},S,\Z^{d})$ acts as a permutation group on the set $\{N_{{\bm G}}(\conv(T_{\zeta}))\colon N_{{\bm G}}(\conv(T_{\zeta})) \subseteq \ND(X_{\zeta},S,\Z^{d}), {\bm G}\ \text{a face of}\ \conv(T_{\zeta})\}$. In particular, if  $\ND(X_{\zeta},S,\Z^{d})=\SS^{d-1}$, then the linear representation group $\vec{N}(X_{\zeta},S,\Z^{d})$ is isomorphic to a subgroup of the automorphism group of the normal fan of $\conv(T_{\zeta})$.
	\end{enumerate}
\end{theorem}

\begin{proof}
	The statement \ref{CoalescenceOfHomomorphismsPolytopeSubstitutions} is true by \cref{Coalescence} and \cref{NormalizerCoalescenceFiniteGroup}.
	
	Now, by the third isomorphism theorem we have that 
	$${\Large \sfrac{N(X_{\zeta},S,\Z^{d})}{\Aut(X_{\zeta},S,\Z^{d})} \cong \left(\sfrac{N(X_{\zeta},S,\Z^{d})}{\left\langle S\right\rangle}\right)/(\sfrac{\Aut(X_{\zeta},S,\Z^{d})}{\left\langle S\right\rangle})}.$$
	
	Then, \cref{PropositionAboutMatricesPolytopeCase} gives that the quotient group $N(X_{\zeta},S,\Z^{d})/\Aut(X_{\zeta},S,\Z^{d})=\vec{N}(X_{\zeta},S,\Z^{d})$ is finite and \cref{AutomoprhismVirtuallyZd} implies that $\Aut(X_{\zeta},S,\Z^{d})/\left\langle S\right\rangle$ is also finite. We conclude that $N(X_{\zeta},S,\Z^{d})/\left\langle S\right\rangle$ is a finite group. 
	
	Finally, the statement \ref{SymmetryGroupPermutesNormalFanPolytopeSubstitutions} is true by \cref{PropositionAboutMatricesPolytopeCase}.
\end{proof}

The hypothesis of having $d$ linearly independent vectors as nondeterministic directions for $(X_{\zeta},S,\Z^{d})$ is decidable by an algorithm, studying all the sets of differences and then applying \cref{ConditionForBeingNonExpansiveBijectiveSubstitutions}. Until now we did not find an aperiodic $d$-dimensional reduced primitive polytope substitution with less than $d$ linearly independent nondeterministic directions leaving the following question:

\begin{question}
	Does there exist an aperiodic primitive reduced polytope substitution $\zeta$ with less than $d$ linearly independent nondeterministic directions for $(X_{\zeta},S,\Z^{d})$?
\end{question}

\bibliographystyle{plain}
\bibliography{sample}
\end{document}